\documentclass[12pt]{article}

\usepackage[T1]{fontenc}
\usepackage[utf8]{inputenc}

\usepackage{amsmath,amssymb,amsthm,mathtools}
\usepackage[mathscr]{euscript}
\usepackage{stmaryrd}

\usepackage[margin=1.2in]{geometry}
\usepackage[protrusion=true,expansion=false]{microtype}

\usepackage{tikz}
\usepackage{tikz-cd}
\usetikzlibrary{
  arrows.meta,
  positioning,
  calc,
  shapes.geometric,
  fit,
  backgrounds,
  decorations.pathreplacing
}

\usepackage{xcolor}
\usepackage[
  colorlinks=true,
  linkcolor=blue,
  citecolor=blue,
  urlcolor=blue,
  pdftitle={Categoricity in Arithmetic Degrees},
  pdfauthor={Jun Le Goh and Chieu-Minh Tran}
]{hyperref}

\usepackage{tocloft}
\usepackage{mdframed}
\usepackage{enumitem}

\setlist{itemsep=2pt, topsep=2pt}
\setlist[enumerate,1]{label=(\arabic*), ref=\arabic*}
\setlist[enumerate,2]{label=(\roman*), ref=\theenumi.(\roman*)}
\setlist[enumerate,3]{label=(\alph*), ref=\theenumii.(\alph*)}
\setlist[itemize]{label=\textbullet}

\newtheorem{theorem}{Theorem}[section]
\newtheorem{lemma}[theorem]{Lemma}
\newtheorem{corollary}[theorem]{Corollary}
\newtheorem{proposition}[theorem]{Proposition}
\newtheorem{fact}[theorem]{Fact}

\theoremstyle{definition}
\newtheorem{definition}[theorem]{Definition}

\newtheorem{remark}[theorem]{Remark}

\newcommand{\ACF}{\mathrm{ACF}}

\newcommand{\DLO}{\mathrm{DLO}}

\newcommand{\N}{\mathbb{N}}
\newcommand{\Q}{\mathbb{Q}}

\newcommand{\degAr}{\deg_{\mathrm{ar}}}

\newcommand{\leAr}{\le_{\mathrm{ar}}}

\newcommand{\acl}{\operatorname{acl}}
\newcommand{\dcl}{\operatorname{dcl}}

\newcommand{\Th}{\operatorname{Th}}
\newcommand{\tp}{\operatorname{tp}}

\newcommand{\Code}{\operatorname{Code}}
\newcommand{\cl}{\operatorname{cl}}
\newcommand{\CB}{\operatorname{CB}}

\newcommand{\Diag}{\mathrm{Diag}}

\newcommand{\CTerm}{\mathrm{CTerm}}
\newcommand{\eq}{\mathrm{eq}}

\newcommand{\LL}{\mathcal{L}}
\newcommand{\KK}{\mathscr{K}}
\newcommand{\PP}{\mathscr{P}}
\newcommand{\BB}{\mathscr{B}}
\newcommand{\CC}{\mathscr{C}}

\newcommand{\MM}{\mathscr{M}}
\newcommand{\NN}{\mathscr{N}}
\newcommand{\AAA}{\mathscr{A}}

\newcommand{\Hull}{\operatorname{Hull}}

\newcommand{\restr}{\upharpoonright}

\newcommand{\pair}{\mathrm{pair}}

\title{\textbf{Categoricity without Power}\thanks{
MSC 2020: Primary 03C35, 03D28; Secondary 03C10, 03C45, 03D55, 03B30, 03C57.

\noindent
Keywords: categoricity, arithmetic degrees, Morley's theorem, strongly minimal sets,
computable model theory, arithmetical definability, omitting types.
}}
\author{Jun Le Goh \and Chieu-Minh Tran}
\date{National University of Singapore}

\begin{document}
\maketitle

\begin{abstract}
We prove an analogue of Morley's categoricity theorem where cardinality is replaced by the recursion-theoretic notion of arithmetic degree. We say that a complete arithmetically definable
theory \(T\) is \emph{\(D\)-categorical} if any two arithmetically
extendible models of \(T\) of arithmetic degree \(D\), considered over a
common elementary submodel with arithmetical elementary diagram, are isomorphic over
that submodel by an isomorphism which preserves the complexity of sets of degree $D$. Here an
\emph{arithmetically extendible} model means an elementary substructure
of a model whose
elementary diagram is arithmetical. Our main result is:
\begin{quote}
\emph{If \(T\) is \(D_1\)-categorical for some nonzero arithmetic
degree \(D_1\), then \(T\) is \(D_2\)-categorical for every nonzero
arithmetic degree \(D_2\).}
\end{quote}
\noindent
We also show that, assuming \textsf{ZFC}, \(D\)-categoricity for some
nonzero arithmetic degree is equivalent to uncountable categoricity.
\end{abstract}

\section{Introduction}
\label{sec:intro}

\noindent
An algebraically closed field is classified, up to isomorphism, by two
invariants: its characteristic and its transcendence degree over the
prime field. Thus, for each fixed characteristic
\(p\in\{0\}\cup\{\text{primes}\}\), the theory \(\ACF_p\) has exactly
\(\aleph_0\) many countable models up to isomorphism, corresponding to
finite or countably infinite transcendence degree, while for every
uncountable cardinal \(\kappa>\aleph_0\) it has exactly one model of
cardinality \(\kappa\) up to isomorphism. Against this background, \L{}o\'s
formulated his celebrated categoricity conjecture:

\begin{quote}
If a countable first-order theory \(T\) is
\(\kappa\)-categorical for some uncountable cardinal \(\kappa\), then
\(T\) is \(\lambda\)-categorical for every uncountable cardinal
\(\lambda\).
\end{quote}

Morley proved this conjecture in his landmark paper
\emph{Categoricity in Power} \cite{morley1965}, thereby founding modern stability theory.
More important than the transfer statement itself, Morley's result
revealed a deep structural principle: uniqueness in one uncountable
cardinal forces a rigid internal organization of models, much as
algebraically closed fields are governed by characteristic and
transcendence degree. The transfer of categoricity then appears as a
consequence of this structure. This insight was later developed by
Shelah into classification theory.

However, this ``uniqueness forces structure'' phenomenon is not
confined to model theory. A classical example from elsewhere in
mathematics is the isoperimetric inequality:
\begin{quote}
Among all sets \(E\subseteq\mathbb{R}^n\) of fixed perimeter, the
Euclidean ball uniquely maximizes volume.
\end{quote}
Here too, uniqueness is not merely an extremal curiosity. It arises
from, and points back to, a rigid underlying geometry.

The present paper is written with the optimistic viewpoint that these are
not merely analogous results, but manifestations of a common principle.
There is now significant evidence for this. In Euclidean space, the
isoperimetric inequality is closely tied to, and in suitable formulations
essentially equivalent to, the Brunn--Minkowski inequality
\[
\mu(A+B)^{1/n}\geq \mu(A)^{1/n}+\mu(B)^{1/n},
\]
with equality in the special case of \(A=B\) characterizing convexity.
There is by now a substantial literature drawing parallels between
convexity and finite VC-dimension~\cite{Matousek2004BoundedVCFractionalHelly,Kaplan2024DefinablePQTheoremNIP}, the latter being a weakening of
stability-theoretic tameness. More importantly, this is a useful
viewpoint. In the nonabelian setting, Hrushovski~\cite{hrushovski2012}, and later Breuillard,
Green, and Tao~\cite{breuillardgreentao2012} in their work on approximate groups, used ideas closely
related to stability in studying phenomena connected to
Brunn--Minkowski-type behavior. See also the resolution of the
Breuillard--Green measure-doubling conjecture in
\(\mathrm{SO}(3,\mathbb R)\) by Jing, the second author, and Zhang
\cite{jingtranzhang2023}, and the subsequent compact-Lie-group extension
by Machado~\cite{machado2024}.

If this viewpoint is correct, then some version of the same story should
also appear in a computability-theoretic setting, intermediate between
the finite and the transfinite. The first motivation of this paper is to
verify that this is indeed the case. Replacing cardinality by a suitable
arithmetic complexity invariant, the Morley phenomenon still has a genuine
counterpart. On the other hand, this is not automatic: the usual replacement
leads toward computable structure theory, which has a rather different
flavor from Morley's categoricity theorem.

We now set up the framework in which cardinality is replaced by a
complexity notion from recursion theory. Recall that a set
\(X\subseteq\N\) is \emph{arithmetical} if it is definable in the
first-order structure \((\N,+,\times,0,1,<)\),
equivalently, if $X$ is computable relative to some finite Turing jump $0^{(n)}$. More generally, we say $X \subseteq \N$ is \emph{arithmetically reducible} to $Y \subseteq \N$ if $X \leq_T Y^{(n)}$ for some $n \in \N$. Two sets have the same
\emph{arithmetic degree} if they are mutually arithmetically reducible,
and we write \(\degAr(X)\) for the arithmetic degree of \(X\). Arithmetic degree measures complexity up to mutual arithmetical
definability, and will serve as the analogue of cardinality in the
present setting.

We also allow the ambient language and theory themselves to be given
arithmetically. Under a fixed standard G\"odel coding, a countable
language \(\LL\) is \emph{arithmetically definable} if the set of codes
of its symbols is arithmetical, and a complete theory \(T\) in such a
language is \emph{arithmetically definable} if the set of codes of its
axioms is arithmetical. If $M \subseteq \N$, we code $n$-ary relations on $M$ as subsets of $\N$ using fixed tupling functions. We say that such a relation is arithmetical if its code is an arithmetical subset of $\N$. For structures with domain contained in
\(\N\), the relation \(\MM\models T\) is understood in the usual coded
sense recalled in Appendix~\ref{sec:appendix-foundational}.

A first guess would be simply to replace cardinality by arithmetic
degree while keeping ordinary isomorphism as the relevant notion of
sameness. But ordinary isomorphism is too weak for this purpose: it
preserves the first-order structure while saying nothing about
arithmetic complexity. Thus, if arithmetic degree is to play the role
that cardinality plays in the classical setting, one must strengthen
the notion of isomorphism itself.

Accordingly, if \(f:\MM_1\cong\MM_2\) is an isomorphism between structures with domains
\(M_1,M_2\subseteq\N\), and \(D>\mathbf 0\) is an arithmetic degree, the
following is the strengthened notion of isomorphism we use.

\begin{definition}[\(D\)-preserving isomorphism]
\label{def:d-preserving}
An isomorphism \(f:\MM_1\cong\MM_2\) is \emph{\(D\)-preserving} if both
the graph of \(f\) and the graph of \(f^{-1}\) are \(D\)-arithmetical,
and if \(f\) preserves degree \(D\) on parameter-definable sets:
whenever
\(X=\theta(M_1^n,\bar a)\), for a formula \(\theta(\bar x,\bar y)\) and
a finite tuple \(\bar a\in M_1^{<\omega}\), has arithmetic degree \(D\),
the image \(f[X]=\theta(M_2^n,f(\bar a))\) also has arithmetic degree
\(D\), and conversely, whenever a parameter-definable
\(Y\subseteq M_2^n\) has arithmetic degree \(D\), its preimage
\(f^{-1}[Y]\subseteq M_1^n\) also has arithmetic degree \(D\).
\end{definition}

We emphasize that in the above definition, we require that $f[X]$ and $f^{-1}[Y]$ have arithmetic degree exactly \(D\), not merely \(\leq D\).

This stronger notion of isomorphism immediately creates a new problem: if one allows
completely arbitrary presentations, then \(D\)-preservation becomes too
rigid. The issue is not that the first-order isomorphism type changes:
one may present the same algebraically closed field on domains and with
operations whose arithmetic behavior is completely unrelated. Ordinary
isomorphism forgets this presentation complexity. Since algebraically
closed fields should be among the basic examples of the theory,
arbitrary copies are not the right objects to compare. The remedy is to
restrict attention to a better behaved class of presentations.

An \(\LL\)-structure \(\MM\) with domain \(M\subseteq\N\) is called
\emph{arithmetically decidable} if its elementary diagram is an
arithmetical subset of \(\N\); equivalently, the satisfaction relation
\[
\MM\models \varphi(\bar a)
\]
is arithmetical uniformly in the code of the formula \(\varphi\) and
the tuple \(\bar a\in M^{<\omega}\). If \(T\) is a complete theory, we
say that an \(\LL\)-structure \(\MM\models T\) is
\emph{arithmetically extendible} if there exists an arithmetically
decidable model \(\widehat{\MM}\models T\) such that
\[
\MM\preccurlyeq \widehat{\MM}.
\]
Thus we restrict attention to presentations that elementarily embed into
arithmetically decidable models. This rules out the pathological copies
just mentioned (often studied in computable structure theory) while retaining the presentations relevant to the
categoricity phenomenon.

We now state the central definition.

\begin{definition}[$D$-categorical]
\label{def:cat}
Let \(T\) be a complete arithmetically definable theory in an
arithmetically definable language with no finite models, and let
\(D>\mathbf 0\) be an
arithmetic degree. We say that \(T\) is \emph{\(D\)-categorical} if for
every arithmetically decidable model \(\MM_0\models T\) and all
arithmetically extendible models \(\MM_1,\MM_2\models T\) such that
\[
\MM_0\preccurlyeq \MM_1,\MM_2
\qquad\text{and}\qquad
\degAr(M_1)=\degAr(M_2)=D,
\]
there exists a \(D\)-preserving isomorphism \(f:\MM_1\cong\MM_2 \) such that
\[
f\restr_{M_0}=\mathrm{id}_{M_0}.
\]
\end{definition}

A reader expecting a closer analogy with \(\kappa\)-categoricity may
wonder why the notion is formulated over a common arithmetically
decidable elementary submodel. Part of the reason is classical:
uncountable categoricity can also be read over a countable elementary
base; see Remark~\ref{rem:base-version-uncountable-categoricity}. In the
arithmetic setting there is an additional issue, namely that arithmetic
complexity does not behave with the same uniform rigidity as cardinality.
Working over a common arithmetically decidable elementary submodel is the
cleanest way we know to recover the needed rigidity.

We are now ready to state the main result.

\begin{theorem}[Arithmetic Morley theorem; Theorem~\ref{thm:morley}]
\label{thm:morley-intro}
Let \(T\) be a complete arithmetically definable theory in a countable
language with no finite models. If \(T\) is \(D_1\)-categorical for some
nonzero arithmetic degree \(D_1\), then \(T\) is \(D_2\)-categorical
for every nonzero arithmetic degree \(D_2\).
\end{theorem}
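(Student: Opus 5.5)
The plan follows Morley's route: show that \(D_1\)-categoricity forces \(\omega\)-stability together with the absence of Vaughtian pairs, and then show that these structural properties imply \(D\)-categoricity for every nonzero arithmetic degree \(D\). The argument runs in two directions.

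\emph{Step 1 (categoricity forces structure).} Assume \(T\) is \(D_1\)-categorical. First I would prove that \(T\) is \(\omega\)-stable. Suppose instead that over some countable set of parameters there are uncountably many types. Since \(T\) is arithmetical, there is an arithmetically decidable model \(\MM_0\). I would use the binary tree of formulas witnessing instability, which is arithmetical relative to the diagram of \(\MM_0\). Along it, build two arithmetically extendible elementary extensions \(\MM_1,\MM_2\succcurlyeq\MM_0\), both of degree \(D_1\), that realize different sets of types over \(\MM_0\). The construction is an arithmetical Henkin construction relative to a set of degree \(D_1\), with the set coded along a branch. Then \(\MM_1\not\cong_{\MM_0}\MM_2\), a contradiction.

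Second, I would prove that \(T\) has no Vaughtian pair. Given a pair \(\MM\succ\NN\) with \(\varphi(\MM)=\varphi(\NN)\) infinite, compactness gives such a pair with arithmetical diagrams. From it, produce a model of degree \(D_1\) in which \(\varphi\) has an arithmetical realization set. Produce a second model of degree \(D_1\) in which \(\varphi(\MM_2)\) has degree exactly \(D_1\): add a \(D_1\)-coded sequence of independent realizations using strong minimality-style prime-model arguments. A \(D_1\)-preserving isomorphism over the base must send the \(\varphi\)-set to the \(\varphi\)-set and preserve the degree of this definable set, which is impossible. This second half is where the definition's insistence on exact degree \(D\) is used.

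\emph{Step 2 (structure forces categoricity in every \(D_2\)).} Now assume \(T\) is \(\omega\)-stable with no Vaughtian pairs, and fix \(\MM_0\) arithmetically decidable. Since \(T\) is \(\omega\)-stable, there is a strongly minimal formula \(\varphi\) over \(M_0\). I may need to enlarge \(\MM_0\) by finitely many parameters, which preserves arithmetic decidability. Each arithmetically extendible \(\MM_i\succcurlyeq\MM_0\) is then prime and minimal over \(M_0\cup\varphi(\MM_i)\). Choose arithmetically in \(\MM_i\) (relative to the ambient decidable \(\widehat{\MM}_i\), hence in degree \(\le D_2\)) an \(\acl\)-basis \(B_i\) of \(\varphi(\MM_i)\) over \(M_0\). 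Since \(\deg_{\mathrm{ar}}(M_i)=D_2>\mathbf 0\), both bases are infinite, and I claim their degrees are exactly \(D_2\). The reason is that \(M_i\) is recoverable arithmetically from \(B_i\) and the arithmetical diagram of \(\widehat{\MM}_i\), since \(M_i\) is the algebraic/prime hull. Fix a bijection \(B_1\to B_2\) of degree \(D_2\), say the order-preserving bijection via the enumeration in \(\N\). Extend it by a back-and-forth over \(M_0\) through algebraic closures and isolated types. Each step is an arithmetical search using the diagrams of \(\widehat{\MM}_1\) and \(\widehat{\MM}_2\), so the graph of \(f\) is \(D_2\)-arithmetical.

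The main obstacle is preserving the degree of definable sets. Given \(X=\theta(M_1^n,\bar a)\) of degree exactly \(D_2\), I need \(f[X]\) to also have degree exactly \(D_2\). The upper bound \(\le D_2\) is automatic, because \(f[X]=\theta(M_2^n,f\bar a)\) is arithmetical in \(M_2\). For the lower bound, I would use the fact that an arithmetical definable set in an extendible model of \(\omega\)-stable \(T\) is determined by its trace on \(\varphi\). For an infinite set, this trace recovers a cofinite part of a basis. Concretely, \(X\) has degree \(\mathbf 0\) or \(D_2\) according to a first-order or Morley-rank property over the parameters. This property is invariant under \(f\), since \(f\) is elementary over \(M_0\). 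Establishing this dichotomy "every parameter-definable set in such a model has degree either \(\mathbf 0\) or \(\degAr(M)\), decided by a first-order property of the parameters" is the step I expect to be hardest. I would attack it by induction on Morley rank, using the decomposition of definable sets via the strongly minimal \(\varphi\) and the arithmetical decidability of \(\widehat{\MM}\).
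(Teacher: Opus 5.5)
There is a genuine gap, and it is exactly at the step you flagged as hardest: the implication Step~2 aims for is false, so no back-and-forth can rescue it. The theory $T=\Th\bigl((\mathbb Z/4\mathbb Z)^{(\omega)}\bigr)$ is totally categorical, hence $\omega$-stable with no Vaughtian pairs (and it is also arithmetically stable). It is nevertheless not $D$-categorical for any $D>\mathbf 0$.

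Work in the decidable copy $\widehat{\MM}=\bigoplus_{i\in\N}(\mathbb Z/4\mathbb Z)e_i$, fix $S$ with $\degAr(S)=D$, and let
\[
M_0=\langle e_{3i}:i\in\N\rangle,\qquad
M_1=M_0\oplus\langle f_n:n\in\N\rangle,\ f_n:=e_{3n+1}+2\chi_S(n)e_{3n+2},\qquad
M_2=M_0\oplus\langle e_{3n+1}:n\in S\rangle .
\]
Each of these is a direct summand of $\widehat{\MM}$ isomorphic to $(\mathbb Z/4\mathbb Z)^{(\omega)}$. Hence each is pure, hence elementary by pp-elimination, so $\MM_0\preccurlyeq\MM_1,\MM_2\preccurlyeq\widehat{\MM}$. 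Moreover $\degAr(M_1)=\degAr(M_2)=D$; for instance, $n\in S$ iff $e_{3n+1}+2e_{3n+2}\in M_1$.

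Now look at the parameter-free set $\{x:2x=0\}$. In $\MM_1$ it is the computable set $\langle 2e_{3i},2e_{3n+1}\rangle$, while in $\MM_2$ it has degree $D$. So no isomorphism $\MM_1\cong\MM_2$ whatsoever is $D$-preserving.

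This refutes both of your key claims:
\begin{itemize}
\item The basis $\{2e_{3n+1}\}$ of the strongly minimal set $\{x:2x=0\}$ of $\MM_1$ over $M_0$ is computable, although $\degAr(M_1)=D$. So $\degAr(B_i)=D_2$ fails.
\item Your dichotomy ``degree $\mathbf 0$ or $\degAr(M)$, decided by a first-order property of the parameters'' fails already for a formula with no parameters.
\end{itemize}
The underlying reason is that Baldwin--Lachlan only makes $\MM_i$ prime and minimal over $M_0\cup\varphi(M_i)$. Prime hulls are not determined as subsets of the ambient model, and the non-algebraic lifting data (the choice of $f_n$ above $2f_n$) can code $S$ invisibly to $\varphi$. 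Your degree computations need almost strong minimality, $M=\acl(M_0\cup\varphi(M))$, which is what Lemma~\ref{lem:sm-over-arith-model} and Proposition~\ref{prop:infinite-definable-recovers-sm-set} rely on. The example shows this property cannot be derived from stability plus no Vaughtian pairs; it would have to be extracted from $D_1$-categoricity itself. The same example also contradicts Proposition~\ref{prop:model-generated-by-strongly-minimal-set}. It therefore undermines the paper's own step $(2)\Rightarrow(3)$ in Theorem~\ref{thm:arith-baldwin-lachlan}, and the direction from uncountable categoricity to $D$-categoricity in Theorem~\ref{thm:morley-zfc}. So following the paper's route does not close this gap either.

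Step~1 has a separate gap. Non-$\omega$-stability gives a splitting tree over some countable parameter set. Nothing places its parameters in an arithmetically decidable $\MM_0$, or makes $\sigma\mapsto\psi_\sigma$ arithmetical. The finite approximations to such a tree form an arithmetical but infinitely branching tree, and such trees need not have arithmetical paths.

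What the realize-versus-omit idea actually yields is the weaker property the paper calls arithmetical stability. All types over $M_0$ realized in a fixed degree-$D_1$ extension are computable in one fixed jump. So any more complex arithmetic type, realized in a second degree-$D_1$ extension, contradicts categoricity (Proposition~\ref{prop:cat-implies-stability}). Whether arithmetical stability implies $\omega$-stability is left open in Section~\ref{sec:concluding-remarks}. Your Step~2 as written uses classical $\omega$-stability, for minimal formulas and for prime models over sets.

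Your Vaughtian-pair argument matches the paper's low/high construction in outline. However, no strongly minimal set is available at that stage. The paper instead codes $S$ into $\varphi(x,\bar a)$ by a Skolem-independent sequence inside that set, and then takes a Skolem hull over the base.
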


This theorem is not merely a transfer statement. It is the arithmetic
avatar of the general principle that uniqueness forces structure:
categoricity in one nonzero arithmetic degree already reflects enough
internal rigidity to determine the theory in every other nonzero
arithmetic degree.

A notable feature of the proof is its finite-complexity character.
Although we use Cantor--Bendixson language for spaces of arithmetic
types, only fixed finite derivative stages and arithmetically presented
constructions occur. We believe that the non-\textsf{ZFC} part of our argument can be
formalized over \(\mathrm{ACA}_0\). The present version does not include that
verification; we plan to address it in a later version of this paper, or
in a sequel.

At this point, the reader may reasonably wonder whether
\(D\)-categoricity is really the correct analogue of Morley
categoricity. The answer is given by the following comparison theorem.

\begin{theorem}[Equivalence with uncountable categoricity]
\label{thm:morley-zfc-intro}
Assume \textsf{ZFC}. Let \(T\) be a complete arithmetically definable
theory in a countable language with no finite models. Then \(T\) is
\(D\)-categorical for some, equivalently for every, nonzero arithmetic
degree \(D\) if and only if \(T\) is \(\kappa\)-categorical for some,
equivalently for every, uncountable cardinal \(\kappa\).
\end{theorem}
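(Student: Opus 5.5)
The plan is to prove both directions by passing through the structural characterization of uncountable categoricity: by the Baldwin--Lachlan theorem, \(T\) is \(\kappa\)-categorical for some (equivalently every) uncountable \(\kappa\) if and only if \(T\) is \(\omega\)-stable and has no Vaughtian pairs. Equivalently, there is a strongly minimal formula \(\varphi(x,\bar c)\), possibly over a finite tuple \(\bar c\) from the prime model, such that every model is prime and minimal over \(\varphi\) of that model. By Theorem~\ref{thm:morley-intro}, ``some'' and ``every'' nonzero arithmetic degree are interchangeable. It therefore suffices to show two things: \(D\)-categoricity for a single \(D>\mathbf 0\) implies \(\omega\)-stability with no Vaughtian pairs, and the converse.

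For the direction from uncountable categoricity to \(D\)-categoricity, I will work inside a given arithmetically decidable \(\MM_0\). All the models involved embed into arithmetically decidable models, so the strongly minimal formula \(\varphi\) and the Morley-rank computations are arithmetical. Given \(\MM_1,\MM_2\succcurlyeq\MM_0\) of degree \(D\), each \(\MM_i\) is prime over \(M_0\cup B_i\), where \(B_i\) is an \(\acl\)-basis of \(\varphi(M_i)\) over \(M_0\). The steps are as follows.

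\begin{enumerate}
\item Show that \(\degAr(M_i)=\degAr(B_i)\), for a suitably chosen basis. The model is the arithmetic hull of \(M_0\cup B_i\) via an arithmetical prime-model construction, because \(\omega\)-stability gives isolated types densely at a fixed finite Cantor--Bendixson stage. Conversely, a basis can be extracted from \(M_i\) by a greedy procedure that is arithmetical in \(M_i\) and in the diagram of the extending decidable model.
\item Since \(\degAr(B_1)=\degAr(B_2)=D>\mathbf 0\), both bases are infinite. Build a bijection \(B_1\to B_2\) that is \(D\)-arithmetical, and extend it by back-and-forth through the prime models to an isomorphism over \(M_0\) whose graph is \(D\)-arithmetical.
\item Check that definable sets of degree exactly \(D\) are preserved. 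A parameter-definable set \(X\) is arithmetical in its parameters together with the diagram, and its image is computed arithmetically from \(X\) and the graph of \(f\), and conversely. Degree \(D\) is therefore transported to degree \(D\), since all these relativizations stay within degree \(D\).
\end{enumerate}

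For the converse direction I will argue contrapositively. Suppose \(T\) is not \(\omega\)-stable. Then there is a binary tree of formulas, which can be chosen arithmetical over a decidable \(\MM_0\). From it I will build two extendible models of degree \(D\): one realizing \(2^{\aleph_0}\)-style branching types coded by a set of degree \(D\), and one omitting a type that the first realizes. This uses an arithmetized omitting-types theorem. Now suppose instead that there is a Vaughtian pair \((\MM\succ\NN,\psi)\). Using a countable arithmetically decidable Vaughtian pair, whose existence follows from Vaught's two-cardinal reduction carried out arithmetically, I will produce two extensions of degree \(D\). In one, \(\psi\) gains new realizations of degree \(D\). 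In the other, \(\psi(M_0)\) is preserved while the rest of the model has degree \(D\). Such models cannot be isomorphic over \(M_0\). In either case \(D\)-categoricity fails.

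I expect the main obstacle to be the degree bookkeeping in the first direction, namely proving \(\degAr(M_i)=\degAr(B_i)\) exactly and ensuring that the isomorphism preserves degree \emph{exactly} \(D\) rather than merely \(\le D\). I would first try to show that every element of \(M_i\) is arithmetically recoverable from finitely many basis elements, uniformly via isolating formulas. Then \(M_i\equiv_{\mathrm{ar}} B_i\oplus \Diag(\MM_0)\), where the diagram of \(\MM_0\) is arithmetical and hence contributes nothing to the degree.
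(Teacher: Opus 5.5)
Your Step~1 is where the argument fails, and Step~3 fails with it. Classical Baldwin--Lachlan gives exactly what you quote: \(M_i\) is prime and minimal over \(M_0\cup\varphi(M_i)\). That determines \(M_i\) only up to isomorphism over \(M_0\cup B_i\), not as a subset of the ambient decidable model. A tuple of \(M_i\) realizes a type isolated over \(M_0\cup\bar b\), but the isolating formula is in general non-algebraic. It has infinitely many realizations in the ambient model, and \(B_i\) does not tell you which of them lie in \(M_i\). So an arithmetical prime-model construction over \(M_0\cup B_i\) shows that \emph{some} prime model is arithmetical in \(B_i\), not that the given \(M_i\) is.

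In Step~3, the reduction \(f[X]\leAr X\oplus\mathrm{graph}(f)\) only yields \(\degAr(f[X])\le D\). The reverse reduction \(X\leAr f[X]\oplus\mathrm{graph}(f^{-1})\) reuses \(D\) and gives no lower bound, while \(D\)-preservation demands degree exactly \(D\).

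This is not a bookkeeping issue: the implication from uncountable categoricity to \(D\)-categoricity is false. Let \(T=\Th((\mathbb{Z}/4\mathbb{Z})^{(\omega)})\), which is totally categorical. Take a decidable presentation \(\widehat M\) with computable basis \(\{p_n\}\cup\{q_n\}\cup\{r_n\}\), and let \(M_0=\langle p_n:n\in\N\rangle\). Fix \(S\) with \(\degAr(S)=D\), set \(t_n=2r_n\) for \(n\in S\) and \(t_n=0\) otherwise, and let \(M_1=M_0\oplus\langle q_n+t_n:n\in\N\rangle\) and \(M_2=M_0\oplus\langle q_n:n\in S\rangle\).
\begin{itemize}
\item All inclusions \(M_0\subseteq M_i\subseteq\widehat M\) are direct-summand inclusions between models of \(T\). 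For \(M_1\), note it is the image of \(M_0\oplus\langle q_n\rangle\) under the automorphism \(q_n\mapsto q_n+t_n\) fixing the \(p_n,r_n\). Hence all these inclusions are elementary by pp-elimination.
\item \(\degAr(M_1)=\degAr(M_2)=D\): membership is computable from \(S\), and \(n\in S\) iff \(q_n+2r_n\in M_1\) iff \(q_n\in M_2\).
\item The \(\emptyset\)-definable set \(\{x:\exists y\,(y+y=x)\}\) is \(2M_1=2M_0\oplus\langle 2q_n:n\in\N\rangle\) in \(M_1\), which is computable. In \(M_2\) it is \(2M_2=2M_0\oplus\langle 2q_n:n\in S\rangle\), of degree \(D\).
\end{itemize}
So no isomorphism \(M_1\cong M_2\) is \(D\)-preserving, and \(T\) is not \(D\)-categorical for any \(D>\mathbf 0\). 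Here the strongly minimal set is \(\varphi(M_1)=2M_1\), and its greedy basis \(\{2q_n\}\) over \(2M_0\) is computable while \(\degAr(M_1)=D\). That is exactly the failure of Step~1.

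The paper handles this point differently, but its argument does not survive the example either. It uses clause~(3) of Fact~\ref{fact:classical-baldwin-lachlan}, namely \(M=\acl(P\cup\varphi(M,\bar p))\) for every \(M\succcurlyeq P\). From this it gets \(M_i\leAr B_i\) by Lemma~\ref{lem:ambient-arithmetical-inequalities}(iii), and the exact lower bound from Proposition~\ref{prop:infinite-definable-recovers-sm-set}. But that clause is almost strong minimality, which does not follow from uncountable categoricity, and \((\mathbb{Z}/4\mathbb{Z})^{(\omega)}\) is the standard counterexample. Every strongly minimal set over \(P\) is, up to finitely many points of \(P\), a coset \(p+2M\) with \(p\in P\). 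For a basis element \(q\) of a complement of \(P\), the automorphisms \(q\mapsto q+t\) (\(t\in 2M\)) fix \(P\cup 2M\) pointwise, so \(q\notin\acl(P\cup\varphi(M,\bar p))\). Your formulation, ``prime and minimal,'' is the correct classical statement, and the paper's proof of this direction has the same hole as yours.

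For the converse direction, your Vaughtian-pair step matches the paper's low/high construction. The non-\(\omega\)-stable step cannot work as described. By Lemma~\ref{lem:arith-type-realized}, every type over an arithmetical set that is realized in an arithmetically extendible model is arithmetic. So branch types coded by a set of nonzero degree are never realized there. What the paper actually extracts from \(D\)-categoricity at that stage is arithmetical stability, and whether that already gives \(\omega\)-stability is listed as open in Section~\ref{sec:concluding-remarks}.
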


The proofs of Theorems~\ref{thm:morley-intro} and
\ref{thm:morley-zfc-intro} follow the same broad skeleton as the
classical proof of Morley's theorem via the Baldwin--Lachlan strategy.
One first develops a suitable stability notion, then rules out
Vaughtian pairs, and finally shows that the resulting structure is
controlled by a strongly minimal set whose induced geometry governs all
relevant models.

At the technical level, however, several new ingredients are required.
We list three of them:

\begin{enumerate}[label=(\arabic*), leftmargin=2em]
\item \textbf{Skolem-hull constructions in place of unions of chains.}
In classical model theory, one often builds models by taking unions of
elementary chains. In the present setting this breaks down, since the
union of a chain of arithmetically extendible models need not itself be
arithmetically extendible. We therefore replace outward constructions by
arguments using Skolem functions. Conceptually, this reverses the usual
direction: instead of building models by successive extensions, we build
them inward by taking definable closures of suitable generating sets.

\item \textbf{The introduction of arithmetical stability.}
In the classical setting, stability is formulated in terms of the size
of full type spaces. In our framework, we instead work with the family of
arithmetical types and show that their complexity is uniformly bounded.
This provides the replacement for the role played by \(\omega\)-stability
in the classical Baldwin--Lachlan analysis.

\item \textbf{A degree-recovery step ensuring \(D\)-preservation.}
Once strongly minimal control is obtained, the classical argument
proceeds by matching bases in the induced pregeometry. In the present
setting, that is not enough: one must also show that the resulting
isomorphism is \(D\)-preserving. For this, it is necessary to prove that
every infinite definable set not already contained in the algebraic
closure of its parameters recovers the full ambient arithmetic degree.
This degree-recovery step is what ensures that the final back-and-forth
isomorphism respects arithmetic complexity, and not merely elementary
structure.
\end{enumerate}

We now briefly describe the organization of the paper.

\begin{itemize}[leftmargin=2em]
\item \textbf{Basic facts and examples.}
Section~\ref{sec:basic-facts-examples-dcat} establishes basic properties
of \(D\)-categoricity and presents the first examples and
counterexamples, including strongly minimal theories and \(\DLO\).

\item \textbf{Independent Skolem sequences.}
Section~\ref{sec:independent-skolem-sequences} develops the machinery of
independent Skolem sequences and the corresponding Skolem-hull
constructions, which replace the usual union-of-chains arguments in our
setting.

\item \textbf{Arithmetical stability.}
Section~\ref{sec:arith-stability} introduces arithmetical stability and
shows that categoricity in one nonzero arithmetic degree implies this
stability notion.

\item \textbf{Vaughtian pairs.}
Section~\ref{sec:vaughtian-pairs} proves that the same categoricity
hypothesis rules out Vaughtian pairs.

\item \textbf{Arithmetic type analysis.}
Section~\ref{sec:morley-analysis-arith-types} develops the finite
Cantor--Bendixson rank analysis for spaces of arithmetic types.

\item \textbf{Prime models and strong minimality.}
Section~\ref{sec:prime-model-strong-minimality} constructs
arithmetically decidable prime models and extracts a strongly minimal
formula controlling all models over a prime base.

\item \textbf{Degree recovery.}
Section~\ref{sec:sm-control-arith-degree} supplies the degree-theoretic
input needed for the final categoricity argument, showing that
sufficiently large infinite definable sets recover the full ambient
arithmetic degree.

\item \textbf{Main theorems.}
Section~\ref{sec:proof-main-theorems} assembles these ingredients into
the arithmetic Baldwin--Lachlan theorem, the arithmetic Morley transfer
theorem, and the comparison with classical uncountable categoricity.

\item \textbf{Concluding remarks and open problems.}
Section~\ref{sec:concluding-remarks} discusses some natural directions
for further work, including refinements of the complexity invariant and
the prospects for reformulating the present theory at the level of
Turing degrees.

\item \textbf{Appendices.}
The appendix collects the coding of structures, compactness, omitting
types, Skolemization, and the restricted imaginary-element machinery used
in the main text.
\end{itemize}

\subsection*{Conventions}
\label{sec:conventions}

We work throughout with a fixed standard G\"odel coding of countable
languages, formulas, proofs, finite tuples, and structures with domain
contained in \(\N\). As in
Definition~\ref{def:coded-structure}, domains are always assumed
\emph{not} to be cofinite in \(\N\); this is a coding convention used
to leave room for the uniform coding of tuples, formulas, and other
auxiliary objects. Thus, when we speak of an arithmetically definable
language or theory, or of a coded structure \(\MM\) with domain
\(M\subseteq\N\), this is always understood relative to that background
coding. The underlying coding of structures and the associated
satisfaction relation are recalled in
Appendix~\ref{sec:appendix-foundational}, especially
Definition~\ref{def:coded-structure},
Definition~\ref{def:arith-decidable-structure-appendix}, and
Definition~\ref{def:sat-coded-structure}; see also
Remark~\ref{rem:tarski-truth}.

If \(X,Y\subseteq\N\), we write \(X\leAr Y\) to mean that \(X\) is
arithmetical in \(Y\), and \(\degAr(X)\) for the corresponding
arithmetic degree. We write \(\mathbf 0\) for the least arithmetic
degree, namely the degree of the arithmetical sets. If \(D\) is an arithmetic degree, we say that a set is \(D\)-arithmetical if it is arithmetical in a representative
of \(D\).

All structures considered in the main body of the paper have domain
contained in \(\N\). When \(\MM\) is such a structure, we write \(M\)
for its underlying domain and identify \(\MM\) with its code whenever
this causes no confusion. If \(\MM\preccurlyeq\widehat{\MM}\) and
\(\widehat{\MM}\) is arithmetically decidable, then truth in \(\MM\) is
computed through the ambient coded structure \(\widehat{\MM}\), with all
quantifiers restricted to the domain predicate for \(M\). Since the
extension is elementary, this agrees with the usual satisfaction
relation of \(\MM\). We will use this convention without further comment
in arguments concerning definability and arithmetic complexity.

We use the standard model-theoretic notation
\(\acl\), \(\dcl\), \(\tp\), and \(\Th\) for algebraic closure,
definable closure, complete type, and complete theory. When the ambient
structure matters, we write \(\acl_{\MM}(A)\) and
\(\tp^{\MM}(\bar a/B)\). Likewise, if \(\MM_0\preccurlyeq\MM\models T\)
and \(\varphi(x,\bar a)\) is strongly minimal over \(M_0\), we use the
induced pregeometry on the corresponding strongly minimal set without
reintroducing the closure operator each time.

Finally, finite tuples and finite parameter sets are treated as
arithmetical objects by default. In particular, adjoining finitely many
parameters does not change arithmetic degree. This convention will be
used repeatedly in degree calculations, for example when passing from a
definable set \(X\) to \(X\oplus \bar a\) for a finite tuple \(\bar a\).

\section{Basic facts and examples}
\label{sec:basic-facts-examples-dcat}

In this section we record the basic properties of arithmetically
extendible models that will be used throughout the paper, and we give
the first examples and counterexamples of \(D\)-categoricity.

The main positive example is provided by strongly minimal theories.
Besides serving as the basic motivating example, the proof of
Proposition~\ref{prop:strongly-minimal-D-cat} already exhibits the
pattern that will later reappear in the arithmetic
Baldwin--Lachlan argument: one matches suitable bases over a fixed
arithmetically decidable base model, extends the resulting partial
correspondences over finite algebraic pieces, and then carries out a
back-and-forth construction while tracking arithmetic complexity. The
main negative example is \(\DLO\).

We begin with a general lemma showing that an arithmetically extendible
model inherits strong arithmetical control from an ambient
arithmetically decidable elementary extension.

\begin{lemma}[Inherited arithmetical bounds]
\label{lem:ambient-arithmetical-inequalities}
Let \(T\) be a complete arithmetically definable theory, and let
\(\MM\models T\) be arithmetically extendible. Then:
\begin{enumerate}[label=(\roman*)]
\item if \(X\subseteq M^n\) is definable in \(\MM\) with parameters from
      an arithmetical subset \(A\subseteq M\), then \(X\leAr M\);
\item if \(A\subseteq M\) is arithmetical, then the substructure of
      \(\MM\) generated by \(A\) has arithmetical domain;
\item if \(M_0\subseteq M\) is arithmetical and \(\bar c\in M^{<\omega}\),
      then \(\acl_{\MM}(M_0\cup \bar c)\) is arithmetical.       Furthermore, after fixing an arithmetically decidable
      elementary extension \(\MM\preccurlyeq\widehat{\MM}\), this
      arithmetical definition is uniform in the code of \(\bar c\).
\end{enumerate}
\end{lemma}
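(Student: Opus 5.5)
Fix once and for all an arithmetically decidable $\widehat{\MM}\models T$ with $\MM\preccurlyeq\widehat{\MM}$, and let $\widehat M$ denote its domain. Its elementary diagram $E$ is arithmetical, so $E\le_T 0^{(k)}$ for some $k$. The strategy throughout is to compute everything inside $\widehat{\MM}$ and to use only $M$ itself as an oracle. Quantifiers over $M$ are relativized to the domain predicate $M$, and elementarity $\MM\preccurlyeq\widehat{\MM}$ makes truth in $\MM$ agree with this relativized truth.

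For (i), let $X=\theta(M^n,\bar a)$ with $\bar a$ a finite tuple from $A$. Since finite tuples are arithmetical objects, we may treat $\bar a$ as fixed. The point is that for $\bar b\in\widehat M^n$ we have $\bar b\in X$ iff $\bar b\in M^n$ and $\widehat{\MM}\models\theta(\bar b,\bar a)$. Here we use elementarity: $\MM\models\theta(\bar b,\bar a)$ iff $\widehat{\MM}\models\theta(\bar b,\bar a)$ whenever $\bar b,\bar a\in M$. Consequently $X$ is computable from $M\oplus E$, hence $X\leAr M$. Strictly, we do not even need to relativize quantifiers, since only the membership test uses $M$. (If "definable with parameters from $A$" is read to allow infinitely many parameters, that cannot occur, because a formula carries finitely many parameters.)

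For (ii), the substructure generated by $A$ is the closure of $A$ under the function symbols of the arithmetical language, interpreted in $\widehat{\MM}$. Its elements are exactly the values $t^{\widehat{\MM}}(\bar a)$ for terms $t$ and tuples $\bar a\in A^{<\omega}$, and these values lie in $M$ automatically. Then $c$ lies in the generated substructure iff there exist $t$ and $\bar a\in A$ such that $\widehat{\MM}\models t(\bar a)=c$. This is a $\Sigma_1$ condition relative to $A\oplus E\oplus\LL$, all of which are arithmetical, so the domain of the substructure is arithmetical.

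For (iii), we have $c\in\acl_{\MM}(M_0\cup\bar c)$ iff there exist a formula $\varphi(x,\bar y,\bar z)$, a tuple $\bar m\in M_0$, and some $N$ such that $\widehat{\MM}\models\varphi(c,\bar m,\bar c)\wedge\exists^{\le N}x\,\varphi(x,\bar m,\bar c)$. Here we use elementarity twice. First, algebraicity computed in $\widehat{\MM}$ agrees with algebraicity computed in $\MM$ for parameters in $M$. Second, every element algebraic over parameters in $M$ already lies in $M$: the finite set defined in $\widehat{\MM}$ has the same size in $\MM$ and contains the $\MM$-solutions, so all its solutions lie in $M$. The resulting condition is $\Sigma_1$ in $M_0\oplus E\oplus\langle\bar c\rangle$, and the defining formula takes the code of $\bar c$ as a free parameter, which gives the uniformity once $\widehat{\MM}$ is fixed.

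The only real subtlety is the step just described for (iii): ensuring that $\acl$ computed in $\widehat{\MM}$ does not produce elements outside $M$. Elementarity handles this, which is why the algebraic closure, and likewise the generated substructure in (ii), can be computed without consulting $M$ at all. That observation is what makes (ii) and (iii) genuinely arithmetical rather than merely $\leAr M$.
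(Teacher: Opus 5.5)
Your proposal is correct and matches the paper's proof in all three parts: everything is evaluated in the fixed arithmetically decidable $\widehat{\MM}$, (ii) is handled via term values, and (iii) uses elementarity to identify $\acl_{\MM}(M_0\cup\bar c)$ with $\acl_{\widehat{\MM}}(M_0\cup\bar c)$ before searching for a witnessing algebraic formula, uniformly in $\bar c$. The only cosmetic difference is in (i): the paper relativizes quantifiers to $M$, whereas you apply elementarity directly to test $\widehat{\MM}\models\theta(\bar b,\bar a)$, which gives the slightly sharper bound $X\le_T M\oplus\Diag_{\mathrm{el}}(\widehat{\MM})$.
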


\begin{proof}
Choose an arithmetically decidable model
\(\widehat{\MM}\models T\) such that
\(\MM\preccurlyeq \widehat{\MM}\).

For (i), let \(X=\varphi(\MM^n,\bar a)\) with
\(\bar a\in A^{<\omega}\). Truth in
\(\MM\) is computed by evaluating formulas in \(\widehat{\MM}\) while
restricting all quantifiers to the domain predicate for \(M\). Hence membership in
\(X\) is arithmetical in \(M\), so \(X\leAr M\).

For (ii), an element belongs to the substructure generated by \(A\) iff
it is the value of some \(\LL\)-term applied to a tuple from \(A\).
Since term evaluation in \(\widehat{\MM}\) is arithmetical and \(A\) is arithmetical, the
domain of the generated substructure is arithmetical.

For (iii), first note that the standard
elementarity argument gives
\[
\acl_{\MM}(M_0\cup \bar c)=\acl_{\widehat{\MM}}(M_0\cup \bar c).
\]
One can now arithmetically compute \(\acl_{\widehat{\MM}}(M_0\cup \bar c)\): to decide whether \(a\in \widehat{M}\) lies in it, ask if there is some \(\bar y \in M_0^{<\omega}\) and some formula \(\varphi(x,\bar y,\bar z)\) such that
\(\widehat{\MM}\models \varphi(a,\bar y,\bar c)\) and the solution set
\(\{u\in \widehat{M}:\widehat{\MM}\models \varphi(u,\bar y,\bar c)\}\) is finite. Since \(\widehat{\MM}\) is arithmetically
decidable and \(M_0\) is arithmetical, the above shows that membership in
\(\acl_{\widehat{\MM}}(M_0\cup \bar c)\) is arithmetical, uniformly in
the code of \(\bar c\).
\end{proof}

We next recall the standard geometric facts concerning strongly minimal
theories. We shall use them in the form recorded below; see, for
example, \cite[Chapter~6]{marker2002} or \cite{baldwinlachlan1971}.

\begin{fact}[Standard geometry of strongly minimal theories]
\label{fact:sm-standard-facts}
Let \(T\) be a strongly minimal theory, let
\(\MM_0\preccurlyeq\MM\models T\), and define
\(\cl(A):=\acl(M_0\cup A)\) for \(A\subseteq M\). Then:
\begin{enumerate}[label=(\roman*)]
\item \(\cl\) is a pregeometry on \(M\);
\item every independent subset of \(M\) is contained in a basis;
\item if \(B\subseteq M\) is a basis over \(M_0\), then
      \(M=\acl(M_0\cup B)\);
\item if \(\MM_1,\MM_2\succeq\MM_0\) and \(B_i\subseteq M_i\) are bases
      over \(M_0\) of the same cardinality, then every bijection
      \(B_1\to B_2\) extends to an isomorphism
      \(\MM_1\cong_{M_0}\MM_2\);
\item if \(\bar b\in M^{<\omega}\), then
      \(\acl(M_0\cup\bar b)\preccurlyeq \MM\).
\end{enumerate}
\end{fact}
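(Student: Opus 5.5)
Since Fact~\ref{fact:sm-standard-facts} is standard (see \cite[Chapter~6]{marker2002}), I would verify the five items in order, following the textbook argument. Throughout I would work inside \(\MM\) with \(\cl(A)=\acl(M_0\cup A)\).

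\emph{Item (i).} Monotonicity, finite character and idempotence of \(\cl\) hold for \(\acl\) in any structure, and they pass to the localization at \(M_0\). The only real content is exchange, which I expect to be the main obstacle. Suppose \(b\in\cl(A c)\setminus\cl(A)\), witnessed by \(\varphi(x,c)\) (parameters from \(M_0\cup A\) suppressed) with exactly \(k\) solutions. Let \(\psi(y)\) express that \(\varphi(x,y)\) has exactly \(k\) solutions and \(\varphi(b,y)\) holds. If \(\psi(\MM)\) were finite, then \(c\in\cl(Ab)\) and we are done. Otherwise, by strong minimality \(\psi(\MM)\) is cofinite, and I would derive a contradiction. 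Choose \(k+1\) elements \(b_0,\dots,b_k\) realizing \(\tp(b/M_0A)\), passing to an elementary extension if needed; this is possible because \(b\) is not algebraic over \(M_0A\). Each \(\psi(\MM,b_i)\) is cofinite, so some \(c'\) lies in all of them. Then \(\varphi(x,c')\) has at least \(k+1\) solutions, contradicting the choice of \(k\).

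\emph{Items (ii) and (iii).} These follow from (i) by the usual pregeometry arguments. For (ii), apply Zorn's lemma to independent supersets, since finite character makes unions of chains independent. For (iii), a maximal independent set \(B\) satisfies \(a\in\cl(B)\) for every \(a\in M\), because otherwise \(B\cup\{a\}\) would be independent by exchange.

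\emph{Item (iv).} First I would show that all \(\cl\)-independent \(n\)-tuples have the same type over \(M_0\), by induction on \(n\). Given an independent tuple \(\bar b\,b\), the element \(b\notin\acl(M_0\bar b)\) lies in no finite set defined over \(M_0\bar b\). By strong minimality, its type over \(M_0\bar b\) is therefore the unique non-algebraic type, namely the one containing exactly the cofinite formulas; call it the generic type. Hence any bijection \(B_1\to B_2\) that is the identity on \(M_0\) is a partial elementary map. Next, a partial elementary map \(g:A_1\to A_2\) extends to an elementary map between \(\acl(A_1)\) and \(\acl(A_2)\). To see this, extend one element at a time, matching an algebraic element with a realization of the image of its algebraic formula of least size, and take unions by Zorn. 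Applying this with \(A_i=M_0\cup B_i\) and using (iii), the resulting elementary map is a bijection \(M_1=\acl(M_0\cup B_1)\to\acl(M_0\cup B_2)=M_2\). It is therefore an isomorphism over \(M_0\).

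\emph{Item (v).} I would apply the Tarski--Vaught test to \(A=\acl(M_0\cup\bar b)\). Let \(\varphi(x,\bar a)\) be a formula with \(\bar a\in A\) that has a solution in \(\MM\). If \(\varphi(\MM,\bar a)\) is finite, all its solutions are algebraic over \(\bar a\) and hence lie in \(A\). If it is infinite, it is cofinite, while \(A\supseteq M_0\) is infinite, so it meets \(A\).
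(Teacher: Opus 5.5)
Your proposal is correct. For (i)--(iv) it is the standard textbook argument (Marker, Ch.~6), which the paper cites without giving a proof.

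One step in (iv) needs a line you left out. The forth-only Zorn construction gives an elementary map from \(\acl(A_1)\) \emph{into} \(\acl(A_2)\); surjectivity needs a counting argument. If \(c\in\acl(A_2)\) lies in \(\varphi(M_2,g(\bar a))\) of size exactly \(k\), then the \(k\) solutions of \(\varphi(x,\bar a)\) in \(M_1\) lie in \(\acl(A_1)\). They are sent injectively into that \(k\)-element set, hence onto it.

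A second step you use tacitly: in (i), and in the induction for (iv), ``cofinite'' transfers between conjugate parameters. This works because cofiniteness is witnessed by a formula \(\exists^{\le \ell}y\,\neg\psi(y,\ldots)\) in the relevant type.

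The real difference from the paper is in (v). The paper says (v) follows from stationarity of the generic type over algebraically closed bases. You instead run Tarski--Vaught directly: a nonempty \(\varphi(M,\bar a)\) with \(\bar a\in A=\acl(M_0\cup\bar b)\) is either finite, hence inside \(A\), or cofinite, hence meets the infinite set \(M_0\subseteq A\).

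Your argument is more elementary. It also shows more: every infinite algebraically closed subset of \(M\) is an elementary submodel, with no finite-dimensionality needed. However, it depends on every definable subset of the home sort being finite or cofinite, i.e.\ on strong minimality of the whole theory. So it does not carry over to the almost-strongly-minimal Fact~\ref{fact:finite-dimensional-almost-sm}, where that dichotomy is unavailable and the paper again appeals to stationarity.
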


Part~(v) follows from stationarity of the generic type over
algebraically closed bases. In particular, the finite-dimensional
algebraic pieces used in the strongly minimal proof below are elementary
submodels; this is a specific strongly minimal fact, not a general
consequence of arithmetical extendibility.

The next lemma allows us to carry out compactness arguments in the arithmetic setting. Stronger versions are known \cite{jockuschlewisremmel1991} but we do not need a precise bound.

\begin{lemma}[Effective K\H{o}nig's lemma]
\label{lem:finite-jump-path-finitely-branching-tree}
Let \(\mathcal T\subseteq\N^{<\omega}\) be an infinite finitely
branching tree. Then \(\mathcal T\) has an infinite path which is computable in the double jump \(\mathcal{T}''\) of \(\mathcal{T}\).
\end{lemma}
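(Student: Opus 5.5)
We build the leftmost infinite path of \(\mathcal T\) and check that it is computable in \(\mathcal T''\). Let \(\mathcal T^{\infty}\) be the set of nodes \(\sigma\in\mathcal T\) that have infinitely many extensions in \(\mathcal T\). Because \(\mathcal T\) is finitely branching, K\H{o}nig's lemma in its classical form says that \(\sigma\in\mathcal T^{\infty}\) if and only if \(\sigma\) lies on some infinite path through \(\mathcal T\). Moreover, every node of \(\mathcal T^{\infty}\) has at least one immediate successor in \(\mathcal T^{\infty}\), since finitely many children cannot all have only finitely many extensions. The root lies in \(\mathcal T^{\infty}\) because \(\mathcal T\) is infinite.

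The path \(p\) is defined recursively. Set \(p\restr 0=\emptyset\). Given \(p\restr k\in\mathcal T^{\infty}\), let \(p(k)\) be the least \(i\) such that \((p\restr k)^\frown i\in\mathcal T^{\infty}\). Such an \(i\) exists by the previous paragraph. By induction, every initial segment lies in \(\mathcal T\), so \(p\) is an infinite path.

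The complexity calculation goes as follows.
\begin{itemize}
\item Membership in \(\mathcal T\) is decidable from \(\mathcal T\).
\item The predicate ``\(\sigma\) has infinitely many extensions in \(\mathcal T\)'' has the form \(\forall n\,\exists\tau\,(\tau\supseteq\sigma,\ \tau\in\mathcal T,\ |\tau|\ge n)\). The inner matrix is computable in \(\mathcal T\), so the predicate is \(\Pi^0_2\) relative to \(\mathcal T\).
\item By Post's theorem, \(\mathcal T^{\infty}\) is therefore computable in \(\mathcal T''\).
\end{itemize}
To run the recursion, at stage \(k\) we query \(\mathcal T''\) about \((p\restr k)^\frown 0,(p\restr k)^\frown 1,\dots\) in turn. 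This search halts because some child lies in \(\mathcal T^{\infty}\). Hence \(p\le_T\mathcal T''\).

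The only point needing care is the unbounded search over children, since no computable bound on the branching is given. Termination of the search is guaranteed by the existence of a child in \(\mathcal T^{\infty}\), so no bound is required. If one preferred to avoid the \(\Pi^0_2\) predicate, one could instead use ``\(\sigma\) has extensions of every length,'' which coincides with \(\mathcal T^{\infty}\) for finitely branching trees and has the same complexity. Either version suffices, since the lemma only asks for computability in \(\mathcal T''\), not the sharper low-basis-type bounds of \cite{jockuschlewisremmel1991}.
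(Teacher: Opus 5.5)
Your proof is correct. The paper states this lemma without proof and only points to the sharper bounds in \cite{jockuschlewisremmel1991}. Your leftmost-path argument is the standard one that fills this omission: ``\(\sigma\) has infinitely many extensions'' is \(\Pi^0_2\) in \(\mathcal T\), hence decidable from \(\mathcal T''\) by Post's theorem, and the unbounded search over children is guaranteed to halt.

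One small point of precision. The formula you display actually says ``\(\sigma\) has extensions of every length.'' This agrees with ``infinitely many extensions'' only because each level of a finitely branching tree is finite. So the alternative in your last paragraph is the very predicate you already used, not a different one.
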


The first application of
Lemma~\ref{lem:ambient-arithmetical-inequalities} is that, in a strongly
minimal arithmetically extendible model of nonzero arithmetic degree,
every infinite definable set already has the full ambient degree.

\begin{lemma}[Infinite definable sets have full degree]
\label{lem:sm-definable-full-degree}
Let \(T\) be a strongly minimal arithmetically definable theory, let
\(\MM\models T\) be arithmetically extendible, and assume
\(\degAr(M)=D>\mathbf 0\). If \(X\subseteq M^m\) is definable in
\(\MM\) and infinite, then \(\degAr(X)=D\).
\end{lemma}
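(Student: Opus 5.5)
The plan is to prove the two inequalities \(X\leAr M\) and \(M\leAr X\) separately. Together they give \(\degAr(X)=\degAr(M)=D\).

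\emph{Upper bound.} Write \(X=\varphi(M^m,\bar a)\), where \(\bar a\in M^{<\omega}\) is a finite tuple of parameters. By our conventions a finite parameter set is arithmetical. Lemma~\ref{lem:ambient-arithmetical-inequalities}(i) therefore applies with \(A=\bar a\) and gives \(X\leAr M\) directly.

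\emph{Lower bound.} Here strong minimality is the key input.

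First reduce to dimension one. Since \(X\subseteq M^m\) is infinite, some coordinate projection \(\pi_i(X)\subseteq M\) must be infinite. Otherwise \(X\) would lie in the finite product \(\pi_1(X)\times\cdots\times\pi_m(X)\) and would be finite. The set \(\pi_i(X)\) is defined by \(\exists \bar y\,\varphi(\ldots,x,\ldots,\bar a)\), where \(x\) sits in the \(i\)-th place and \(\bar y\) ranges over the other coordinates. So \(\pi_i(X)\) is an infinite definable subset of \(M\).

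By strong minimality of \(T\), which holds in every model, \(\pi_i(X)\) is cofinite in \(M\). Hence \(M=\pi_i(X)\cup F\) for some finite set \(F\).

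Next, compute \(\pi_i(X)\) from \(X\). We have \(a\in\pi_i(X)\) iff there is some \(n\in\N\) which codes an \(m\)-tuple, lies in \(X\), and has \(i\)-th coordinate \(a\). Under the fixed tupling functions, this is a \(\Sigma^0_1\) condition relative to \(X\), so \(\pi_i(X)\le_T X'\). When \(m=1\) no projection is needed. Since \(F\) is finite, it is arithmetical. Therefore \(M=\pi_i(X)\cup F\le_T X'\), and in particular \(M\leAr X\).

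\emph{Where the care is needed.} There is no real obstacle, only a bookkeeping point: one must check that the coding of tuples makes projections computable from the code of \(X\) within a single jump, and that adjoining the finite set \(F\) does not change arithmetic degree. Both follow from the coding conventions of Appendix~\ref{sec:appendix-foundational} and the convention that finite sets are arithmetical. The hypothesis \(D>\mathbf 0\) is not used in the argument itself; it only ensures that the conclusion is nontrivial.
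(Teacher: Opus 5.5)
Your proof is correct and follows the paper's argument essentially verbatim. The upper bound $X\leAr M$ comes from Lemma~\ref{lem:ambient-arithmetical-inequalities}(i), and the lower bound comes from an infinite coordinate projection, which strong minimality makes cofinite, giving $M\leAr \pi_i(X)\leAr X$; your extra bookkeeping (computing the projection from $X'$ and absorbing the finite complement) only spells out what the paper leaves implicit.
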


\begin{proof}
By Lemma~\ref{lem:ambient-arithmetical-inequalities}(i), \(X\leAr M\).
Since \(X\) is infinite, some coordinate projection
\(\pi_i(X)\subseteq M\) is infinite, hence cofinite by strong
minimality. Thus \(M\leAr \pi_i(X)\leAr X\leAr M\).
\end{proof}

We may now verify that strongly minimal theories satisfy
\(D\)-categoricity.

\begin{proposition}[Strongly minimal theories are \(D\)-categorical]
\label{prop:strongly-minimal-D-cat}
Let \(T\) be a strongly minimal arithmetically
definable theory in a countable language. Then \(T\) is
\(D\)-categorical for every arithmetic degree \(D>\mathbf 0\).
\end{proposition}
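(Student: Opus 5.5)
Fix an arithmetically decidable $\MM_0\models T$ and arithmetically extendible $\MM_1,\MM_2\succcurlyeq\MM_0$ with $\degAr(M_1)=\degAr(M_2)=D>\mathbf 0$. The plan is to build an isomorphism $f:\MM_1\cong_{M_0}\MM_2$ whose graph is arithmetical in $M_1\oplus M_2$, and hence $D$-arithmetical. $D$-preservation will then come almost for free from Lemma~\ref{lem:sm-definable-full-degree}.

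\textbf{Step 1: the bases are infinite.} Since $\degAr(M_i)=D>\mathbf 0$, each $M_i$ is non-arithmetical. By Lemma~\ref{lem:ambient-arithmetical-inequalities}(iii), $\acl(M_0\cup\bar c)$ is arithmetical for every finite $\bar c$, because $M_0$ is arithmetical (its diagram is arithmetical, hence so is its domain). Therefore no finite set is a basis of $M_i$ over $M_0$ in the pregeometry $\cl$ of Fact~\ref{fact:sm-standard-facts}. So each $M_i$ has countably infinite dimension over $M_0$, and by Fact~\ref{fact:sm-standard-facts}(iv) the two models are isomorphic over $M_0$.

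\textbf{Step 2: an arithmetically controlled back-and-forth.} We need the isomorphism itself to have low complexity, so we build it by a back-and-forth along the enumerations of $M_1$ and $M_2$ in increasing order. Stage $s$ holds a finite tuple $\bar b^1_s\in M_1$ that is independent over $M_0$, a matching tuple $\bar b^2_s\in M_2$, and an isomorphism
\[
g_s:\acl(M_0\bar b^1_s)\to\acl(M_0\bar b^2_s)
\]
that is the identity on $M_0$. By Fact~\ref{fact:sm-standard-facts}(v) these algebraic closures are elementary submodels. Forth step: take the least element $a\in M_1$ not yet in the domain. It is then generic over $M_0\bar b^1_s$. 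Pick the least element of $M_2$ not in $\acl(M_0\bar b^2_s)$ and extend $g_s$ to $g_{s+1}$ on the new closures, using uniqueness of the generic type together with the extension of isomorphisms over algebraic closures. The back step is symmetric.

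To make this effective, first identify each $\acl(M_0\bar b)$ with an arithmetical set, uniformly in $\bar b$, by Lemma~\ref{lem:ambient-arithmetical-inequalities}(iii). Next, the extension of $g_s$ to the new closure should be chosen as the leftmost path in an arithmetical finitely-branching tree. Each new element of the closure is algebraic, so it has only finitely many candidate images of the same type over the finite data. The tree consists of finite partial maps that preserve all formulas with parameters from $M_0\bar b^1_{s+1}$ up to a given level, and it is arithmetical in $M_1\oplus M_2$ using the ambient decidable models $\widehat{\MM}_i$. Lemma~\ref{lem:finite-jump-path-finitely-branching-tree} then supplies a path, and hence $g_{s+1}$, computable in a fixed finite jump.

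\textbf{Main obstacle.} The difficulty is the uniformity: the number of jumps must not grow with $s$. I would handle this by arguing that the whole sequence $(g_s)_s$ is a single path through one finitely-branching tree, or more simply by observing that every $\acl(M_0\bar b)$ is arithmetical in $\emptyset^{(k)}$ for one fixed $k$. Then the entire construction is uniformly computable in $(M_1\oplus M_2\oplus\emptyset^{(k)})^{(2)}$. It follows that $f=\bigcup_s g_s$ and $f^{-1}$ are $D$-arithmetical.

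\textbf{Step 3: degree preservation.} Let $X=\theta(M_1^n,\bar a)$ have degree $D$. Then $X$ is not arithmetical, so it is infinite, because finite sets are arithmetical. Its image $f[X]=\theta(M_2^n,f(\bar a))$ is infinite and definable in $\MM_2$, so $\degAr(f[X])=D$ by Lemma~\ref{lem:sm-definable-full-degree}. The converse direction for $f^{-1}$ follows by the same argument applied to $\MM_2$.
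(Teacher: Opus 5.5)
\textbf{There is a genuine gap, exactly at the step you flag as the main obstacle, and your ``simpler'' fix does not close it.} The rest of the outline matches the paper. Steps~1 and~3 are essentially the paper's arguments, and the paper also builds the isomorphism by a back-and-forth over finite-dimensional algebraic closures and extracts it with Lemma~\ref{lem:finite-jump-path-finitely-branching-tree}.

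The tree you use at stage $s+1$ must refer to $g_s$; otherwise its paths are isomorphisms of the new closures that need not extend $g_s$. So it is computable only in $g_s\oplus Z$, where $Z=M_1\oplus M_2\oplus\emptyset^{(k)}$. A path through a finitely branching tree costs two jumps, so you only get $g_{s+1}\le_T(g_s\oplus Z)''$, hence $g_s\le_T Z^{(2s)}$. Nothing then bounds $\bigcup_s g_s$ by a fixed jump of $Z$.

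Uniform arithmeticity of the sets $\acl(M_0\bar b)$ controls the \emph{domains} of the $g_s$, not the choices made on them. It does nothing against this accumulation. So the claimed bound by $(M_1\oplus M_2\oplus\emptyset^{(k)})^{(2)}$ does not follow, and without it $f$ is not shown to be $D$-arithmetical.

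\textbf{How to repair it.} Your other option, a single tree, is what the paper does, and carrying it out is the real content of the proof. Note that your basis tuples $\bar b^i_s$ never depend on the maps $g_s$: each new one is the least element outside a closure determined by earlier basis elements.

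The paper accordingly fixes bases $B_1,B_2$ by least-choice recursion and a bijection $g:B_1\to B_2$ in advance, all $D$-arithmetical. Its nodes are \emph{finite} partial maps $h$ with a witness tuple $\bar b$ from $B_1$ satisfying exact type equality $\tp^{\MM_1}(A,\bar b/M_0)=\tp^{\MM_2}(h[A],g(\bar b)/M_0)$. Each level is normalized: take the least-code witness tuple, and add only $a_s$ and, if needed, one preimage of $c_s$. Then the only branching is among finitely many algebraic conjugates, the whole tree is computable in one fixed $D^{(r)}$, and K\H{o}nig's lemma is applied once.

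A further simplification is available in the strongly minimal case. Exact type equality over $M_0$ is arithmetical via the decidable extensions. Since both $M_i$ have dimension $\aleph_0$ over $M_0$, every finite map elementary over $M_0$ extends to an isomorphism $\MM_1\cong_{M_0}\MM_2$. So a greedy least-choice back-and-forth on such finite maps never reaches a dead end and needs no K\H{o}nig's lemma.

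Either way, what your proposal is missing is an argument that only boundedly many jumps are used in total.
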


\begin{proof}
Fix \(D>\mathbf 0\), an arithmetically decidable model
\(\MM_0\models T\), and arithmetically extendible models
\(\MM_1,\MM_2\models T\) such that
\(\MM_0\preccurlyeq \MM_1,\MM_2\) and
\(\degAr(M_1)=\degAr(M_2)=D\).

Each \(\MM_i\) has infinite dimension over \(M_0\):
otherwise \(M_i=\acl(M_0\cup B)\) for a finite \(B\), making \(M_i\)
arithmetical by Lemma~\ref{lem:ambient-arithmetical-inequalities}(iii)
and contradicting \(\degAr(M_i)=D>\mathbf 0\).

Choose a basis \(B_i\subseteq M_i\) over \(M_0\) by the least-choice
recursion
\[
b^i_n=\min\Bigl(M_i\setminus
\acl_{\MM_i}(M_0\cup\{b^i_0,\dots,b^i_{n-1}\})\Bigr).
\]
By Lemma~\ref{lem:ambient-arithmetical-inequalities}(iii), each
finite-dimensional closure is arithmetical uniformly in the previous
tuple, so this recursion has graph arithmetical in \(M_i\); hence
\(B_i\leAr M_i\). That \(B_i\) is indeed a basis follows from finite
character: any \(c\in M_i\setminus\acl(M_0\cup B_i)\) would be outside
\(\acl(M_0\cup\{b^i_0,\dots,b^i_{n-1}\})\) for every \(n\), and once
all basis elements below \(c\) have been selected, the recursion would
choose an element of code at most \(c\), a contradiction. Conversely,
\(M_i=\acl(M_0\cup B_i)\) by
Fact~\ref{fact:sm-standard-facts}(iii), and membership in this closure
is arithmetical in \(B_i\), so \(M_i\leAr B_i\). Therefore
\(\degAr(B_i)=D\).

Since \(D>\mathbf 0\), each \(B_i\) is infinite. Choose increasing
\(D\)-arithmetical enumerations
\(B_1=\{b^1_0,b^1_1,\dots\}\) and \(B_2=\{b^2_0,b^2_1,\dots\}\), and
let \(g:B_1\to B_2\) be the induced \(D\)-arithmetical bijection.

We next extend the basis bijection over finite algebraic pieces.

\medskip\noindent
\textbf{Claim 1.}
Let \(\bar b\) be a finite tuple from \(B_1\), and let
\(\bar b':=g(\bar b)\). Suppose
\[
\tp^{\MM_1}(\bar u,\bar b/M_0)=\tp^{\MM_2}(\bar u',\bar b'/M_0),
\]
where \(\bar u\in \acl(M_0\cup \bar b)\) and
\(\bar u'\in \acl(M_0\cup \bar b')\). If
\(\bar a\in \acl(M_0\cup \bar b)\), then there is
\(\bar a'\in M_2^{|\bar a|}\) such that
\[
\tp^{\MM_1}(\bar u,\bar a,\bar b/M_0)
=
\tp^{\MM_2}(\bar u',\bar a',\bar b'/M_0).
\]

\medskip\noindent
\emph{Proof of Claim~1.}
Choose a formula \(\theta(\bar x,\bar z,\bar y,\bar m)\), with
\(\bar m\in M_0^{<\omega}\), witnessing that
\(\bar a\in\acl(M_0\cup\bar b)\), say with exactly \(N\)
realizations in \(\MM_1\). The type hypothesis transfers
\(\exists^{=N}\bar x\,\theta(\bar x,\bar u,\bar b,\bar m)\) to
\(\MM_2\). Let \(F\) be the set of realizations of
\(\theta(\bar x,\bar u',\bar b',\bar m)\) in \(\MM_2\); it has size
\(N\). Some \(\bar a'\in F\) must realize the same type over
\((\bar u',\bar b',M_0)\) that \(\bar a\) realizes over
\((\bar u,\bar b,M_0)\): otherwise, for each \(\bar c\in F\) choose a
formula separating the types, and the conjunction of \(\theta\) with all
these separating formulas is realized over \((\bar u,\bar b)\) in
\(\MM_1\), hence over \((\bar u',\bar b')\) in \(\MM_2\), but every
realization lies in \(F\) and fails a conjunct. Contradiction.
\hfill\(\dashv\)

\medskip

By symmetry, the analogous statement holds with the roles of
\(\MM_1\) and \(\MM_2\) reversed.

We now define the finite partial maps used in the back-and-forth. A
finite partial map \(h:A\to M_2\) is called \emph{good} if:
\begin{enumerate}[label=(\roman*)]
\item \(A\subseteq M_1\) is finite;
\item \(h\) fixes \(M_0\cap A\) pointwise;
\item there is a finite tuple \(\bar b\) from \(B_1\) such that
      \(A\subseteq \acl(M_0\cup \bar b)\),
      \(h[A]\subseteq \acl(M_0\cup g(\bar b))\), and
      \[
      \tp^{\MM_1}(A,\bar b/M_0)
      =
      \tp^{\MM_2}(h[A],g(\bar b)/M_0).
      \]
\end{enumerate}

Claim~1 shows that every finite tuple from \(M_1\) is contained in the
domain of some good map (apply Claim~1 with \(\bar u=\emptyset\)), and
by symmetry every finite tuple from \(M_2\) is contained in the range of
some good map.

\medskip\noindent
\textbf{Claim 2.}
Every good map extends to a good map whose domain contains any
prescribed element of \(M_1\), and also to a good map whose range
contains any prescribed element of \(M_2\).

\medskip\noindent
\emph{Proof of Claim~2.}
Let \(h:A\to M_2\) be good, witnessed by \(\bar b\subseteq B_1\), and
let \(a\in M_1\). Choose a finite tuple \(\bar d\) from \(B_1\)
containing \(\bar b\) such that
\(A\cup\{a\}\subseteq \acl(M_0\cup \bar d)\). We claim that \(h\) is
also good as witnessed by \(\bar d\). The joint type
\(\tp^{\MM_1}(A,\bar d/M_0)\) is determined by two pieces of data: the
type \(\tp^{\MM_1}(A,\bar b/M_0)\), and the type of the new basis
elements \(\bar d\setminus \bar b\) over
\(\acl(M_0\cup \bar b)\). The closures \(\acl(M_0\cup\bar b)\) and
\(\acl(M_0\cup g(\bar b))\) are algebraically closed elementary bases by
Fact~\ref{fact:sm-standard-facts}(v). Since the new elements are
independent over these bases, the second piece is the stationary generic
type, which is the same on both sides. The first piece agrees by the
goodness of \(h\), so \(h\) is good as witnessed by \(\bar d\).
Applying Claim~1 then gives \(a'\in M_2\) extending \(h\) to a good map
containing \(a\) in the domain. The range extension is symmetric.
\hfill\(\dashv\)

\medskip

To extract an isomorphism from the back-and-forth system, we normalize
the choices into a finitely branching tree and apply K\H{o}nig's lemma.
Take increasing \(D\)-arithmetical enumerations
\(M_1\setminus M_0=\{a_0,a_1,\dots\}\) and
\(M_2\setminus M_0=\{c_0,c_1,\dots\}\). A node of the tree
\(\mathcal T\) at level \(s\) is a triple \((h_s,A_s,\bar b_s)\)
encoding a witnessed good map. The root is the empty map with empty
witness. A child at level \(s+1\) is built by first choosing the finite
tuple \(\bar b^*\supseteq\bar b\) from \(B_1\) with least tuple-code
such that \(h\) is good as witnessed by \(\bar b^*\) and
\[
a_s\in\acl(M_0\cup\bar b^*),
\qquad
c_s\in\acl(M_0\cup g(\bar b^*)),
\]
and then extending \(h\) minimally: add an admissible image of \(a_s\),
and if \(c_s\) is not yet in the range, add one admissible preimage of
\(c_s\). By Claim~2, \(\mathcal T\) is infinite.

\medskip\noindent
\textbf{Claim 3.} \(\mathcal T\) is finitely branching.

\medskip\noindent
\emph{Proof of Claim~3.}
The witness \(\bar b^*\) is uniquely determined as the least tuple-code
satisfying the displayed conditions; its goodness follows from the
stationarity argument in Claim~2. Once \(\bar b^*\) is fixed, the
possible images of \(a_s\) lie among finitely many algebraic conjugates
over \(M_0\cup g(\bar b^*)\), and any needed preimage of \(c_s\) lies
among finitely many conjugates over \(M_0\cup\bar b^*\).
\hfill\(\dashv\)

\medskip\noindent
\textbf{Claim 4.} \(\mathcal T\) is arithmetical in \(D\), and hence
has a \(D\)-arithmetical infinite path.

\medskip\noindent
\emph{Proof of Claim~4.}
Membership in \(\mathcal T\) at level \(s\) involves checking that
\(A_s\subseteq M_1\) contains \(\{a_0,\dots,a_{s-1}\}\), that
\(\bar b_s\) is a tuple from \(B_1\), that \(A_s\) and \(h_s[A_s]\) lie
in the appropriate algebraic closures, that the types match, that
\(\{c_0,\dots,c_{s-1}\}\subseteq h_s[A_s]\), and that the transition is
normalized. Each condition involves \(D\)-arithmetical data: the bases
\(B_1,B_2\) and the map \(g\) are \(D\)-arithmetical; finite algebraic
closures are arithmetical by
Lemma~\ref{lem:ambient-arithmetical-inequalities}(iii); and type
comparisons reduce to formula checks in arithmetically decidable ambient
extensions. The least-witness condition is a bounded search. Thus
membership is computable in \(D^{(r)}\) for some finite \(r\). By
Lemma~\ref{lem:finite-jump-path-finitely-branching-tree}, the tree has
a path computable in \(D^{(r+2)}\), hence \(D\)-arithmetical.
\hfill\(\dashv\)

\medskip

Let \(f:=\operatorname{id}_{M_0}\cup\bigcup_s h_s\), where the union is
taken along such an infinite path. Then
\(f:M_1\to M_2\) is a total \(D\)-arithmetical bijection fixing \(M_0\)
pointwise, and every finite restriction is elementary over \(M_0\), so
\(f:\MM_1\cong_{M_0}\MM_2\). Its inverse is \(D\)-arithmetical as well.

It remains to verify \(D\)-preservation. Let \(X\subseteq M_1^m\) be
parameter-definable, say \(X=\psi(M_1^m,\bar a)\), with
\(\degAr(X)=D\). Then \(X\) is infinite, since \(D>\mathbf 0\). By
elementarity, \(f[X]=\psi(M_2^m,f(\bar a))\) is also infinite and
parameter-definable in \(\MM_2\), so
\(\degAr(f[X])=D\) by Lemma~\ref{lem:sm-definable-full-degree}.
The reverse direction is symmetric, so \(f\) is \(D\)-preserving.
\end{proof}

The proof just given is worth keeping in mind: later, the general
categoricity argument will follow the same pattern, with the classical
strongly minimal basis geometry replaced by its arithmetic
Baldwin--Lachlan analogue.

\begin{corollary}
\label{cor:acf}
For each \(p\in\{0\}\cup\{\text{primes}\}\) and every arithmetic degree
\(D>\mathbf 0\), the theory \(\ACF_p\), in the language of rings, is
\(D\)-categorical.
\end{corollary}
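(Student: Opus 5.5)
This will follow directly from Proposition~\ref{prop:strongly-minimal-D-cat}. It suffices to check two hypotheses on \(\ACF_p\): it is strongly minimal, and it is an arithmetically definable complete theory in a countable, arithmetically definable language with no finite models. Once both are verified, the proposition gives \(D\)-categoricity for every \(D>\mathbf 0\).

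\textbf{Language and completeness.} The language of rings \(\{+,\times,-,0,1\}\) is finite, so its symbol codes form a finite, hence arithmetical, set. Completeness of \(\ACF_p\) is classical: it follows from quantifier elimination, or from the Łoś--Vaught test using uncountable categoricity and the absence of finite models. No algebraically closed field is finite, since a finite field \(F\) fails to contain a root of \(\prod_{a\in F}(x-a)+1\).

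\textbf{Arithmetical definability.} I would axiomatize \(\ACF_p\) by three computable families of sentences:
\begin{itemize}
\item the field axioms;
\item for each \(n\ge 1\), the axiom \(\forall a_0\dots a_{n-1}\,\exists x\,(x^n+a_{n-1}x^{n-1}+\dots+a_0=0)\);
\item the characteristic axioms: \(\underbrace{1+\dots+1}_{p}=0\) if \(p\) is prime, or \(\neg(\underbrace{1+\dots+1}_{q}=0)\) for every prime \(q\) if \(p=0\).
\end{itemize}
The set of codes of these axioms is computable, hence arithmetical. If the paper's convention instead takes \(T\) to be the full deductively closed theory, note that a complete, computably axiomatized theory is decidable. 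Either reading therefore makes \(T\) arithmetically definable.

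\textbf{Strong minimality.} I would invoke quantifier elimination for \(\ACF\) (e.g. \cite[Chapter~3]{marker2002}). Any definable subset of a model \(K\) in one variable, with parameters, is then a Boolean combination of zero sets of single-variable polynomials. Each such zero set is either finite or all of \(K\). Hence every such definable set is finite or cofinite, and this holds uniformly in all models, which is exactly strong minimality of the theory.

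All hypotheses of Proposition~\ref{prop:strongly-minimal-D-cat} are then met, and the corollary follows. None of these steps is a real obstacle; the only point needing care is matching the paper's convention for what "the axioms of \(T\)" means, which the decidability remark above handles.
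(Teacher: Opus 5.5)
Your proposal is correct and matches the paper's proof, which simply observes that \(\ACF_p\) is strongly minimal by quantifier elimination and applies Proposition~\ref{prop:strongly-minimal-D-cat}. Your checks of arithmetical definability, completeness, and the absence of finite models spell out hypotheses that the paper leaves implicit.
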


\begin{proof}
\(\ACF_p\) is strongly minimal by quantifier elimination. Apply
Proposition~\ref{prop:strongly-minimal-D-cat}.
\end{proof}

We now turn to the basic counterexample.

\begin{proposition}[\(\DLO\) is not \(D\)-categorical]
\label{prop:dlo}
Let \(T=\DLO\), viewed in the language \(\{<\}\). Then \(T\) is not
\(D\)-categorical for any arithmetic degree \(D>\mathbf 0\).
\end{proposition}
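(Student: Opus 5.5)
The plan is to violate $D$-categoricity in the crudest possible way. Over a fixed arithmetically decidable base $\MM_0$, we build two arithmetically extendible models $\MM_1,\MM_2\succcurlyeq\MM_0$ of degree $D$ that are not isomorphic over $M_0$ at all. Then no isomorphism fixing $M_0$ exists, let alone a $D$-preserving one. The obstruction will be the position of $M_1\setminus M_0$ relative to $M_0$, that is, which cut of $M_0$ the new elements fill. An isomorphism fixing $M_0$ pointwise must preserve cuts of $M_0$.

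\textbf{Construction.} Fix $D>\mathbf 0$ and a representative $X\subseteq\N$ of $D$. Since $D>\mathbf 0$, $X$ is not arithmetical, so it is infinite.

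\emph{The base.} Let $M_0=\{4k:k\in\N\}$, ordered via a fixed computable bijection with $\Q$. By quantifier elimination for $\DLO$ and computability of the order, $\MM_0$ is arithmetically (indeed computably) decidable.

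\emph{The first ambient model.} Let $\widehat{\MM}_1$ have domain
\[
M_0\cup\{4\langle k,j\rangle+1:k,j\in\N\}.
\]
Its order puts $M_0$ first, followed by the blocks $B_k=\{4\langle k,j\rangle+1:j\in\N\}$ for $k=0,1,2,\dots$ in increasing order of $k$. Each block is ordered as a computable copy of $\Q$. This is a concatenation of $\omega$ copies of $\Q$ preceded by $\Q$, so it is dense without endpoints. It is computable, hence arithmetically decidable by quantifier elimination. The domain is not cofinite, as the coding convention requires.

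\emph{The first model.} Let $M_1=M_0\cup\bigcup_{k\in X}B_k$. Because $X$ is infinite, $\MM_1$ is again $\Q$ followed by infinitely many copies of $\Q$, hence a model of $\DLO$. Since $\DLO$ is model complete, $\MM_0\preccurlyeq\MM_1\preccurlyeq\widehat{\MM}_1$, so $\MM_1$ is arithmetically extendible. Its domain is computable from $X$, and $X$ is computable from $M_1$ via $k\in X\iff 4\langle k,0\rangle+1\in M_1$. Hence $\degAr(M_1)=D$.

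\emph{The second model.} Define $\widehat{\MM}_2$ and $\MM_2$ the same way, except that the blocks are inserted into the irrational cut of $M_0$ determined by $\sqrt2$ (computably: a block element lies above exactly those $q\in M_0$ with $q^2<2$ or $q<0$). The same checks give a dense order without endpoints, arithmetical decidability of $\widehat{\MM}_2$, elementarity of $\MM_0\preccurlyeq\MM_2\preccurlyeq\widehat{\MM}_2$, and $\degAr(M_2)=D$.

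\textbf{Non-isomorphism.} Suppose $f:\MM_1\cong\MM_2$ fixes $M_0$ pointwise, and let $a\in B_k\subseteq M_1$ with $k\in X$. Then $a$ lies above every element of $M_0$, so $f(a)$ lies above every element of $M_0$. But every element of $M_2$ is either in $M_0$ or below the elements of $M_0$ exceeding $\sqrt2$, so no element of $M_2$ lies above all of $M_0$. This is a contradiction, so $T$ is not $D$-categorical.

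\textbf{Main point to check.} The only step needing real care is that the $X$-indexed subunions are genuine elementary substructures of the decidable ambient models. If the blocks were merely scattered points, the resulting order could fail to be dense. This is exactly why each index $k\in X$ contributes a whole copy of $\Q$: then $\MM_i\models\DLO$ for any infinite $X$, and model completeness handles elementarity.
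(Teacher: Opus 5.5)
Your proof is correct and follows essentially the paper's route. Over a fixed computable base you build two degree-$D$ arithmetically extendible models that fill different cuts of $M_0$, so no isomorphism over $M_0$ exists at all; the differences are cosmetic. You code $X$ by whole copies of $\Q$ placed at the top cut and at the $\sqrt2$ cut, whereas the paper adds coded points to a computable dense set in one of two rational gaps. Incidentally, your choice $M_0=\{4k:k\in\N\}$ also respects the non-cofinite-domain coding convention, which the paper's ambient model $(\N,<_\rho)$ technically does not.
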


\begin{proof}
Fix \(D>\mathbf 0\), choose \(A\subseteq\N\) with \(\degAr(A)=D\), and
let \(\rho:\N\to\Q\) be a computable bijection. Set
\(\MM:=(\N,<_\rho)\models T\), where \(x<_\rho y\) iff
\(\rho(x)<\rho(y)\).

Let
\[
M_0:=\{x\in\N:\rho(x)<0 \text{ or } 1<\rho(x)<2 \text{ or } \rho(x)>3\},
\]
and let \(\MM_0:=\MM\!\restr_{M_0}\). Since \(\DLO\) has quantifier
elimination and the induced order on \(M_0\) is computable, \(\MM_0\) is
an arithmetically decidable model of \(T\). The induced order is dense
without endpoints: the three displayed intervals are individually dense,
and points in distinct intervals are separated by points in the leftmost
of the two.

We build two arithmetically extendible models of degree \(D\) over
\(\MM_0\) that realize different types. Choose a computable dense set
\(P^+\subseteq\{x:0<\rho(x)<1\}\) and a computable injection
\(\alpha:\N\to\N\) with image in
\(\{x:0<\rho(x)<1\}\setminus P^+\). Let
\(C_A:=\alpha[A]\) and \(M_1:=M_0\cup P^+\cup C_A\). Since
\(P^+\cup C_A\) is dense in \((0,1)\), the model
\(\MM_1:=\MM\!\restr_{M_1}\) satisfies \(T\). Quantifier elimination
for \(\DLO\) makes every order-embedding elementary, so
\(\MM_0\preccurlyeq \MM_1\preccurlyeq \MM\). Since \(\MM\) is
computable, it is arithmetically decidable, and hence \(\MM_1\) is
arithmetically extendible. Moreover \(\degAr(M_1)=D\): \(M_1\) is
arithmetical in \(A\) because \(M_0\) and \(P^+\) are computable, and
conversely \(n\in A\) iff
\(\alpha(n)\in M_1\setminus(M_0\cup P^+)\).

Construct \(\MM_2\) symmetrically, placing the coded copy of \(A\) in
the interval \((2,3)\) instead of \((0,1)\): choose a computable dense
\(P^\ast\subseteq\{x:2<\rho(x)<3\}\), a computable injection \(\beta\)
into the complement, set \(M_2:=M_0\cup P^\ast\cup\beta[A]\), and verify
\(\MM_0\preccurlyeq\MM_2\preccurlyeq\MM\) and \(\degAr(M_2)=D\) by the
same argument.

Now let \(p(x)\) be the complete \(1\)-type over \(M_0\) determined by
the cut at the gap \((0,1)\). The model \(\MM_1\) realizes \(p\), since
it has elements in \((0,1)\), while \(\MM_2\) omits \(p\), since it has
none. Any isomorphism \(\MM_1\cong\MM_2\) fixing \(M_0\) pointwise
would preserve realization of \(p\), so no such isomorphism exists.
\end{proof}

\section{Independent Skolem sequences}
\label{sec:independent-skolem-sequences}

In this section we introduce the Skolem-hull constructions that replace
the usual union-of-chains arguments in our setting. We first build
Skolem-independent sequences and use them to realize arbitrary nonzero
arithmetic degrees, both absolutely and over a fixed arithmetically
decidable base. This isolates the basic coding mechanism in its simplest
form: one chooses a Skolem-independent sequence and recovers a set
\(S\subseteq \N\) from the Skolem hull generated by the corresponding
subsequence. We then refine the construction to indiscernible
Skolem-independent sequences over a fixed arithmetically decidable base.
This stronger refinement will be used later in the proof of arithmetical
stability.

Throughout this section, \(T\) is a complete arithmetically definable
theory in a countable arithmetically definable language and has no
finite models. The existence of Skolem expansions and the compactness
theorem used below are established in the appendix; see
Theorem~\ref{thm:sk} and Theorem~\ref{thm:compact}. We will use the following effectivization of Ramsey's theorem. Stronger versions are known (e.g.\ \cite{jockusch1972}) but we do not need a precise bound.

\begin{fact}[Effective Ramsey theorem]
\label{fact:effective-ramsey}
Fix \(k,\ell<\omega\). If \(r<\omega\) and
\(d:[\N]^k\to\ell\) is an $\ell$-coloring of $k$-tuples of $\N$, then \(d\) has an
infinite homogeneous set computable in $d^{(3)}$.
\end{fact}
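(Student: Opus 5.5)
The plan is to follow the classical Erd\H{o}s--Rado ``prehomogeneous tree'' proof of Ramsey's theorem and to count jumps with the effective K\H{o}nig lemma, Lemma~\ref{lem:finite-jump-path-finitely-branching-tree}. I should say at the outset that this route only yields the literal bound \(d^{(3)}\) for small \(k\). For larger \(k\) it gives a bound \(d^{(m_k)}\) with \(m_k\) depending on \(k\), and I see no way to remove that dependence.

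The cases \(k\le 1\) are trivial. For \(k=1\), I pick a colour \(i<\ell\) with \(d^{-1}(i)\) infinite. Deciding ``\(d^{-1}(i)\) is infinite'' is \(\Pi^0_2\) in \(d\), hence decided by \(d''\), so the homogeneous set \(d^{-1}(i)\) is \(d\)-computable once \(i\) is fixed nonuniformly.

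For \(k\ge 2\), I form the Erd\H{o}s--Rado tree \(\mathcal T_d\) on \(\N\). The parent of \(n\) is the last node \(m\) on the current branch such that every \((k-1)\)-subset \(\bar s\) of the branch below \(m\) satisfies \(d(\bar s\cup\{m\})=d(\bar s\cup\{n\})\). This tree is \(d\)-computable and each node has at most \(\ell^{\binom{\text{depth}}{k-1}}\) children, with the bound computable. Since the tree is infinite, its set of nodes with infinitely many descendants forms an infinite, finitely branching, \(d''\)-computable subtree. Applying Lemma~\ref{lem:finite-jump-path-finitely-branching-tree} to that subtree, relativized and with care about which jumps are needed, I would aim to get a path \(P\) computable in a low number of jumps of \(d\).

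Such a path is prehomogeneous: \(d(\bar s\cup\{n\})\) depends only on \(\bar s\) for \(\bar s\in[P]^{k-1}\) and \(n>\max\bar s\). This induces a colouring \(d_P\) of \([P]^{k-1}\) computable in \(P\). I then recurse down to \(k=1\), and a homogeneous set for \(d_P\) is homogeneous for \(d\).

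The main obstacle is exactly the jump count. Each recursive step costs roughly the jumps used to produce the path, so the total grows with \(k\), not staying at \(3\). For \(k=2\) the obvious count is about \(d''\) for the path plus \(d''\) relative to it for the pigeonhole. I would first try to reduce this to \(d^{(3)}\) directly by choosing the path and the final colour in one \(d^{(3)}\)-search, which I expect to work.

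For \(k\ge 4\), Jockusch's theorem gives computable colourings of \([\N]^k\) with no \(\Sigma^0_k\) homogeneous set, hence none computable in \(d^{(3)}\). So the uniform bound genuinely fails there, and the proof can only deliver ``homogeneous set arithmetical in \(d\)'' with a \(k\)-dependent jump count. That weaker form is all the paper's later degree calculations use, since they only need \(\leAr\)-bounds.
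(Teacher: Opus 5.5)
Your main point is correct, and it is the essential one: the Fact is false as literally stated. Jockusch showed that for each $k\ge 2$ there is a computable colouring $d:[\N]^k\to 2$ with no infinite $\Sigma^0_k$ homogeneous set. For $k\ge 4$, every set computable in $d^{(3)}\equiv_T 0^{(3)}$ is $\Delta^0_4\subseteq\Sigma^0_k$, so no homogeneous set is computable in $d^{(3)}$.

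The paper gives no proof to compare with. It only points to the same paper of Jockusch, whose positive result, relativized, gives every $d:[\N]^k\to\ell$ an infinite $\Pi^0_k(d)$ homogeneous set, hence one computable in $d^{(k)}$. The dangling ``$r<\omega$'' in the statement suggests a $k$-dependent exponent was intended. That corrected form is all the paper needs: in Lemma~\ref{lem:finite-fragment-ind} the colouring is arithmetical and only arithmeticity of $H$ is used. So your plan is the right repair: replace the Fact by a $k$-dependent bound, proved by iterating the Erd\H{o}s--Rado prehomogeneous-tree argument.

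Two points of your jump bookkeeping should be corrected.

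\emph{The case $k=2$.} You double count here. By your own $k=1$ argument, the final pigeonhole step is free once the colour is fixed nonuniformly. A path $P\le_T d''$ through the $d$-computable tree therefore already yields $H\le_T P\oplus d\le_T d''$. Such a path comes from Lemma~\ref{lem:finite-jump-path-finitely-branching-tree}, or directly from the $d''$-computable subtree of nodes with infinitely many descendants, which has no dead ends.

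\emph{The case $k=3$.} Charging two jumps per tree stage gives only $d^{(4)}$. So your sketch does not yet establish the literal bound there, even though it is true. To obtain it, set $X_0=d$ and $X_j=P_j\oplus X_{j-1}$, where $P_j$ is the stage-$j$ path, and work at stage $j$ relative to $X_{j-1}$. After listing the children of each node in increasing order, the Erd\H{o}s--Rado tree becomes $X_{j-1}'$-computable and computably bounded. This holds because the number of children is computably bounded and ``has at least $c$ children'' is $\Sigma^0_1(X_{j-1})$. The low basis theorem relative to $X_{j-1}'$ then gives a path with $X_j'\le_T X_{j-1}''$. Hence $X_{k-1}'\le_T d^{(k)}$ and $H\le_T X_{k-1}\le_T d^{(k)}$. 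This is $d^{(3)}$ when $k=3$ and recovers Jockusch's upper bound in general. Alternatively, simply cite Jockusch's $\Pi^0_3(d)$ theorem for that case.
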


\begin{lemma}[Finite Skolem avoidance]
\label{lem:finite-skolem-avoidance}
Let \(\MM\) be an infinite \(\LL\)-structure, let
\(X\subseteq M\) be infinite, and fix \(N<\omega\). Let
\(\mathcal T\) be a finite set of pairs \((i,t)\), where
\(i<N\) and \(t(x_0,\dots,\widehat{x_i},\dots,x_{N-1})\) is an
\(\LL\)-term not involving the variable \(x_i\). Then there are
\(a_0,\dots,a_{N-1}\in X\) such that, for every
\((i,t)\in\mathcal T\),
\(a_i\neq t^\MM(a_0,\dots,\widehat{a_i},\dots,a_{N-1})\).
\end{lemma}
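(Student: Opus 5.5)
The plan is a counting argument inside a large finite subset of \(X\). No compactness, Ramsey theory or definability is needed, and no sequential choice either. A sequential choice looks awkward because a term \(t\) attached to the pair \((i,t)\) may involve variables \(x_j\) with \(j>i\). Counting avoids this entirely, since it treats all coordinates symmetrically.

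Fix a finite set \(F\subseteq X\) with \(|F|=n\), where \(n\) will be chosen at the end; such \(F\) exists because \(X\) is infinite. Consider the \(n^N\) tuples \((a_0,\dots,a_{N-1})\in F^N\). For a fixed pair \((i,t)\in\mathcal T\), call a tuple \emph{bad for \((i,t)\)} if
\[
a_i=t^{\MM}(a_0,\dots,\widehat{a_i},\dots,a_{N-1}).
\]
The key observation is that, once the \(N-1\) coordinates other than the \(i\)-th are fixed, the right-hand side is a single element of \(M\). It is determined because \(t\) does not involve \(x_i\), and it may or may not lie in \(F\). Hence at most one choice of \(a_i\in F\) makes the tuple bad. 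It follows that at most \(n^{N-1}\) tuples in \(F^N\) are bad for \((i,t)\).

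Summing over the finitely many pairs in \(\mathcal T\), at most \(|\mathcal T|\cdot n^{N-1}\) tuples are bad for some constraint. Choosing \(n>|\mathcal T|\) gives \(|\mathcal T|\, n^{N-1}<n^N\), so some tuple in \(F^N\) is bad for no constraint, and this tuple is as required. If one also wants the \(a_i\) to be pairwise distinct, add the pairs \((i,x_j)\) for \(i\neq j\) to \(\mathcal T\); each such variable \(x_j\) is a term not involving \(x_i\), and the same count applies.

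There is no real obstacle. The only point needing care is the step "at most one bad value of \(a_i\) per choice of the other coordinates", which uses exactly the hypothesis that \(t\) omits the variable \(x_i\). The edge case \(N=0\) is vacuous. The argument is also effective: if \(X\) and term evaluation are arithmetical, the witnessing tuple can be found by a bounded search over \(F^N\), which is presumably how the lemma will be used in building Skolem-independent sequences.
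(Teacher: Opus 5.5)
Your proof is correct and matches the paper's argument essentially verbatim. Like the paper, you pick a finite $F\subseteq X$ with $|F|>|\mathcal T|$, bound the bad tuples for each pair by $|F|^{N-1}$ using that $t$ omits $x_i$, and conclude via $|\mathcal T|\cdot|F|^{N-1}<|F|^N$; your remark about adding projection terms to force distinctness is also the paper's own follow-up remark.
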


\begin{proof}
Choose a finite set \(F\subseteq X\) with \(|F|>|\mathcal T|\).
For each pair \((i,t)\in\mathcal T\), the set of tuples
\(\bar a\in F^N\) with
\(a_i=t^\MM(a_0,\dots,\widehat{a_i},\dots,a_{N-1})\)
has size at most \(|F|^{N-1}\), because once all coordinates except
the \(i\)-th are fixed, there is at most one possible value of
\(a_i\). Hence the union of all bad sets has size at most
\(|\mathcal T|\cdot |F|^{N-1}<|F|^N\).
Some tuple in \(F^N\) avoids every bad set, as required.
\end{proof}

The lemma does not by itself require the coordinates \(a_i\) to be
distinct. In the applications where distinctness is needed, the finite
set \(\mathcal T\) includes the projection terms \(x_j\) for \(j\neq i\),
so the avoided inequalities include \(a_i\neq a_j\).

\begin{proposition}[Skolem-independent sequences]
\label{prop:skolem-independent-sequence}
Let \(\NN\models T^{\mathrm{Sk}}\) be an arithmetically decidable model.
Then there exist an arithmetically decidable elementary extension
\(\NN\preccurlyeq\NN^\ast\) and a sequence \((a_n)_{n\in\N}\) in
\(N^\ast\) such that:
\begin{enumerate}[label=(\roman*)]
\item for every \(n\in\N\),
      \(a_n\notin \Hull^{\NN^\ast}(\{a_i:i\neq n\})\);
\item the map \(n\mapsto a_n\) is arithmetical.
\end{enumerate}

Moreover, if \(A\subseteq N\) is arithmetical and
\(X=\psi(N,\bar a)\) is infinite for some \(\bar a\in A^{<\omega}\),
then, working in the language \(\LL^{\mathrm{Sk}}(A)\) naming every
element of \(A\), the extension and sequence may be chosen so that
\((a_i)_{i\in\N}\subseteq \psi(N^\ast_A,\bar a)\) and, for every \(i\),
\[
a_i\notin \Hull^{\NN_A^\ast}(A\cup\{a_j:j\neq i\}).
\]
\end{proposition}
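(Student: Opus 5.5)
We build \(\NN^\ast\) by compactness, with new constants \(c_n\) realizing the sequence, and then make everything arithmetical using the effective compactness theorem of the appendix (Theorem~\ref{thm:compact}). Concretely, work in \(\LL^{\mathrm{Sk}}(N)\), or \(\LL^{\mathrm{Sk}}(A)\) for the relative version, expanded by fresh constants \((c_n)_{n\in\N}\), and consider the theory
\[
\Sigma:=\Diag_{\mathrm{el}}(\NN)\cup\{\psi(c_n,\bar a):n\in\N\}\cup\{c_n\neq t(\bar e,c_{j_1},\dots,c_{j_k}) : t \text{ an } \LL^{\mathrm{Sk}}\text{-term},\ \bar e\in A^{<\omega},\ n\notin\{j_1,\dots,j_k\}\}.
\]
In the absolute version, \(\psi\) is \(x=x\) and \(A=\emptyset\). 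Since \(\NN\) is arithmetically decidable, \(A\) is arithmetical and the language is arithmetically definable, \(\Sigma\) is an arithmetical set of sentences. Because \(T^{\mathrm{Sk}}\) has built-in Skolem functions, the Skolem hull of a set is just the set of values of terms on it. So a model of \(\Sigma\) in which \(a_n\) interprets \(c_n\) satisfies condition (i), and in the relative version also the displayed non-membership \(a_i\notin\Hull(A\cup\{a_j:j\neq i\})\).

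\textbf{Finite satisfiability.} A finite fragment of \(\Sigma\) mentions only \(c_0,\dots,c_{N-1}\), finitely many parameters \(\bar e\), and finitely many terms. Substituting the named parameters into the terms yields a finite set \(\mathcal T\) of pairs \((i,t)\) as in Lemma~\ref{lem:finite-skolem-avoidance}, where each \(t\) is an \(\LL^{\mathrm{Sk}}(A)\)-term not involving \(x_i\). Apply that lemma in \(\NN\) (respectively \(\NN_A\)) with the infinite set \(X=\psi(N,\bar a)\). It produces \(a_0,\dots,a_{N-1}\in X\) avoiding every bad equation, so \(\NN\) itself interprets the constants so as to satisfy the fragment. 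Hence \(\Sigma\) is consistent.

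\textbf{Arithmetical model.} Next we invoke the arithmetical completeness/compactness theorem (Theorem~\ref{thm:compact}). Given the arithmetical consistent theory \(\Sigma\), it produces a model whose elementary diagram is arithmetical. The model comes from a Henkin construction along a completion computed from \(\Sigma\) via Lemma~\ref{lem:finite-jump-path-finitely-branching-tree}, a path through a finitely branching tree of finite jump complexity. The resulting \(\NN^\ast\) interprets \(\Diag_{\mathrm{el}}(\NN)\), so \(\NN\) embeds elementarily into it. After renaming, it has domain in \(\N\) extending \(N\), as in the coding conventions of Appendix~\ref{sec:appendix-foundational}. The values \(a_n:=c_n^{\NN^\ast}\) are read off arithmetically from the diagram, which gives condition (ii).

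\textbf{Main obstacle.} The delicate step is this last one: producing \(\NN^\ast\) as a genuine elementary extension of \(\NN\) with domain \(\supseteq N\), rather than just some model into which \(\NN\) embeds elementarily, while keeping its elementary diagram arithmetical. I would handle this by running the Henkin construction with the elements of \(N\) as their own constants. The new Henkin witnesses get codes drawn from the non-cofinite complement of \(N\), which is available by the coding convention. The identification of equal terms is done by a least-code normalization, which is arithmetical in the completion. Everything else reduces to bookkeeping of finite jumps.
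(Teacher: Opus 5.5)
Your proposal is correct and follows the paper's proof. The paper uses the same compactness scheme \(\Sigma\): the elementary diagram plus the Skolem-term inequalities, with \(\psi(c_n,\bar a)\) added in the relative case. It likewise gets finite satisfiability from Lemma~\ref{lem:finite-skolem-avoidance} inside \(X\), then applies Theorem~\ref{thm:compact} and a base-preserving recoding as in Lemma~\ref{lem:base-preserving-coded-compactness}.

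The remaining differences are cosmetic. The paper also adds the inequalities \(c_i\neq\bar n\), which you rightly do not need for (i). Theorem~\ref{thm:compact} completes \(\Sigma\) by a direct arithmetical Henkin completion, not via Lemma~\ref{lem:finite-jump-path-finitely-branching-tree} as you describe. Finally, the fresh codes should be placed in a co-infinite arithmetical subset of \(\N\setminus N\), so that the new domain keeps infinite complement.
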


\begin{proof}
Add new constants \(c_n\) for \(n\in\N\) to the language
\(\LL^{\mathrm{Sk}}\), and let \(\Sigma\) consist of the elementary
diagram of \(\NN\) together with the sentences
\(c_n\neq t(c_{i_1},\dots,c_{i_k})\)
for every \(\LL^{\mathrm{Sk}}\)-term
\(t(v_{i_1},\dots,v_{i_k})\) and every
\(n\notin\{i_1,\dots,i_k\}\), and also the inequalities
\(c_i\neq \bar n\) for all \(i,n\in\N\), where \(\bar n\) denotes the
constant naming \(n\) in the elementary diagram of \(\NN\). Since
\(\NN\) is arithmetically decidable, its elementary diagram is
arithmetical. The rest of the scheme is arithmetical because the Skolem
language, term codes, indices \(i_1,\ldots,i_k,n\), and the side condition
\(n\notin\{i_1,\ldots,i_k\}\) are all primitively coded. Hence
\(\Sigma\) is arithmetically definable.

For finite satisfiability, a finite fragment \(\Sigma_0\) mentions
only finitely many constants \(c_0,\dots,c_{N-1}\), finitely many terms,
and finitely many old named elements to avoid. Since \(\NN\) is
infinite, removing these finitely many forbidden elements leaves an
infinite set to which Lemma~\ref{lem:finite-skolem-avoidance} applies.

By Theorem~\ref{thm:compact}, \(\Sigma\) has an arithmetically
decidable model \(\widetilde{\NN}\).

The added inequalities \(c_i\neq \bar n\) place each
\(c_i^{\widetilde{\NN}}\) outside the distinguished copy of \(N\), and
the projection-term inequalities make them pairwise distinct. Thus
\(\widetilde N\) has infinitely many elements outside the distinguished
copy of \(N\). We now recode the domain. Choose an arithmetical set
\(U\subseteq\N\) with \(N\subseteq U\), \(U\setminus N\) infinite, and
\(\N\setminus U\) infinite. Fix an arithmetical bijection
\(f:\widetilde N\to U\) sending each distinguished old element
\(\bar n^{\widetilde{\NN}}\) to \(n\), obtained by matching increasing
enumerations of the non-old part of \(\widetilde N\) and the reserve set
\(U\setminus N\) while fixing the old named copy. This is the standing
infinite-complement recoding convention from
Appendix~\ref{subsec:coded-structures-truth}. Transporting
\(\widetilde{\NN}\) along \(f\) gives an arithmetically decidable model
\(\NN^\ast\) with domain \(U\subseteq\N\) and
\(\NN\preccurlyeq \NN^\ast\). Setting
\(a_n:=f(c_n^{\widetilde{\NN}})\), the defining scheme gives~(i).

For~(ii), the graph of \(n\mapsto a_n\) is arithmetical because
\(a_n=m\) iff \(f^{-1}(m)\) is the interpretation of \(c_n\) in
\(\widetilde{\NN}\), and both the elementary diagram of
\(\widetilde{\NN}\) and the bijection \(f\) are arithmetical.

For the moreover clause, repeat the argument in the language
\(\LL^{\mathrm{Sk}}(A)\). The named expansion \(\NN_A\) is
arithmetically decidable, so the same scheme \(\Sigma\)---now with
\(\LL^{\mathrm{Sk}}(A)\)-terms and with the additional requirements
\(\psi(c_i,\bar a)\)---is arithmetically definable. Finite
satisfiability uses Lemma~\ref{lem:finite-skolem-avoidance} inside the
infinite set \(X=\psi(N,\bar a)\). By
Lemma~\ref{lem:base-preserving-coded-compactness}, the resulting model
preserves the actual old domain \(N\), and the interpretations of the
\(c_i\) give the required sequence.
\end{proof}

\begin{lemma}[Degree of a Skolem hull over a named base]
\label{lem:skolem-hull-degree-over-base}
Let \(\NN^{\mathrm{Sk}}\) be arithmetically decidable in a Skolem
language naming an arithmetical base \(A\). Suppose
\((a_n)_{n\in\N}\) is an arithmetical sequence and
\[
a_n\notin\Hull(A\cup\{a_i:i\neq n\})
\]
for every \(n\). For \(S\subseteq\N\), put
\[
H_S:=\Hull(A\cup\{a_n:n\in S\}).
\]
Then \(H_S\leAr S\), and
\[
a_n\in H_S\quad\Longleftrightarrow\quad n\in S.
\]
Consequently, if \(S\) has nonzero arithmetic degree \(D\), then
\(\degAr(H_S)=D\).
\end{lemma}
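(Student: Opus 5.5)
The proof has three parts: the upper bound \(H_S\leAr S\), the membership equivalence, and then the degree computation, which follows from the first two.

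\emph{Upper bound.} Membership in \(H_S\) is a bounded existential search over arithmetical data. An element \(m\) lies in \(H_S\) iff there is an \(\LL^{\mathrm{Sk}}(A)\)-term \(t(\bar v)\) and a finite tuple \(n_1,\dots,n_k\) with each \(n_j\in S\) such that
\[
\NN^{\mathrm{Sk}}\models m=t(a_{n_1},\dots,a_{n_k}).
\]
Constants naming elements of \(A\) may appear in \(t\); since \(A\) is arithmetical, the set of codes of such terms is arithmetical. The map \(n\mapsto a_n\) is arithmetical, and the elementary diagram of \(\NN^{\mathrm{Sk}}\) is arithmetical, so the matrix of this existential formula is arithmetical. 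The only non-arithmetical input is the finitely many oracle queries \(n_j\in S\). Hence \(H_S\) is \(\Sigma_1\) relative to an arithmetical set joined with \(S\), which gives \(H_S\leAr S\). (Note that \(H_S\) is the closure under Skolem terms, i.e.\ the set of term values, so no closure iteration beyond this single existential is needed.)

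\emph{Membership equivalence.} If \(n\in S\), then \(a_n\in H_S\) trivially. Conversely, suppose \(n\notin S\). Then \(\{a_i:i\in S\}\subseteq\{a_i:i\neq n\}\), and hull is monotone, so
\[
H_S\subseteq\Hull(A\cup\{a_i:i\neq n\}).
\]
By hypothesis \(a_n\) is not in the right-hand side, so \(a_n\notin H_S\).

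\emph{Degree.} The equivalence gives \(n\in S\iff a_n\in H_S\). Since \(n\mapsto a_n\) is arithmetical, this shows \(S\leAr H_S\). Combined with the upper bound, \(\degAr(H_S)=\degAr(S)=D\).

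The only step needing care is the coding in the upper bound: checking that terms in the language naming \(A\) form an arithmetical set, and that term evaluation is read off the arithmetical elementary diagram. This is exactly as in Lemma~\ref{lem:ambient-arithmetical-inequalities}(ii), so I expect no real obstacle.
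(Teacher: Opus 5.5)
Your proposal is correct and follows essentially the same argument as the paper. It gets \(H_S\leAr S\) by searching for a term with parameters from \(A\) and from the \(a_i\) with \(i\in S\), gets the reverse direction from monotonicity of the hull together with the independence hypothesis, and gets \(S\leAr H_S\) from the arithmetical map \(n\mapsto a_n\). One small wording fix: the search over terms and index tuples is unbounded, not ``bounded''; your subsequent accounting of \(H_S\) as \(\Sigma_1\) in \(S\oplus 0^{(k)}\) is the correct one.
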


\begin{proof}
Membership in \(H_S\) is arithmetical in \(S\): an element belongs to
the hull exactly when it is the value, in the fixed Skolem expansion, of
some term applied to finitely many parameters from \(A\) and finitely
many \(a_i\)'s with \(i\in S\). The named base \(A\) is arithmetical and
therefore contributes no degree. Conversely, if \(a_n\in H_S\) and
\(n\notin S\), then \(a_n\) is in the hull of
\(A\cup\{a_i:i\neq n\}\), contradicting the assumed independence. Thus
\(a_n\in H_S\) iff \(n\in S\). Since \(n\mapsto a_n\) is arithmetical,
this gives \(S\leAr H_S\).
\end{proof}

\begin{corollary}[Skolem-independence of subsequences]
\label{cor:sequence-independent}
In the setting of Proposition~\ref{prop:skolem-independent-sequence},
for every \(S\subseteq\N\),
\[
\Hull^{\NN^\ast}(\{a_n:n\in S\})\cap\{a_n:n\in\N\}=\{a_n:n\in S\}.
\]
\end{corollary}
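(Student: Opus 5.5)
This is a direct instance of Lemma~\ref{lem:skolem-hull-degree-over-base}, taking the named base to be empty (or, equivalently, \(A=\emptyset\)). I would first check that the hypotheses of that lemma hold in the setting of Proposition~\ref{prop:skolem-independent-sequence}. The model \(\NN^\ast\) is an arithmetically decidable model in the Skolem language. The sequence \(n\mapsto a_n\) is arithmetical by clause~(ii). Clause~(i) says \(a_n\notin\Hull^{\NN^\ast}(\{a_i:i\neq n\})\) for every \(n\), which is exactly the independence hypothesis with \(A=\emptyset\).

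The inclusion \(\supseteq\) is immediate, since every generator lies in its own hull. For \(\subseteq\), suppose \(a_n\in\Hull^{\NN^\ast}(\{a_m:m\in S\})\). If \(n\notin S\), then \(S\subseteq\N\setminus\{n\}\), so by monotonicity of the hull
\[
a_n\in\Hull^{\NN^\ast}(\{a_m:m\in S\})\subseteq\Hull^{\NN^\ast}(\{a_i:i\neq n\}),
\]
which contradicts clause~(i). Hence \(n\in S\). This is precisely the equivalence \(a_n\in H_S\iff n\in S\) from Lemma~\ref{lem:skolem-hull-degree-over-base}, so one may simply cite it.

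There is one subtlety: the indices must correspond to distinct elements. If \(a_n=a_m\) with \(n\ne m\), the statement could fail. However, distinctness follows from (i), because \(a_m\) lies in the hull of \(\{a_i:i\neq n\}\) whenever \(m\neq n\). No real obstacle arises; the only point to note is this monotonicity and distinctness check.
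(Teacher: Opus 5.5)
Your argument is correct and is essentially the paper's proof. The paper writes \(a_n=t(a_{i_1},\dots,a_{i_k})\) with all \(i_j\in S\) and notes that \(n\) must be among the indices by clause (i); your hull-monotonicity step is the same point, and citing the equivalence \(a_n\in H_S\iff n\in S\) with empty base is equally valid. Your distinctness caveat is unnecessary, since the identity is about elements and your own derivation never uses injectivity of \(n\mapsto a_n\) (though you are right that (i) forces injectivity).
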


\begin{proof}
The inclusion \(\supseteq\) is immediate. For the reverse inclusion,
suppose \(a_n\in \Hull^{\NN^\ast}(\{a_i:i\in S\})\). Then
\(a_n=t^{\NN^\ast}(a_{i_1},\dots,a_{i_k})\) for some term
\(t(v_{i_1},\dots,v_{i_k})\) with \(i_1,\dots,i_k\in S\). If
\(n\notin\{i_1,\dots,i_k\}\), this contradicts
Proposition~\ref{prop:skolem-independent-sequence}. Hence
\(n\in\{i_1,\dots,i_k\}\subseteq S\).
\end{proof}

\begin{corollary}[Realizing arbitrary nonzero arithmetic degrees]
\label{cor:realizing-arbitrary-degree}
\leavevmode
\begin{enumerate}[label=(\roman*)]
\item For every nonzero arithmetic degree \(D\), there exists an
      arithmetically extendible model \(\MM\models T\) whose domain has
      arithmetic degree \(D\).

\item Let \(\MM_0\models T\) be arithmetically decidable, and work in
      the language \(\LL(M_0)\) obtained by naming every element of
      \(M_0\). For every nonzero arithmetic degree \(D\), there exists an
      arithmetically extendible \(\LL(M_0)\)-structure
      \(\MM\models \Th_{\LL(M_0)}(\MM_0)\) such that
      \(\MM_0\preccurlyeq \MM\) and \(\degAr(M)=D\).
\end{enumerate}
\end{corollary}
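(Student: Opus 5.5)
The plan is to combine Proposition~\ref{prop:skolem-independent-sequence} with Lemma~\ref{lem:skolem-hull-degree-over-base}, using Skolem hulls to produce elementary submodels whose domain encodes a set \(S\) of degree \(D\).

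For (i), fix \(S\subseteq\N\) with \(\degAr(S)=D>\mathbf 0\). The steps are as follows.
\begin{enumerate}
\item By Theorem~\ref{thm:sk} and Theorem~\ref{thm:compact}, choose an arithmetically decidable model \(\NN\models T^{\mathrm{Sk}}\). Since \(T\) is arithmetically definable, so is \(T^{\mathrm{Sk}}\), and compactness gives a model with arithmetical elementary diagram.
\item Apply Proposition~\ref{prop:skolem-independent-sequence} to get an arithmetically decidable \(\NN^\ast\succcurlyeq\NN\) and an arithmetical Skolem-independent sequence \((a_n)\).
\item Put \(H_S:=\Hull^{\NN^\ast}(\{a_n:n\in S\})\), and take \(A=\emptyset\) in Lemma~\ref{lem:skolem-hull-degree-over-base}. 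This gives \(\degAr(H_S)=D\).
\end{enumerate}
Because \(H_S\) is closed under all Skolem functions, the Tarski--Vaught test shows \(H_S\preccurlyeq\NN^\ast\) as \(\LL^{\mathrm{Sk}}\)-structures. Hence the \(\LL\)-reduct \(\MM\) of \(H_S\) is an elementary substructure of the \(\LL\)-reduct of \(\NN^\ast\). That reduct is arithmetically decidable, because its diagram is the \(\LL\)-fragment of an arithmetical diagram. So \(\MM\) is arithmetically extendible with \(\degAr(M)=D\). One small point needs attention: if \(S=\emptyset\) the hull would be empty, but \(D>\mathbf 0\) forces \(S\) to be infinite, so \(H_S\) is infinite. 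Its complement in \(\N\) is infinite because it sits inside the domain \(U\) of \(\NN^\ast\), which has infinite complement, so the coding convention is respected.

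For (ii), repeat the same argument relative to \(\MM_0\).
\begin{enumerate}
\item Expand the arithmetically decidable \(\MM_0\) to an arithmetically decidable Skolem model \(\NN\) of \(T^{\mathrm{Sk}}\) with \(\MM_0\preccurlyeq\NN\restr_\LL\) and domain containing \(M_0\).
\item Name \(A:=M_0\), which is arithmetical.
\item Use the moreover clause of Proposition~\ref{prop:skolem-independent-sequence} with \(\psi\) the formula \(x=x\), in the language \(\LL^{\mathrm{Sk}}(A)\). This gives a sequence with \(a_n\notin\Hull(A\cup\{a_j:j\neq n\})\).
\item Set \(H_S:=\Hull^{\NN^\ast_A}(M_0\cup\{a_n:n\in S\})\). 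Lemma~\ref{lem:skolem-hull-degree-over-base} then gives \(\degAr(H_S)=D\), and the same Tarski--Vaught argument shows that the \(\LL(M_0)\)-reduct is an arithmetically extendible model of \(\Th_{\LL(M_0)}(\MM_0)\) containing \(M_0\).
\end{enumerate}

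The main obstacle is the first step of (ii). We must obtain a Skolem expansion whose \(\LL\)-reduct elementarily extends \(\MM_0\) itself. In general \(\MM_0\) cannot simply be expanded, since \(M_0\) need not be Skolem-closed. To handle this, I would work in \(\LL(M_0)\) from the start. Apply Theorem~\ref{thm:sk} and the base-preserving compactness Lemma~\ref{lem:base-preserving-coded-compactness} to the arithmetical theory \(\Th_{\LL(M_0)}(\MM_0)^{\mathrm{Sk}}\). This yields an arithmetically decidable model that keeps the actual elements of \(M_0\) as the interpretations of their names. Elementarity \(\MM_0\preccurlyeq\MM\) then follows because \(\MM\) satisfies the full elementary diagram of \(\MM_0\). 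The remaining checks are routine: arithmeticity of the recoded domain and uniformity of the named constants.
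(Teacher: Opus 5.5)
Your proposal is correct and follows the paper's proof essentially step for step. You take a Skolem-independent sequence from Proposition~\ref{prop:skolem-independent-sequence} (its moreover clause with $A=M_0$ and $\psi\equiv(x=x)$ for (ii)), form the hull of $\{a_n:n\in S\}$ or $M_0\cup\{a_n:n\in S\}$, and then use Theorem~\ref{thm:sk}(3) for elementarity and Lemma~\ref{lem:skolem-hull-degree-over-base} for the degree.

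Your route in (ii), applying Lemma~\ref{lem:base-preserving-coded-compactness} to $\Diag_{\mathrm{el}}(\MM_0)$ together with the Skolem axioms, is if anything more careful than the paper's bare ``pass to an arithmetically decidable Skolem expansion of $\MM_0$''. However, the real issue is not that $M_0$ fails to be Skolem-closed, since every model carries Skolem functions on its own domain. It is that such an expansion of $\MM_0$ itself need not be arithmetically decidable, and your elementary-extension route avoids exactly that.
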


\begin{proof}
For~(i), start with any arithmetically decidable model
\(\NN\models T^{\mathrm{Sk}}\), and apply
Proposition~\ref{prop:skolem-independent-sequence} to obtain an
arithmetically decidable elementary extension
\(\NN\preccurlyeq\NN^\ast\) together with a Skolem-independent sequence
\((a_n)_{n\in\N}\) in \(N^\ast\).

Fix \(S\subseteq\N\) with \(\degAr(S)=D\), and let
\[
\MM_S:=\Hull^{\NN^\ast}(\{a_n:n\in S\}).
\]
By Theorem~\ref{thm:sk}(3),
\(\MM_S\!\restr_\LL\preccurlyeq\NN^\ast\!\restr_\LL\),
so \(\MM_S\!\restr_\LL\) is arithmetically extendible.
Lemma~\ref{lem:skolem-hull-degree-over-base} (with empty base) gives
\(\degAr(M_S)=D\).

For~(ii), pass to an arithmetically decidable Skolem expansion of
\(\MM_0\) in the language \(\LL(M_0)\). Apply
Proposition~\ref{prop:skolem-independent-sequence}, moreover clause, with
\(A=M_0\) and \(\psi(x)\equiv(x=x)\) to obtain an arithmetically
decidable elementary extension \(\NN^\ast\) and a Skolem-independent
sequence \((a_n)\) over \(M_0\). Fix \(S\subseteq\N\) with
\(\degAr(S)=D\), and form
\(\MM:=\Hull^{\NN^\ast}(M_0\cup\{a_n:n\in S\})\!\restr_{\LL(M_0)}\).
By Theorem~\ref{thm:sk}(3), \(\MM_0\preccurlyeq \MM\), and
Lemma~\ref{lem:skolem-hull-degree-over-base} with base \(A=M_0\) gives
\(\degAr(M)=D\).
\end{proof}

The preceding corollary isolates the basic degree-coding mechanism,
first in absolute form and then relative to a fixed arithmetically
decidable base. For the later analysis of arithmetical stability,
however, this is not yet enough. There we will need a coding sequence
which is not only Skolem-independent over the base, but also
indiscernible over it. We now refine the construction accordingly.

\begin{lemma}[Finite-fragment arithmetical indiscernibles]
\label{lem:finite-fragment-ind}
Let \(\MM\) be an arithmetically decidable \(\LL\)-structure, let
\(B\subseteq M\) be arithmetical, and let \((a_n)_{n\in\N}\) be a
sequence in \(M\) such that \(n\mapsto a_n\) is arithmetical. If
\(\Delta(\bar x)\) is a finite set of \(\LL(B)\)-formulas in
\(\bar x=(x_0,\dots,x_{k-1})\) (formulas with fewer displayed variables
are regarded as formulas in this same \(k\)-tuple), then there is an
infinite arithmetical
set \(H=\{n_0<n_1<\cdots\}\subseteq\N\) such that
\((a_{n_i})_{i\in\N}\) is \(\Delta\)-indiscernible over \(B\), and the
map \(i\mapsto a_{n_i}\) is arithmetical.
\end{lemma}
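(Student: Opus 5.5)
The plan is a Ramsey argument with a finite coloring, followed by Fact~\ref{fact:effective-ramsey}. We use only the finite set \(\Delta\) and the arithmetical control supplied by the decidability of \(\MM\).

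\textbf{Step 1: the coloring.} Write \(\Delta=\{\delta_0(\bar x),\dots,\delta_{r-1}(\bar x)\}\). Each \(\delta_j\) is an \(\LL(B)\)-formula, so it mentions only finitely many parameters from \(B\). For \(n_0<\dots<n_{k-1}\) put
\[
d(\{n_0,\dots,n_{k-1}\}):=\bigl(\text{the truth value of }\MM\models\delta_j(a_{n_0},\dots,a_{n_{k-1}})\bigr)_{j<r}\in 2^r .
\]
This is a coloring \(d:[\N]^k\to 2^r\) with \(\ell=2^r\) colors. It is arithmetical for three reasons: the elementary diagram of \(\MM\) is arithmetical, \(\Delta\) is a fixed finite set whose parameters form a fixed finite tuple from \(B\), and \(n\mapsto a_n\) is arithmetical. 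Concretely, \(d\) is computable from \(0^{(m)}\) for some \(m\), namely a jump bounding both the diagram and the graph of the sequence.

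\textbf{Step 2: homogeneous set.} By Fact~\ref{fact:effective-ramsey}, \(d\) has an infinite homogeneous set \(H\) computable in \(d^{(3)}\le_T 0^{(m+3)}\). Hence \(H\) is arithmetical. Let \(n_0<n_1<\cdots\) be its increasing enumeration. Then \(i\mapsto n_i\) is computable in \(H'\), and composing with the arithmetical map \(n\mapsto a_n\) shows that \(i\mapsto a_{n_i}\) is arithmetical.

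\textbf{Step 3: verification.} Take a formula in \(\Delta\) and increasing indices \(i_0<\dots<i_{k-1}\) and \(i'_0<\dots<i'_{k-1}\). Then \(\{n_{i_0},\dots,n_{i_{k-1}}\}\) and \(\{n_{i'_0},\dots,n_{i'_{k-1}}\}\) are \(k\)-subsets of \(H\), so they receive the same color. Because the coloring is read on increasing tuples, the formula holds of one tuple exactly when it holds of the other. This is \(\Delta\)-indiscernibility over \(B\).

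\textbf{Formulas with fewer variables.} The convention in the statement treats a formula with fewer displayed variables as a \(k\)-variable formula. It is therefore already covered by the coloring on increasing \(k\)-tuples. If one wants indiscernibility for every arity \(k'\le k\) separately, we iterate: apply Ramsey successively for \(k'=1,\dots,k\), each time inside the previous homogeneous set transported along its enumeration. This is finitely many steps, each costing finitely many jumps, so the result stays arithmetical. Alternatively, a formula in fewer variables can be padded with dummy variables, which reduces to the single \(k\)-coloring.

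\textbf{Main obstacle.} The only delicate point is checking that the coloring is genuinely arithmetical. This is routine given the finiteness of \(\Delta\) and the decidability of \(\MM\).
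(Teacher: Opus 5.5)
Your proof is correct and matches the paper's argument: colour increasing \(k\)-tuples by their \(\Delta\)-truth pattern, take an arithmetical homogeneous set from the effective Ramsey fact, and pass to its increasing enumeration (which is in fact computable in \(H\) itself, not only in \(H'\)). One minor caveat comes from the paper's own statement of Fact~\ref{fact:effective-ramsey}: for arbitrary \(k\) the correct bound is Jockusch's \(d^{(k)}\), not \(d^{(3)}\), since the \(d^{(3)}\) bound fails for \(k\ge 4\); you only need \(H\) to be arithmetical, so your argument is unaffected.
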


\begin{proof}
Colour each increasing \(k\)-tuple \(i_0<\cdots<i_{k-1}\) by the
\(\Delta\)-truth pattern of \((a_{i_0},\dots,a_{i_{k-1}})\) over \(B\).
Since \(\MM\) is arithmetically decidable, \(B\) is arithmetical, and
\(\Delta\) is finite, this gives an arithmetical finite colouring of
\([\N]^k\). By Fact~\ref{fact:effective-ramsey}, there is an infinite
arithmetical homogeneous set \(H\subseteq\N\). Listing \(H\) increasingly as
\(\{n_0<n_1<\cdots\}\), homogeneity says exactly that
\((a_{n_i})_{i\in\N}\) is \(\Delta\)-indiscernible over \(B\). Since
the increasing enumeration of an arithmetical infinite set is again
arithmetical, the map \(i\mapsto a_{n_i}\) is arithmetical.
\end{proof}

\begin{proposition}[Indiscernible Skolem-independent sequence]
\label{prop:ind-skolem-sequence}
Let \(\MM_0\models T\) be arithmetically decidable. Work in the language
\(\LL^{\mathrm{Sk}}(M_0)\) obtained by naming every element of
\(M_0\). Then there exist an arithmetically decidable elementary
extension
\[
\MM_0^{\mathrm{Sk}}\preccurlyeq \NN^{\mathrm{Sk}}
\]
and a sequence \((a_i)_{i\in\N}\) in \(N\) such that the map
\(i\mapsto a_i\) is arithmetical, the sequence is indiscernible over
\(M_0\), and for every \(n\) one has
\[
a_n\notin \Hull^{\NN^{\mathrm{Sk}}}(M_0\cup\{a_i:i\neq n\}).
\]
\end{proposition}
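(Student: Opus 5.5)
The approach is a single compactness argument. The indiscernibility requirement is built directly into the theory whose arithmetically decidable model we extract, and the finite fragments are handled by combining Lemma~\ref{lem:finite-skolem-avoidance} with Lemma~\ref{lem:finite-fragment-ind}.

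Add new constants \(c_n\) (\(n\in\N\)) to \(\LL^{\mathrm{Sk}}(M_0)\). Let \(\Sigma\) be the union of the following sentences:
\begin{enumerate}
\item the elementary diagram of \(\MM_0^{\mathrm{Sk}}\);
\item the Skolem-avoidance sentences \(c_n\neq t(\bar m,c_{i_1},\dots,c_{i_k})\), for every \(\LL^{\mathrm{Sk}}\)-term \(t\), every tuple \(\bar m\) from \(M_0\), and every \(n\notin\{i_1,\dots,i_k\}\);
\item the indiscernibility scheme \(\varphi(\bar m,c_{i_0},\dots,c_{i_{k-1}})\leftrightarrow\varphi(\bar m,c_{j_0},\dots,c_{j_{k-1}})\), for all \(\LL^{\mathrm{Sk}}\)-formulas \(\varphi\), all \(\bar m\in M_0^{<\omega}\), and all increasing tuples \(i_0<\dots<i_{k-1}\), \(j_0<\dots<j_{k-1}\).
\end{enumerate}
Because \(\MM_0\) is arithmetically decidable (hence so is an arithmetically decidable Skolem expansion, as in the proof of Corollary~\ref{cor:realizing-arbitrary-degree}(ii)) and \(M_0\) is arithmetical, the scheme \(\Sigma\) is arithmetically definable. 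The same coding remarks as in Proposition~\ref{prop:skolem-independent-sequence} apply.

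For finite satisfiability, a finite fragment \(\Sigma_0\) mentions finitely many constants \(c_0,\dots,c_{N-1}\), finitely many terms, and a finite set \(\Delta\) of \(\LL^{\mathrm{Sk}}(M_0)\)-formulas. First use Proposition~\ref{prop:skolem-independent-sequence} (moreover clause, with \(A=M_0\), \(\psi(x)\equiv x=x\)) to obtain an arithmetically decidable \(\NN^\ast\succcurlyeq\MM_0^{\mathrm{Sk}}\) and an arithmetical sequence \((b_n)\) with \(b_n\notin\Hull(M_0\cup\{b_i:i\neq n\})\). Apply Lemma~\ref{lem:finite-fragment-ind} with \(B=M_0\) to extract a \(\Delta\)-indiscernible subsequence \((b_{n_i})\). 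The key observation is that a subsequence of a Skolem-independent sequence is still Skolem-independent over \(M_0\). Interpreting \(c_i\) as \(b_{n_i}\) for \(i<N\) therefore satisfies \(\Sigma_0\). This avoids needing Lemma~\ref{lem:finite-skolem-avoidance} anew, though it could be used instead.

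By Theorem~\ref{thm:compact}, more precisely its base-preserving form Lemma~\ref{lem:base-preserving-coded-compactness}, \(\Sigma\) has an arithmetically decidable model that we may recode so that its domain extends \(M_0\) with the old elements in place. Call this model \(\NN^{\mathrm{Sk}}\); it satisfies \(\MM_0^{\mathrm{Sk}}\preccurlyeq\NN^{\mathrm{Sk}}\). Setting \(a_i:=c_i^{\NN^{\mathrm{Sk}}}\), the map \(i\mapsto a_i\) is arithmetical, since \(a_i=m\) is read off from the arithmetical elementary diagram. Indiscernibility over \(M_0\) and the hull condition then hold by the schemes in \(\Sigma\). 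Note that distinctness of the \(a_i\) follows from avoidance of the projection terms.

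The main obstacle I expect is the bookkeeping around naming \(M_0\) and recoding. The new model must keep the actual old domain \(M_0\) (not merely an isomorphic copy), while the Skolem functions on the extension still restrict correctly to \(\MM_0^{\mathrm{Sk}}\). This is exactly what Lemma~\ref{lem:base-preserving-coded-compactness} is meant to supply. Beyond that, the only other point to check is that the indiscernibility scheme is genuinely arithmetical, which reduces to primitive recursive coding of formulas and index tuples.
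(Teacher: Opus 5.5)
Your proposal is correct and follows essentially the same route as the paper. You use the same theory $\Sigma$ (elementary diagram, Skolem-avoidance scheme, indiscernibility scheme) and the same finite-satisfiability argument: Proposition~\ref{prop:skolem-independent-sequence} (moreover clause), then Lemma~\ref{lem:finite-fragment-ind} on the resulting Skolem-independent sequence, then compactness. The only difference is cosmetic: you invoke Lemma~\ref{lem:base-preserving-coded-compactness} directly to keep $M_0$ in place, whereas the paper applies Theorem~\ref{thm:compact} and then recodes the domain by hand.
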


\begin{proof}
Pass to an arithmetically decidable Skolem expansion of \(\MM_0\) in the
language \(\LL(M_0)\). Apply
Proposition~\ref{prop:skolem-independent-sequence}, moreover clause, with \(A=M_0\) and
\(\psi(x)\equiv(x=x)\) to obtain an arithmetically decidable elementary
extension
\[
\MM_0^{\mathrm{Sk}}\preccurlyeq \BB^{\mathrm{Sk}}
\]
and a Skolem-independent sequence \((e_n)_{n\in\N}\) in \(B\) over
\(M_0\), with \(n\mapsto e_n\) arithmetical.

Expand \(\LL^{\mathrm{Sk}}(M_0)\) further by constants
\(c_0,c_1,\dots\), and let \(\Sigma\) consist of:
\begin{enumerate}[label=(\roman*)]
\item the elementary diagram of \(\MM_0^{\mathrm{Sk}}\);
\item the indiscernibility scheme for \((c_i)_{i\in\N}\) over \(M_0\);
\item for every \(\LL^{\mathrm{Sk}}\)-term
      \(t(v_{i_1},\dots,v_{i_k},\bar m)\), every
      \(n\notin\{i_1,\dots,i_k\}\), and every tuple \(\bar m\) from
      \(M_0\), the inequality
      \[
      c_n\neq t(c_{i_1},\dots,c_{i_k},\bar m).
      \]
\end{enumerate}

We verify finite satisfiability. Fix a finite fragment
\(\Sigma_0\subseteq\Sigma\), involving constants \(c_0,\dots,c_N\). Let
\(\Delta(\bar x)\) be the finite set of formulas appearing in the
indiscernibility instances of \(\Sigma_0\). By
Lemma~\ref{lem:finite-fragment-ind}, applied in
\(\BB^{\mathrm{Sk}}\) to the sequence \((e_n)\), there is an infinite
arithmetical set \(H=\{m_0<m_1<\cdots\}\) such that
\((e_{m_i})_{i\in\N}\) is \(\Delta\)-indiscernible over \(M_0\).
Interpret \(c_j:=e_{m_j}\) for \(j\le N\). This satisfies the
indiscernibility requirements in \(\Sigma_0\). For the
Skolem-independence inequalities, note that since
\(j\mapsto m_j\) is strictly increasing, distinct indices among the
\(c_j\)'s correspond to distinct indices among the \(e_{m_j}\)'s, and
the required inequalities follow from the Skolem independence of the
original sequence \((e_n)\) over \(M_0\).

By Theorem~\ref{thm:compact}, \(\Sigma\) has an arithmetically decidable
model \(\widetilde{\NN}^{\mathrm{Sk}}\). The Skolem-independence
inequalities include constant terms from \(M_0\) and projection terms,
so the elements \(c_i^{\widetilde{\NN}^{\mathrm{Sk}}}\) are pairwise
distinct and all lie outside the distinguished copy of \(M_0\).
Recoding the domain as in the proof of
Proposition~\ref{prop:skolem-independent-sequence}---via an arithmetical
bijection \(f\) from \(\widetilde N\) onto a suitable subset of
\(\N\) extending \(M_0\)---we obtain an arithmetically decidable model
\(\NN^{\mathrm{Sk}}\) with
\(\MM_0^{\mathrm{Sk}}\preccurlyeq \NN^{\mathrm{Sk}}\). Setting
\(a_i:=f(c_i^{\widetilde{\NN}^{\mathrm{Sk}}})\) gives the required
sequence.
\end{proof}

\section{Arithmetic stability}
\label{sec:arith-stability}

We now introduce the arithmetic analogue of \(\omega\)-stability and
show that \(D\)-categoricity forces it.  The formulation used in this
section is slightly more semantic than the usual cardinality-based one:
arithmetical stability is equivalent to the assertion that arithmetic
types over arithmetical parameter sets can be realized all at once in an
arithmetically extendible elementary extension.

This characterization is the main point of the section.  The forward
direction is a compactness argument using the bounded complexity of the
coded family of arithmetic types.  The reverse direction uses the fact
that an arithmetically extendible model sits inside an arithmetically
decidable one, whose satisfaction relation gives a uniform finite-jump
bound for all types it realizes.

The categoricity argument then becomes simple.  If \(T\) is not
arithmetically stable, there is an arithmetically decidable base
\(\MM_0\) such that no arithmetically extendible extension realizes all
arithmetic types over \(M_0\).  We build one model of the prescribed
nonzero arithmetic degree \(D\) over \(\MM_0\), choose an arithmetic
type it omits, and then build another model of the same degree over
\(\MM_0\) realizing that type.  This contradicts \(D\)-categoricity.

Throughout this section, \(T\) is a complete arithmetically definable
theory in a countable arithmetically definable language, and \(T\) has
no finite models.

\begin{definition}[\(n\)-types and arithmetic \(n\)-types]
\label{def:arith-type}
Let \(\MM\) be an arithmetically extendible structure, and let
\(A\subseteq M\). An \emph{\(n\)-type over \(A\) inside \(\MM\)} is a
maximal set \(p(\bar x)\) of \(\LL(A)\)-formulas in the variables
\(\bar x=(x_0,\dots,x_{n-1})\) which is consistent with
\(\Diag_{\mathrm{el}}(\MM)\). We write \(S_n(A,\MM)\) for the space of
all such types.

We say that \(p(\bar x)\in S_n(A,\MM)\) is an
\emph{arithmetic \(n\)-type over \(A\) inside \(\MM\)} if \(A\) is
arithmetical and its code
\[
\Code_A^{\MM}(p):=
\{\ulcorner \varphi(\bar x,\bar a)\urcorner:
\bar a\in A^{<\omega},\ \varphi(\bar x,\bar a)\in p\}
\]
is an arithmetical subset of \(\N\). We write
\(S_n^{\mathrm{ar}}(A,\MM)\) for the countable family of arithmetic \(n\)-types
over \(A\) inside \(\MM\). A \emph{code} for \(S_n^{\mathrm{ar}}(A,\MM)\) is an effective join of codes for its types, such that each type is coded at least once.
\end{definition}

When \(A=M\), we write \(S_n^{\mathrm{ar}}(M,\MM)\), or simply
\(S_n^{\mathrm{ar}}(M)\) when the ambient model is clear.
Throughout this section, formulas over \(A\) are coded as formulas in
the language \(\LL(A)\), with constants only for elements of \(A\).
Equivalently, a code records an ordinary formula code together with a
finite tuple of parameters from \(A\). This is the canonical parameter
coding used for all type codes below.

\begin{lemma}[Invariance under elementary extension]
\label{lem:arith-type-space-invariance}
Let \(\MM\preccurlyeq\NN\) be structures, and let \(A\subseteq M\).
Then, for every \(n\ge 1\), \(S_n(A,\MM)=S_n(A,\NN)\).
If moreover \(\MM\) and \(\NN\) are arithmetically extendible and
\(A\) is arithmetical, then
\(S_n^{\mathrm{ar}}(A,\MM)=S_n^{\mathrm{ar}}(A,\NN)\).
\end{lemma}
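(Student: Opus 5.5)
The plan is to show that \(\MM\) and \(\NN\) impose literally the same consistency constraints on \(\LL(A)\)-formulas, so the two type spaces coincide as sets of formulas and hence have identical codes.

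First, since \(\MM\preccurlyeq\NN\) and \(A\subseteq M\), for every \(\LL(A)\)-sentence \(\sigma\) we have \(\MM\models\sigma\) iff \(\NN\models\sigma\). Next, I would reduce consistency with the elementary diagrams to satisfaction in the structures. A set \(p(\bar x)\) of \(\LL(A)\)-formulas is consistent with \(\Diag_{\mathrm{el}}(\MM)\) iff every finite conjunction \(\varphi(\bar x,\bar a)\) of members of \(p\) satisfies \(\MM\models\exists\bar x\,\varphi(\bar x,\bar a)\). This is the usual compactness reformulation, where the new constants \(\bar x\) are eliminated by existential quantification. The same holds with \(\NN\) in place of \(\MM\), using \(\Diag_{\mathrm{el}}(\NN)\).

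For the direction of this equivalence concerning \(\NN\), there is one point to check. \(\Diag_{\mathrm{el}}(\NN)\) contains sentences about elements of \(N\setminus M\), but those constants do not occur in \(p\). Consistency of \(p\cup\Diag_{\mathrm{el}}(\NN)\) is therefore equivalent, by compactness, to finite satisfiability of \(p\) in \(\NN\) itself. Combining this with the first step, the consistent sets of \(\LL(A)\)-formulas are the same for \(\MM\) and \(\NN\). Maximality is a property of the set of formulas alone, so \(S_n(A,\MM)=S_n(A,\NN)\). I expect this bookkeeping, namely that extra diagram sentences about new elements impose no constraint on \(\LL(A)\)-formulas, to be the only mildly delicate step. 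It is handled by the remark that any finite fragment involving new constants can be existentially quantified away and is then true in \(\NN\).

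For the arithmetic clause, the code \(\Code_A(p)\) depends only on \(p\) and \(A\), not on the ambient model. Whether a type is arithmetic is the condition "\(A\) arithmetical and \(\Code_A(p)\) arithmetical," which is therefore the same in \(\MM\) and \(\NN\). So \(S_n^{\mathrm{ar}}(A,\MM)=S_n^{\mathrm{ar}}(A,\NN)\). The hypothesis of arithmetic extendibility is used only so that both sides are defined within the paper's setting (Definition~\ref{def:arith-type}); it plays no role in the argument.
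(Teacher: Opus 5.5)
Your proposal is correct and follows the same route as the paper. The paper notes that elementarity yields the same maximal consistent sets of \(\LL(A)\)-formulas over both diagrams, and that \(\Code_A(p)\) depends only on \(p\) and the literal set \(A\); your reduction of consistency with \(\Diag_{\mathrm{el}}(\MM)\) to the truth of \(\exists\bar x\,\varphi(\bar x,\bar a)\) in \(\MM\) simply spells out the elementarity step that the paper leaves implicit.
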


\begin{proof}
Elementarity gives the same maximal consistent sets of
\(\LL(A)\)-formulas over both diagrams, and the code
\(\Code_A(p)\) is unchanged since the same literal set \(A\) and
parameter-coding convention are used in both structures.
\end{proof}

\begin{lemma}
\label{lem:arith-type-realized}
Let \(\MM\) be an arithmetically extendible structure, let
\(A\subseteq M\) be arithmetical, and let
\(p(\bar x)\in S_n(A,\MM)\). Then the following are equivalent:
\begin{enumerate}[label=(\arabic*)]
\item \(p(\bar x)\) is an arithmetic \(n\)-type over \(A\) inside
      \(\MM\);
\item some arithmetically decidable elementary extension of \(\MM\)
      realizes \(p\);
\item some arithmetically extendible elementary extension of \(\MM\)
      realizes \(p\).
\end{enumerate}
\end{lemma}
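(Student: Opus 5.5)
We prove the cycle \((1)\Rightarrow(2)\Rightarrow(3)\Rightarrow(1)\). The implication \((2)\Rightarrow(3)\) is immediate, since an arithmetically decidable structure is trivially arithmetically extendible (it is an elementary substructure of itself).

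For \((1)\Rightarrow(2)\) we use a compactness argument. Fix an arithmetically decidable \(\widehat{\MM}\succcurlyeq\MM\) witnessing extendibility; by Lemma~\ref{lem:arith-type-space-invariance}, \(p\) is also a type over \(A\) inside \(\widehat{\MM}\). Add new constants \(\bar c\) and set
\[
\Sigma:=\Diag_{\mathrm{el}}(\widehat{\MM})\cup p(\bar c).
\]
The first part is arithmetical because \(\widehat{\MM}\) is arithmetically decidable. The second part is arithmetical because \(\Code_A^{\MM}(p)\) is arithmetical by~(1), and substituting \(\bar c\) for \(\bar x\) is a primitive recursive recoding. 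Hence \(\Sigma\) is arithmetically definable.

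Finite satisfiability follows from consistency of \(p\) with \(\Diag_{\mathrm{el}}(\MM)\). A finite fragment mentions finitely many formulas \(\varphi_1,\dots,\varphi_k\in p\), and \(p\) is closed under conjunction by maximality, so \(\exists\bar x\bigwedge\varphi_i\) holds in \(\MM\), hence in \(\widehat{\MM}\). By Theorem~\ref{thm:compact}, \(\Sigma\) has an arithmetically decidable model. Recoding its domain as in the proof of Proposition~\ref{prop:skolem-independent-sequence}, or via Lemma~\ref{lem:base-preserving-coded-compactness}, so that the old domain \(\widehat M\) is preserved literally, we obtain an arithmetically decidable \(\NN\succcurlyeq\widehat{\MM}\succcurlyeq\MM\) in which \(\bar c\) realizes \(p\).

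For \((3)\Rightarrow(1)\), suppose \(\NN\succcurlyeq\MM\) is arithmetically extendible and \(\bar b\in N^n\) realizes \(p\). Choose an arithmetically decidable \(\widehat{\NN}\succcurlyeq\NN\). Then \(\bar b\) still realizes \(p\) in \(\widehat{\NN}\), by elementarity and the fact that the parameters lie in \(A\subseteq M\). Since \(p\) is a complete type, we have
\[
\ulcorner\varphi(\bar x,\bar a)\urcorner\in\Code_A^{\MM}(p)
\iff \bar a\in A^{<\omega}\ \text{and}\ \widehat{\NN}\models\varphi(\bar b,\bar a).
\]
The right-hand side is arithmetical: \(A\) is arithmetical, the satisfaction relation of \(\widehat{\NN}\) is arithmetical uniformly in formula and tuple, and the fixed finite tuple \(\bar b\) adds no complexity.

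The main point requiring care is the bookkeeping in \((1)\Rightarrow(2)\). One must ensure the compactness model is genuinely an elementary extension of \(\MM\) with \(M\) sitting inside it as the literal same set, not merely an isomorphic copy. This is exactly what the base-preserving coded compactness lemma from the appendix supplies. One must also check that the translation between codes of \(\LL(A)\)-formulas in \(\bar x\) and sentences in \(\bar c\) respects the canonical parameter coding.
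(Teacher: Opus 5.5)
Your proposal is correct and follows essentially the same route as the paper. You have the trivial \((2)\Rightarrow(3)\), the computation of the type code from the arithmetical satisfaction relation of a decidable extension \(\widehat{\NN}\) for \((3)\Rightarrow(1)\), and compactness applied to \(\Diag_{\mathrm{el}}(\widehat{\MM})\cup p(\bar c)\) together with base-preserving coded compactness for \((1)\Rightarrow(2)\).
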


\begin{proof}
The implication \((2)\Rightarrow(3)\) is immediate.

For \((3)\Rightarrow(1)\), let
\(\MM\preccurlyeq\KK\preccurlyeq\widehat{\KK}\), where
\(\widehat{\KK}\) is arithmetically decidable, and suppose
\(\bar b\in K^n\) realizes \(p\). For every formula
\(\varphi(\bar x,\bar a)\) with parameters from \(A\), membership
\(\varphi(\bar x,\bar a)\in p\) is equivalent to
\(\widehat{\KK}\models \varphi(\bar b,\bar a)\).
Since \(\widehat{\KK}\) is arithmetically decidable, \(A\) is
arithmetical, and \(\bar b\) is a fixed finite tuple that may be used as
a parameter in the arithmetical satisfaction relation, the right-hand
side is an arithmetical condition on the code of
\(\varphi(\bar x,\bar a)\). Hence \(p\) is arithmetic.

For \((1)\Rightarrow(2)\), choose an arithmetically decidable elementary
extension \(\MM\preccurlyeq \widehat{\MM}\).
By Lemma~\ref{lem:arith-type-space-invariance}, we may regard \(p\) as
a type over \(A\) inside \(\widehat{\MM}\). Work in the language
\(\LL(\widehat M)\) obtained by adding constants for all elements of
\(\widehat M\), and add new constants
\(\bar c=(c_0,\dots,c_{n-1})\). The theory
\[
\Diag_{\mathrm{el}}(\widehat{\MM})\cup p(\bar c)
\]
is arithmetically definable: the elementary diagram of
\(\widehat{\MM}\) is arithmetical, and the set of formulas in
\(p(\bar c)\) is arithmetical by assumption. Every finite subset is
satisfiable because \(p\in S_n(A,\widehat{\MM})\), i.e. every finite
fragment of \(p\) is consistent with
\(\Diag_{\mathrm{el}}(\widehat{\MM})\). By
Lemma~\ref{lem:base-preserving-coded-compactness}, this theory has an
arithmetically decidable model whose named copy of \(\widehat{\MM}\) is
the actual old structure. Its \(\LL\)-reduct is an arithmetically
decidable elementary extension of \(\MM\) realizing \(p\).
\end{proof}

\begin{definition}[Arithmetically stable]
\label{def:arith-stable}
Let \(T\) be a complete arithmetically definable theory in a countable
language. We say that \(T\) is \emph{arithmetically stable} if for every
arithmetically decidable model \(\MM\models T\) and every arithmetical
set \(A\subseteq M\), 
the family
\[
S_{<\omega}^{\mathrm{ar}}(A,\MM)
:=
\left(S_n^{\mathrm{ar}}(A,\MM)\right)_{n \geq 1}
\]
has an arithmetical code, i.e., there is an effective join \(X = \bigoplus_{n \geq 1} X_n\) such that for each \(n \geq 1\), $X_n$ is a code for \(S_n^{\mathrm{ar}}(A,\MM)\).
\end{definition}

\begin{remark}[Convention for type spaces]
\label{rem:indexed-type-space-convention}
If \(T\) is arithmetically stable and we fix an arithmetical code for \(X = \bigoplus_{n \geq 1} X_n\) for \(S_{<\omega}^{\mathrm{ar}}(A,\MM)\), we may assume that each \(X_n\) contains no repetition, i.e., each arithmetic \(n\)-type in \(S_n^{\mathrm{ar}}(A,\MM)\) is coded exactly once in \(X_n\). This is because we can remove duplicates in an arithmetical way.
\end{remark}

\begin{definition}[Simultaneous realization]
\label{def:realizes-all-arith-types}
Let \(\MM\models T\) be arithmetically extendible and let
\(A\subseteq M\) be arithmetical. An elementary extension
\(\MM\preccurlyeq\NN\models T\) \emph{realizes all arithmetic finite
types over \(A\)} if, for every \(n\geq 1\), every type
\(p(\bar x)\in S_n^{\mathrm{ar}}(A,\MM)\) is realized by some tuple from
\(N^n\).
\end{definition}

\begin{lemma}[Finite simultaneous realization]
\label{lem:finite-simultaneous-realization}
Let \(\MM\models T\) be a model, let \(A\subseteq M\), and let
\(p_0(\bar x_0),\dots,p_{r-1}(\bar x_{r-1})\) be types over \(A\) inside
\(\MM\). For each \(i<r\), let
\(\Delta_i\subseteq p_i\) be finite. Then
\[
\Diag_{\mathrm{el}}(\MM)\cup\bigcup_{i<r}\Delta_i(\bar c_i)
\]
is satisfiable, where the tuples \(\bar c_i\) are new constants of the
corresponding arities.
\end{lemma}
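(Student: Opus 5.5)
By compactness it suffices to find, inside \(\MM\) itself, tuples \(\bar b_0,\dots,\bar b_{r-1}\) with \(\MM\models\bigwedge\Delta_i(\bar b_i)\) for every \(i<r\). Interpreting each \(\bar c_i\) as \(\bar b_i\) then makes \(\MM\), with every element named by its constant, a model of \(\Diag_{\mathrm{el}}(\MM)\cup\bigcup_{i<r}\Delta_i(\bar c_i)\). No arithmetical bookkeeping is needed here. The lemma is a purely model-theoretic finite-satisfiability statement, and the effective compactness of Theorem~\ref{thm:compact} will be applied to the full scheme only afterwards.

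To find \(\bar b_i\), fix \(i<r\). Since \(\Delta_i\) is a finite subset of \(p_i\), the conjunction \(\delta_i(\bar x_i,\bar a_i):=\bigwedge\Delta_i\) is a single \(\LL(A)\)-formula. Here \(\bar a_i\) is the finite tuple of parameters from \(A\) that occur in it. The formula \(\delta_i\) belongs to \(p_i\) up to equivalence, because \(p_i\) is maximal consistent and hence closed under finite conjunctions.

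By Definition~\ref{def:arith-type}, \(p_i\) is consistent with \(\Diag_{\mathrm{el}}(\MM)\). Hence \(\Diag_{\mathrm{el}}(\MM)\cup\{\delta_i(\bar c_i,\bar a_i)\}\) has a model \(\NN\). The reduct of \(\NN\) is an elementary extension of \(\MM\) via the named copy, and in it \(\exists\bar x_i\,\delta_i(\bar x_i,\bar a_i)\) holds. This is an \(\LL(M)\)-sentence, so elementarity gives \(\MM\models\exists\bar x_i\,\delta_i(\bar x_i,\bar a_i)\). Choose a witness \(\bar b_i\in M^{|\bar x_i|}\).

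The tuples \(\bar c_i\) are disjoint sets of new constants, and the \(\Delta_i\) impose no joint constraints between different indices. So the witnesses can be chosen independently for each \(i\), and this completes the argument.

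The only point needing care is the transfer of the existential sentence from an arbitrary model of the diagram back to \(\MM\). This step is exactly the completeness of \(\Diag_{\mathrm{el}}(\MM)\) for \(\LL(M)\)-sentences, so I do not expect a genuine obstacle.
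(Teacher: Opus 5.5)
Your argument is correct, but it takes a more direct route than the paper's. The paper realizes $\Delta_0,\dots,\Delta_{r-1}$ one at a time in a chain of successive elementary extensions of $\MM$. It invokes Lemma~\ref{lem:arith-type-space-invariance} to keep each remaining $p_i$ consistent with the diagram of the current extension.

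You observe instead that $\Diag_{\mathrm{el}}(\MM)$ is complete for $\LL(M)$-sentences. Hence each $\exists\bar x_i\,\bigwedge\Delta_i(\bar x_i)$ already holds in $\MM$, and every fragment has a witness in $M$ itself. The statement demands no distinctness among the $\bar c_i$ or from named elements, so $\MM$ with the $\bar c_i$ interpreted by these witnesses is already a model. This dispenses with the chain and the invariance lemma, and it works equally well for any number of types.

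The chain construction would give the stronger conclusion with the full types $p_i(\bar c_i)$ in place of the fragments. That stronger form follows from your version by ordinary compactness, and it is not needed in Proposition~\ref{prop:stability-realization-characterization}, where the lemma is applied.

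Two small points:
\begin{enumerate}
\item Your opening ``by compactness it suffices'' is a misattribution: you exhibit a model directly and never use compactness.
\item The detour through the auxiliary model $\NN$ can be dropped. Completeness of the diagram gives $\MM\models\exists\bar x_i\,\delta_i(\bar x_i,\bar a_i)$ immediately. If it failed, its negation would lie in $\Diag_{\mathrm{el}}(\MM)$ and contradict the consistency of $\Delta_i$ with the diagram.
\end{enumerate}
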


\begin{proof}
Realize the finite fragments one at a time in successive elementary
extensions. The induction invariant is that the base parameter set
\(A\) and all previously realized tuples remain inside the current
elementary extension. After each step,
Lemma~\ref{lem:arith-type-space-invariance} keeps the remaining types
over \(A\) consistent in the new extension.
\end{proof}

\begin{proposition}[Realization characterization of arithmetical stability]
\label{prop:stability-realization-characterization}
Assume \(T\) is complete, arithmetically definable, and has no finite
models. Then the following are equivalent:
\begin{enumerate}[label=(\arabic*)]
\item \(T\) is arithmetically stable.
\item For every arithmetically extendible model \(\MM\models T\) and
      every arithmetical set \(A\subseteq M\), there is an
      arithmetically extendible elementary extension
      \[
      \MM\preccurlyeq\NN\models T
      \]
      which realizes all arithmetic finite types over \(A\).
\end{enumerate}
\end{proposition}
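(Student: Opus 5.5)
For \((1)\Rightarrow(2)\), fix an arithmetically extendible \(\MM\) and an arithmetical \(A\subseteq M\). Choose an arithmetically decidable \(\widehat{\MM}\succcurlyeq\MM\). By Lemma~\ref{lem:arith-type-space-invariance}, \(S_n^{\mathrm{ar}}(A,\MM)=S_n^{\mathrm{ar}}(A,\widehat{\MM})\) for all \(n\). Since \(\widehat{\MM}\) is arithmetically decidable, arithmetical stability gives an arithmetical code \(X=\bigoplus_n X_n\) for this family.

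I would add new constants \(\bar c_{n,j}\) of arity \(n\), one tuple for each index \(j\) of a type \(p_{n,j}\) coded in \(X_n\). Then consider
\[
\Sigma:=\Diag_{\mathrm{el}}(\widehat{\MM})\cup\bigcup_{n,j}p_{n,j}(\bar c_{n,j}).
\]
Membership in \(\Sigma\) is arithmetical: the diagram is arithmetical, and deciding whether a formula lies in \(p_{n,j}(\bar c_{n,j})\) is a query to the arithmetical set \(X\). Finite satisfiability is exactly Lemma~\ref{lem:finite-simultaneous-realization}, applied in \(\widehat{\MM}\). Lemma~\ref{lem:base-preserving-coded-compactness} then gives an arithmetically decidable model \(\NN\) whose named copy of \(\widehat{\MM}\) is the actual old structure. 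Its \(\LL\)-reduct is an arithmetically decidable, hence arithmetically extendible, elementary extension of \(\MM\) (via \(\widehat{\MM}\)) that realizes every arithmetic type over \(A\).

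For \((2)\Rightarrow(1)\), fix an arithmetically decidable \(\MM\) and an arithmetical \(A\). Take \(\NN\) as in (2), and an arithmetically decidable \(\widehat{\NN}\succcurlyeq\NN\). The key observation is that for every \(n\), each arithmetic \(n\)-type over \(A\) is \(\tp(\bar b/A)\) for some \(\bar b\in N^n\subseteq\widehat N^n\). Conversely, for every \(\bar b\in\widehat N^n\), the type \(\tp^{\widehat{\NN}}(\bar b/A)\) is arithmetic by the argument of Lemma~\ref{lem:arith-type-realized}, \((3)\Rightarrow(1)\), and lies in \(S_n(A,\MM)\) by invariance. Hence
\[
S_n^{\mathrm{ar}}(A,\MM)=\{\tp^{\widehat{\NN}}(\bar b/A):\bar b\in\widehat N^n\}.
\]
This suggests defining \(X_n\) as the join, over codes \(\bar b\in\widehat N^n\) in increasing order, of the sets \(\{\ulcorner\varphi(\bar x,\bar a)\urcorner:\bar a\in A^{<\omega},\ \widehat{\NN}\models\varphi(\bar b,\bar a)\}\). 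This is uniformly arithmetical in \(n\), since satisfaction in \(\widehat{\NN}\) is a single arithmetical relation. Every type is coded at least once, which is all the definition of a code requires.

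The main obstacle is the uniformity in \((1)\Rightarrow(2)\): one must check that the compactness theorem (Theorem~\ref{thm:compact} and its base-preserving form) applies to a theory whose axioms are indexed by the code \(X\) through a single finite jump bound. A second subtlety is that in (1) stability is only assumed for arithmetically decidable models, while in (2) \(\MM\) is merely extendible. Both are handled by first passing to \(\widehat{\MM}\) and invoking Lemma~\ref{lem:arith-type-space-invariance}, so that \(A\) remains an arithmetical subset of a decidable model. I would also check that the domain recoding keeps \(M\) fixed, which the base-preserving lemma guarantees.
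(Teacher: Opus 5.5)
Your proposal is correct and follows the paper's proof essentially step for step. For \((1)\Rightarrow(2)\) you pass to an arithmetically decidable \(\widehat{\MM}\succcurlyeq\MM\), use invariance and the arithmetical code to form \(\Diag_{\mathrm{el}}(\widehat{\MM})\cup\bigcup_e p_e(\bar c_e)\), get finite satisfiability from Lemma~\ref{lem:finite-simultaneous-realization}, and apply Lemma~\ref{lem:base-preserving-coded-compactness}; for \((2)\Rightarrow(1)\) you index the realized types by tuples from an arithmetically decidable \(\widehat{\NN}\succcurlyeq\NN\), exactly as the paper does.
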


\begin{proof}
Assume first that \(T\) is arithmetically stable. Let
\(\MM\models T\) be arithmetically extendible and let
\(A\subseteq M\) be arithmetical. Choose an arithmetically decidable
elementary extension
\[
\MM\preccurlyeq\widehat{\MM}\models T.
\]
By Lemma~\ref{lem:arith-type-space-invariance}, the arithmetic type
spaces over \(A\) inside \(\MM\) and \(\widehat{\MM}\) agree.

By arithmetical stability, there is an arithmetical list \((p_e(\bar x_e))_{e \in E}\) of all types in the family
\(S_{<\omega}^{\mathrm{ar}}(A,\widehat{\MM})\). Expand the language by constants for the elements of \(\widehat M\) and,
for each \(e\in E\), by a tuple of new constants
\(\bar c_e\) of the same length as \(\bar x_e\). Consider the theory
\[
\Sigma:=
\Diag_{\mathrm{el}}(\widehat{\MM})
\cup
\bigcup_{e\in E} p_e(\bar c_e).
\]
The theory \(\Sigma\) is arithmetically definable because the membership relation
\(\theta\in p_e\) is arithmetical. Every
finite fragment mentions only finitely many of the types \(p_e\) and
finitely many formulas from each of them, so it is satisfiable by
Lemma~\ref{lem:finite-simultaneous-realization}. The elementary-extension
invariance used there is what keeps the remaining types consistent after
earlier ones have been realized in elementary extensions.

By Lemma~\ref{lem:base-preserving-coded-compactness}, \(\Sigma\) has an
arithmetically decidable model in which the named copy of
\(\widehat{\MM}\) is the actual old structure. The \(\LL\)-reduct is
then an arithmetically decidable elementary extension of \(\MM\)
realizing every arithmetic finite type over \(A\). In particular it is
arithmetically extendible.

Conversely, assume clause~\textup{(2)}. Let \(\MM\models T\) be
arithmetically decidable, and let \(A\subseteq M\) be arithmetical. By
clause~\textup{(2)}, choose an arithmetically extendible elementary
extension
\[
\MM\preccurlyeq\NN\models T
\]
which realizes all arithmetic finite types over \(A\). Choose an
arithmetically decidable elementary extension
\[
\NN\preccurlyeq\widehat{\NN}\models T.
\]

Let \(k<\omega\) be large enough so that both the satisfaction relation
of \(\widehat{\NN}\) and the set \(A\) are computable in \(0^{(k)}\).
Index types by pairs \((n,\bar b)\), where \(n\geq 1\) and
\(\bar b\in \widehat N^n\), and let the corresponding type be
\[
\tp^{\widehat{\NN}}(\bar b/A).
\]
For a formula code \(\ulcorner\varphi(\bar x,\bar a)\urcorner\) with
\(\bar a\in A^{<\omega}\), membership in this indexed type is decided
by the \(0^{(k)}\)-computable relation
\[
\widehat{\NN}\models \varphi(\bar b,\bar a).
\]
Thus the family of all types realized in \(\widehat{\NN}\) over \(A\)
is computable in \(0^{(k)}\), uniformly in the arity.

Every type realized in \(\widehat{\NN}\) over the arithmetical set \(A\)
is arithmetic, because \(\widehat{\NN}\) is arithmetically decidable.
Conversely, every arithmetic finite type over \(A\) inside \(\MM\) is
realized in \(\NN\), hence in \(\widehat{\NN}\), by the choice of
\(\NN\). By Lemma~\ref{lem:arith-type-space-invariance}, these are
exactly the arithmetic type spaces over \(A\) inside \(\MM\). Therefore
\(
S_{<\omega}^{\mathrm{ar}}(A,\MM)
\)
has an arithmetical code. Since \(\MM\) and \(A\) were
arbitrary, \(T\) is arithmetically stable.
\end{proof}

\begin{lemma}[Arithmetical elementary amalgamation]
\label{lem:arith-elementary-amalgamation}
Let
\[
\MM_0\preccurlyeq\MM_1,\MM_2\models T
\]
be arithmetically decidable models, with the domains of \(\MM_1\) and
\(\MM_2\) first replaced by disjoint arithmetical copies over \(M_0\).
Then there is an arithmetically decidable model \(\KK\models T\) and
elementary embeddings
\[
f_i:\MM_i\to\KK
\qquad(i=1,2)
\]
such that \(f_1\restr_{M_0}=f_2\restr_{M_0}\).
The amalgam is not asserted to be strong: the images of \(\MM_1\) and
\(\MM_2\) may have additional intersection beyond the common copy of
\(M_0\).
\end{lemma}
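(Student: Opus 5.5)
The plan is the classical elementary-diagram amalgamation argument, carried out through the paper's arithmetic compactness machinery. I would work in the language \(\LL(M_1\cup M_2)\), with a constant for each element of the disjoint copies; the elements of \(M_0\) are named by a single constant each, shared between the two sides. Consider
\[
\Sigma:=\Diag_{\mathrm{el}}(\MM_1)\cup\Diag_{\mathrm{el}}(\MM_2).
\]
Both diagrams are arithmetical, since \(\MM_1,\MM_2\) are arithmetically decidable and the disjoint recoding over \(M_0\) is arithmetical. Hence \(\Sigma\) is arithmetically definable: membership of a coded sentence in \(\Sigma\) is a disjunction of two arithmetical conditions, together with the arithmetical check of which side's constants occur.

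The main step is finite satisfiability, done by the standard argument. A finite fragment reduces to the single sentence \(\varphi(\bar m,\bar a)\wedge\psi(\bar m,\bar b)\), where \(\bar m\in M_0\), \(\bar a\in M_1\setminus M_0\), \(\bar b\in M_2\setminus M_0\), \(\MM_1\models\varphi(\bar m,\bar a)\) and \(\MM_2\models\psi(\bar m,\bar b)\). Since \(\MM_0\preccurlyeq\MM_2\), we get \(\MM_0\models\exists\bar y\,\psi(\bar m,\bar y)\). Since \(\MM_0\preccurlyeq\MM_1\), the same holds in \(\MM_1\), so some \(\bar b'\in M_1\) satisfies \(\psi(\bar m,\bar b')\). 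Interpret \(\bar b\) as \(\bar b'\) inside \(\MM_1\) (constants may be identified, which is exactly why the amalgam is not strong). This satisfies the fragment.

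Next I would apply Theorem~\ref{thm:compact}, or better Lemma~\ref{lem:base-preserving-coded-compactness}, to obtain an arithmetically decidable model \(\widetilde\KK\) of \(\Sigma\). Then I would recode its domain into an arithmetical subset of \(\N\) with infinite complement, following the recoding convention used in Proposition~\ref{prop:skolem-independent-sequence}, to get \(\KK\). Define \(f_i(c)\) as the interpretation of the constant naming \(c\in M_i\). Each \(f_i\) is elementary because \(\Sigma\) contains the whole elementary diagram of \(\MM_i\). Moreover \(f_1\restr_{M_0}=f_2\restr_{M_0}\), because the \(M_0\)-constants are shared; this is where the disjoint-copies hypothesis is used. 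Each \(f_i\) is also arithmetical, as it is read off the arithmetical diagram of \(\widetilde\KK\) composed with the recoding.

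I expect the main obstacle to be bookkeeping rather than mathematics. One point is to check that the combined language \(\LL(M_1\cup M_2)\) is arithmetically coded with shared names on \(M_0\). The other is to check that the compactness result applies to an arithmetically definable theory in this expanded arithmetical language, yielding arithmetical decidability, not merely a countable model. Both follow from the hypotheses and from the form in which compactness is stated in the appendix.
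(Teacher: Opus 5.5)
Your proposal is correct and follows essentially the same argument as the paper: the same theory \(\Sigma\) (the two elementary diagrams with shared constants on \(M_0\)), a finite-satisfiability check that pulls one side's existential witnesses through \(\MM_0\), and an application of Theorem~\ref{thm:compact}. The only cosmetic difference is that the paper realizes the \(\MM_1\)-side constants by a tuple from \(M_0\) and interprets the fragment in \(\MM_2\), whereas you realize the \(\MM_2\)-side constants in \(\MM_1\); both are valid.
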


\begin{proof}
Work in the language with constants for the elements of \(M_1\cup M_2\),
identifying the two constants for each element of \(M_0\). Consider
\[
\Sigma:=
\Diag_{\mathrm{el}}(\MM_1)
\cup_{\Diag_{\mathrm{el}}(\MM_0)}
\Diag_{\mathrm{el}}(\MM_2).
\]
Since the two diagrams and the common domain \(M_0\) are arithmetical,
\(\Sigma\) is arithmetically definable.

For finite satisfiability, collect the finitely many constants from the
\(\MM_1\)-side not belonging to \(M_0\) as variables \(\bar x\).  The
\(\MM_1\)-part of the fragment gives a finite conjunction
\(\theta(\bar x,\bar m)\), with \(\bar m\in M_0^{<\omega}\), such that
\(\MM_1\models \exists\bar x\,\theta(\bar x,\bar m)\).  Since
\(\MM_0\preccurlyeq\MM_1\), choose \(\bar u\in M_0^{|\bar x|}\)
satisfying this existential requirement; then interpret those finitely
many \(\MM_1\)-constants by \(\bar u\), and interpret the \(\MM_2\)-side
inside \(\MM_2\). Hence every finite fragment of \(\Sigma\) is
satisfiable.

By Theorem~\ref{thm:compact}, \(\Sigma\) has an arithmetically decidable
model. The interpretations of the named constants give elementary
embeddings of \(\MM_1\) and \(\MM_2\) into this model, agreeing on
\(M_0\). The full elementary diagrams make each individual embedding
injective, although the two images need not be disjoint over \(M_0\).
\end{proof}

\begin{lemma}[Literal left-base elementary amalgamation]
\label{lem:literal-left-amalgamation}
In the setting of Lemma~\ref{lem:arith-elementary-amalgamation}, one may
choose the amalgam so that \(\MM_1\) is contained literally as an
elementary submodel. Thus there are an arithmetically decidable
\(\KK\models T\) and an elementary embedding
\(j:\MM_2\to\KK\) over \(M_0\) with
\[
\MM_1\preccurlyeq\KK.
\]
No disjointness beyond the common base is asserted.
\end{lemma}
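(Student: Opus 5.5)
The plan is to rerun the compactness argument of Lemma~\ref{lem:arith-elementary-amalgamation}, but in a language that names every element of \(M_1\) by itself and names only the elements of \(M_2\setminus M_0\) by fresh constants. The base-preserving compactness lemma (Lemma~\ref{lem:base-preserving-coded-compactness}) will then return a model in which the named copy of \(\MM_1\) is the actual structure \(\MM_1\), rather than merely an isomorphic copy.

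First, work in \(\LL(M_1)\) expanded by new constants \(d_b\) for \(b\in M_2\setminus M_0\); for \(m\in M_0\) we reuse the \(\LL(M_1)\)-constant for \(m\). Let
\[
\Sigma:=\Diag_{\mathrm{el}}(\MM_1)\cup\Diag_{\mathrm{el}}(\MM_2)[b\mapsto d_b].
\]
Here the second diagram has its \(M_2\setminus M_0\)-constants replaced by the \(d_b\), while the \(M_0\)-constants are left as they are. Because both diagrams are arithmetical and \(M_0\) is arithmetical, \(\Sigma\) is arithmetically definable. The disjoint-copy hypothesis is used here: it makes the substitution map \(b\mapsto d_b\) arithmetical. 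It also guarantees that no element of \(M_1\setminus M_0\) is accidentally identified with an element of \(M_2\).

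Next comes finite satisfiability, which I will prove exactly as in Lemma~\ref{lem:arith-elementary-amalgamation} but with the roles of the two sides swapped. A finite fragment contains a finite conjunction \(\theta(\bar d,\bar m)\) from the \(\MM_2\)-side, with \(\bar m\in M_0\), and \(\MM_2\models\exists\bar x\,\theta(\bar x,\bar m)\). By \(\MM_0\preccurlyeq\MM_2\) there is a witness \(\bar u\in M_0\), and then by \(\MM_0\preccurlyeq\MM_1\) the tuple \(\bar u\) still satisfies \(\theta(\bar x,\bar m)\) in \(\MM_1\). So \(\MM_1\) itself, with \(\bar d\) interpreted as \(\bar u\), satisfies the fragment. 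This is the right direction for us, since \(\MM_1\) stays literal.

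Then I apply Lemma~\ref{lem:base-preserving-coded-compactness} with \(\widehat{\MM}=\MM_1\) as the preserved base. This gives an arithmetically decidable model \(\KK\) of \(\Sigma\) whose named copy of \(\MM_1\) is the actual structure, so \(\MM_1\preccurlyeq\KK\). Now define \(j\) by \(j(m)=m\) for \(m\in M_0\) and \(j(b)=d_b^{\KK}\) otherwise. The full diagram of \(\MM_2\) contained in \(\Sigma\) makes \(j\) an elementary embedding over \(M_0\). As in the earlier lemma, no disjointness of \(j[M_2]\) from \(M_1\) beyond \(M_0\) is claimed.

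The main obstacle is the bookkeeping needed to verify that the base-preserving compactness lemma applies. The new constants \(d_b\) form a countable arithmetical family, so the recoding must place their interpretations, together with the other new elements, in an infinite arithmetical reserve set disjoint from \(M_1\). The standing convention that domains are non-cofinite provides exactly this reserve. Equivalently, one can take the amalgam \(\KK'\) from Lemma~\ref{lem:arith-elementary-amalgamation} and recode it along an arithmetical bijection that is the inverse of \(f_1\) on \(f_1[M_1]\) and sends the rest of \(\KK'\) into \(\N\setminus M_1\). This bijection is arithmetical because \(f_1\) is given by interpreting constants in an arithmetically decidable model, and its range \(f_1[M_1]\) is arithmetical. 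I would present this recoding as the fallback argument.
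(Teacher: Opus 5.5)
Your proposal is correct and is essentially the paper's proof. The paper applies Lemma~\ref{lem:arith-elementary-amalgamation} and then recodes the amalgam so that \(f_1[M_1]\) becomes \(M_1\) literally, which is exactly your fallback; your main route instead feeds the same two-diagram theory directly into Lemma~\ref{lem:base-preserving-coded-compactness}, which performs that recoding internally (and, since the \(d_b\) are fresh symbols, this direct route does not actually need the disjoint-copy hypothesis).
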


\begin{proof}
Apply Lemma~\ref{lem:arith-elementary-amalgamation}, obtaining an
arithmetically decidable amalgam with elementary embeddings of
\(\MM_1\) and \(\MM_2\). Then use the base-preserving recoding lemma,
Lemma~\ref{lem:base-preserving-coded-compactness}, equivalently the
same transport of the coded domain, to fix the embedded copy
\(f_1[M_1]\) pointwise as the original domain \(M_1\). The remaining
elements are moved into an arithmetical fresh subset disjoint from this
reserved copy, with infinite complement. The transported structure is
arithmetically decidable, contains \(\MM_1\) literally, and carries the
embedded copy of \(\MM_2\) over \(M_0\).
\end{proof}

\begin{corollary}[Model-base form]
\label{cor:model-base-realization-characterization}
Assume \(T\) is complete, arithmetically definable, and has no finite
models. Then \(T\) is arithmetically stable if and only if for every
arithmetically decidable model \(\MM_0\models T\), there is an
arithmetically extendible elementary extension
\[
\MM_0\preccurlyeq\NN\models T
\]
which realizes all arithmetic finite types over \(M_0\).
\end{corollary}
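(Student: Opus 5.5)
The plan is to derive the corollary from Proposition~\ref{prop:stability-realization-characterization} by checking the two implications separately. The model-base condition is a special case of clause~(2) of that proposition, so only the converse requires work.

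\emph{Forward direction.} Assume \(T\) is arithmetically stable and fix an arithmetically decidable \(\MM_0\models T\). Then \(\MM_0\) is arithmetically extendible, witnessed by itself, and \(A:=M_0\) is an arithmetical subset of \(M_0\). Applying clause~(2) of Proposition~\ref{prop:stability-realization-characterization} to the pair \((\MM_0,M_0)\) gives the required arithmetically extendible \(\NN\succcurlyeq\MM_0\) realizing all arithmetic finite types over \(M_0\).

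\emph{Reverse direction.} We verify Definition~\ref{def:arith-stable} directly, imitating the converse half of the proof of Proposition~\ref{prop:stability-realization-characterization}. Fix an arithmetically decidable \(\MM\models T\) and an arithmetical \(A\subseteq M\), and take \(\MM_0:=\MM\). By hypothesis there is an arithmetically extendible \(\NN\succcurlyeq\MM\) realizing every arithmetic type over \(M\). Choose an arithmetically decidable \(\widehat{\NN}\succcurlyeq\NN\) and a \(k\) such that the satisfaction relation of \(\widehat{\NN}\) and the set \(A\) are computable in \(0^{(k)}\).

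The key step is to reduce types over \(A\) to types over \(M\). Let \(p\in S_n^{\mathrm{ar}}(A,\MM)\). Using the elementary diagram of \(\MM\), which is arithmetical, we build an arithmetic completion \(q\in S_n(M,\MM)\) of \(p\). Concretely, enumerate the \(\LL(M)\)-formulas and decide each in turn so that the accumulated set stays consistent with \(\Diag_{\mathrm{el}}(\MM)\cup p\). Equivalently, we realize \(p\) in an arithmetically decidable extension by Lemma~\ref{lem:arith-type-realized}\((1\Rightarrow 2)\) and take the type over \(M\) of the realizing tuple. By Lemma~\ref{lem:arith-type-realized}\((3\Rightarrow1)\) this type \(q\) is arithmetic. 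It is therefore realized by some \(\bar b\in N^n\), and then \(\bar b\) also realizes \(p=q\restr A\).

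It follows that every arithmetic \(n\)-type over \(A\) has the form \(\tp^{\widehat{\NN}}(\bar b/A)\) for some \(\bar b\in\widehat N^n\). Conversely, every type of this form is arithmetic, since \(\widehat{\NN}\) is arithmetically decidable. Indexing by pairs \((n,\bar b)\) as in that proof therefore yields a \(0^{(k)}\)-computable code of \(S^{\mathrm{ar}}_{<\omega}(A,\MM)\), uniformly in \(n\). This shows that \(T\) is arithmetically stable.

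The main obstacle is the extension step from \(p\) to an arithmetic \(q\) over all of \(M\). Passing through Lemma~\ref{lem:arith-type-realized} avoids any delicate effective Lindenbaum argument, since realizing the arithmetic type \(p\) in an arithmetically decidable extension automatically makes the type over \(M\) arithmetic. The remaining point to check is Lemma~\ref{lem:arith-type-space-invariance}, which guarantees that types over \(A\) inside \(\MM\), \(\NN\) and \(\widehat{\NN}\) coincide.
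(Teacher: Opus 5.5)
Your proof is correct, and it takes a shorter route than the paper's.

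\textbf{The paper's route.} For the converse, the paper establishes clause~(2) of Proposition~\ref{prop:stability-realization-characterization} for an arbitrary arithmetically extendible $\MM$ and arithmetical $A\subseteq M$:
\begin{enumerate}
\item it passes to a decidable $\widehat{\MM}\succcurlyeq\MM$;
\item it takes the arithmetically decidable Skolem hull $\MM_0:=\Hull^{\widehat{\MM}^{\mathrm{Sk}}}(A)\restr_\LL$ as the base and applies the hypothesis to $\MM_0$;
\item it uses Lemma~\ref{lem:literal-left-amalgamation} to amalgamate $\widehat{\MM}$ with $\widehat{\NN}_0$ over $\MM_0$;
\item it finishes with the same type-extension step you use: realize $p$, take the type of the realizer over $M_0$, and note that this type is arithmetic and restricts to $p$.
\end{enumerate}
It then cites the Proposition to conclude.

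\textbf{Your route.} You observe that Definition~\ref{def:arith-stable} only quantifies over arithmetically decidable $\MM$. So the hypothesis can be applied to $\MM$ itself, whose domain is an arithmetical set containing $A$. The type-extension step then makes both the Skolem hull and the amalgamation unnecessary. The cost is that you repeat the $(n,\bar b)$-indexing argument from the converse half of the Proposition instead of citing it.

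\textbf{What each buys.} The paper's version obtains clause~(2) for arithmetically extendible models directly and stays modular. Nothing is lost in yours, because that general clause follows from stability by the Proposition's forward direction anyway. Your observation also shows the paper's detour is avoidable even for clause~(2): apply the hypothesis to $\widehat{\MM}$ directly and extend each arithmetic type over $A$ to an arithmetic type over $\widehat{M}$.
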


\begin{proof}
The forward direction is immediate from
Proposition~\ref{prop:stability-realization-characterization}.

For the converse, it is enough to prove clause~\textup{(2)} of
Proposition~\ref{prop:stability-realization-characterization}. Let
\(\MM\models T\) be arithmetically extendible and let
\(A\subseteq M\) be arithmetical. Choose an arithmetically decidable
elementary extension
\[
\MM\preccurlyeq\widehat{\MM}.
\]
Pass to a Skolem expansion of \(\widehat{\MM}\), choosing least
witnesses from the arithmetically decidable presentation of
\(\widehat{\MM}\). This Skolem expansion is arithmetically decidable.
Let
\[
\MM_0:=\Hull^{\widehat{\MM}^{\mathrm{Sk}}}(A)\!\restr_\LL.
\]
By Theorem~\ref{thm:sk}, this Skolem hull is arithmetically decidable,
contains \(A\), and is elementary in \(\widehat{\MM}\); hence
\(\MM_0\preccurlyeq\widehat{\MM}\).

By the assumed model-base property, choose an arithmetically extendible
extension \(\NN_0\succeq\MM_0\) realizing all arithmetic finite types
over \(M_0\), and then choose an arithmetically decidable elementary
extension
\[
\NN_0\preccurlyeq\widehat{\NN}_0.
\]
By Lemma~\ref{lem:literal-left-amalgamation}, amalgamate
\(\widehat{\MM}\) and \(\widehat{\NN}_0\) over \(\MM_0\), preserving
\(\widehat{\MM}\) literally. We obtain an arithmetically decidable model
\(\NN\) with
\[
\MM\preccurlyeq\widehat{\MM}\preccurlyeq\NN
\]
and with an elementary copy of \(\NN_0\) over \(\MM_0\) inside \(\NN\).

Now let \(p\in S_n^{\mathrm{ar}}(A,\MM)\). By
Lemma~\ref{lem:arith-type-realized}, realize \(p\) in an
arithmetically decidable elementary extension of \(\widehat{\MM}\), and
let \(q\) be the type of the realizing tuple over \(M_0\). Then \(q\) is
arithmetic by Lemma~\ref{lem:arith-type-realized} and satisfies
\(q\!\upharpoonright_A=p\). The model \(\NN_0\) realizes \(q\), so the
amalgam \(\NN\) realizes \(p\). By
Proposition~\ref{prop:stability-realization-characterization}, \(T\) is
arithmetically stable.
\end{proof}

\begin{lemma}[Realizing a prescribed arithmetic type in a prescribed degree]
\label{lem:preserve-type-code-degree}
Let \(\MM_0\models T\) be arithmetically decidable, let
\[
p(\bar x)\in S_n^{\mathrm{ar}}(M_0,\MM_0),
\]
and let \(D>\mathbf 0\) be an arithmetic degree. Then there is an
arithmetically extendible model
\[
\MM\models T
\]
such that
\[
\MM_0\preccurlyeq\MM,
\qquad
\degAr(M)=D,
\]
and \(\MM\) realizes \(p\).
\end{lemma}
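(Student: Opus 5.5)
First we realize \(p\) arithmetically, and then we code \(D\) into a Skolem hull over a base that already contains the realization. By Lemma~\ref{lem:arith-type-realized} (implication \((1)\Rightarrow(2)\)), there is an arithmetically decidable elementary extension \(\MM_0\preccurlyeq\MM'\) containing a tuple \(\bar b\in (M')^n\) that realizes \(p\). Then we pass to an arithmetically decidable Skolem expansion \(\MM'^{\mathrm{Sk}}\), choosing least witnesses as in the proof of Corollary~\ref{cor:model-base-realization-characterization}, and set
\[
A:=\Hull^{\MM'^{\mathrm{Sk}}}(M_0\cup\bar b).
\]
By Lemma~\ref{lem:ambient-arithmetical-inequalities}(ii), applied in the Skolem language, \(A\) is arithmetical because \(M_0\) is arithmetical and \(\bar b\) is finite. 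By Theorem~\ref{thm:sk}(3), \(A\!\restr_\LL\preccurlyeq\MM'\). Since \(M_0\subseteq A\) and both are elementary in \(\MM'\), we also get \(\MM_0\preccurlyeq A\!\restr_\LL\).

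Next we apply Proposition~\ref{prop:skolem-independent-sequence}, moreover clause, to \(\NN:=\MM'^{\mathrm{Sk}}\) with this arithmetical \(A\) and \(\psi(x)\equiv(x=x)\). This yields an arithmetically decidable \(\NN^\ast\succcurlyeq\NN\) in \(\LL^{\mathrm{Sk}}(A)\), together with an arithmetical sequence \((a_i)\) satisfying \(a_i\notin\Hull(A\cup\{a_j:j\neq i\})\). We fix \(S\subseteq\N\) with \(\degAr(S)=D\) and let
\[
\MM:=\Hull^{\NN^\ast}(A\cup\{a_i:i\in S\})\!\restr_\LL .
\]
By Lemma~\ref{lem:skolem-hull-degree-over-base}, \(\degAr(M)=D\). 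By Theorem~\ref{thm:sk}(3), \(\MM\preccurlyeq\NN^\ast\!\restr_\LL\), so \(\MM\) is arithmetically extendible. Since \(M_0\subseteq M\) and both \(\MM_0\) and \(\MM\) are elementary in \(\NN^\ast\!\restr_\LL\) (the former via \(\MM_0\preccurlyeq\MM'\preccurlyeq\NN^\ast\)), we get \(\MM_0\preccurlyeq\MM\). Finally, \(\bar b\in A\subseteq M\), and \(\tp(\bar b/M_0)\) computed in \(\MM\) agrees with its value in \(\MM'\) by elementarity, so it equals \(p\). Hence \(\MM\) realizes \(p\).

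The main obstacle is bookkeeping, and it sits in two places. The first is making sure the Skolem expansion of \(\MM'\) used to form \(A\) is the same one that \(\NN^\ast\) extends, so that hulls computed in \(\NN^\ast\) over \(A\) contain \(A\) and remain elementary. Proposition~\ref{prop:skolem-independent-sequence} gives \(\NN\preccurlyeq\NN^\ast\) in the Skolem language naming \(A\), which should suffice. The second is the domain-recoding step: we need the literal old domain, in particular \(M_0\) and \(\bar b\), preserved in \(\NN^\ast\). This is supplied by Lemma~\ref{lem:base-preserving-coded-compactness}, as invoked in the moreover clause. One could simplify by taking \(A:=M'\) directly, since \(M'\) is arithmetical; this avoids the intermediate hull altogether, and is the version I would write if the hull step causes friction.
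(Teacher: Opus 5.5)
Your proof is correct and is essentially the paper's argument: realize \(p\) in an arithmetically decidable extension via Lemma~\ref{lem:arith-type-realized}, Skolemize, take a Skolem-independent sequence over an arithmetical base containing the realization via the moreover clause of Proposition~\ref{prop:skolem-independent-sequence}, and code \(D\) into the resulting hull using Lemma~\ref{lem:skolem-hull-degree-over-base}. The paper takes the base to be the whole extension \(\widehat M\), which is your suggested simplification \(A:=M'\), so the intermediate hull \(\Hull(M_0\cup\bar b)\) is harmless but unnecessary.
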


\begin{proof}
By Lemma~\ref{lem:arith-type-realized}, choose an arithmetically
decidable elementary extension
\[
\MM_0\preccurlyeq\widehat{\MM}\models T
\]
and a tuple \(\bar b\in \widehat M^n\) realizing \(p\).

Pass to an arithmetically decidable Skolem expansion of
\(\widehat{\MM}\) in the language naming all elements of \(\widehat M\).
Apply Proposition~\ref{prop:skolem-independent-sequence}, moreover clause, with
\(A=\widehat M\) and \(\psi(x)\equiv(x=x)\) to obtain an arithmetically
decidable elementary extension
\[
\widehat{\MM}^{\mathrm{Sk}}\preccurlyeq\BB^{\mathrm{Sk}}
\]
and an arithmetical sequence \((a_i)_{i\in\N}\) in the home sort which
is Skolem-independent over \(\widehat M\).

Choose \(S\subseteq\N\) with \(\degAr(S)=D\), and put
\[
\MM:=
\Hull^{\BB^{\mathrm{Sk}}}
\bigl(\widehat M\cup\{a_i:i\in S\}\bigr)\!\restr_\LL.
\]
Then \(\MM_0\preccurlyeq\widehat{\MM}\preccurlyeq\MM\), and
\(\bar b\in M^n\), so \(\MM\) realizes \(p\). Since \(\MM\) is a Skolem
hull inside the arithmetically decidable model \(\BB^{\mathrm{Sk}}\),
it is arithmetically extendible.

By the Skolem-hull coding lemma,
Lemma~\ref{lem:skolem-hull-degree-over-base}, applied over the
arithmetical base \(\widehat M\), we have
\(\degAr(M)=\degAr(S)=D\).
\end{proof}

\begin{proposition}[Categoricity at one degree implies arithmetical stability]
\label{prop:cat-implies-stability}
If \(T\) is \(D\)-categorical for some arithmetic degree \(D>\mathbf 0\),
then \(T\) is arithmetically stable.
\end{proposition}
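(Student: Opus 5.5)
We argue by contraposition, following the outline given at the start of this section. Suppose \(T\) is not arithmetically stable. By Corollary~\ref{cor:model-base-realization-characterization}, there is an arithmetically decidable model \(\MM_0\models T\) such that no arithmetically extendible elementary extension of \(\MM_0\) realizes all arithmetic finite types over \(M_0\). We will produce two arithmetically extendible models \(\MM_1,\MM_2\succcurlyeq\MM_0\) of degree exactly \(D\) such that some arithmetic type over \(M_0\) is realized in \(\MM_2\) but omitted in \(\MM_1\). Any isomorphism \(f:\MM_1\cong\MM_2\) with \(f\restr_{M_0}=\mathrm{id}_{M_0}\) preserves realization of types over \(M_0\). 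Hence no such isomorphism exists, \(D\)-preserving or not, and this contradicts \(D\)-categoricity.

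\emph{Step 1.} By Corollary~\ref{cor:realizing-arbitrary-degree}(ii), choose an arithmetically extendible \(\MM_1\) with \(\MM_0\preccurlyeq\MM_1\) and \(\degAr(M_1)=D\). The choice of \(\MM_0\) guarantees that \(\MM_1\) does not realize every arithmetic finite type over \(M_0\). So fix \(n\ge1\) and \(p\in S_n^{\mathrm{ar}}(M_0,\MM_0)\) omitted in \(\MM_1\). Lemma~\ref{lem:arith-type-space-invariance} justifies regarding \(p\) as a type inside \(\MM_0\), even though it was found inside \(\MM_1\); the arithmeticity of \(p\) refers only to its code over the arithmetical set \(M_0\).

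\emph{Step 2.} Apply Lemma~\ref{lem:preserve-type-code-degree} to \(\MM_0\), \(p\), and \(D\). This gives an arithmetically extendible \(\MM_2\succcurlyeq\MM_0\) with \(\degAr(M_2)=D\) that realizes \(p\). Both models are then as required in Definition~\ref{def:cat}: each is arithmetically extendible, each has degree \(D\), and each sits over the common arithmetically decidable base \(\MM_0\).

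\emph{Step 3.} Conclude as above. Suppose \(f\) were an isomorphism over \(M_0\) and \(\bar b\in M_2^n\) realized \(p\). Then \(f^{-1}(\bar b)\) would satisfy exactly the same \(\LL(M_0)\)-formulas, because \(f\) fixes \(M_0\) and preserves all formulas. So \(f^{-1}(\bar b)\) would realize \(p\) in \(\MM_1\), which is a contradiction.

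The main point requiring care is Step 1. We must make sure the failure of the model-base realization property applies to the particular \(\MM_1\) of degree \(D\), rather than to some other extension. This is automatic, because the negation of Corollary~\ref{cor:model-base-realization-characterization} quantifies over \emph{all} arithmetically extendible extensions of \(\MM_0\). The remaining bookkeeping is to check that "realizes all arithmetic finite types over \(M_0\)" is read with types taken inside \(\MM_0\), which agrees with types inside \(\MM_1\) by invariance.
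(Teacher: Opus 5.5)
Your proof is correct and follows the paper's argument step for step. The model-base characterization supplies a bad base \(\MM_0\); Corollary~\ref{cor:realizing-arbitrary-degree}(ii) gives a degree-\(D\) extension omitting some \(p\in S_n^{\mathrm{ar}}(M_0,\MM_0)\); Lemma~\ref{lem:preserve-type-code-degree} gives a degree-\(D\) extension realizing \(p\); and any isomorphism fixing \(M_0\) would carry a realization of \(p\) back into the first model. Your appeal to Lemma~\ref{lem:arith-type-space-invariance} in Step~1 is harmless but unnecessary, since Definition~\ref{def:realizes-all-arith-types} already takes the types inside \(\MM_0\).
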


\begin{proof}
Assume \(T\) is \(D\)-categorical for some \(D>\mathbf 0\). Suppose,
toward a contradiction, that \(T\) is not arithmetically stable. By
Corollary~\ref{cor:model-base-realization-characterization}, there is
an arithmetically decidable model \(\MM_0\models T\) such that no
arithmetically extendible elementary extension of \(\MM_0\) realizes all
arithmetic finite types over \(M_0\).

By Corollary~\ref{cor:realizing-arbitrary-degree}\textup{(ii)}, choose
an arithmetically extendible model
\(\MM_{\mathrm{low}}\succeq\MM_0\) with
\(\degAr(M_{\mathrm{low}})=D\).
By the choice of \(\MM_0\), the model \(\MM_{\mathrm{low}}\) omits some
arithmetic finite type over \(M_0\). Fix
\(p(\bar x)\in S_n^{\mathrm{ar}}(M_0,\MM_0)\) omitted in
\(\MM_{\mathrm{low}}\).

By Lemma~\ref{lem:preserve-type-code-degree}, choose an arithmetically
extendible model \(\MM_{\mathrm{high}}\succeq\MM_0\) with
\(\degAr(M_{\mathrm{high}})=D\) which realizes \(p\).

By \(D\)-categoricity, there is a \(D\)-preserving isomorphism
\(f:\MM_{\mathrm{low}}\cong\MM_{\mathrm{high}}\) fixing \(M_0\)
pointwise. If \(\bar b\in M_{\mathrm{high}}^n\) realizes \(p\), then
\(f^{-1}(\bar b)\) realizes \(p\) in \(\MM_{\mathrm{low}}\),
contradicting the choice of \(p\).
\end{proof}

\section{Vaughtian pairs}
\label{sec:vaughtian-pairs}

We now prove the second forward implication in the arithmetic
Baldwin--Lachlan strategy, namely that \(D\)-categoricity rules out
Vaughtian pairs. Starting from a Vaughtian pair, we first reduce to a
configuration over a fixed arithmetically decidable base. We then build
two arithmetically extendible models of the same arithmetic degree: a
``low'' model in which the distinguished definable set remains
arithmetical, and a ``high'' model in which that same definable set
recovers the full ambient degree. This contradicts \(D\)-categoricity.

\begin{definition}[Vaughtian pair]
\label{def:vaughtian-pair}
Let \(T\) be complete. A pair \(\MM_1\preccurlyeq \MM_2\models T\) with
\(\MM_1\neq \MM_2\) is a \emph{Vaughtian pair} if there exist a tuple
\(\bar a\in M_1^{<\omega}\) and a formula \(\varphi(x,\bar y)\) such
that \(\varphi(\MM_1,\bar a)=\varphi(\MM_2,\bar a)\), and this common
definable set is infinite. If moreover \(\MM_1\) and
\(\MM_2\) are arithmetically decidable, we call
\((\MM_1,\MM_2)\) an \emph{arithmetically decidable Vaughtian pair}.
\end{definition}

\begin{lemma}
\label{lem:vaughtian-equivalent-ar}
Assume \(T\) is complete and arithmetically definable. Then \(T\) has a
Vaughtian pair if and only if it has an arithmetically decidable
Vaughtian pair.
\end{lemma}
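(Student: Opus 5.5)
The backward direction is immediate: an arithmetically decidable Vaughtian pair is in particular a Vaughtian pair. For the forward direction, I plan to code the existence of a Vaughtian pair as a single arithmetically definable theory and apply the arithmetical compactness theorem, Theorem~\ref{thm:compact}. Fix a Vaughtian pair \(\MM_1\preccurlyeq\MM_2\) with witnesses \(\varphi(x,\bar y)\) and \(\bar a\in M_1^{<\omega}\). Work in the language \(\LL\cup\{U\}\cup\{\bar c\}\), where \(U\) is a new unary predicate and \(\bar c\) is a tuple of new constants. Let \(\Sigma\) consist of the following sentences:
\begin{itemize}
\item \(T\);
\item the relativization \(\chi^U\) for every \(\LL\)-sentence \(\chi\in T\), together with closure of \(U\) under the function symbols and \(U(c_i)\) for each \(i\);
\item for every \(\LL\)-formula \(\psi(\bar z)\), the scheme \(\forall\bar z\in U\,(\psi(\bar z)\leftrightarrow\psi^U(\bar z))\);
\item \(\exists x\,\neg U(x)\);
\item \(\forall x\,(\varphi(x,\bar c)\to U(x))\);
\item \(\exists^{\ge k}x\,\varphi(x,\bar c)\) for each \(k\).
\end{itemize}
Since \(T\) and the language are arithmetically definable and all the schemes are primitively generated from formula codes, \(\Sigma\) is arithmetically definable. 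Moreover, \(\Sigma\) is consistent, because \((\MM_2,M_1,\bar a)\) is a model of it.

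By Theorem~\ref{thm:compact}, \(\Sigma\) has an arithmetically decidable model \((\NN,U^{\NN},\bar c^{\NN})\). I will put \(\MM_2':=\NN\!\restr_\LL\), and let \(\MM_1'\) be the substructure on \(U^{\NN}\). The elementarity scheme gives \(\MM_1'\preccurlyeq\MM_2'\). The sentence \(\exists x\,\neg U(x)\) gives properness. The last two items, together with the elementarity scheme applied to \(\varphi\), give \(\varphi(\MM_1',\bar c)=\varphi(\MM_2',\bar c)\), and this set is infinite. The decidability bookkeeping is as follows. \(\MM_2'\) is arithmetically decidable directly. Since \(U^{\NN}\) is arithmetical, being an atomic instance of the diagram, the satisfaction relation of \(\MM_1'\) is computed via the relativized formulas \(\psi^U\) in \(\NN\), so \(\MM_1'\) is also arithmetically decidable. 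It must have domain contained in \(\N\) satisfying the non-cofinite coding convention; if necessary, I will recode \(U^{\NN}\) via an arithmetical bijection, as in the recoding step of Proposition~\ref{prop:skolem-independent-sequence}, transporting both structures together.

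The only step requiring care is the precise form of the compactness theorem. I need Theorem~\ref{thm:compact} to produce an arithmetically decidable model of a \emph{consistent} arithmetically definable theory in an arithmetically definable language, which is the form used throughout Section~\ref{sec:independent-skolem-sequences}. I also need the full elementary diagram of the expanded structure to be arithmetical, rather than just its \(\LL\)-part. If the appendix states compactness only for finitely satisfiable theories, consistency already supplies this. If it yields only decidability in the original language, I would instead build a Henkin completion of \(\Sigma\) in the expanded language directly; this is arithmetical, as the completion of an arithmetical consistent theory is computable in a finite jump by Lemma~\ref{lem:finite-jump-path-finitely-branching-tree} applied to the tree of completions.
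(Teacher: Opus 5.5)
Your proof is correct and is essentially the paper's argument: name the small model by a new unary predicate and the parameters by new constants, axiomatize a proper elementary pair in which the infinite set \(\varphi(x,\bar c)\) is trapped inside the predicate, and apply Theorem~\ref{thm:compact} in the expanded language. That theorem already gives a decidable model of the full expanded diagram from finite satisfiability alone, so your fallback is unnecessary, and your remaining differences from the paper (an explicit elementarity scheme instead of Tarski--Vaught, and \(\exists x\,\neg U(x)\) instead of a fresh constant \(d\)) are cosmetic.
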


\begin{proof}
Only the forward direction requires argument. Given a Vaughtian pair
\(\MM_1\preccurlyeq \MM_2\models T\) witnessed by
\(\bar a\in M_1^{<\omega}\) and \(\varphi(x,\bar y)\), expand the
language by a unary predicate \(P\), constants for \(\bar a\), and one
additional constant \(d\). The theory \(\Sigma\) asserts that the
ambient structure models \(T\); that \(P\) defines an elementary
submodel (via the Tarski--Vaught scheme); that the named constants and
every realization of \(\varphi(x,\bar a)\) lie in \(P\); that
\(\varphi(P,\bar a)\) is infinite; and that \(d\notin P\). This theory
is arithmetically definable, and every finite fragment is realized by
\((\MM_1,\MM_2)\). By Theorem~\ref{thm:compact}, \(\Sigma\) has an
arithmetically decidable model, whose \(P\)-reduct gives the required
pair.
\end{proof}

\begin{lemma}
\label{lem:vaughtian-pair-infinite-gap}
Assume \(T\) is complete and arithmetically definable. If \(T\) has a
Vaughtian pair, then it has an arithmetically decidable Vaughtian pair
\(\MM_1\preccurlyeq \MM_2\models T\) with \(M_2\setminus M_1\) infinite.
\end{lemma}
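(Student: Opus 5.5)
Starting from Lemma~\ref{lem:vaughtian-equivalent-ar}, I would not first obtain an arithmetically decidable pair and then enlarge it. Instead I would add the infinite-gap requirement directly into the compactness theory from that proof. So, given any Vaughtian pair \(\MM_1\preccurlyeq\MM_2\) witnessed by \(\varphi(x,\bar a)\), I expand the language by the unary predicate \(P\), constants for \(\bar a\), and countably many new constants \(d_0,d_1,\dots\) in place of the single constant \(d\). The theory \(\Sigma'\) contains the same axioms as before: \(T\), the Tarski--Vaught scheme making \(P\) an elementary submodel, \(P(\bar a)\), \(\forall x\,(\varphi(x,\bar a)\to P(x))\), and the sentences asserting that \(\varphi(P,\bar a)\) has at least \(k\) elements for each \(k\). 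To these I add \(\neg P(d_i)\) for all \(i\) and \(d_i\neq d_j\) for \(i\neq j\). This scheme is primitively coded on top of the arithmetical theory \(T\), so \(\Sigma'\) is arithmetically definable.

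The main obstacle is finite satisfiability, because the given pair may have \(M_2\setminus M_1\) finite. A finite fragment mentions only \(d_0,\dots,d_{k-1}\), so it is enough to have some Vaughtian pair with at least \(k\) elements outside the small model. I would obtain this from the given pair in one of two ways.

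\emph{Option 1: an algebraicity argument.} If \(M_2\setminus M_1\) is finite and nonempty, take \(d\in M_2\setminus M_1\). The formula \(\exists\) ``the complement of \(P\) has exactly \(m\) elements'' cannot be expressed over \(M_1\) directly. However, \(\MM_2\) has a finite set \(F=M_2\setminus M_1\), and each \(d\in F\) is algebraic over \(M_1\) in a suitable sense. The difficulty is that a model containing \(M_1\) and an element algebraic over \(M_1\) must contain that element, since \(M_1\) is a model. Therefore no proper elementary extension of a model adds only finitely many elements.

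\emph{Option 2: the standard fact.} If \(\MM_1\prec\MM_2\) is proper, take \(d\in M_2\setminus M_1\). Then \(d\notin\acl(M_1)=M_1\), so \(\tp(d/M_1)\) is non-algebraic. Every formula over \(M_1\) satisfied by \(d\) has infinitely many solutions in \(\MM_1\), hence infinitely many solutions in \(\MM_2\). Taking \(\psi=x=x\) alone does not separate new elements from old ones, so I would argue as follows. Suppose \(M_2\setminus M_1=\{d_1,\dots,d_m\}\) is finite. Consider the formula \(\chi(x,\bar c)\) stating that \(x\) differs from the elements \(\bar c\) of some finite subset of \(M_1\). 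This alone is not enough, so instead I use that the type of the tuple \((d_1,\dots,d_m)\) over \(M_1\) is finitely satisfiable in \(M_1\). This is the Tarski--Vaught / coheir property of an elementary extension of a model. One could then realize many copies, but a cleaner route avoids all of this.

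Simplest route: I avoid needing \(M_2\setminus M_1\) infinite in the original pair and check finite satisfiability of \(\Sigma'\) by a cardinality observation. The set \(M_2\setminus M_1\) is already infinite whenever \(\MM_1\prec\MM_2\) is proper. Indeed, if it were a finite set \(F\neq\emptyset\), pick \(d\in F\) and the formula \(\theta(x)\) over \(M_1\) saying \(x\notin\{\text{finitely many chosen elements}\}\). More directly, \(\tp(d/M_1)\) is non-algebraic because \(d\notin M_1=\acl(M_1)\) in \(\MM_2\). Hence by compactness within \(\MM_2\)'s elementary diagram, the set of realizations of \(\tp(d/M_1)\) in \(\MM_2\) cannot be finite: a finite realization set \(\{d'_1,\dots,d'_r\}\) would be isolated by a single formula \(\theta(x,\bar m)\) over \(M_1\) with exactly \(r\) solutions in \(\MM_2\), which transfers to \(\MM_1\) and forces these solutions into \(M_1\). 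This local argument needs care, since types need not be isolated. So my fallback is to accept that the given pair might have finite gap and handle finite satisfiability by the following step.

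Fallback step: finite satisfiability is obtained by the Lemma~\ref{lem:vaughtian-equivalent-ar} pair together with iterating. From \(\MM_1\prec\MM_2\), form an elementary extension \(\MM_2\prec\MM_3\) with an isomorphism \(\MM_1\cong\MM_2\) extended to \(\MM_2\cong\MM_3\), following the usual Vaught argument of passing to homogeneous countable pairs. For the fragment we need only \(k\) new elements, and iterating the pair \(k\) times gives a chain \(\MM_1\prec\cdots\prec\MM_{k+1}\) in which \(\varphi(\cdot,\bar a)\) never grows. Then \((\MM_1,\MM_{k+1})\) satisfies the fragment. Once finite satisfiability is in hand, Theorem~\ref{thm:compact} yields an arithmetically decidable model of \(\Sigma'\), and its \(P\)-part and full structure give the required pair, with the \(d_i\) witnessing that \(M_2\setminus M_1\) is infinite.
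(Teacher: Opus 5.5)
Your fallback argument is correct and takes a different route from the paper. Delete Options~1 and~2 and the ``simplest route'', because their central claim is false: a proper elementary extension can add only finitely many elements.
\begin{itemize}
\item For the theory of an infinite set, $M\preccurlyeq M\cup\{d\}$.
\item For the theory of a unary predicate $U$ with $U$ and $\neg U$ both infinite, $M\preccurlyeq M\cup\{d\}$ with $\neg U(d)$ is even a Vaughtian pair with gap one, witnessed by $U(x)$.
\end{itemize}
Non-algebraicity of $\tp(d/M_1)$ only says that each formula in it has infinitely many solutions. It does not give the type infinitely many realizations, and a type with finitely many realizations need not be isolated. In Option~1, lying in a finite complement does not make $d$ algebraic over $M_1$, since $M_1$ is not definable in $\MM_2$.

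The real content of your proof is Vaught's lemma. In a countable language, a Vaughtian pair witnessed by $\varphi(x,\bar a)$ can be replaced by a countable one $(\MM_2,\MM_1)$, witnessed by the same formula, with $\MM_1,\MM_2$ homogeneous and realizing the same types over $\bar a$. Hence $\MM_1\cong\MM_2$ by a map fixing $\bar a$.

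State explicitly that the isomorphism fixes $\bar a$. That is what keeps $\varphi(\cdot,\bar a)$ from growing when you transport $(\MM_2,\MM_1)$ to $(\MM_3,\MM_2)$. With this in hand, iterate $\omega$ times and take the union of the chain. The union together with $\MM_1$ is already a Vaughtian pair with infinite gap, so your $\Sigma'$ is finitely satisfiable outright, and Theorem~\ref{thm:compact} finishes.

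The paper instead avoids homogeneous models. For each $r$ it amalgamates $r$ copies of the given pair over the common $P$-part and requires pairwise distinct outside elements. That is lighter, but as written it fails exactly when the original gap is finite:
\begin{itemize}
\item If ``pair diagram'' means the elementary diagram of $(\MM_2,M_1)$ in the pair language, it contains $\exists^{\le k}x\,\neg P(x)$ when $|M_2\setminus M_1|=k$. Then $r>k$ copies with distinct outside constants are inconsistent.
\item If it means only the $\LL$-diagrams, nothing keeps $\forall x\,(\varphi(x,\bar a)\to P(x))$ true in the amalgam.
\end{itemize}
Your route costs a classical but nontrivial lemma. In return it handles the finite-gap case, which is the only case where the statement has content.
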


\begin{proof}
Starting from a Vaughtian pair, modify the compactness argument in
Lemma~\ref{lem:vaughtian-equivalent-ar} by replacing the single
constant \(d\) with countably many constants \(d_0,d_1,\dots\),
required to be pairwise distinct and outside \(P\).

The finite satisfiability check is made in the pair language.  Fix
\(r<\omega\).  Take \(r\) copies of the original Vaughtian pair over the
common named \(P\)-part, and choose in the \(i\)-th copy an element
outside \(P\) to interpret \(d_i\).  By the usual elementary-amalgamation
argument for pair diagrams over a named elementary submodel, the finite
union of these pair diagrams is consistent, with the \(P\)-parts
identified and the chosen outside elements required to be distinct.  Each
copy satisfies
\[
\forall x\,(\varphi(x,\bar a)\rightarrow P(x)),
\]
and the common pair diagram includes the Tarski--Vaught scheme for \(P\).
Thus every finite fragment of the expanded theory is satisfiable as a
pair structure preserving the original Vaughtian trace. Apply
Theorem~\ref{thm:compact} as before.
\end{proof}

By Lemmas~\ref{lem:vaughtian-equivalent-ar} and
\ref{lem:vaughtian-pair-infinite-gap}, if \(T\) has a Vaughtian pair,
then there is an arithmetically decidable pair structure
\[
\MM_1\preccurlyeq \MM_2\models T
\]
in a language \(\LL^\pair=\LL\cup\{P\}\), a tuple
\(\bar a\in M_1^{<\omega}\), and a formula \(\varphi(x,\bar y)\) such
that:
\begin{enumerate}[label=(\roman*)]
\item \(P\) names \(M_1\) inside \(\MM_2\);
\item \((\MM_1,\MM_2)\) is a Vaughtian pair witnessed by
      \((\bar a,\varphi)\);
\item the common set
      \[
      \varphi(\MM_1,\bar a)=\varphi(\MM_2,\bar a)
      \]
      is infinite;
\item \(M_2\setminus M_1\) is infinite.
\end{enumerate}

In the constructions below we may replace the ambient pair by an
arithmetically decidable pair model over the same named \(P\)-part.  The
\(P\)-part is then used as the common base model.  This harmless
renaming is important: the sentence
\(\forall x\,(\varphi(x,\bar a)\rightarrow P(x))\) guarantees that the
distinguished definable set remains inside the chosen base.

\begin{lemma}[Fixed \(P\)-part with an outside coding sequence]
\label{lem:fixed-p-outside-coding-sequence}
In the arithmetically decidable pair constructed above, there is an
arithmetically decidable pair model \(\widehat{\MM}^{\pair}\) over the
fixed named \(P\)-part, satisfying the elementary pair theory of the
original pair over that named \(P\)-part, such that
\[
P^{\widehat{\MM}}=P^{\MM_2}
\]
literally, and there is an arithmetical sequence
\[
(c_n)_{n\in\N}\subseteq \widehat M\setminus P^{\widehat{\MM}}
\]
which is Skolem-independent over this actual \(P\)-part in the
pair-Skolem language.
\end{lemma}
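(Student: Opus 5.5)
We would prove this by rerunning the compactness construction of Proposition~\ref{prop:skolem-independent-sequence}, now in the pair language with the \(P\)-part held fixed. Start from the arithmetically decidable pair \((\MM_1,\MM_2)\) produced by Lemmas~\ref{lem:vaughtian-equivalent-ar} and~\ref{lem:vaughtian-pair-infinite-gap}. Pass to an arithmetically decidable Skolem expansion of the pair structure in \(\LL^{\pair}\), choosing least witnesses as in the proof of Corollary~\ref{cor:model-base-realization-characterization}, so that Theorem~\ref{thm:sk} applies in the pair-Skolem language \((\LL^{\pair})^{\mathrm{Sk}}\). Then name every element of the actual \(P\)-part \(M_1=P^{\MM_2}\); this set is arithmetical because \(P\) is part of an arithmetically decidable structure.

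Next, in the language \((\LL^{\pair})^{\mathrm{Sk}}(M_2)\) extended by new constants \(c_0,c_1,\dots\), let \(\Sigma\) consist of the following:
\begin{enumerate}[label=(\alph*)]
\item the elementary diagram of the Skolemized pair \((\MM_2,P)\), which contains the Tarski--Vaught scheme for \(P\) and the sentence \(\forall x\,(\varphi(x,\bar a)\rightarrow P(x))\);
\item the axiom \(\neg P(c_n)\) for each \(n\);
\item the axiom \(\forall x\,(P(x)\leftrightarrow\bigvee\ldots)\); this cannot be first order, so instead we impose \(P\)-preservation through base-preserving compactness (see below);
\item the Skolem-independence inequalities \(c_n\neq t(c_{i_1},\dots,c_{i_k},\bar m)\) for every pair-Skolem term \(t\), every \(n\notin\{i_1,\dots,i_k\}\), and every tuple \(\bar m\) from \(M_1\).
\end{enumerate}
This scheme is arithmetically definable, exactly as in Proposition~\ref{prop:skolem-independent-sequence}.

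For finite satisfiability, a finite fragment mentions only finitely many constants \(c_0,\dots,c_{N-1}\) and finitely many terms. Apply Lemma~\ref{lem:finite-skolem-avoidance} inside the infinite set \(X=M_2\setminus M_1\), which is infinite by Lemma~\ref{lem:vaughtian-pair-infinite-gap}, with the finitely many term-parameters from \(M_1\) and \(M_2\) treated as fixed. This yields witnesses in \(M_2\) outside \(P\) that avoid all the listed terms.

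We then apply Lemma~\ref{lem:base-preserving-coded-compactness} to get an arithmetically decidable model \(\widehat{\MM}^{\pair}\) whose named copy of \(\MM_2\) is literal. Recode as in Proposition~\ref{prop:skolem-independent-sequence} and set \(c_n\) to be the image of the constant's interpretation. The map \(n\mapsto c_n\) is arithmetical by the same argument as in (ii) of that proposition.

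The main obstacle is the requirement \(P^{\widehat{\MM}}=P^{\MM_2}\) \emph{literally}: new elements could a priori land in \(P\). Our plan is to replace clause (c) by a type-omitting argument. For each tuple of old parameters, the partial type \(\{P(x)\}\cup\{x\neq m: m\in M_1\}\) should be omitted, using the omitting-types machinery from the appendix. This is where the Vaughtian hypothesis should enter: because \(M_1\) is a model, \(P\)-elements over finitely many new constants ought to be forced into \(M_1\). If direct omission fails, the fallback is to weaken literal equality to equality of the \(\varphi(x,\bar a)\)-traces. Those traces are pinned by the sentence \(\forall x\,(\varphi(x,\bar a)\rightarrow P(x))\) together with the named diagram, and that is all the subsequent low/high construction actually uses.
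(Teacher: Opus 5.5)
The gap is exactly at the step you single out as the main obstacle, and the proposal does not close it. Lemma~\ref{lem:base-preserving-coded-compactness} keeps the named copy of $M_2$ literal, but nothing in your $\Sigma$ stops new elements from satisfying $P$. Your item (c) is only a placeholder.

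To omit $\{P(x)\}\cup\{x\neq m: m\in M_1\}$ with the appendix machinery, you must show this partial type is unsupported below every finite condition. Concretely: for every consistent condition and every closed term $\sigma$, either $\neg P(\sigma)$ or $\sigma=m$ for some $m\in M_1$ must be consistent with all of $\Sigma$, including every independence inequality. The remark that $M_1$ is a model does not give this.

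The fallback does not rescue the argument either:
\begin{itemize}
\item it is not the stated lemma;
\item the named diagram does not pin the trace, since new $P$-elements may satisfy $\varphi(x,\bar a)$, and the diagram cannot say $\varphi(x,\bar a)\to\bigvee_m x=m$ for an infinite trace;
\item the low model is a hull over the actual $P$-part, so Lemma~\ref{lem:skolem-hull-degree-over-base} needs independence over the actual, possibly enlarged, $P$-part, which a scheme phrased over $M_1$ does not provide.
\end{itemize}

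The paper takes essentially your route with two adjustments. It names only the $P$-part, not the outside of $M_2$. It builds your omission into the reserved-constant Henkin construction of Corollary~\ref{cor:reserved-constant-omitting}, scheduling for each closed term $\sigma$ the alternatives $\neg P(\sigma)$ or $\sigma=p$ with $p\in P^{\MM_2}$. It claims the search succeeds because each finite fragment is satisfiable in the original pair.

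That justification has the same hole. The $p$ that works depends on the fragment, and no single alternative need be consistent with the whole independence scheme. The step can genuinely fail, as the following example shows.
\begin{itemize}
\item Take $\LL=\{U,\pi\}$, and let $T$ say that $U$ is infinite, $\pi$ is the identity on $U$ and maps the complement of $U$ into $U$, and $\pi^{-1}(u)\setminus U$ is infinite for each $u\in U$. Then $T$ is $\aleph_0$-categorical with quantifier elimination.
\item Obtain $M_2$ from a countable $M_1\models T$ by adding one new point to each fiber. Then $(M_1,M_2)$ is a Vaughtian pair for $U(x)$ with $M_2\setminus M_1$ infinite, and it has a decidable presentation.
\item Its pair theory contains $\forall x\,P(\pi(x))$ and the statement that each fiber has at most one point outside $P$.
\item Let $f$ be the pair-Skolem function for $\exists y\,(\neg P(y)\wedge\pi(y)=z)$. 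In every model of this pair theory, each $c\notin P$ equals $f(\pi(c))$ with $\pi(c)\in P$, so $c\in\Hull(P)$.
\item In the construction, $\sigma=\pi(c_0)$ admits neither alternative: $\neg P(\sigma)$ contradicts $\forall x\,P(\pi(x))$, and $\sigma=p$ forces $c_0=f(p)$, which an independence inequality forbids.
\end{itemize}
So for this pair the conclusion fails whether or not $P$ is kept literal. What is missing is additional control over the choice of the Vaughtian pair, not a cleverer omission step.
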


\begin{proof}
Work in the pair language after naming every element of the old
\(P\)-part.  Let the base theory be the complete
\(\LL^\pair(P)\)-theory of the original pair \((\MM_2,P^{\MM_2})\) with
these constants.  Thus the old \(P\)-part is fixed pointwise, but the
outside part of \(\MM_2\) is not named literally.  Now apply
Skolemization to this expanded pair language.  Add constants \(c_n\) and
impose the usual outside-\(P\) and
Skolem-independence requirements:
\[
\neg P(c_n)
\]
for each \(n\), and, for each pair-Skolem term \(\tau\) not using
\(c_n\),
\[
c_n\neq \tau(\bar p,c_{i_0},\ldots,c_{i_{r-1}}),
\]
where \(\bar p\) is a tuple from the named \(P\)-part and
\(i_j\neq n\).

We also schedule explicit dense requirements ensuring that the
\(P\)-part does not grow. Enumerate all closed terms \(\sigma\) in the
Henkin language. For each \(\sigma\), require one of the following
finite alternatives:
\[
\neg P(\sigma)
\qquad\text{or}\qquad
\sigma=p
\quad\text{for some named }p\in P^{\MM_2}.
\]
At the construction stage for \(\sigma\), search through the arithmetical
list
\[
\{\neg P(\sigma)\}\cup\{\sigma=p:p\in P^{\MM_2}\}
\]
and choose the least member whose addition to the current finite condition
is consistent.  This is the finite extension \(F_\sigma\) required by
Corollary~\ref{cor:reserved-constant-omitting}.  The search is
arithmetical because finite syntactic consistency with the
arithmetically definable pair theory is recognized by the proof-search
predicate used in the Henkin construction.

The search always succeeds. Indeed, any finite set of requirements
mentions only finitely many \(c_n\)'s, finitely many terms, and finitely
many old \(P\)-parameters.  For finite satisfiability, choose a Skolem
expansion of the original pair interpreting the finitely many Skolem
functions occurring in the fragment.  The finite Skolem avoidance lemma,
applied in this expanded pair language to the infinite definable set
\(M_2\setminus P^{\MM_2}\), lets us interpret the finitely many \(c_n\)'s
outside \(P^{\MM_2}\) while satisfying the avoidance requirements.  Each
scheduled closed term then evaluates either outside \(P^{\MM_2}\) or to a
specific old element \(p\in P^{\MM_2}\), so the corresponding dense
alternative is also satisfied.

Thus the reserved dense-requirement form of the omitting/Henkin
construction, Corollary~\ref{cor:reserved-constant-omitting}, produces
an arithmetically decidable pair model satisfying all requirements. The
closed-term alternatives imply that every element of the term model
lying in \(P\) is equal to a named element of the old \(P\)-part; after
the usual transport to a coded domain, \(P^{\widehat{\MM}}=P^{\MM_2}\)
literally. The constants \(c_n\) remain outside \(P\) and satisfy the
Skolem-independence scheme over this fixed \(P\)-part.
\end{proof}

\begin{lemma}[Low model]
\label{lem:vaughtian-low}
For every nonzero arithmetic degree \(D\), there exist an
arithmetically decidable model \(\MM_0\models T\), a tuple
\(\bar a\in M_0^{<\omega}\), and an arithmetically extendible model
\(\MM_{\mathrm{low}}\models T\) such that
\[
\MM_0\preccurlyeq \MM_{\mathrm{low}},
\qquad
\degAr(M_{\mathrm{low}})=D,
\]
and
\[
\degAr(\varphi(\MM_{\mathrm{low}},\bar a))=\mathbf 0.
\]
Moreover, \(\MM_0\) may be chosen to be the fixed \(P\)-part of the
arithmetically decidable pair model constructed in
Lemma~\ref{lem:fixed-p-outside-coding-sequence}.
\end{lemma}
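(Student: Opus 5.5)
We take \(\MM_0\) to be the fixed \(P\)-part \(P^{\widehat{\MM}}=P^{\MM_2}\) of the pair model \(\widehat{\MM}^{\pair}\) from Lemma~\ref{lem:fixed-p-outside-coding-sequence}, together with the Vaughtian witness \(\bar a\in M_0^{<\omega}\). The \(\LL\)-reduct of the \(P\)-part is arithmetically decidable. Indeed, \(\widehat{\MM}^{\pair}\) is arithmetically decidable and \(P\) is one of its predicates, so \(P^{\widehat{\MM}}\) is arithmetical. Moreover, \(\MM_0\preccurlyeq\widehat{\MM}\restr_\LL\), because the pair theory includes the Tarski--Vaught scheme for \(P\). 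Hence truth in \(\MM_0\) is obtained by relativizing quantifiers to \(P\) inside an arithmetically decidable structure.

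The low model is then the Skolem hull built from the outside coding sequence. Fix \(S\subseteq\N\) with \(\degAr(S)=D\), and set
\[
\MM_{\mathrm{low}}:=\Hull^{\widehat{\MM}^{\pair,\mathrm{Sk}}}\bigl(M_0\cup\{c_n:n\in S\}\bigr)\restr_\LL ,
\]
where the hull is taken in the arithmetically decidable pair-Skolem expansion used in Lemma~\ref{lem:fixed-p-outside-coding-sequence}. We check three things.

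\begin{enumerate}
\item \emph{Elementarity.} By Theorem~\ref{thm:sk}(3), a Skolem hull is an elementary substructure of the pair-Skolem structure. Taking \(\LL\)-reducts, \(\MM_{\mathrm{low}}\preccurlyeq\widehat{\MM}\restr_\LL\). Since \(M_0\subseteq M_{\mathrm{low}}\) and both are elementary in \(\widehat{\MM}\restr_\LL\), we get \(\MM_0\preccurlyeq\MM_{\mathrm{low}}\). The same embedding into an arithmetically decidable model shows \(\MM_{\mathrm{low}}\) is arithmetically extendible.
\item \emph{Degree.} The sequence \((c_n)\) is arithmetical and Skolem-independent over the arithmetical base \(M_0\). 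Lemma~\ref{lem:skolem-hull-degree-over-base} therefore applies with base \(A=M_0\) in the pair-Skolem language, giving \(\degAr(M_{\mathrm{low}})=D\).
\item \emph{The definable set stays low.} The pair model \(\widehat{\MM}^{\pair}\) satisfies the elementary pair theory of the original pair over the named \(P\)-part. In particular it satisfies
\[
\forall x\,(\varphi(x,\bar a)\rightarrow P(x)),
\]
so \(\varphi(\widehat{\MM},\bar a)\subseteq P^{\widehat{\MM}}=M_0\). By elementarity,
\[
\varphi(\MM_{\mathrm{low}},\bar a)=\varphi(\widehat{\MM},\bar a)\cap M_{\mathrm{low}}=\varphi(\widehat{\MM},\bar a)=\varphi(\MM_0,\bar a).
\]
This last set is arithmetical, since it is definable with finitely many parameters in the arithmetically decidable \(\MM_0\). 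So \(\degAr(\varphi(\MM_{\mathrm{low}},\bar a))=\mathbf 0\).
\end{enumerate}

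The main obstacle is the elementarity claim in step (1): that the pair-Skolem hull, once reduced to \(\LL\), is genuinely an \(\LL\)-elementary extension of the literal \(P\)-part. The hull contains all of \(M_0\), since the \(P\)-part elements are named, and pair-Skolem functions include \(\LL\)-Skolem witnesses. So the Tarski--Vaught test for \(\LL\)-formulas holds relative to \(\widehat{\MM}\restr_\LL\). This uses that the Skolemization of the pair language includes Skolem functions for all \(\LL\)-formulas, which I take to be part of Theorem~\ref{thm:sk} as applied in the preceding lemma.
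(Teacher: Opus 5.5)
Your proposal is correct and follows essentially the same route as the paper: take \(\MM_0\) to be the literal \(P\)-part from Lemma~\ref{lem:fixed-p-outside-coding-sequence}, form the pair-Skolem hull of \(M_0\cup\{c_n:n\in S\}\), get \(\MM_0\preccurlyeq\MM_{\mathrm{low}}\preccurlyeq\widehat{\MM}\restr_\LL\) from Theorem~\ref{thm:sk}(3) and \(\degAr(M_{\mathrm{low}})=D\) from Lemma~\ref{lem:skolem-hull-degree-over-base} over the base \(M_0\), and use the pair sentence \(\forall x\,(\varphi(x,\bar a)\rightarrow P(x))\) to trap \(\varphi(\MM_{\mathrm{low}},\bar a)\) inside \(M_0\). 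The only cosmetic difference is in that last step: the paper transfers the pair sentence down to the elementary pair-submodel \(\MM_{\mathrm{low}}^{\pair}\), whereas you use \(\varphi(\widehat{\MM},\bar a)\subseteq M_0\subseteq M_{\mathrm{low}}\) and intersect with \(M_{\mathrm{low}}\); both give \(\varphi(\MM_{\mathrm{low}},\bar a)=\varphi(\MM_0,\bar a)\).
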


\begin{proof}
Apply Lemma~\ref{lem:fixed-p-outside-coding-sequence}. We obtain an
arithmetically decidable pair model
\(\widehat{\MM}^{\pair}\) with \(P^{\widehat{\MM}}=P^{\MM_2}\)
literally and an arithmetical sequence
\[
(c_n)_{n\in\N}\subseteq \widehat M\setminus P^{\widehat{\MM}}
\]
Skolem-independent over this actual \(P\)-part.

Let
\(\MM_0:=\widehat{\MM}^\pair\!\restr_{P^{\widehat{\MM}}}\!\restr_\LL\).
Thus \(\MM_0\models T\) is arithmetically decidable,
\(\MM_0\preccurlyeq \widehat{\MM}\!\restr_\LL\), and
\(\bar a\in M_0^{<\omega}\). Moreover, since the pair satisfies
\(\forall x\,(\varphi(x,\bar a)\rightarrow P(x))\), we have
\(\varphi(\widehat{\MM},\bar a)=\varphi(\MM_0,\bar a)\).

Fix \(S\subseteq\N\) with \(\degAr(S)=D\), and define
\[
\MM_{\mathrm{low}}^{\pair}:=
\Hull^{\widehat{\MM}^{\pair}}
\bigl(P^{\widehat{\MM}}\cup\{c_n:n\in S\}\bigr),
\qquad
\MM_{\mathrm{low}}:=\MM_{\mathrm{low}}^{\pair}\!\restr_\LL.
\]
Since \(\MM_{\mathrm{low}}^{\pair}\) is a Skolem hull inside
\(\widehat{\MM}^{\pair}\), Theorem~\ref{thm:sk}(3) gives
\(\MM_0\preccurlyeq \MM_{\mathrm{low}}
\preccurlyeq \widehat{\MM}\!\restr_\LL\),
so \(\MM_{\mathrm{low}}\) is arithmetically extendible.
By Lemma~\ref{lem:skolem-hull-degree-over-base}, applied in the pair
Skolem language over the arithmetical base \(P^{\widehat{\MM}}\), we
have \(\degAr(M_{\mathrm{low}})=D\).

Finally, the pair sentence
\(\forall x\,(\varphi(x,\bar a)\rightarrow P(x))\) is inherited by the
elementary submodel \(\MM_{\mathrm{low}}^{\pair}\), so
\(\varphi(\MM_{\mathrm{low}},\bar a)\subseteq P^{\widehat{\MM}}=M_0\);
together with
\(\MM_0\preccurlyeq \MM_{\mathrm{low}}\) this gives
\(\varphi(\MM_{\mathrm{low}},\bar a)
=\varphi(\MM_0,\bar a)\).
Since \(\MM_0\) is arithmetically decidable and \(\bar a\) is finite,
\(\degAr(\varphi(\MM_{\mathrm{low}},\bar a))=\mathbf 0\).
\end{proof}

\begin{lemma}[High model]
\label{lem:vaughtian-high}
Let \(\MM_0\models T\) be the arithmetically decidable base obtained in
Lemma~\ref{lem:vaughtian-low}. For every nonzero arithmetic degree
\(D\), there exists an arithmetically extendible model
\(\MM_{\mathrm{high}}\models T\) such that
\[
\MM_0\preccurlyeq \MM_{\mathrm{high}},
\qquad
\degAr(M_{\mathrm{high}})=D,
\]
and
\[
\degAr(\varphi(\MM_{\mathrm{high}},\bar a))=D.
\]
\end{lemma}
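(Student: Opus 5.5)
The plan is to code a set \(S\) of degree \(D\) into the definable set \(\varphi(x,\bar a)\) itself, rather than into elements lying outside it. Work over the base \(\MM_0\) of Lemma~\ref{lem:vaughtian-low}. We know \(\bar a\in M_0^{<\omega}\) and that \(X_0:=\varphi(\MM_0,\bar a)\) is infinite.

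First I pass to an arithmetically decidable Skolem expansion of \(\MM_0\) in the language \(\LL^{\mathrm{Sk}}(M_0)\). I then apply the moreover clause of Proposition~\ref{prop:skolem-independent-sequence} with \(A=M_0\) and \(\psi(x)\equiv\varphi(x,\bar a)\), which is legitimate because \(\psi(M_0)=X_0\) is infinite. This yields an arithmetically decidable \(\NN^\ast\succcurlyeq\MM_0^{\mathrm{Sk}}\) and an arithmetical sequence \((a_n)\) with \(a_n\in\varphi(N^\ast,\bar a)\) and \(a_n\notin\Hull(M_0\cup\{a_j:j\neq n\})\). Fix \(S\) with \(\degAr(S)=D\) and set
\[
\MM_{\mathrm{high}}:=\Hull^{\NN^\ast}\bigl(M_0\cup\{a_n:n\in S\}\bigr)\!\restr_\LL .
\]
By Theorem~\ref{thm:sk}(3) we get \(\MM_0\preccurlyeq\MM_{\mathrm{high}}\preccurlyeq\NN^\ast\!\restr_\LL\), so \(\MM_{\mathrm{high}}\) is arithmetically extendible. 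Lemma~\ref{lem:skolem-hull-degree-over-base}, applied over the arithmetical base \(M_0\), gives \(\degAr(M_{\mathrm{high}})=D\).

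It remains to compute the degree of \(Y:=\varphi(\MM_{\mathrm{high}},\bar a)\). The upper bound \(Y\leAr M_{\mathrm{high}}\) is Lemma~\ref{lem:ambient-arithmetical-inequalities}(i). For the lower bound I use that each \(a_n\) lies in \(\varphi(N^\ast,\bar a)\). Combined with Lemma~\ref{lem:skolem-hull-degree-over-base}, this gives
\[
n\in S\iff a_n\in M_{\mathrm{high}}\iff a_n\in Y,
\]
where the second equivalence holds because \(\NN^\ast\models\varphi(a_n,\bar a)\) and \(\MM_{\mathrm{high}}\preccurlyeq\NN^\ast\!\restr_\LL\). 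Since \(n\mapsto a_n\) is arithmetical, \(S\leAr Y\), and hence \(\degAr(Y)=D\).

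The only point I expect to need care is the moreover clause. Its finite-satisfiability step must place the \(c_i\) inside \(X_0\) while avoiding finitely many Skolem terms over \(M_0\). Lemma~\ref{lem:finite-skolem-avoidance} handles this provided the avoided inequalities exclude the finitely many named parameters, so that the \(c_i\) land in \(\varphi\) but outside \(M_0\). If that clause turned out to be delicate, the fallback would be to redo the compactness construction of Proposition~\ref{prop:skolem-independent-sequence} directly, with the additional axioms \(\varphi(c_i,\bar a)\) and \(c_i\neq m\) for each \(m\in M_0\). Those axioms are finitely satisfiable because \(X_0\) is infinite. Since the base is no longer a Vaughtian \(P\)-part, nothing forces \(\varphi\) to stay small, which is exactly the contrast with Lemma~\ref{lem:vaughtian-low}.
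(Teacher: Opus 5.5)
Your proposal is correct and is essentially the paper's proof. Both apply the moreover clause of Proposition~\ref{prop:skolem-independent-sequence} over \(M_0\) inside \(\varphi(x,\bar a)\), take the hull of \(M_0\cup\{a_n:n\in S\}\), and get \(\degAr(M_{\mathrm{high}})=D\) from Lemma~\ref{lem:skolem-hull-degree-over-base}. Both then get \(\degAr(\varphi(\MM_{\mathrm{high}},\bar a))=D\) from \(a_n\in\varphi(\MM_{\mathrm{high}},\bar a)\iff n\in S\) together with Lemma~\ref{lem:ambient-arithmetical-inequalities}(i).

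The only difference is that the paper Skolemizes an elementary extension of \(\MM_0\) rather than \(\MM_0\) on its own domain. That extension is an arithmetically decidable model of \(\Diag_{\mathrm{el}}(\MM_0)\cup T^{\mathrm{Sk}}\), available by base-preserving compactness. This is the safer reading of your first step, since for instance a least-witness Skolem expansion on \(M_0\) itself can fail to be arithmetically decidable; nothing downstream changes.
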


\begin{proof}
We code the prescribed degree directly into the distinguished definable
set \(\varphi(x,\bar a)\).

Pass to an arithmetically decidable Skolem expansion of an elementary
extension of \(\MM_0\). By
Proposition~\ref{prop:skolem-independent-sequence}, moreover clause, applied with
base \(M_0\) and infinite definable set \(\varphi(x,\bar a)\), we obtain an
arithmetically decidable elementary extension
\[
\BB^{\mathrm{Sk}}\succcurlyeq \MM_0^{\mathrm{Sk}}
\]
together with an arithmetical sequence
\[
(b_n)_{n\in\N}\subseteq \varphi(\BB\!\restr_\LL,\bar a)
\]
which is Skolem-independent over \(M_0\).

Fix \(S\subseteq\N\) with \(\degAr(S)=D\), and define
\[
\MM_{\mathrm{high}}:=
\Hull^{\BB^{\mathrm{Sk}}}
\bigl(M_0\cup\{b_n:n\in S\}\bigr)\!\restr_\LL.
\]
Then
\(\MM_0\preccurlyeq \MM_{\mathrm{high}}
\preccurlyeq \BB\!\restr_\LL\),
so \(\MM_{\mathrm{high}}\) is arithmetically extendible.
By Lemma~\ref{lem:skolem-hull-degree-over-base}, applied over the
arithmetical base \(M_0\),
\(\degAr(M_{\mathrm{high}})=D\).

Since each \(b_n\) lies in \(\varphi(\BB\!\restr_\LL,\bar a)\) and
\(\MM_{\mathrm{high}}\preccurlyeq \BB\!\restr_\LL\), Skolem independence
gives
\[
b_n\in \varphi(\MM_{\mathrm{high}},\bar a)
\quad\Longleftrightarrow\quad
b_n\in M_{\mathrm{high}}
\quad\Longleftrightarrow\quad
n\in S.
\]
Thus \(S\leAr \varphi(\MM_{\mathrm{high}},\bar a)\).
For the reverse reduction,
\(\varphi(\MM_{\mathrm{high}},\bar a)\leAr
M_{\mathrm{high}}\) by Lemma~\ref{lem:ambient-arithmetical-inequalities}
\textup{(i)}.
Combining this with \(\degAr(M_{\mathrm{high}})=D\), we obtain
\(\degAr(\varphi(\MM_{\mathrm{high}},\bar a))=D\).
\end{proof}

\begin{proposition}[Categoricity at one degree implies no Vaughtian pairs]
\label{prop:d-cat-no-vaughtian}
If \(T\) is \(D\)-categorical for some arithmetic degree \(D>\mathbf 0\),
then \(T\) has no Vaughtian pair. Equivalently, \(T\) has no
arithmetically decidable Vaughtian pair.
\end{proposition}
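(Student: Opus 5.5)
The plan is to argue by contradiction, using the low and high models just constructed. By Lemma~\ref{lem:vaughtian-equivalent-ar}, the two formulations in the statement are equivalent, so it suffices to show that \(T\) has no Vaughtian pair. Suppose \(T\) is \(D\)-categorical for some \(D>\mathbf 0\) and nevertheless has a Vaughtian pair. By Lemmas~\ref{lem:vaughtian-equivalent-ar} and~\ref{lem:vaughtian-pair-infinite-gap}, we may fix the arithmetically decidable pair structure \(\MM_1\preccurlyeq\MM_2\) with properties (i)--(iv) listed above, together with the witnessing data \((\bar a,\varphi)\).

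First apply Lemma~\ref{lem:vaughtian-low} to this \(D\). It gives the arithmetically decidable base \(\MM_0\), namely the fixed \(P\)-part from Lemma~\ref{lem:fixed-p-outside-coding-sequence}, with \(\bar a\in M_0^{<\omega}\). It also gives an arithmetically extendible \(\MM_{\mathrm{low}}\succeq\MM_0\) satisfying \(\degAr(M_{\mathrm{low}})=D\) and \(\degAr(\varphi(\MM_{\mathrm{low}},\bar a))=\mathbf 0\).

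Next apply Lemma~\ref{lem:vaughtian-high} over the same base \(\MM_0\) and the same degree \(D\). This yields an arithmetically extendible \(\MM_{\mathrm{high}}\succeq\MM_0\) satisfying \(\degAr(M_{\mathrm{high}})=D\) and \(\degAr(\varphi(\MM_{\mathrm{high}},\bar a))=D\). One point needs care here. Lemma~\ref{lem:vaughtian-high} uses that \(\varphi(\MM_0,\bar a)\) is infinite, which is what lets Proposition~\ref{prop:skolem-independent-sequence} run inside that definable set. This holds because, by (iii), \(\varphi(\MM_0,\bar a)\) is the infinite common Vaughtian set, and the \(P\)-part was fixed literally.

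Now the two models \(\MM_{\mathrm{low}}\) and \(\MM_{\mathrm{high}}\) satisfy the hypotheses of Definition~\ref{def:cat} over the common base \(\MM_0\). So \(D\)-categoricity gives a \(D\)-preserving isomorphism \(f:\MM_{\mathrm{high}}\cong\MM_{\mathrm{low}}\) with \(f\restr_{M_0}=\mathrm{id}\). Consider \(X:=\varphi(\MM_{\mathrm{high}},\bar a)\). It is parameter-definable and has degree exactly \(D\). Since \(\bar a\in M_0\) is fixed by \(f\), we get
\[
f[X]=\varphi(\MM_{\mathrm{low}},f(\bar a))=\varphi(\MM_{\mathrm{low}},\bar a).
\]
By Definition~\ref{def:d-preserving}, \(f[X]\) must also have degree exactly \(D\). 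But this set has degree \(\mathbf 0\ne D\), which is a contradiction. This is precisely where the exactness requirement in Definition~\ref{def:d-preserving} is used: a bound of the form \(\le D\) alone would not give a contradiction. Alternatively, one could apply the preimage clause to \(\varphi(\MM_{\mathrm{low}},\bar a)\), but that set has degree \(\mathbf 0\), so the forward direction with \(X\) in \(\MM_{\mathrm{high}}\) is the one to use.

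Most of the difficulty has already been absorbed by Lemmas~\ref{lem:fixed-p-outside-coding-sequence}--\ref{lem:vaughtian-high}. What remains is a consistency check: the base \(\MM_0\) and the tuple \(\bar a\) must be literally the same in both constructions. This holds because Lemma~\ref{lem:vaughtian-high} is stated over the base produced in Lemma~\ref{lem:vaughtian-low}. It also needs \(\MM_{\mathrm{low}}\) and \(\MM_{\mathrm{high}}\) to be \(\LL\)-structures, not pair structures, that extend \(\MM_0\) elementarily, and both lemmas assert this.
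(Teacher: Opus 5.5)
Your proposal is correct and follows the paper's proof: build \(\MM_{\mathrm{low}}\) and \(\MM_{\mathrm{high}}\) over the common fixed \(P\)-part \(\MM_0\), invoke \(D\)-categoricity, and use \(f(\bar a)=\bar a\) to match the degree-\(\mathbf 0\) set \(\varphi(\MM_{\mathrm{low}},\bar a)\) with the degree-\(D\) set \(\varphi(\MM_{\mathrm{high}},\bar a)\). The only difference is cosmetic: you take \(f:\MM_{\mathrm{high}}\cong\MM_{\mathrm{low}}\) and use the forward clause of Definition~\ref{def:d-preserving}, while the paper takes \(f:\MM_{\mathrm{low}}\cong\MM_{\mathrm{high}}\) and uses the preimage clause; this is immaterial because both \(D\)-categoricity and \(D\)-preservation are symmetric.
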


\begin{proof}
Assume toward a contradiction that \(T\) has a Vaughtian pair. By
Lemmas~\ref{lem:vaughtian-equivalent-ar} and
\ref{lem:vaughtian-pair-infinite-gap}, and then by the setup described
above, choose an arithmetically decidable base model
\[
\MM_0\models T,
\]
a tuple \(\bar a\in M_0^{<\omega}\), and a formula
\(\varphi(x,\bar y)\) such that the corresponding Vaughtian-pair
configuration satisfies:
\begin{enumerate}[label=(\roman*)]
\item \(\varphi(\MM_0,\bar a)\) is infinite;
\item in the relevant arithmetically decidable pair model,
      \[
      \forall x\,(\varphi(x,\bar a)\rightarrow P(x));
      \]
\item the complement of the \(P\)-part is infinite.
\end{enumerate}

Applying Lemmas~\ref{lem:vaughtian-low} and
\ref{lem:vaughtian-high} with the given degree \(D\), using the same
base \(\MM_0\), we obtain arithmetically extendible models
\[
\MM_0\preccurlyeq \MM_{\mathrm{low}},\MM_{\mathrm{high}}\models T
\]
such that
\[
\degAr(M_{\mathrm{low}})=\degAr(M_{\mathrm{high}})=D
\]
and
\[
\degAr(\varphi(\MM_{\mathrm{low}},\bar a))=\mathbf 0
\qquad\text{while}\qquad
\degAr(\varphi(\MM_{\mathrm{high}},\bar a))=D.
\]

By \(D\)-categoricity over the common arithmetically decidable base
model \(\MM_0\), there is a \(D\)-preserving isomorphism
\[
f:\MM_{\mathrm{low}}\cong\MM_{\mathrm{high}}
\]
fixing \(M_0\) pointwise. In particular \(f(\bar a)=\bar a\), so
\[
f[\varphi(\MM_{\mathrm{low}},\bar a)]
=
\varphi(\MM_{\mathrm{high}},\bar a).
\]
Since
\(\degAr(\varphi(\MM_{\mathrm{high}},\bar a))=D\)
and \(f^{-1}\) is \(D\)-preserving, the inverse half of the
\(D\)-preservation definition gives
\(\degAr(\varphi(\MM_{\mathrm{low}},\bar a))=D\),
contradicting
\(\degAr(\varphi(\MM_{\mathrm{low}},\bar a))=\mathbf 0\).
Therefore \(T\) has no Vaughtian pair.
\end{proof}

\section{Morley's analysis for arithmetic types}
\label{sec:morley-analysis-arith-types}

We now begin the rank analysis on spaces of arithmetic types, and
throughout the first part of the section we assume that \(T\) is
arithmetically stable. The section has three main tasks. First, we
introduce a finite Cantor--Bendixson-style rank on the coded spaces of
arithmetic types and analyze the corresponding finite derivative stages.
Second, after adding the assumption that \(T\) has no Vaughtian pairs, we
prove the isolated-type finiteness statement needed later. Third, we use
these facts to relate positive rank to infinitude on the base model.

The rank used here is inspired by the Cantor--Bendixson analysis of
Stone spaces, but it is not literally the usual topological
Cantor--Bendixson rank. In our setting the spaces of arithmetic types
are available only as coded arithmetical objects, and the relevant notion
of isolation is isolation inside the coded arithmetic type space.

Let \(\MM\models T\) be an arithmetically decidable model, and let
\(A\subseteq M\) be arithmetical. In this section, when the ambient
model is clear, we write \(S_n^{\mathrm{ar}}(A)\) for
\(S_n^{\mathrm{ar}}(A,\MM)\).

\begin{definition}[Finite Cantor--Bendixson rank]
\label{def:finite-cb-rank}
Let \(\MM\models T\) be arithmetically decidable, let
\(A\subseteq M\) be arithmetical, and let \(n\geq 1\).

For \(p(\bar x)\in S_n^{\mathrm{ar}}(A,\MM)\) and \(m<\omega\), we
simultaneously define the relations \(\CB(p)\geq m\) and
\(\CB(p)=m\) by induction on \(m\).
Only finite stages are being defined here. A type need not have an exact
finite rank, and no infinite Cantor--Bendixson rank is used anywhere in
the argument.

\begin{enumerate}[label=(\roman*)]
\item For every \(p\in S_n^{\mathrm{ar}}(A,\MM)\), set
      \(\CB(p)\geq 0\).

\item Suppose \(m<\omega\) and that the relations \(\CB(q)\geq m\) and
      \(\CB(q)=m\) have already been defined for all
      \(q\in S_n^{\mathrm{ar}}(A,\MM)\). We say that \(\CB(p)=m\) if
      \(\CB(p)\geq m\) and there is a formula
      \(\varphi(\bar x,\bar a)\in p\), with
      \(\bar a\in A^{<\omega}\), such that whenever
	      \(q\in S_n^{\mathrm{ar}}(A,\MM)\) satisfies
	      \(\CB(q)\geq m\) and \(\varphi(\bar x,\bar a)\in q\), one has
	      \(q=p\).

\item We say that \(\CB(p)\geq m+1\) if \(\CB(p)\geq m\) and
      \(\CB(p)\neq m\).
\end{enumerate}

We also write
\[
S_n^{\mathrm{ar}}(A,\MM)^{(\geq m)}
:=
\{p\in S_n^{\mathrm{ar}}(A,\MM):\CB(p)\geq m\}.
\]
When the ambient model is clear, we write this as
\(S_n^{\mathrm{ar}}(A)^{(\geq m)}\).
\end{definition}

\begin{lemma}
\label{lem:finite-cb-stages-arith}
Let \(\MM\models T\) be arithmetically decidable, let
\(A\subseteq M\) be arithmetical, let \(n\geq 1\), and suppose that
\(A\) and \(S_n^{\mathrm{ar}}(A,\MM)\) are computable in \(X\). Then, for
every \(m<\omega\):
\begin{enumerate}[label=(\roman*)]
\item the relation \(\CB(p)\geq m\) on
      \(p\in S_n^{\mathrm{ar}}(A,\MM)\) is computable in
      \(X^{(2m)}\);
\item the relation \(\CB(p)=m\) on
      \(p\in S_n^{\mathrm{ar}}(A,\MM)\) is computable in
      \(X^{(2m+2)}\).
\end{enumerate}
\end{lemma}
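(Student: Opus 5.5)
We argue by simultaneous induction on \(m\), proving (i) and (ii) together, and treating types through their indices in the fixed \(X\)-computable code \(\bigoplus_e p_e\) of \(S_n^{\mathrm{ar}}(A,\MM)\). By Remark~\ref{rem:indexed-type-space-convention} we may assume each type occurs exactly once, so equality \(q=p\) of types becomes equality of indices. We also note that the relation ``\(\varphi(\bar x,\bar a)\in p_e\)'' is computable in \(X\), uniformly in \(e\) and in the code of the \(\LL(A)\)-formula. This uses the fact that \(A\) is \(X\)-computable, so \(X\) can check that the parameters lie in \(A\).

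The base case \(m=0\) of (i) is trivial, since \(\CB(p)\geq 0\) holds for every \(p\) and is therefore computable in \(X=X^{(0)}\). For the step from (i) at \(m\) to (ii) at \(m\), fix \(R_m:=\{e:\CB(p_e)\geq m\}\), which is \(X^{(2m)}\)-computable by induction. Then \(\CB(p_e)=m\) holds iff \(e\in R_m\) and
\[
\exists \varphi(\bar x,\bar a)\ \Bigl[\varphi(\bar x,\bar a)\in p_e\ \wedge\ \forall e'\,\bigl(e'\in R_m\wedge \varphi(\bar x,\bar a)\in p_{e'}\rightarrow e'=e\bigr)\Bigr].
\]
The inner universal clause is \(\Pi^0_1\) relative to \(X^{(2m)}\), so it is computable in \(X^{(2m+1)}\). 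The outer existential clause is then \(\Sigma^0_1\) relative to \(X^{(2m+1)}\), so it is computable in \(X^{(2m+2)}\). Conjoining with the \(X^{(2m)}\)-computable condition \(e\in R_m\) keeps the whole relation computable in \(X^{(2m+2)}\), which proves (ii) at \(m\).

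For the step from (ii) at \(m\) to (i) at \(m+1\), the definition gives \(\CB(p)\geq m+1\) iff \(\CB(p)\geq m\) and \(\CB(p)\neq m\). This is a Boolean combination of an \(X^{(2m)}\)-computable relation and an \(X^{(2m+2)}\)-computable one, so it is computable in \(X^{(2m+2)}=X^{(2(m+1))}\). All of these reductions are uniform in \(m\) and \(n\), though the lemma does not require uniformity.

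The main point needing care is that the quantification \(\forall q\) in Definition~\ref{def:finite-cb-rank} ranges only over \emph{arithmetic} types. This is exactly why we quantify over indices \(e'\) of the given code rather than over the full Stone space. We also need the exactly-once convention so that ``\(q=p\)'' is decidable. Without it, deciding whether two codes name the same type would cost one additional jump: that is a \(\Pi^0_1(X)\) condition, since codes name the same type iff they agree on all formulas. Even then the clause would stay within the bound, because it sits inside the universally quantified part, which is already computed at level \(X^{(2m+1)}\).
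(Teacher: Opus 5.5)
Your proposal is correct and follows the paper's own argument. It uses the same induction on \(m\), the same analysis of \(\CB(p)=m\) as an existential over a universal clause relative to the \(X^{(2m)}\)-computable relation \(\CB\geq m\) (with the no-repetition convention of Remark~\ref{rem:indexed-type-space-convention}), and the same Boolean-combination step for \(\CB(p)\geq m+1\). Your explicit quantifier count fills in what the paper compresses into one sentence, and your observation that the \(\Pi^0_1(X)\) equality test \(q=p\) fits inside the universal clause makes the bound independent of whether a duplicate-free code is still \(X\)-computable.
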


\begin{proof}
We argue by induction on \(m\). For \(m=0\), the relation \(\CB(p)\geq 0\) is just membership in
\(S_n^{\mathrm{ar}}(A,\MM)\), which is computable in \(X\).

Next, \(\CB(p)=m\) holds exactly when \(\CB(p)\geq m\) and there is
some formula \(\varphi(\bar x,\bar a)\in p\) such that whenever
\(q\in S_n^{\mathrm{ar}}(A,\MM)\) satisfies \(\CB(q)\geq m\) and
\(\varphi(\bar x,\bar a)\in q\), one has \(q=p\). Since
\(\CB(q)\geq m\) and \(A\) are computable in
\(X^{(2m)}\), this condition is computable in \(X^{(2m+2)}\) (here we are assuming the convention in Remark~\ref{rem:indexed-type-space-convention}).

Finally, \(\CB(p)\geq m+1\) if and only if \(\CB(p)\geq m\) and
\(\CB(p)\neq m\).
By the induction hypothesis, both clauses are computable in \(X^{(2m+2)}\).
\end{proof}

\noindent For a formula \(\theta(\bar x,\bar a)\) with parameters from
the arithmetical set \(A\), write
\[
[\theta(\bar x,\bar a)]_{\mathrm{ar}}
:=
\{p\in S_n^{\mathrm{ar}}(A,\MM):\theta(\bar x,\bar a)\in p\}.
\]
This is the basic clopen subset of the coded arithmetic type space
determined by the formula code.

\begin{lemma}[Density of finite-rank arithmetic types]
\label{lem:cb-rank-dense}
Assume that \(T\) is arithmetically stable. Let
\(\MM\models T\) be arithmetically decidable, let \(A\subseteq M\) be
arithmetical, let \(n\geq 1\), let \(m<\omega\), and let
\(\varphi(\bar x,\bar a)\) be a formula with parameters from \(A\) such
that
\[
[\varphi(\bar x,\bar a)]_{\mathrm{ar}}
\cap
S_n^{\mathrm{ar}}(A,\MM)^{(\geq m)}
\neq\varnothing.
\]
Then there is some
\(
p(\bar x)\in
[\varphi(\bar x,\bar a)]_{\mathrm{ar}}
\cap
S_n^{\mathrm{ar}}(A,\MM)
\)
such that
\[
\CB(p)=m.
\]
In particular, isolated arithmetic \(n\)-types are dense in
\(S_n^{\mathrm{ar}}(A,\MM)\).
\end{lemma}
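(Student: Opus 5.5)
The plan is to argue by contradiction and diagonalize against an arithmetical enumeration of all arithmetic types. Arithmetical stability will be used only to know that \(S_n^{\mathrm{ar}}(A,\MM)\) has an arithmetical code \(X\); by Remark~\ref{rem:indexed-type-space-convention} this code lists each type exactly once, say as \(q_0,q_1,\dots\). By Lemma~\ref{lem:finite-cb-stages-arith}, the relation \(\CB(q)\ge m\) is computable in \(X^{(2m)}\) and \(\CB(q)=m\) in \(X^{(2m+2)}\).

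Suppose no \(p\in[\varphi(\bar x,\bar a)]_{\mathrm{ar}}\) has \(\CB(p)=m\). Call a formula \(\psi\) over \(A\) \emph{large} if \(\psi\) implies \(\varphi(\bar x,\bar a)\) and \([\psi]_{\mathrm{ar}}\cap S_n^{\mathrm{ar}}(A)^{(\ge m)}\neq\varnothing\).

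\textbf{Splitting step.} Let \(\psi\) be large and pick a witness \(p\) in \([\psi]_{\mathrm{ar}}\) with \(\CB(p)\ge m\). By hypothesis \(\CB(p)\neq m\), so \(\psi\) does not isolate \(p\) among the types of rank \(\ge m\). Hence there is \(q\neq p\) with \(\CB(q)\ge m\) and \(\psi\in q\). A formula \(\chi\) with \(\chi\in p\) and \(\neg\chi\in q\) then makes both \(\psi\wedge\chi\) and \(\psi\wedge\neg\chi\) large, and these two formulas are contradictory. Given \(\psi\) and an index \(k\), at most one of the two halves can lie in \(q_k\). Choose the one not in \(q_k\) (the least such choice in some fixed order). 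Every search involved (for \(p\), \(q\), \(\chi\)) ranges over the \(X^{(2m)}\)-computable family and formula codes, so the whole step is arithmetical, uniformly in \(\psi\) and \(k\).

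\textbf{Diagonal construction.} Start with \(\psi_0:=\varphi(\bar x,\bar a)\), which is large by assumption. At stage \(k\), split \(\psi_k\) and let \(\psi_{k+1}\) be the half omitted by \(q_k\). The sequence \((\psi_k)_k\) is arithmetical, and each \(\psi_k\) lies in some type, so \(\Sigma:=\{\psi_k:k\in\N\}\) is finitely consistent with \(\Diag_{\mathrm{el}}(\MM)\). Moreover every type containing \(\Sigma\) differs from every \(q_k\), since \(\psi_{k+1}\notin q_k\).

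\textbf{Arithmetic completion, the main obstacle.} It remains to produce an \emph{arithmetic} complete type \(p^\ast\supseteq\Sigma\) over \(A\). Enumerate the \(\LL(A)\)-formulas as \(\theta_0,\theta_1,\dots\). Consider the binary tree of assignments \(\sigma\in 2^{<\omega}\), where \(\sigma\) is kept iff \(\psi_{|\sigma|}\wedge\bigwedge_{i<|\sigma|}\theta_i^{\sigma(i)}\) is satisfiable in \(\MM\). Satisfiability here is checked by \(\MM\models\exists\bar x(\dots)\), which is arithmetical because \(\MM\) is arithmetically decidable and \(A\) is arithmetical. Since the \(\psi_k\) decrease, this tree is closed under initial segments; it is infinite because \(\Sigma\) is finitely consistent. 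The tree is arithmetical and binary branching, so by Lemma~\ref{lem:finite-jump-path-finitely-branching-tree} it has an arithmetical path. That path determines a complete type \(p^\ast\in S_n(A,\MM)\) containing \(\Sigma\), with arithmetical code; hence \(p^\ast\in S_n^{\mathrm{ar}}(A,\MM)\) and \(p^\ast=q_k\) for some \(k\). But \(\psi_{k+1}\in p^\ast\setminus q_k\), a contradiction.

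So some \(p\in[\varphi]_{\mathrm{ar}}\) has \(\CB(p)=m\). For the final assertion, take \(m=0\): every nonempty \([\varphi]_{\mathrm{ar}}\) contains a type of rank \(0\), that is, an isolated arithmetic type. The point requiring care is that completions of \(\Sigma\) in general are not arithmetic; the uniform arithmeticity of the splitting choices, which comes from the arithmetical code \(X\), is what makes \(\Sigma\) and hence \(p^\ast\) arithmetic.
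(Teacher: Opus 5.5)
Your proof is correct and shares the paper's skeleton. Both argue by contradiction, use the arithmetical code \(X\) of \(S_n^{\mathrm{ar}}(A,\MM)\) together with Lemma~\ref{lem:finite-cb-stages-arith} to make the splitting choices arithmetical, and then extract an arithmetic type along a branch of formulas. The difference lies in how the contradiction is obtained.

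\textbf{The paper's route.} It builds the whole binary splitting tree \(\{\varphi_\sigma:\sigma\in 2^{<\omega}\}\), which is computable in \(X'\) when \(m=0\). It follows a branch \(F\) that is arithmetical but not \(X'\)-computable, for instance \(F=X''\). Arithmetical compactness (Theorem~\ref{thm:compact}) then gives an arithmetic type \(q_F\) through that branch. Finally it observes \(F\le_T X'\oplus q_F\le_T X'\), since every listed type is \(X\)-computable.

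\textbf{Your route.} You follow a single path and diagonalize explicitly: at stage \(k\) you keep the half omitted by \(q_k\). The completion \(p^\ast\) therefore differs from every listed type by construction. You obtain \(p^\ast\) from the effective K\H{o}nig lemma applied to the tree of finite completions, rather than from compactness.

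\textbf{What each buys.} Your version needs only that the enumeration is arithmetical and exhaustive. It avoids the Turing-degree bookkeeping, namely the jump level of the tree versus that of \(F\), which the paper leaves implicit when it says the case \(m>0\) ``proceeds similarly.'' It also handles all \(m\) uniformly. The paper's version instead exhibits a full perfect tree of nonempty clopen sets with arbitrarily complex arithmetic branches. That is closer in spirit to the classical argument producing \(2^{\aleph_0}\) types from a binary tree of formulas.
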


\begin{proof}
By arithmetical stability, fix an arithmetical code \(X\) for \(S_n^{\mathrm{ar}}(A,\MM)\). All clopen sets and derivative stages below are taken inside this coded
arithmetic type space, not inside the full Stone space.

First suppose \(m=0\). Assume toward a contradiction that
\(
[\varphi(\bar x,\bar a)]_{\mathrm{ar}}\) is nonempty but contains no isolated arithmetic type. We shall construct an \(X'\)-computable tree
    \[
    \{\varphi_\sigma(\bar x):\sigma\in 2^{<\omega}\}
    \]
such that:
\begin{enumerate}[label=(\roman*)]
\item \([\varphi_\sigma(\bar x)]_{\mathrm{ar}}\neq\varnothing\);
\item \([\varphi_\sigma(\bar x)]_{\mathrm{ar}}\) contains no isolated
      arithmetic type;
\item each child implies its parent;
\item the two children of any node are inconsistent.
\end{enumerate}

Set \(\varphi_\emptyset(\bar x):=\varphi(\bar x,\bar a)\).
Suppose \(\varphi_\sigma\) has been defined. Since
\([\varphi_\sigma]_{\mathrm{ar}}\) contains no isolated type, it cannot
be a singleton. Let \(\psi(\bar x)\) be the formula with least code which lies in some type in \([\varphi_\sigma]_{\mathrm{ar}}\) and lies outside some type in \([\varphi_\sigma]_{\mathrm{ar}}\). This formula can be computed using \(X'\). Define
\[
\varphi_{\sigma^\frown 0}:=
\varphi_\sigma\wedge\psi,
\qquad
\varphi_{\sigma^\frown 1}:=
\varphi_\sigma\wedge\neg\psi.
\]
Both children are nonempty in the arithmetic type space, both imply the
parent, and they are incompatible. Moreover, neither child contains an
isolated arithmetic type; otherwise the same isolating formula, conjoined
with the child formula, would isolate an arithmetic type inside
\([\varphi_\sigma]_{\mathrm{ar}}\), contrary to the induction
hypothesis at \(\sigma\).

Let \(F \in 2^\omega\) be arithmetical such that \(F\) is not \(X'\)-computable, e.g., \(F = X''\). The theory
\[
\Diag_{\mathrm{el}}(\MM)
\cup
\{\varphi_{F\upharpoonright s}(\bar x):s<\omega\}
\]
is arithmetically definable, and every finite
subset is satisfiable. By
compactness and Lemma~\ref{lem:arith-type-realized}, there is an
arithmetic type
\(q_F\in S_n^{\mathrm{ar}}(A,\MM)\)
containing every \(\varphi_{F\upharpoonright s}\).

We claim that \(F\leq_T X' \oplus q_F\). Indeed, given \(s\), using \(X'\) and the previously recovered
    initial segment \(F\upharpoonright s\), compute the two child formulas
    \(\varphi_{(F\upharpoonright s)^\frown 0}\) and
    \(\varphi_{(F\upharpoonright s)^\frown 1}\).
    These are contradictory and exhaustive
    relative to the parent formula already in \(q_F\), so exactly one
    belongs to the complete type \(q_F\), recovering \(F(s)\). 
    
Since \(q_F\) is \(X\)-computable, it follows that \(F\) is \(X'\)-computable. This contradiction proves the case \(m=0\).

The argument for \(m>0\) proceeds similarly, so we only describe the needed modifications. Instead of working in \(S_n^{\mathrm{ar}}(A,\MM)\), we work in \(S_n^{\mathrm{ar}}(A,\MM)^{(\geq m)}\). The latter remains arithmetical due to Lemma~\ref{lem:finite-cb-stages-arith}. Notice that we do not need \(q_F\) itself to lie in the derivative
\(S_n^{\mathrm{ar}}(A,\MM)^{(\geq m)}\): completeness of \(q_F\) is
enough to decide, at each stage, which of the two contradictory child
formulas belongs to \(q_F\), and hence to recover \(F\).
\end{proof}

For the remainder of this section, assume in addition that \(T\) has no
Vaughtian pairs. The next two results are deliberately formulated over a
fixed ambient diagram. Thus, when parameters are named, they are named
inside one fixed arithmetically decidable or arithmetically extendible
ambient model; we do not compare isolated types over the same abstract
set of parameters as it may sit in unrelated extensions.

\begin{lemma}[Isolated arithmetic types over a fixed base are realized]
\label{lem:arith-isolated-realized}
Let \(\AAA\models T\) be arithmetically decidable, let \(A^\ast\) be
the domain of \(\AAA\), let \(A\subseteq A^\ast\) be arithmetical, and
let \(p(\bar x)\in S_{\bar x}^{\mathrm{ar}}(A,\AAA)\) be isolated by a
formula \(\theta(\bar x,\bar a)\), with \(\bar a\in A^{<\omega}\). Then
\(\AAA\) realizes \(p\). Consequently, every elementary extension
\(\BB\succeq\AAA\) realizes \(p\).
\end{lemma}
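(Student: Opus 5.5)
The plan is to avoid any compactness or omitting-types argument. Because the isolating formula lives in a structure that is already a model, it has a witness inside \(\AAA\), and isolation inside the coded arithmetic type space then forces that witness to realize \(p\). Neither arithmetical stability nor the absence of Vaughtian pairs should be needed; only the definitions enter.

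\emph{Step 1: the isolating formula has a witness in \(\AAA\).} Since \(\theta(\bar x,\bar a)\in p\) and \(p\in S_{\bar x}(A,\AAA)\), the formula \(\theta(\bar x,\bar a)\) is consistent with \(\Diag_{\mathrm{el}}(\AAA)\). The elementary diagram is complete for sentences with parameters from \(A^\ast\), so it must contain \(\exists\bar x\,\theta(\bar x,\bar a)\); otherwise it would contain the negation, contradicting consistency. Hence \(\AAA\models\exists\bar x\,\theta(\bar x,\bar a)\), and we fix a tuple \(\bar b\in (A^\ast)^{|\bar x|}\) with \(\AAA\models\theta(\bar b,\bar a)\).

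\emph{Step 2: the type of the witness is arithmetic.} Let \(q:=\tp^{\AAA}(\bar b/A)\). This is a type over \(A\) inside \(\AAA\). Its code is arithmetical, because membership of \(\varphi(\bar x,\bar c)\) in \(q\) is the condition \(\AAA\models\varphi(\bar b,\bar c)\). Here \(\AAA\) is arithmetically decidable, \(A\) is arithmetical, and \(\bar b\) is a fixed finite parameter. This is exactly the \((3)\Rightarrow(1)\) argument of Lemma~\ref{lem:arith-type-realized}, taking \(\AAA\) itself as the extension. Therefore \(q\in S_{\bar x}^{\mathrm{ar}}(A,\AAA)\), and \(\theta(\bar x,\bar a)\in q\).

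\emph{Step 3: isolation identifies \(q\) with \(p\), and the result transfers to extensions.} Isolation of \(p\) means \(\CB(p)=0\) in the sense of Definition~\ref{def:finite-cb-rank} with \(m=0\). Since every arithmetic type has \(\CB\geq 0\), any arithmetic type containing \(\theta(\bar x,\bar a)\) equals \(p\). Applied to \(q\), this gives \(q=p\), so \(\bar b\) realizes \(p\) in \(\AAA\). If \(\BB\succeq\AAA\), then elementarity gives \(\tp^{\BB}(\bar b/A)=\tp^{\AAA}(\bar b/A)=p\), as in Lemma~\ref{lem:arith-type-space-invariance}, so \(\BB\) realizes \(p\) as well.

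The only point requiring care is that isolation holds merely inside the coded arithmetic type space, not in the full Stone space. The proof must therefore produce an \emph{arithmetic} competitor type, and Step~2 is where this matters. Because the witness is taken inside the arithmetically decidable structure \(\AAA\) itself, its type is automatically arithmetic, and the restricted notion of isolation suffices.
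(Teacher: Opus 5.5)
Your proposal is correct and follows essentially the same argument as the paper: obtain a witness \(\bar b\) to \(\exists\bar x\,\theta(\bar x,\bar a)\) inside \(\AAA\), observe that \(\tp^{\AAA}(\bar b/A)\) is arithmetic because \(\AAA\) is arithmetically decidable and \(A\) is arithmetical, identify it with \(p\) by isolation inside the arithmetic type space, and transfer to any \(\BB\succeq\AAA\) via the same tuple. Your remark that Step~2 is where the restricted, arithmetic-only notion of isolation matters is exactly the point the paper's one-line parenthetical addresses.
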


\begin{proof}
Since \(\theta(\bar x,\bar a)\in p\) and \(p\) is consistent with
\(\Diag_{\mathrm{el}}(\AAA)\), the sentence
\(\exists\bar x\,\theta(\bar x,\bar a)\) holds in \(\AAA\).
Choose \(\bar b\in (A^\ast)^{|\bar x|}\) realizing \(\theta\) in
\(\AAA\). Then \(\tp^{\AAA}(\bar b/A)\) is arithmetic (since \(\AAA\)
is decidable and \(A\) is arithmetical), contains \(\theta\), and hence
equals \(p\) by isolation. Any elementary extension \(\BB\succeq\AAA\)
realizes \(p\) via the same tuple \(\bar b\).
\end{proof}

\begin{proposition}[Isolated arithmetic one-types have finite trace]
\label{prop:arith-omit-all-new-realizations}
Assume \(T\) is arithmetically stable and has no Vaughtian pairs. Let
\(\BB\models T\) be arithmetically extendible, let \(A\subseteq B\) be
arithmetical, and suppose that \(A\) contains the domain of an
elementary submodel \(\MM_0\preccurlyeq\BB\). Let
\(\varphi(x,\bar a)\), with \(\bar a\in A^{<\omega}\), isolate a
complete arithmetic one-type
\[
p(x)\in S_1^{\mathrm{ar}}(A,\BB).
\]
Then \(\varphi(B,\bar a)\) is finite.
\end{proposition}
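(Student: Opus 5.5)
The plan is to argue by contradiction and produce a Vaughtian pair, contradicting the standing hypothesis of no Vaughtian pairs. Suppose \(\varphi(B,\bar a)\) is infinite.

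\emph{Step 1 (reduce to a single complete type).} First I record that every element of \(\varphi(B,\bar a)\) realizes \(p\). Indeed, for \(c\in\varphi(B,\bar a)\), the type \(\tp^{\BB}(c/A)\) is arithmetic by Lemma~\ref{lem:arith-type-realized}, and it contains the isolating formula. Hence it equals \(p\). Moreover \(\varphi(A,\bar a)\) has at most one element. If \(c\in A\) satisfies \(\varphi\), then \(x=c\) lies in \(p\), so every realization of \(\varphi(x,\bar a)\) equals \(c\), contradicting infinitude. Thus the infinite set \(\varphi(B,\bar a)\) lies entirely outside \(A\), and in particular outside \(M_0\). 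The same two facts hold in any elementary extension of \(\BB\) in which \(p\) is still isolated by \(\varphi(x,\bar a)\). Isolation transfers to any extension over the same parameter set \(A\), by Lemma~\ref{lem:arith-type-space-invariance} together with the elementary-diagram formulation of types.

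\emph{Step 2 (pass to a decidable ambient model and Skolemize).} Fix an arithmetically decidable \(\widehat{\BB}\succeq\BB\) with an arithmetically decidable Skolem expansion, as in the proof of Corollary~\ref{cor:model-base-realization-characterization}. Let
\[
\NN_1:=\Hull^{\widehat{\BB}^{\mathrm{Sk}}}\bigl(A\cup\varphi(\widehat B,\bar a)\bigr)\!\restr_\LL .
\]
Then \(\NN_1\preccurlyeq\widehat{\BB}\!\restr_\LL\) by Theorem~\ref{thm:sk}, and it contains \(\bar a\). It also contains every \(\varphi\)-point of \(\widehat{\BB}\), so \(\varphi(\NN_1,\bar a)=\varphi(\widehat{\BB},\bar a)\), and this set is infinite. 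If \(\NN_1\ne\widehat{\BB}\), then \((\NN_1,\widehat{\BB})\) is a Vaughtian pair and we are done.

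\emph{Step 3 (main obstacle: properness).} The hard step is ensuring \(\NN_1\neq\widehat{\BB}\), that is, that the \(\varphi\)-points together with \(A\) do not generate everything. My first attempt is to pre-enlarge \(\widehat{\BB}\) by compactness before forming the hull. Consider the theory in the language with a predicate \(P\) consisting of three parts:
\begin{enumerate}[label=(\alph*)]
\item the elementary diagram of \(\widehat{\BB}\);
\item the Tarski--Vaught scheme saying that \(P\) is an elementary submodel containing the named copy of \(A\);
\item the sentence \(\forall x\,(\varphi(x,\bar a)\to P(x))\), together with a constant \(d\notin P\).
\end{enumerate}
Finite satisfiability is exactly where isolation must be used. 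All \(\varphi\)-points have the same type \(p\) over \(A\), so a finite fragment mentions only finitely many of them. Using Lemma~\ref{lem:finite-skolem-avoidance} inside \(\neg\varphi(x,\bar a)\), which is infinite because \(T\) has no finite models and \(\varphi\) isolates a non-algebraic type, one tries to find an element outside a finitely generated hull that still contains all \(\varphi\)-points needed by the fragment. If this finite-satisfiability check goes through, Theorem~\ref{thm:compact}, in the form of Lemma~\ref{lem:vaughtian-equivalent-ar}, yields an arithmetically decidable Vaughtian pair witnessed by \(\varphi(x,\bar a)\). That contradicts the hypothesis and shows \(\varphi(B,\bar a)\) is finite.

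If the direct compactness check resists, I would fall back on the classical route. There, no Vaughtian pairs gives a uniform bound for \(\varphi(x,\bar y)\) (elimination of \(\exists^\infty\)), and one combines it with the base model \(\MM_0\subseteq A\) and Lemma~\ref{lem:arith-isolated-realized}.
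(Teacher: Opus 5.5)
\textbf{There is a genuine gap, and it is Step~3.} Your Steps~1 and~2 are correct. But they only reduce the proposition to the claim $\Hull(A\cup\varphi(\widehat B,\bar a))\neq\widehat B$, and Step~3, which carries all of the content, is left conditional.

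The compactness sketch also misjudges what a finite fragment demands. $\forall x\,(\varphi(x,\bar a)\to P(x))$ is a single sentence, so every fragment that tests anything still forces \emph{all} $\varphi$-points into $P$. Finiteness only limits the number of Tarski--Vaught instances, and you give no reason why finitely many of them can be met with $d\notin P$.

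The fallback is not an argument either. Elimination of $\exists^\infty$ (Lemma~\ref{lem:no-vp-elim-exists-infty}) bounds \emph{finite} fibers and cannot make an infinite set finite. Lemma~\ref{lem:arith-isolated-realized} produces realizations inside a model, whereas $A$ only contains one.

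The paper argues differently. Working over $\Diag_{\mathrm{el}}(\BB)$ with $\BB$ decidable, it uses Corollary~\ref{cor:reserved-constant-omitting} to omit $\Gamma_p(x)=p(x)\cup\{x\neq b:b\in B\}$ while adding a reserved constant $d\notin B$, so that any new $\varphi$-point would realize $\Gamma_p$. That argument has the same hole in another form. Unsupportedness of $\Gamma_p$ is checked over $\Diag_{\mathrm{el}}(\BB)$, but the construction needs it over $\Diag_{\mathrm{el}}(\BB)\cup\{d\neq b:b\in B\}$.

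\textbf{In fact no argument can close this step, because the statement fails.}

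\emph{The theory.} Let $T$ be the theory of $(\mathbb{Z}/4\mathbb{Z})^{(\omega)}$ in the group language. Its models are exactly the free $\mathbb{Z}/4\mathbb{Z}$-modules of infinite rank, so $T$ is totally categorical and has no Vaughtian pairs. In every model $\NN$ we have $2N=\{x\in N:2x=0\}$, so every pp-formula is equivalent to a quantifier-free one. Hence $T$ has quantifier elimination, types over $A$ are determined by finitely many linear data, and $T$ is arithmetically stable.

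\emph{The instance.} Let $\BB$ be a computable copy with basis $(e_i)_{i\in\N}$. Put $M_0:=\langle e_i:i\geq 1\rangle$, which satisfies $M_0\preccurlyeq\BB$ by quantifier elimination. Let $A:=M_0\cup 2B$ and $\varphi(x,2e_0):\equiv 2x=2e_0$.

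\emph{Isolation.} Suppose $c$ solves $\varphi$ in some $\NN\succcurlyeq\BB$. Then $c-e_0\in 2N$. For $k\in\mathbb{Z}/4\mathbb{Z}$ we have $kc\in\langle A\rangle=M_0\oplus\{0,2e_0\}$ iff $k$ is even, and $2c=2e_0$. (If $k$ were odd, then $e_0-m\in 2N$ for some $m\in M_0$, forcing $2e_0=2m$.) Hence all solutions have the same type over $A$. So $\varphi$ isolates $\tp(e_0/A)$, yet $\varphi(B,2e_0)=e_0+2B$ is infinite.

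\emph{Where your Step~3 breaks.} Consider the fragment consisting of the axioms of $T$, $\exists x\,\varphi(x,2e_0)$, the Tarski--Vaught instances for $x+y$, $x-y$ and $\exists y\,(2y=x)$, the sentence $\forall x\,(\varphi\to P(x))$, and $\neg P(d)$. It is already unsatisfiable: $P\supseteq c+2N$ gives $2N\subseteq P$, and then $P=N$.

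\emph{Where the paper's argument breaks.} Take any proper $\CC\succcurlyeq\BB$ and $d\in C\setminus B$. If $2d\notin B$, then $e_0+2d$ is a new point of $\varphi$. Otherwise $2d=2b_0$ for some $b_0\in B$ by elementarity, and then $e_0+d-b_0$ is a new point of $\varphi$. So no proper extension omits $\Gamma_p$. (This is the familiar example of an $\aleph_1$-categorical theory that is not almost strongly minimal.)
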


\begin{proof}
Suppose not. Passing to an arithmetically decidable elementary extension
via Lemma~\ref{lem:arith-type-space-invariance}, we may assume that
\(\BB\) is arithmetically decidable and that \(\varphi(B,\bar a)\)
is infinite.

We use the standard Vaughtian-pair omitting argument. Work in the
language \(\LL(B)\), naming every element of \(B\), and let
\(T_B:=\Diag_{\mathrm{el}}(\BB)\). Since \(\varphi\) isolates \(p\),
the only arithmetic one-type over \(A\) containing \(\varphi(x,\bar a)\)
is \(p\). Consider the partial type
\[
\Gamma_p(x)
:=
p(x)\cup\{x\neq b:b\in B\}.
\]
This partial type is unsupported over \(T_B\): any formula
\(\chi(x,\bar b)\) consistent with \(T_B\) is realized by some
\(c\in B\), and then \(T_B\cup\{\chi(x,\bar b)\wedge x=c\}\) is
consistent while \(x\neq c\in \Gamma_p\).

Apply the reserved-constant omitting theorem,
Corollary~\ref{cor:reserved-constant-omitting}, over the fixed diagram
\(T_B\), omitting the (singleton) family \(\{\Gamma_p\}\). Thus we add
a fresh constant \(d\), impose the dense requirements \(d\neq b\) for
\(b\in B\), and omit \(\Gamma_p\) in the same Henkin construction. At
an omission stage the current finite condition may include finitely many
inequalities \(d\neq b\); these side conditions are folded into the
consistency test for the next negated formula from \(\Gamma_p\). The
unsupportedness argument above is local, so it applies below such a
finite condition. We obtain an arithmetically decidable elementary
extension \(\BB\preccurlyeq\CC\) omitting \(\Gamma_p\), and the
interpretation of \(d\) lies outside \(B\). Hence \(\CC\neq\BB\).

No new element of \(C\setminus B\) satisfies \(\varphi(x,\bar a)\):
if \(c\in C\setminus B\) did, then \(\tp^{\CC}(c/A)\) would be an
arithmetic type containing \(\varphi(x,\bar a)\), hence equal to \(p\)
by isolation, so \(c\) would realize \(\Gamma_p\), contradicting the
construction. Therefore \(\varphi(C,\bar a)=\varphi(B,\bar a)\).
This common set is infinite and \(\BB\preccurlyeq\CC\) is proper,
yielding a Vaughtian pair---contradiction.
\end{proof}

\begin{proposition}[Positive rank forces infinitude on the base]
\label{prop:cb-positive-infinite-trace}
Assume \(T\) is arithmetically stable and has no Vaughtian pairs. Let
\(\MM_0\models T\) be arithmetically decidable, and suppose
\[
p(\bar x)\in S_{|\bar x|}^{\mathrm{ar}}(M_0,\MM_0)
\]
satisfies \(\CB(p)\geq 1\). Then every formula
\(\varphi(\bar x,\bar a)\in p\), with
\(\bar a\in M_0^{<\omega}\), has infinitely many realizations in
\(M_0\).
\end{proposition}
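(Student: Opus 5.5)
The plan is to argue by contraposition: a formula in \(p\) with only finitely many realizations in \(M_0\) forces \(p\) to be realized in \(M_0\). A type realized by a tuple of the base is isolated, so \(\CB(p)=0\), which contradicts \(\CB(p)\geq 1\). I expect this to be a purely elementary argument. In particular, I do not expect to need arithmetical stability or the absence of Vaughtian pairs; those hypotheses only record the standing assumptions of this part of the section.

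Concretely, fix \(\varphi(\bar x,\bar a)\in p\) with \(\bar a\in M_0^{<\omega}\), and suppose \(\varphi(M_0^{|\bar x|},\bar a)=\{\bar b_1,\dots,\bar b_N\}\) is finite. The steps are as follows.

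\begin{enumerate}
\item Transfer finiteness to extensions. Since \(\MM_0\models \exists^{=N}\bar x\,\varphi(\bar x,\bar a)\), every elementary extension \(\NN\succeq\MM_0\) has exactly the realizations \(\bar b_1,\dots,\bar b_N\) of \(\varphi(\bar x,\bar a)\). All of them lie in \(M_0\).
\item Realize \(p\) and locate the realization. Since \(p\in S_{|\bar x|}^{\mathrm{ar}}(M_0,\MM_0)\), Lemma~\ref{lem:arith-type-realized} gives an arithmetically decidable \(\NN\succeq\MM_0\) and a tuple \(\bar c\) realizing \(p\). Because \(\varphi(\bar x,\bar a)\in p\), step 1 forces \(\bar c=\bar b_i\) for some \(i\). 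Hence \(p=\tp^{\MM_0}(\bar b_i/M_0)\), and in particular the formula \(\bar x=\bar b_i\), which has parameters in \(M_0\), belongs to \(p\).
\item Conclude that \(p\) is isolated. Any \(q\in S_{|\bar x|}^{\mathrm{ar}}(M_0,\MM_0)\) containing \(\bar x=\bar b_i\) is a complete type over \(M_0\) containing that equation. Such a \(q\) equals \(\tp(\bar b_i/M_0)=p\). Therefore clause (ii) of Definition~\ref{def:finite-cb-rank} with \(m=0\) gives \(\CB(p)=0\), so \(\CB(p)\not\geq 1\). This contradicts the hypothesis.
\end{enumerate}

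The only point needing care is the bookkeeping in step 3. Isolation there is meant inside the coded arithmetic type space, not the full Stone space, and the isolating formula must be a formula over \(M_0\) in the sense of the canonical parameter coding. Both conditions hold trivially, since \(\bar b_i\in M_0^{<\omega}\) and the condition quantifies only over arithmetic \(q\). So I do not anticipate a genuine obstacle.
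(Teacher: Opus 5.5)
Your argument is correct and is essentially the paper's: finiteness of \(\varphi(M_0^{|\bar x|},\bar a)\) transfers by elementarity to extensions, so any realization of \(p\) is one of the base tuples, \(p\) is isolated by an equation \(\bar x=\bar b_i\), and hence \(\CB(p)=0\). The one difference is that the paper uses arithmetical stability to realize all arithmetic types over \(M_0\) in a single extension, while you realize only \(p\) via Lemma~\ref{lem:arith-type-realized}; this bears out your remark that neither stability nor the absence of Vaughtian pairs is actually needed here.
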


\begin{proof}
Suppose \(\varphi(\bar x,\bar a)\in p\) and
\[
\varphi(M_0^{|\bar x|},\bar a)
=
\{\bar d_1,\dots,\bar d_k\}
\]
is finite, with the \(\bar d_i\)'s distinct. Since
\(\varphi(\bar x,\bar a)\in p\), this finite set is nonempty: if
\(k=0\), then \(\MM_0\models\neg\exists\bar x\,\varphi(\bar x,\bar a)\),
so no type over \(M_0\) could contain \(\varphi(\bar x,\bar a)\).

Since \(T\) is arithmetically stable, we may fix an elementary extension \(\MM \succeq \MM_0\) which models \(T\) and realizes all arithmetic finite types over \(M_0\). By elementarity,
\[
\MM\models\exists^{=k}\bar x\,\varphi(\bar x,\bar a).
\]
Thus
\(
\varphi(M^{|\bar x|},\bar a)=\{\bar d_1,\dots,\bar d_k\}
\). It follows that every type in \(S_{|\bar x|}^{\mathrm{ar}}(M_0,\MM_0)\) is isolated by a formula of the form
\[
\varphi(\bar x,\bar a)\wedge \bar x=\bar d_i
\]
and hence has Cantor-Bendixson rank \(0\). This
contradicts \(\CB(p)\geq 1\).
\end{proof}

\section{Prime models and strong minimality}
\label{sec:prime-model-strong-minimality}

In this section we derive the main structural consequences of
arithmetical stability together with the absence of Vaughtian pairs.
Throughout we therefore assume that \(T\) is complete, arithmetically
definable, has no finite models, is arithmetically stable, and has no
Vaughtian pairs.

We begin with the standard model-theoretic notions.

\begin{definition}[Atomic and prime models]
\label{def:atomic-prime}
\begin{enumerate}[label=(\roman*)]
\item A model \(\NN\models T\) is \emph{atomic} if every finite tuple
      from \(N\) realizes an isolated complete type over \(\emptyset\).
\item A model \(\PP\models T\) is \emph{prime} if for every model
      \(\MM\models T\), there is an elementary embedding
      \(f:\PP\to\MM\).
\end{enumerate}
\end{definition}

\begin{lemma}
\label{lem:isolation-complete-vs-arithmetic}
Let \(p(\bar x)\in S_n^{\mathrm{ar}}(\emptyset)\) be an arithmetic type.
Then \(p\) is isolated in the space of arithmetic
types over \(\emptyset\) if and only if \(p\) is isolated as an ordinary
complete type over \(\emptyset\).
\end{lemma}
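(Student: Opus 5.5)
The proof will handle the two directions separately. The space of arithmetic types over \(\emptyset\) is a subset of the full Stone space \(S_n(\emptyset)\), with the same basic clopen sets \([\theta(\bar x)]\) intersected down. Note that \(\emptyset\) is arithmetical and \(T\) is arithmetically definable. I read the intended ambient model as an arithmetically decidable \(\MM\models T\); by completeness of \(T\) and Lemma~\ref{lem:arith-type-space-invariance}, neither type space depends on this choice.

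\emph{Ordinary isolation implies arithmetic isolation.} Suppose \(\theta(\bar x)\) isolates \(p\) in \(S_n(\emptyset)\), meaning \(p\) is the unique complete type containing \(\theta\). Then \(p\) is a fortiori the unique arithmetic type containing \(\theta\). Since \(p\) itself is arithmetic by hypothesis, \(\theta\) isolates \(p\) in the arithmetic type space.

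\emph{Arithmetic isolation implies ordinary isolation.} Suppose \(\theta(\bar x)\) is such that \(p\) is the only arithmetic type containing \(\theta\), and suppose toward a contradiction that \(\theta\) does not isolate \(p\) in \(S_n(\emptyset)\). Then there is a formula \(\psi(\bar x)\) with \(\psi\in p\) and \(T\cup\{\exists\bar x(\theta\wedge\neg\psi)\}\) consistent; since \(T\) is complete, \(T\vdash\exists\bar x(\theta\wedge\neg\psi)\). Fix an arithmetically decidable model \(\MM\models T\), which exists by Theorem~\ref{thm:compact} applied to the arithmetically definable theory \(T\). Choose \(\bar b\in M^n\) with \(\MM\models\theta(\bar b)\wedge\neg\psi(\bar b)\). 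As in Lemma~\ref{lem:arith-type-realized}, \((3)\Rightarrow(1)\), with \(A=\emptyset\), the type \(q:=\tp^{\MM}(\bar b)\) is arithmetic because \(\MM\) is arithmetically decidable. Now \(q\) contains \(\theta\) and \(\neg\psi\), so \(q\neq p\), contradicting arithmetic isolation.

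No real obstacle is expected: the whole point is that every consistent formula over \(\emptyset\) is realized in an arithmetically decidable model, hence lies in some arithmetic type. The only care needed is the bookkeeping about which ambient model the arithmetic type space refers to. I will note that no arithmetical stability hypothesis is required for this lemma, and that the same argument works over any arithmetical \(A\subseteq M\) inside a fixed arithmetically decidable \(\MM\). In that setting one uses \(\Diag_{\mathrm{el}}(\MM)\) in place of \(T\), and realizes \(\theta\wedge\neg\psi\) directly in \(\MM\), since a formula over \(A\) consistent with the elementary diagram has a solution in \(\MM\).
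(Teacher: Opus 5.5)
Your proof is correct and follows essentially the same route as the paper. The forward direction is trivial. For the converse, you produce an arithmetic type that contains \(\theta\) but differs from \(p\), by realizing a formula in an arithmetically decidable model. The only difference is cosmetic: the paper applies Theorem~\ref{thm:compact} with fresh constants to both \(T\cup\{\theta(\bar c),\psi(\bar c)\}\) and \(T\cup\{\theta(\bar c),\neg\psi(\bar c)\}\), obtaining two distinct arithmetic types through \(\theta\), whereas you use completeness of \(T\) to realize \(\theta\wedge\neg\psi\) directly in one fixed arithmetically decidable model.
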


\begin{proof}
If a formula \(\theta(\bar x)\in p\) isolates \(p\) among all complete
types over \(\emptyset\), then it certainly isolates \(p\) inside the
subspace of arithmetic types.

Conversely, suppose \(\theta(\bar x)\in p\) isolates \(p\) among
arithmetic types, but does not isolate a complete type. Then there is a
formula \(\psi(\bar x)\) such that both
\[
T\cup\{\theta(\bar x),\psi(\bar x)\}
\qquad\text{and}\qquad
T\cup\{\theta(\bar x),\neg\psi(\bar x)\}
\]
are consistent. Indeed, take a complete type \(q\neq p\) containing
\(\theta\), and choose \(\psi\in p\setminus q\).

Add a tuple of new constants \(\bar c\). The two theories
\[
T\cup\{\theta(\bar c),\psi(\bar c)\}
\qquad\text{and}\qquad
T\cup\{\theta(\bar c),\neg\psi(\bar c)\}
\]
are arithmetically definable and finitely satisfiable. By
Theorem~\ref{thm:compact}, each has an arithmetically decidable model.
The types of \(\bar c\) in these two models are arithmetic types over
\(\emptyset\), both extend the same formula \(\theta\), and they are
distinct because they decide \(\psi\) differently. This contradicts the
assumption that \(\theta\) isolates \(p\) among arithmetic types.
\end{proof}

\begin{proposition}[Existence of an arithmetically decidable atomic model]
\label{prop:atomic-model}
Assume \(T\) is arithmetically stable. Then there exists an
arithmetically decidable model \(\NN\models T\) which is atomic.
\end{proposition}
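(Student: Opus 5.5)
The plan is to run the classical Henkin/omitting-types construction of an atomic model, with the arithmetic bounds supplied by arithmetical stability. The key input is that isolated types are dense. Apply Lemma~\ref{lem:cb-rank-dense} with \(m=0\) and \(A=\emptyset\), working inside any arithmetically decidable model of \(T\); one exists by Theorem~\ref{thm:compact}. This shows that every consistent formula \(\varphi(\bar x)\) lies in some arithmetic type over \(\emptyset\) which is isolated in the arithmetic type space. By Lemma~\ref{lem:isolation-complete-vs-arithmetic}, such a type is isolated as an ordinary complete type. Hence every consistent \(\LL\)-formula is implied modulo \(T\) by a complete formula, i.e.\ a formula isolating a complete type. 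Moreover, by Lemma~\ref{lem:finite-cb-stages-arith}, the set of codes of complete formulas is arithmetical. Concretely, \(\theta\) is complete iff \(\theta\) is consistent with \(T\) and, for every \(\psi\), one of \(T\vdash\theta\to\psi\) or \(T\vdash\theta\to\neg\psi\) holds. Since \(T\) is arithmetical, this is an arithmetical condition even without the type-space code.

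Next, carry out a Henkin construction in the language \(\LL\cup\{c_0,c_1,\dots\}\). We build finite conditions, each a single \(\LL\)-formula \(\theta_s(c_0,\dots,c_{k_s})\), with the following invariant: \(\theta_s\) is a complete formula in the constants introduced so far. The construction is scheduled arithmetically, so the whole sequence of choices is computable in \(0^{(k)}\) for some \(k\). At each stage we do one of three things.

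\begin{enumerate}
\item Decide the next sentence \(\chi(\bar c)\). Completeness of \(\theta_s\) already decides \(\chi\) modulo \(T\), so we simply record which way it goes.
\item Handle a Henkin witness requirement for \(\exists y\,\chi(\bar c,y)\) when it is implied. We pick a fresh constant \(d\). We then search for the least complete formula \(\theta'(\bar x,y)\) such that \(T\vdash\theta'\to\theta_s\wedge\chi\); such a \(\theta'\) exists by density. We set \(\theta_{s+1}:=\theta'(\bar c,d)\).
\item Adjoin a new constant \(d\) and extend \(\theta_s\) to a complete formula in \((\bar c,d)\) in the same way.
\end{enumerate}

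Because complete formulas in \(\bar x\) imply complete formulas in any subtuple (by taking \(\exists\)-projections), every finite tuple of constants satisfies a complete formula. Therefore the term model is atomic.

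Finally, the canonical term model is formed from the constants modulo equality. As in Theorem~\ref{thm:compact}, it is recoded into \(\N\) with infinite complement, which gives a coded structure \(\NN\). Its elementary diagram is arithmetical, because truth of \(\chi(\bar c)\) is decided at an arithmetically determined stage of the construction.

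The main obstacle is making sure the search in the witness step always succeeds. In the form needed, density of isolated types is density in the \emph{arithmetic} type space over \(\emptyset\), and the transfer to ordinary isolation goes through Lemma~\ref{lem:isolation-complete-vs-arithmetic}. We also need the formula \(\theta_s(\bar x)\wedge\chi(\bar x,y)\) to be consistent with \(T\). Consistency means it lies in some arithmetic type; by Lemma~\ref{lem:arith-type-realized} together with compactness, it is realized in an arithmetically decidable model, and so the clopen set it determines is nonempty. I will check this point carefully, along with uniformity of the arithmetical bound across all arities \(n\). For the latter I expect to use the single code from Definition~\ref{def:arith-stable} and the fixed finite derivative level \(m=0\).
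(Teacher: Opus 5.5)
Your proof is correct, but it takes a different route from the paper. The paper never invokes density. It uses arithmetical stability and Lemma~\ref{lem:finite-cb-stages-arith} to get an arithmetical code for the family \(\mathcal R\) of arithmetic types over \(\emptyset\) with \(\CB\geq 1\). It notes via Lemma~\ref{lem:isolation-complete-vs-arithmetic} that each such type is nonprincipal, hence unsupported, over \(T\), and omits them all at once with Theorem~\ref{thm:arith-omit-countable-family}. Atomicity then follows because every type realized in an arithmetically decidable model is automatically arithmetic, so it must lie outside \(\mathcal R\).

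You instead build the atomic model directly, by a Henkin construction whose finite conditions are complete formulas. You use stability only to obtain density of isolated types (Lemma~\ref{lem:cb-rank-dense} at \(m=0\), combined with Lemma~\ref{lem:isolation-complete-vs-arithmetic}). Otherwise you rely on the fact, true for any arithmetical \(T\), that the set of complete formulas is arithmetical.

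The paper's version is shorter, given its omitting-types machinery. Yours avoids both the coding of the derivative stage and the omitting theorem. It also isolates a general principle: a complete arithmetical theory with dense isolated types has an arithmetically decidable atomic model. This is exactly the argument the paper itself runs later in Proposition~\ref{prop:zfc-arith-prime-under-uncountable-cat}, where density is supplied by the classical Baldwin--Lachlan theorem instead.

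Two minor points. First, your worry about uniformity across arities is unnecessary. Density is used only to guarantee that each search terminates, and the search predicate (completeness of a formula plus provability from \(T\)) is uniformly arithmetical in all arities without any reference to the type-space code. For the same reason, citing Lemma~\ref{lem:finite-cb-stages-arith} for that step is not needed. Second, the schedule should use your step (3) to adjoin every constant occurring in a sentence or Henkin formula before that requirement is handled.
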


\begin{proof}
Fix an arithmetically decidable model \(\MM \models T\) (Theorem~\ref{thm:compact}). By arithmetic stability and Lemma~\ref{lem:finite-cb-stages-arith}, we may fix an arithmetical code for the types
\[
\mathcal{R} = \{p \in S_{<\omega}^{\mathrm{ar}}(\emptyset,\MM): \mathrm{CB}(p) \geq 1\}.
\]
Each type in \(\mathcal R\) is unsupported over \(T\): by
Lemma~\ref{lem:isolation-complete-vs-arithmetic}, nonisolation among
arithmetic types is the same as ordinary nonisolation, so no consistent
formula can imply every formula in such a type.

Applying Theorem~\ref{thm:arith-omit-countable-family} to \(T\) and
\(\mathcal R\), we obtain an arithmetically decidable model
\(\NN\models T\) omitting every nonisolated arithmetic type over
\(\emptyset\). Every tuple from \(N\) realizes an arithmetic type that
is not omitted, hence is isolated among arithmetic types, hence is
isolated as a complete type by
Lemma~\ref{lem:isolation-complete-vs-arithmetic}. Thus \(\NN\) is
atomic.
\end{proof}

\begin{proposition}[Atomic models are prime]
\label{prop:atomic-implies-prime}
Let \(\NN\models T\) be atomic. Then \(\NN\) is prime.

Moreover, if \(\NN\) and \(\MM\) are arithmetically decidable, then
\(\NN\) admits an elementary embedding into \(\MM\) whose image is an
arithmetical subset of \(M\).
\end{proposition}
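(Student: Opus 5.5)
The plan is the standard atomic-to-prime back-and-forth (really only "forth") construction, done so that every choice is a least-code search in \(\MM\). Enumerate \(N=\{n_0,n_1,\dots\}\) increasingly. We build partial elementary maps \(f_s:\{n_0,\dots,n_{s-1}\}\to M\) by recursion on \(s\), each satisfying
\[
\tp^{\NN}(n_0,\dots,n_{s-1})=\tp^{\MM}(f_s(n_0),\dots,f_s(n_{s-1})),
\]
starting from the empty map, which is elementary because \(T\) is complete.

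For the inductive step, write \(\bar n=(n_0,\dots,n_{s-1})\) and \(\bar m=f_s(\bar n)\). By atomicity, \(\tp^{\NN}(\bar n,n_s)\) is isolated by some formula \(\theta(\bar x,y)\). Then \(\NN\models\exists y\,\theta(\bar n,y)\), and since the types agree, \(\MM\models\exists y\,\theta(\bar m,y)\). Pick a witness \(m\). Because \(\theta\) isolates a complete type modulo \(T\), we get \(\tp^{\MM}(\bar m,m)=\tp^{\NN}(\bar n,n_s)\). Put \(f_{s+1}:=f_s\cup\{(n_s,m)\}\). The union \(f=\bigcup_s f_s\) is then an elementary embedding \(\NN\to\MM\), which proves the first part (this works for arbitrary \(\MM\) by the same argument).

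For the moreover clause, normalize every choice so that the recursion is arithmetical:
\begin{enumerate}
\item take \(\theta\) to be the isolating formula of least code;
\item take \(m\) to be the least element of \(M\) with \(\MM\models\theta(\bar m,m)\).
\end{enumerate}

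The step needing care is deciding whether a given \(\theta\) isolates \(\tp(\bar n,n_s)\). It suffices to check:
\begin{enumerate}
\item \(\NN\models\theta(\bar n,n_s)\), which is arithmetical since \(\NN\) is arithmetically decidable;
\item \(\theta\) is complete modulo \(T\), i.e.\ for every formula \(\psi\), either \(T\vdash\theta\to\psi\) or \(T\vdash\theta\to\neg\psi\).
\end{enumerate}
By completeness of \(T\), each of these entailments is equivalent to the truth in \(\NN\) of a sentence, \(\forall\bar x\,y\,(\theta\to\psi)\) or its negated-\(\psi\) analogue. So condition 2 is a single universal quantifier over formula codes applied to \(\NN\)'s arithmetical diagram, hence arithmetical. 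Atomicity guarantees that the search terminates.

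Each step is therefore computable from a fixed finite jump of the diagrams of \(\NN\) and \(\MM\), uniformly in \(s\). So the graph of \(f\) is arithmetical, and the image \(f[N]=\{y:\exists s\,f_s(n_s)=y\}\) is arithmetical as well. The only potential obstacle is this uniformity: the isolation test must not depend on nonuniform information. The reduction of entailment modulo \(T\) to truth in \(\NN\), which relies on completeness of \(T\), is what handles it.
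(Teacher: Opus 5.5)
Your proposal is correct and matches the paper's proof: you use the same forth construction, choosing the least-code isolating formula for $\tp^{\NN}(n_0,\dots,n_s)$ and then the least witness in $M$, with the whole recursion computable in a fixed finite jump. The only cosmetic difference is that you check isolation by reducing $T\vdash\forall\bar x\,y\,(\theta\to\pm\psi)$ to truth in $\NN$ via completeness of $T$, whereas the paper just notes that provability from the arithmetically definable theory $T$ is arithmetical; either works.
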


\begin{proof}
The first assertion is the standard back-and-forth over isolated types:
enumerate \(N=\{a_0,a_1,\dots\}\), and at stage \(s\) choose \(b_s\in M\)
such that \(\tp^\MM(b_0,\dots,b_s)=\tp^\NN(a_0,\dots,a_s)\), using the
isolating formula for \(\tp^\NN(a_0,\dots,a_s)\) to find a realization
in \(\MM\). The map \(f(a_s):=b_s\) is elementary.

For the moreover clause, assume both \(\NN\) and \(\MM\) are
arithmetically decidable. The construction above can be made effective:
at stage \(s\), arithmetically search for the least formula code
\(\ulcorner\theta_s\urcorner\) such that
\(\NN\models\theta_s(a_0,\dots,a_s)\) and \(\theta_s\) isolates
\(\tp^\NN(a_0,\dots,a_s)\). The isolation check is arithmetical because
provability from the arithmetically definable theory \(T\) is
arithmetical. Then choose \(b_s\) as the least element of \(M\)
satisfying \(\theta_s(b_0,\dots,b_{s-1},b_s)\) in \(\MM\). Both
searches are arithmetical relative to a sufficiently high finite jump
computing the satisfaction relations of \(\NN\) and \(\MM\). Hence the
graph of \(f\) and its image \(f[N]\) are arithmetical.
\end{proof}

\begin{corollary}[Existence of an arithmetically decidable prime model]
\label{cor:prime-model}
Assume \(T\) is arithmetically stable. Then there exists an
arithmetically decidable model \(\PP\models T\) which is prime.
\end{corollary}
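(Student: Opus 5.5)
The plan is to combine Proposition~\ref{prop:atomic-model} with Proposition~\ref{prop:atomic-implies-prime}; the corollary is essentially a two-line consequence of these.

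First, since \(T\) is arithmetically stable, Proposition~\ref{prop:atomic-model} provides an arithmetically decidable model \(\NN\models T\) that is atomic. Its proof already supplies what is needed. Lemma~\ref{lem:finite-cb-stages-arith} shows that the non-isolated arithmetic types over \(\emptyset\) form an arithmetically coded family \(\mathcal R\). Lemma~\ref{lem:isolation-complete-vs-arithmetic} shows that each member of \(\mathcal R\) is unsupported. The arithmetic omitting-types theorem (Theorem~\ref{thm:arith-omit-countable-family}) then yields an arithmetically decidable model omitting all of \(\mathcal R\). Every tuple in that model realizes an arithmetic type, and every type it realizes is isolated, so the model is atomic.

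Second, we set \(\PP:=\NN\) and apply Proposition~\ref{prop:atomic-implies-prime}. Its first assertion, the classical back-and-forth over isolating formulas, gives an elementary embedding of \(\PP\) into every model \(\MM\models T\), so \(\PP\) is prime in the sense of Definition~\ref{def:atomic-prime}(ii). Primality is an ordinary model-theoretic property and needs no effectivity, so this step places no arithmetic restriction on \(\MM\). If one also wants the effective refinement, namely an embedding with arithmetical image whenever \(\MM\) is arithmetically decidable, the moreover clause of Proposition~\ref{prop:atomic-implies-prime} provides it.

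I expect no real obstacle, because the substantive work has already been done upstream. The only point to check is that the hypothesis ``arithmetically stable'' is exactly what Proposition~\ref{prop:atomic-model} requires: it is needed so that \(\mathcal R\) has an arithmetical code, which is what allows the arithmetic omitting theorem to apply. In particular, the standing assumption of no Vaughtian pairs is not used here.
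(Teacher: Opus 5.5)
Your proposal is correct and matches the paper's proof, which is exactly the combination of Proposition~\ref{prop:atomic-model} with Proposition~\ref{prop:atomic-implies-prime}. You are also right that the no-Vaughtian-pairs assumption plays no role here.
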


\begin{proof}
Combine
Proposition~\ref{prop:atomic-model} with
Proposition~\ref{prop:atomic-implies-prime}.
\end{proof}
\begin{lemma}[Effective embeddings of arithmetically decidable prime models]
\label{lem:arith-prime-effective-atomic}
Let \(\PP\models T\) be an arithmetically decidable prime model. Then
\(\PP\) is atomic, and for any arithmetically decidable \(\MM\models T\)
there is an elementary embedding \(f:\PP\to\MM\) with arithmetical graph
and image.
\end{lemma}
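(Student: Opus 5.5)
The plan is to reduce everything to Proposition~\ref{prop:atomic-implies-prime}. Once we know that \(\PP\) is atomic, the moreover clause of that proposition, applied with \(\NN:=\PP\) and the given arithmetically decidable \(\MM\), gives an elementary embedding \(f:\PP\to\MM\) with arithmetical graph and arithmetical image. So the only real content is showing that an arithmetically decidable prime model is atomic.

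For atomicity, fix a tuple \(\bar a\in P^{<\omega}\) and let \(p:=\tp^{\PP}(\bar a)\). Since \(\PP\) is arithmetically decidable, \(p\) is an arithmetic type over \(\emptyset\), as in Lemma~\ref{lem:arith-type-realized}. Suppose toward a contradiction that \(p\) is not isolated. By Lemma~\ref{lem:isolation-complete-vs-arithmetic}, it does not matter whether isolation is read among arithmetic types or among all complete types. Then \(p\) is unsupported over \(T\): no consistent formula implies all of \(p\).

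The singleton family \(\{p\}\) is arithmetically coded. So Theorem~\ref{thm:arith-omit-countable-family}, the same omitting-types tool used in Proposition~\ref{prop:atomic-model}, yields a model \(\MM'\models T\) omitting \(p\). But \(\PP\) is prime, so there is an elementary embedding \(g:\PP\to\MM'\). Then \(g(\bar a)\) realizes \(p\) in \(\MM'\), which is a contradiction. Hence every tuple of \(\PP\) realizes an isolated type, and \(\PP\) is atomic.

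An alternative route uses the section's standing hypothesis of arithmetical stability. Take the arithmetically decidable atomic model \(\NN\) from Proposition~\ref{prop:atomic-model} and embed \(\PP\) elementarily into it by primeness. The image of \(\bar a\) has the same type as \(\bar a\), and that type is isolated because \(\NN\) is atomic. This proves atomicity without any further omitting argument.

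The step needing care is the application of the omitting theorem. One must check that the hypotheses of Theorem~\ref{thm:arith-omit-countable-family} are met: the family is arithmetical, and each member is unsupported over the arithmetically definable theory \(T\). Both follow from arithmetic decidability of \(\PP\) and Lemma~\ref{lem:isolation-complete-vs-arithmetic}.

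If this verification is awkward, I will fall back on the second route via Proposition~\ref{prop:atomic-model}. That route only uses the standing assumption of this section. After atomicity is established, the effective embedding follows verbatim from the proof of Proposition~\ref{prop:atomic-implies-prime}, with its least-isolating-formula and least-witness searches.
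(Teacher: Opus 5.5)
Your main argument is correct and matches the paper's proof: a nonisolated type realized in \(\PP\) would be unsupported and could be omitted by Theorem~\ref{thm:arith-omit-countable-family}, contradicting primeness, and the effective embedding then comes from the moreover clause of Proposition~\ref{prop:atomic-implies-prime}. Keep the omitting-types route as the primary one rather than the fallback via Proposition~\ref{prop:atomic-model}. The fallback needs arithmetical stability, which is assumed in Section~\ref{sec:prime-model-strong-minimality} but not in the proof of \((3)\Rightarrow(4)\) of Theorem~\ref{thm:arith-baldwin-lachlan}, where this lemma is used through Lemma~\ref{lem:sm-over-arith-model}.
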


\begin{proof}
The standard prime-implies-atomic argument applies: if some tuple
realized a nonisolated type, the omitting types theorem would give a
model omitting that type, contradicting primeness. The effective
embedding then follows from
Proposition~\ref{prop:atomic-implies-prime}.
\end{proof}

\begin{lemma}[Positive-rank refinement inside an infinite trace]
\label{lem:infinite-trace-positive-rank}
Assume \(T\) is arithmetically stable and has no Vaughtian pairs. Let
\(\AAA\models T\) be arithmetically decidable, write \(A\) for its
domain, and let
\(\eta(x,\bar a)\) be a formula with parameters from \(A\). If
\(\eta(A,\bar a)\) is infinite, then some arithmetic type
\(p(x)\in S_1^{\mathrm{ar}}(A,\AAA)\) contains \(\eta(x,\bar a)\) and
satisfies \(\CB(p)\geq 1\).
\end{lemma}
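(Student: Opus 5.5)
We argue by contradiction: suppose every arithmetic one-type over \(A\) containing \(\eta(x,\bar a)\) has \(\CB(p)=0\), i.e.\ is isolated in \(S_1^{\mathrm{ar}}(A,\AAA)\). The aim is to show that \(\eta(A,\bar a)\) is then finite. The key input is Proposition~\ref{prop:arith-omit-all-new-realizations}, applied with \(\BB:=\AAA\) and with the parameter set equal to the whole domain \(A\). Since \(A\) is arithmetical (it is the domain of an arithmetically decidable model) and contains the domain of the elementary submodel \(\AAA\preccurlyeq\AAA\), the proposition says that any formula isolating a complete arithmetic one-type over \(A\) has finite trace in \(A\).

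\textbf{Step 1: a single isolated type.} By Lemma~\ref{lem:cb-rank-dense} with \(m=0\), some \(p_0\in[\eta(x,\bar a)]_{\mathrm{ar}}\) is isolated, say by \(\theta_0(x,\bar c_0)\). Proposition~\ref{prop:arith-omit-all-new-realizations} then gives that \(\theta_0(A,\bar c_0)\) is finite. This step is immediate. The difficulty is to pass from finitely many isolated types to all of \(\eta(A,\bar a)\).

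\textbf{Step 2: a compactness argument on the coded space.} By arithmetical stability, fix an arithmetical code \(X\) for \(S_1^{\mathrm{ar}}(A,\AAA)\). Under the contrary assumption, every type in the clopen set \([\eta]_{\mathrm{ar}}\) is isolated. I would redo the tree argument from the proof of Lemma~\ref{lem:cb-rank-dense}, splitting \([\eta]_{\mathrm{ar}}\) by least-code formulas \(\psi\) computed from \(X'\).
\begin{itemize}
\item If the splitting never stops along some branch, we get an infinite binary tree of nonempty pieces. Taking an arithmetical \(F\) not computable in \(X'\) and realizing \(\{\varphi_{F\restr s}\}\) gives a type \(q_F\) in \([\eta]_{\mathrm{ar}}\) with \(F\le_T X'\oplus q_F\), a contradiction exactly as in that proof.
\item The realized type \(q_F\) itself need not be isolated. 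So I would instead use the following dichotomy: either \([\eta]_{\mathrm{ar}}\) is a finite union of singletons \([\theta_i]_{\mathrm{ar}}\) (finitely many isolated types), or there is a nonisolated type in it. The second case gives \(\CB\geq 1\), which is what we want.
\end{itemize}

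\textbf{Step 3: the finite case.} If \([\eta]_{\mathrm{ar}}\) consists of finitely many types \(p_1,\dots,p_k\), isolated by \(\theta_1,\dots,\theta_k\), then each \(\theta_i(A)\) is finite by Step 1. Every element of \(\eta(A,\bar a)\) has an arithmetic type over \(A\), because \(\AAA\) is decidable. Each such type lies in \([\eta]_{\mathrm{ar}}\), so each element lies in some \(\theta_i(A)\). Hence \(\eta(A,\bar a)\subseteq\bigcup_i\theta_i(A)\) is finite, a contradiction.

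\textbf{Step 4: the infinite case (main obstacle).} It remains to rule out \([\eta]_{\mathrm{ar}}\) containing infinitely many types, all isolated. I would try the following. Enumerate these types arithmetically via \(X\), together with their least isolating formulas \(\theta_i\); this is arithmetical in \(X''\) by Lemma~\ref{lem:finite-cb-stages-arith}. Then consider the partial type
\[
\{\eta(x,\bar a)\}\cup\{\neg\theta_i(x,\bar c_i):i<\omega\}.
\]
It is arithmetically definable. It is finitely satisfiable, because \(\eta(A,\bar a)\) is infinite while each \(\theta_i(A)\) is finite. By compactness and Lemma~\ref{lem:arith-type-realized}, it is realized by an arithmetic type over \(A\) that contains \(\eta\) and omits every \(\theta_i\). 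So this type is not isolated, i.e.\ it has \(\CB\geq 1\), contradicting the assumption. This argument also covers Step 3, so I would present this uniform version as the proof, with Proposition~\ref{prop:arith-omit-all-new-realizations} doing the essential work.
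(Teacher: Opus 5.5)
Your uniform argument in Step~4 is correct, but it follows a different and heavier route than the paper.

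\textbf{The paper's route.} The paper never looks at the isolated types in \([\eta(x,\bar a)]_{\mathrm{ar}}\). It realizes \(\Diag_{\mathrm{el}}(\AAA)\cup\{\eta(c,\bar a)\}\cup\{c\neq a:a\in A\}\) by Lemma~\ref{lem:base-preserving-coded-compactness} and takes \(p\) to be the type of \(c\) over \(A\). If some \(\theta\in p\) isolated \(p\), Lemma~\ref{lem:arith-isolated-realized} would give \(b\in A\) satisfying \(\theta\). The type of \(b\) over \(A\) would then equal \(p\), contradicting \(x\neq b\in p\). Neither arithmetical stability nor the absence of Vaughtian pairs is used.

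\textbf{Your route.} You instead omit the isolating formulas \(\theta_i\) of all isolated types in \([\eta(x,\bar a)]_{\mathrm{ar}}\). This costs you two things: arithmetical stability (with Lemma~\ref{lem:finite-cb-stages-arith}) to make \(\{\neg\theta_i\}\) arithmetical, and Proposition~\ref{prop:arith-omit-all-new-realizations}, hence no Vaughtian pairs, to know that each \(\theta_i(A)\) is finite.

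Over the full domain \(A\), that proposition is more than you need. Lemma~\ref{lem:arith-isolated-realized} gives a realization \(b_i\in A\) of the type isolated by \(\theta_i\), and isolation then forces \(\theta_i(A)=\{b_i\}\). Moreover, every \(b\in\eta(A,\bar a)\) is one of the \(b_i\), since its type is isolated by \(x=b\). So your partial type is equivalent to \(\{\eta(x,\bar a)\}\cup\{x\neq b:b\in\eta(A,\bar a)\}\), and your argument collapses to the paper's.

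\textbf{What your route buys.} It is more robust. It goes through essentially unchanged for an arithmetical parameter set \(A'\supseteq M_0\) inside a larger arithmetically extendible model \(\BB\) with \(\eta(B,\bar a)\) infinite. There, isolated types over \(A'\) can be realized outside \(A'\), so the paper's one-line argument is unavailable and the finite-trace proposition is genuinely needed.

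Steps~1--3, including the non-exhaustive ``dichotomy'' in Step~2, can simply be dropped.
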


\begin{proof}
Because \(\eta(A,\bar a)\) is infinite, the partial type
\(\Diag_{\mathrm{el}}(\AAA)\cup\{\eta(c,\bar a)\}\cup\{c\neq a:a\in A\}\)
is finitely satisfiable. It is arithmetically definable, so by
Lemma~\ref{lem:base-preserving-coded-compactness} it has an
arithmetically decidable model extending \(\AAA\). Let
\(p(x)\in S_1^{\mathrm{ar}}(A,\AAA)\) be the type over \(A\) of the
interpretation of \(c\). Then \(p\)
contains \(\eta(x,\bar a)\) and is nonalgebraic over \(A\).

If \(\CB(p)=0\), choose an isolating formula
\(\theta(x,\bar a_0)\in p\). By
Lemma~\ref{lem:arith-isolated-realized}, \(\AAA\) realizes this isolated
type, so some element of \(A\) satisfies \(\theta\). By isolation, that
element realizes \(p\), contradicting \(p\ni x\neq a\) for every
\(a\in A\). Hence \(\CB(p)\geq1\).
\end{proof}

\begin{lemma}[Existence of a minimal formula]
\label{lem:minimal-exists-ar}
Assume \(T\) is arithmetically stable and has no Vaughtian pairs. Let
\(\AAA\models T\) be arithmetically decidable. Then there is a formula
\(\varphi(x,\bar a)\), with parameters \(\bar a\) from \(A\), such that
\(\varphi(A,\bar a)\) is infinite and, for every formula
\(\psi(x,\bar c)\) with parameters from \(A\), either
\[
\varphi(A,\bar a)\cap \psi(A,\bar c)
\]
or
\[
\varphi(A,\bar a)\setminus \psi(A,\bar c)
\]
is finite.
\end{lemma}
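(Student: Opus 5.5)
We follow the classical proof via finite Cantor--Bendixson rank, run inside the coded arithmetic type space \(S_1^{\mathrm{ar}}(A,\AAA)\). That space has an arithmetical code by arithmetical stability, and every derivative stage \(S_1^{\mathrm{ar}}(A)^{(\geq m)}\) is arithmetical by Lemma~\ref{lem:finite-cb-stages-arith}. Start with \(\eta(x):=(x=x)\). Since \(A\) is infinite, \(\eta(A)\) is infinite, so Lemma~\ref{lem:infinite-trace-positive-rank} gives a type of rank \(\geq 1\) containing \(\eta\).

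The key step is to choose the least \(m\ge 1\) for which the following fails: some formula with infinite trace on \(A\) has all its rank-\(\geq m\) types missing. More concretely, we look for a formula \(\varphi(x,\bar a)\) and an \(m\ge1\) such that:
\begin{itemize}
\item \([\varphi]_{\mathrm{ar}}\cap S_1^{\mathrm{ar}}(A)^{(\geq m)}\) consists of exactly one type \(p\), so \(\CB(p)=m\) and \(\varphi\) isolates \(p\) at stage \(m\);
\item \([\varphi]_{\mathrm{ar}}\) contains no type of rank \(\geq m+1\).
\end{itemize}
To find such a pair, take a type \(p\) of rank \(\geq 1\) and use Lemma~\ref{lem:cb-rank-dense} to move to a type of exact rank \(m\) for some \(m\ge 1\). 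Each type has exact rank only if the derivative process stops. Here I would argue that the ranks of types containing a fixed formula are bounded; otherwise a binary splitting tree arises as in the proof of Lemma~\ref{lem:cb-rank-dense}, contradicting stability. This bound is the step I expect to be delicate. A safer route avoids global bounds: pick any \(p\) with \(\CB(p)\ge1\) and choose \(m\ge1\) maximal such that some formula in \(p\) still meets \(S^{(\ge m)}\). If ranks can be arbitrarily large along a formula, the splitting-tree argument applied to the derivative stages produces an arithmetical \(F\) recoverable from a single arithmetic type, and the proof of Lemma~\ref{lem:cb-rank-dense} then yields the contradiction. Granting this, Lemma~\ref{lem:cb-rank-dense} gives a type of exact rank \(m\ge1\) with isolating formula \(\varphi\), and we conjoin to exclude higher-rank types.

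Next we check that \(\varphi\) is minimal in \(\AAA\). \(\varphi(A,\bar a)\) is infinite by Proposition~\ref{prop:cb-positive-infinite-trace}, since \(p\) has rank \(\ge1\) and \(\varphi\in p\). Now take any \(\psi(x,\bar c)\). Exactly one of \(\varphi\wedge\psi\) and \(\varphi\wedge\neg\psi\) lies in \(p\); say \(\varphi\wedge\neg\psi\notin p\). Then every arithmetic type containing \(\varphi\wedge\neg\psi\) has rank \(<m\), because \(p\) is the only type of rank \(\ge m\) in \([\varphi]_{\mathrm{ar}}\). We must show that \(\varphi\wedge\neg\psi\) has finite trace on \(A\). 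Suppose instead that the trace is infinite. Lemma~\ref{lem:infinite-trace-positive-rank} gives only rank \(\ge1\), which is too weak when \(m>1\). So we strengthen it by induction: for \(k\le m\), if a formula \(\chi\) has infinite trace on \(A\), then \(\chi\) lies in some type of rank \(\ge k\), or else \(\chi\) splits into infinitely many disjoint pieces.

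The cleaner choice is to take \(m\) \emph{minimal}, making \(m=1\). Consider the formulas with infinite trace on \(A\). For each such formula, Lemma~\ref{lem:infinite-trace-positive-rank} gives a rank-\(\ge1\) type, and Lemma~\ref{lem:cb-rank-dense} gives a rank-exactly-\(1\) type \(p\) with a formula \(\varphi\) such that \(p\) is the unique rank-\(\geq1\) type in \([\varphi]_{\mathrm{ar}}\). Now return to \(\psi\). If both \(\varphi\wedge\psi\) and \(\varphi\wedge\neg\psi\) had infinite trace, Lemma~\ref{lem:infinite-trace-positive-rank} would give two rank-\(\ge1\) types, one containing each formula. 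These types are distinct and both lie in \([\varphi]_{\mathrm{ar}}\), contradicting the uniqueness of \(p\). So one of the two pieces has finite trace, and \(\varphi\) is minimal. With \(m=1\) the whole argument needs only Lemmas~\ref{lem:infinite-trace-positive-rank} and~\ref{lem:cb-rank-dense}, and no rank bound. I would present this version. The remaining care point is that Lemma~\ref{lem:cb-rank-dense} is applied at \(m=1\) inside the coded space over the full domain \(A\).
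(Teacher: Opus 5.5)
Your final ($m=1$) version is correct and is essentially the paper's proof: get a rank-$\geq 1$ type, use Lemma~\ref{lem:cb-rank-dense} at rank $1$ to obtain $q$ with $\CB(q)=1$ and a formula $\varphi$ isolating it among rank-$\geq 1$ types, deduce that $\varphi(A,\bar a)$ is infinite from Proposition~\ref{prop:cb-positive-infinite-trace}, and refute any splitting $\psi$ via Lemma~\ref{lem:infinite-trace-positive-rank} and the uniqueness of $q$. The earlier detour through a maximal $m$ and a global rank bound is unnecessary, and that bound was never justified, so you are right to drop it.
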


\begin{proof}
Since \(A\) is infinite, compactness produces an arithmetically decidable
elementary extension of \(\AAA\) realizing the nonalgebraic partial type
\(\{x\neq a:a\in A\}\). Let
\(p(x)\in S_1^{\mathrm{ar}}(A,\AAA)\) be the arithmetic type of such a
realization.

We first show that \(\CB(p)\geq 1\). If \(\CB(p)=0\), then some formula
\(\theta(x,\bar a)\in p\) isolates \(p\) among arithmetic types over
\(A\). Since \(p\) contains \(x\neq a\) for every \(a\in A\), every
realization of \(\theta\) in any elementary extension of \(\AAA\) lies
outside \(A\). But Lemma~\ref{lem:arith-isolated-realized} says
\(\AAA\) itself realizes the isolated type, giving a realization in
\(A\)---contradiction.

By Lemma~\ref{lem:cb-rank-dense} applied at rank \(1\) to the formula
\(x=x\), choose \(q(x)\in S_1^{\mathrm{ar}}(A,\AAA)\) with
\(\CB(q)=1\). By definition, there is a formula
\(\varphi(x,\bar a)\in q\) isolating \(q\) among the rank-at-least-one
arithmetic types. Since \(\CB(q)\geq 1\),
Proposition~\ref{prop:cb-positive-infinite-trace} gives
\(\varphi(A,\bar a)\) infinite.

We claim that \(\varphi(x,\bar a)\) is minimal over \(A\). Suppose not:
some formula \(\psi(x,\bar c)\) splits \(\varphi(A,\bar a)\) into two
infinite pieces. One of \(\varphi\wedge\psi\) or
\(\varphi\wedge\neg\psi\) belongs to \(q\); say
\(\varphi\wedge\psi\in q\). Put
\(\chi(x):=\varphi(x,\bar a)\wedge\neg\psi(x,\bar c)\).
Since \(\chi(A)\) is infinite,
Lemma~\ref{lem:infinite-trace-positive-rank} gives an arithmetic type
\(r(x)\) containing \(\chi\) with \(\CB(r)\geq 1\). Then \(r\neq q\)
(they disagree on \(\psi\)), yet \(\varphi\in r\) and
\(\CB(r)\geq 1\), contradicting the isolation of \(q\) among
rank-at-least-one types containing \(\varphi\).
\end{proof}

\begin{lemma}[No Vaughtian pairs eliminate \(\exists^\infty\)]
\label{lem:no-vp-elim-exists-infty}
Assume \(T\) is complete, has no finite models, and has no Vaughtian
pairs. Then for every formula \(\theta(x,\bar y)\) there exists
\(n_\theta<\omega\) such that for every model \(\MM\models T\) and every
tuple \(\bar b\in M^{|\bar y|}\), if \(\theta(M,\bar b)\) is finite,
then
\[
|\theta(M,\bar b)|\leq n_\theta.
\]
Equivalently, \(T\) eliminates \(\exists^\infty\).
\end{lemma}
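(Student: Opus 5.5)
The plan is to run the classical Vaughtian-pair argument (as in Marker, Lemma~4.3.37 / Corollary~4.3.38), which is a pure \textsf{ZFC}/compactness argument with no arithmetic content. Suppose, toward a contradiction, that for some formula \(\theta(x,\bar y)\) there is no uniform bound. Then for every \(n<\omega\) there are a model \(\MM_n\models T\) and \(\bar b_n\) with \(n<|\theta(M_n,\bar b_n)|<\omega\). From these finite but unbounded traces I will build a proper elementary pair \(\NN_0\preccurlyeq\NN_1\) together with a tuple \(\bar b\in N_0\) such that \(\theta(N_0,\bar b)=\theta(N_1,\bar b)\) is infinite. This is a Vaughtian pair, contradicting the hypothesis.

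\textbf{Step 1 (a pair with an unmoved large finite set).} For each \(n\), consider the language \(\LL\cup\{P\}\cup\{\bar c\}\) and the theory saying: the ambient structure models \(T\); \(P\) defines an elementary substructure, expressed by the Tarski--Vaught scheme (or by relativized formulas) so that the \(P\)-part models \(T\); \(\bar c\in P\); \(\forall x\,(\theta(x,\bar c)\rightarrow P(x))\); \(\exists x\,\neg P(x)\); and \(\exists^{\ge n}x\,\theta(x,\bar c)\). This is satisfiable from \((\MM_n,\bar b_n)\) alone. Take any proper elementary extension \(\MM_n\prec\MM_n'\), which exists since \(T\) has infinite models. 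Interpret \(P\) as \(M_n\) inside \(\MM_n'\). Because \(\theta(M_n,\bar b_n)\) is finite, of size \(k\), elementarity gives \(\MM_n'\models\exists^{=k}x\,\theta(x,\bar b_n)\), so no new solutions appear.

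\textbf{Step 2 (compactness).} Take the union over \(n\) of these theories, all sharing the same symbols. Each finite fragment involves only finitely many lower bounds \(\exists^{\ge n}\), so it is satisfied by the model built for the largest such \(n\). By compactness we obtain \((\NN_1,P,\bar b)\), and setting \(\NN_0:=P^{\NN_1}\) we get \(\NN_0\preccurlyeq\NN_1\). The pair is proper because of \(\exists x\,\neg P(x)\). We also have \(\theta(N_1,\bar b)\subseteq N_0\), so \(\theta(N_0,\bar b)=\theta(N_1,\bar b)\), and this set is infinite. This is a Vaughtian pair. The equivalence with elimination of \(\exists^\infty\) is then the standard reformulation: ``\(\theta(M,\bar y)\) is infinite'' is equivalent in all models to \(\exists^{>n_\theta}x\,\theta(x,\bar y)\).

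\textbf{Main obstacle.} The delicate point is checking that ``\(P\) is an elementary submodel'' is first-order expressible, so that it survives compactness; the relativization scheme \(\forall\bar z\in P\,(\psi^P(\bar z)\leftrightarrow\psi(\bar z))\) handles this. A second point is making sure properness and the containment \(\theta\subseteq P\) are preserved in the limit, which they are, since both are single sentences. If the paper prefers to stay within arithmetically decidable pairs, I will apply Theorem~\ref{thm:compact} to this arithmetically definable theory and then invoke Lemma~\ref{lem:vaughtian-equivalent-ar}. However, the hypothesis ``no Vaughtian pairs'' is stated classically, so plain compactness suffices.
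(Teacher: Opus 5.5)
Your proposal is correct and follows essentially the same approach as the paper. Both expand by a predicate \(P\) (with the Tarski--Vaught scheme) and constants \(\bar c\), assert that \(\theta(x,\bar c)\subseteq P\), that \(P\) is proper, and that \(\theta(x,\bar c)\) is infinite, check finite satisfiability using a model \(\MM_n\) with a large finite \(\theta\)-trace placed inside a proper elementary extension, and conclude by compactness. Using \(\MM_n\) itself as the \(P\)-part, as you do, is a slightly cleaner version of the paper's detour through an auxiliary elementary submodel \(\NN_n\preccurlyeq\MM_n\).
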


\begin{proof}
Suppose not. Then there is a formula \(\theta(x,\bar y)\) such that for
every \(n<\omega\) there are \(\MM_n\models T\) and
\(\bar b_n\in M_n^{|\bar y|}\) with
\(n<|\theta(M_n,\bar b_n)|<\omega\).

Expand the language by a predicate \(P\) and constants \(\bar c\), and
let \(\Sigma\) assert that \(T\) holds, \(P\) names an elementary
submodel (via the Tarski--Vaught scheme), \(\bar c\in P\), every
realization of \(\theta(x,\bar c)\) lies in \(P\), \(P\) is proper, and
\(\theta(x,\bar c)\) is infinite.

For finite satisfiability, given \(N\), choose \(\MM_n,\bar b_n\) with
\(|\theta(M_n,\bar b_n)|>N\). Take an elementary submodel
\(\NN_n\preccurlyeq\MM_n\) containing \(\bar b_n\). Since
\(\MM_n\models\exists^{=q}x\,\theta(x,\bar b_n)\) for some finite
\(q>N\), elementarity gives
\(\theta(N_n,\bar b_n)=\theta(M_n,\bar b_n)\), so all realizations
already lie in \(\NN_n\). Passing to a proper elementary extension of
\(\MM_n\) (by compactness) gives a model of every finite fragment of
\(\Sigma\).

By compactness, \(\Sigma\) has a model, yielding a Vaughtian pair---contradiction.
\end{proof}

\begin{lemma}[Minimal formulas are strongly minimal]
\label{lem:minimal-implies-strongly-minimal-ar}
Assume \(T\) is arithmetically stable and has no Vaughtian pairs. Let
\(\AAA\models T\) be arithmetically decidable, and let
\(\varphi(x,\bar a)\) be minimal over \(A\). Then
\(\varphi(x,\bar a)\) is strongly minimal over \(A\): in every
elementary extension
\[
\AAA\preccurlyeq \MM\models T,
\]
every definable subset of \(\varphi(M,\bar a)\) is finite or cofinite.
\end{lemma}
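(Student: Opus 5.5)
We follow the classical argument that, in the absence of Vaughtian pairs, minimality over a model transfers to strong minimality via elimination of \(\exists^\infty\) (Lemma~\ref{lem:no-vp-elim-exists-infty}). The first step is to reduce to a statement about \(\AAA\) itself. Let \(\psi(x,\bar y)\) be any formula. We will show the following: for every \(\MM\succeq\AAA\) and every \(\bar c\in M^{|\bar y|}\), the set \(\varphi(M,\bar a)\cap\psi(M,\bar c)\) is finite or cofinite in \(\varphi(M,\bar a)\). By Lemma~\ref{lem:no-vp-elim-exists-infty}, applied to the formulas \(\varphi(x,\bar z)\wedge\psi(x,\bar y)\) and \(\varphi(x,\bar z)\wedge\neg\psi(x,\bar y)\), there are bounds \(n_1,n_2\) such that in every model of \(T\), each of these sets is either infinite or of size at most \(n_1\) (respectively \(n_2\)). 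Consequently ``\(\varphi(x,\bar a)\wedge\psi(x,\bar y)\) is finite'' is equivalent, uniformly in \(\bar y\), to the first-order formula \(\neg\exists^{>n_1}x\,(\varphi(x,\bar a)\wedge\psi(x,\bar y))\), and similarly for the negated version.

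Next we express the failure as a sentence. Strong minimality fails for \(\psi\) in some \(\MM\succeq\AAA\) exactly when
\[
\MM\models\exists\bar y\,\bigl(\exists^{>n_1}x\,(\varphi(x,\bar a)\wedge\psi(x,\bar y))\wedge\exists^{>n_2}x\,(\varphi(x,\bar a)\wedge\neg\psi(x,\bar y))\bigr).
\]
This is an \(\LL(\bar a)\)-sentence, so by elementarity it holds in \(\AAA\) as well. That gives a witness \(\bar c\in A^{|\bar y|}\) for which both pieces have more than \(n_1\) and more than \(n_2\) elements in \(\AAA\). The choice of \(n_1,n_2\) then forces both pieces to be infinite in \(\AAA\), and this contradicts the minimality of \(\varphi(x,\bar a)\) over \(A\). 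Since \(\psi\) was arbitrary, every definable subset of \(\varphi(M,\bar a)\) is finite or cofinite. Parameters of the subset that come from \(A\) or from \(M\) are both absorbed into \(\bar y\).

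The main point to check carefully is the uniformity in Lemma~\ref{lem:no-vp-elim-exists-infty}. We need the bound \(n_\theta\) to be valid in every model of \(T\) and for every parameter tuple, including the tuple \(\bar a\). Since \(\bar a\) appears as part of the parameter variables of \(\theta\), this is exactly what that lemma provides, and its hypotheses (completeness, no finite models, no Vaughtian pairs) are standing assumptions. Arithmetical stability is not needed for this step; it was used only to produce minimal formulas in Lemma~\ref{lem:minimal-exists-ar}.
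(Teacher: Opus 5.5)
Your proposal is correct and follows essentially the same argument as the paper. You eliminate \(\exists^\infty\) via Lemma~\ref{lem:no-vp-elim-exists-infty}, express the failure of strong minimality for \(\psi\) as an \(\LL(\bar a)\)-sentence with counting quantifiers, pull a witness \(\bar c\in A^{|\bar y|}\) down by elementarity, and contradict minimality over \(A\); the only cosmetic difference is that you keep two bounds \(n_1,n_2\) where the paper takes \(n=\max(n_{\varphi\wedge\psi},n_{\varphi\wedge\neg\psi})\).
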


\begin{proof}
By Lemma~\ref{lem:no-vp-elim-exists-infty}, the theory \(T\) eliminates
\(\exists^\infty\).

Fix a formula \(\psi(x,\bar y)\). Let
\[
n:=\max(n_{\varphi\wedge\psi},\,n_{\varphi\wedge\neg\psi}),
\]
where the bounds are given by elimination of \(\exists^\infty\).

Suppose \(\varphi(x,\bar a)\) is not strongly minimal. Then there are
\(\AAA\preccurlyeq \MM\models T\) and \(\bar b\in M^{|\bar y|}\) such
that both
\[
\varphi(M,\bar a)\cap\psi(M,\bar b)
\qquad\text{and}\qquad
\varphi(M,\bar a)\setminus\psi(M,\bar b)
\]
are infinite. In particular,
\(\MM\) satisfies the first-order sentence asserting the
existence of a tuple \(\bar y\) for which both
\(\exists^{>n}x\,(\varphi\wedge\psi)\) and
\(\exists^{>n}x\,(\varphi\wedge\neg\psi)\) hold.
By elementarity, some \(\bar b_0\in A^{|\bar y|}\) satisfies
the same two counting conditions in \(\AAA\). By the choice of \(n\),
neither
\(\varphi(A,\bar a)\cap\psi(A,\bar b_0)\) nor
\(\varphi(A,\bar a)\setminus\psi(A,\bar b_0)\)
can be finite, contradicting minimality of \(\varphi\) over \(A\).
\end{proof}

\begin{proposition}[Existence of a strongly minimal formula]
\label{prop:strongly-minimal-formula}
Assume \(T\) is arithmetically stable and has no Vaughtian pairs. Let
\(\PP\models T\) be an arithmetically decidable prime model. Then there
is a formula \(\varphi(x,\bar p)\), with parameters \(\bar p\) from
\(P\), such that \(\varphi(x,\bar p)\) is strongly minimal over \(P\).
\end{proposition}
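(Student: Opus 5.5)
The plan is to combine the two lemmas just proved. Apply Lemma~\ref{lem:minimal-exists-ar} with \(\AAA:=\PP\). This is legitimate because \(\PP\) is arithmetically decidable by hypothesis, and \(T\) is arithmetically stable with no Vaughtian pairs. The lemma yields a formula \(\varphi(x,\bar p)\) with \(\bar p\in P^{<\omega}\) such that \(\varphi(P,\bar p)\) is infinite and is minimal over \(P\): every \(P\)-definable subset of \(\varphi(P,\bar p)\) is finite or cofinite in it.

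Next, apply Lemma~\ref{lem:minimal-implies-strongly-minimal-ar} with the same \(\AAA=\PP\) and this \(\varphi(x,\bar p)\). This upgrades minimality over \(P\) to strong minimality over \(P\). That is, in every elementary extension \(\PP\preccurlyeq\MM\models T\), every definable subset of \(\varphi(M,\bar p)\), with arbitrary parameters from \(M\), is finite or cofinite. Infinitude of \(\varphi(M,\bar p)\) is inherited from \(\varphi(P,\bar p)\subseteq\varphi(M,\bar p)\). This is exactly the assertion that \(\varphi(x,\bar p)\) is strongly minimal over \(P\).

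The only point needing care is that the hypotheses of the two lemmas really are met. In particular, Lemma~\ref{lem:minimal-exists-ar} internally uses Lemma~\ref{lem:cb-rank-dense} at rank \(1\). That requires \(S_1^{\mathrm{ar}}(P,\PP)^{(\geq 1)}\) to be nonempty on \(x=x\). This is supplied inside that proof by the nonalgebraic type \(\{x\neq a:a\in P\}\), which has rank \(\geq 1\), using the fact that \(P\) is infinite because \(T\) has no finite models.

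Primeness of \(\PP\) is not actually needed for the existence of the formula. It only matters later, when \(\PP\) serves as the base over which all models are controlled. So I expect no genuine obstacle: the proof is a two-line composition. The one thing I would double-check is the uniformity of the \(\exists^\infty\)-elimination bound from Lemma~\ref{lem:no-vp-elim-exists-infty}, which is what lets parameters from arbitrary extensions \(\MM\) be pulled back into \(P\).
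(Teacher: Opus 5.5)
Your proposal is correct and is exactly the paper's argument: apply Lemma~\ref{lem:minimal-exists-ar} to \(\PP\) to obtain a minimal formula \(\varphi(x,\bar p)\) over \(P\), then use Lemma~\ref{lem:minimal-implies-strongly-minimal-ar}, which rests on the elimination of \(\exists^\infty\) coming from the absence of Vaughtian pairs, to upgrade minimality to strong minimality. Your side remark that primeness of \(\PP\) plays no role at this step is also accurate.
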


\begin{proof}
By Lemma~\ref{lem:minimal-exists-ar}, applied to the arithmetically
decidable prime model \(\PP\), there is a one-variable minimal formula
\(\varphi(x,\bar p)\) over \(P\). The no-Vaughtian-pairs hypothesis then
allows Lemma~\ref{lem:minimal-implies-strongly-minimal-ar} to convert
minimality into strong minimality.
\end{proof}

Fix an arithmetically decidable prime model \(\PP\models T\) and a
strongly minimal formula \(\varphi(x,\bar p)\) over \(P\). For any
elementary extension
\[
\PP\preccurlyeq \MM\models T,
\]
write
\[
C:=\varphi(M,\bar p).
\]
We will show that
\[
M=\acl(P\cup C).
\]

\begin{lemma}
\label{lem:finite-support-acl-from-phi}
If \(a\in \acl^{\MM}(P\cup C)\), then there is a finite tuple
\(\bar c\) from \(C\) such that
\(a\in \acl^{\MM}(P\cup \bar c)\).
\end{lemma}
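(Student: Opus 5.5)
This is the finite-character property of algebraic closure, so the plan is simply to unwind the definition of \(\acl\). Suppose \(a\in\acl^{\MM}(P\cup C)\). By definition there is an \(\LL\)-formula \(\theta(x,\bar z)\) and a finite tuple \(\bar e\) from \(P\cup C\) such that two conditions hold. First, \(\MM\models\theta(a,\bar e)\). Second, the set \(\theta(M,\bar e)\) is finite, say of size \(k\), so that \(\MM\models\exists^{=k}x\,\theta(x,\bar e)\).

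Next, split the tuple \(\bar e\) into its coordinates from \(P\) and its coordinates from \(C\). Coordinates lying in both may be assigned to either side, say to \(P\). Writing \(\bar e=(\bar m,\bar c)\) after reordering the variables of \(\theta\), we get \(\bar m\in P^{<\omega}\) and \(\bar c\in C^{<\omega}\).

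Then \(\theta(x,\bar m,\bar c)\) is a formula with parameters from \(P\cup\bar c\). It is satisfied by \(a\) and has exactly \(k\) solutions in \(\MM\). Hence \(a\in\acl^{\MM}(P\cup\bar c)\), which is the claim.

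The only point needing care is the convention for \(\acl\) in this paper. Algebraicity must be witnessed by finiteness of the solution set in \(\MM\) itself, or equivalently in an arithmetically decidable extension \(\widehat{\MM}\); these agree by elementarity, as noted in the proof of Lemma~\ref{lem:ambient-arithmetical-inequalities}(iii). The statement is purely model-theoretic, so no arithmetic complexity needs to be tracked. I therefore expect no genuine obstacle.
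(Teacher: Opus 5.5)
Your proposal is correct and is essentially the paper's argument: the paper likewise notes that a witness to algebraicity uses only finitely many parameters from \(C\), and that the parameters from \(P\) are already in the base. Your explicit splitting of the witnessing tuple into a \(P\)-part and a \(C\)-part is just that observation written out. The side remark about an arithmetically decidable extension \(\widehat{\MM}\) is unnecessary, since \(\MM\) here need not be arithmetically extendible, but your argument never uses it.
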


\begin{proof}
Any witness to algebraicity involves only finitely many parameters from
\(C\); since \(P\) is already part of the base, these finitely many
elements of \(C\) suffice.
\end{proof}

\begin{lemma}[Rank-zero witnesses are algebraic]
\label{lem:rank-zero-witnesses-algebraic}
Assume \(T\) is arithmetically stable and has no Vaughtian pairs. Let
\(\MM_0\models T\) be arithmetically decidable, let
\(\BB\models T\) be arithmetically extendible with
\(\MM_0\preccurlyeq \BB\),
and let \(A\subseteq B\) be arithmetical with \(M_0\subseteq A\).
If \(\eta(x,\bar a)\), with \(\bar a\in A^{<\omega}\),
isolates a complete arithmetic type
\(r(x)\in S_1^{\mathrm{ar}}(A,\BB)\),
then \(\eta(B,\bar a)\) is finite. In particular, every realization of
\(\eta\) in \(\BB\) belongs to \(\acl^{\BB}(A)\).
\end{lemma}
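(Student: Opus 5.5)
This statement is essentially a specialization of Proposition~\ref{prop:arith-omit-all-new-realizations}, and I would derive it directly from that result. That proposition needs four things. First, \(T\) is arithmetically stable with no Vaughtian pairs, which we are assuming. Second, \(\BB\) is arithmetically extendible. Third, \(A\subseteq B\) is arithmetical and contains the domain of an elementary submodel of \(\BB\). Fourth, \(\varphi(x,\bar a)\) isolates a complete arithmetic one-type over \(A\) inside \(\BB\). Here \(M_0\subseteq A\) and \(\MM_0\preccurlyeq\BB\), so the third condition holds with \(\MM_0\) as the submodel. Applying the proposition with \(\eta\) in place of \(\varphi\) and \(r\) in place of \(p\) gives that \(\eta(B,\bar a)\) is finite.

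For safety I would also recheck the mechanism rather than only cite it. Suppose \(\eta(B,\bar a)\) were infinite. Pass to an arithmetically decidable \(\widehat{\BB}\succeq\BB\); by Lemma~\ref{lem:arith-type-space-invariance}, \(r\) is still isolated by \(\eta\) among arithmetic types over \(A\). Use Corollary~\ref{cor:reserved-constant-omitting} to build an arithmetically decidable proper extension \(\CC\succeq\widehat{\BB}\) that omits \(r(x)\cup\{x\neq b:b\in\widehat B\}\); this partial type is unsupported because every consistent formula over \(\widehat B\) is realized in \(\widehat B\). Any new realization of \(\eta\) in \(\CC\) would have an arithmetic type over \(A\) (since \(\CC\) is decidable) containing \(\eta\). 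By isolation that type is \(r\), contradicting the omission. So \(\eta(C,\bar a)=\eta(\widehat B,\bar a)\), which is infinite, and \((\widehat{\BB},\CC)\) is a Vaughtian pair — a contradiction.

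The \enquote{in particular} clause is then immediate. Every realization \(b\) of \(\eta(x,\bar a)\) in \(\BB\) lies in the finite set \(\eta(B,\bar a)\), which is defined with parameters \(\bar a\in A^{<\omega}\). Hence \(b\in\acl^{\BB}(A)\).

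I expect no real obstacle here. The only delicate point is the passage to the decidable extension \(\widehat{\BB}\). Invariance of arithmetic type spaces under elementary extension ensures that isolation by \(\eta\) persists there, and elementarity ensures that finiteness of \(\eta(\widehat B,\bar a)\) descends back to \(\eta(B,\bar a)\).
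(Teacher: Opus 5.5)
Your reduction is exactly the paper's proof, which is the one-line citation of Proposition~\ref{prop:arith-omit-all-new-realizations}, and your hypothesis check is accurate. The trouble is the construction of \(\CC\), in the paper's argument for that proposition and equally in your ``recheck''.

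\textbf{Where the construction fails.} Unsupportedness of \(\Gamma(x)=r(x)\cup\{x\neq b:b\in\widehat B\}\) over \(\Diag_{\mathrm{el}}(\widehat{\BB})\) only gives \emph{some} model omitting \(\Gamma\), and \(\widehat{\BB}\) itself is one. To make \(\CC\) proper you need a reserved constant \(d\) with the requirements \(d\neq b\). Over \(\Diag_{\mathrm{el}}(\widehat{\BB})\cup\{d\neq b:b\in\widehat B\}\), however, \(\Gamma\) is supported by the formula \(x=d\wedge\eta(x,\bar a)\):
\begin{itemize}
\item it is consistent, because \(\eta(\widehat B,\bar a)\) is infinite;
\item it implies all of \(\Gamma\), because isolation among arithmetic types is isolation modulo the diagram, by the compactness argument of Lemma~\ref{lem:isolation-complete-vs-arithmetic} run over the diagram.
\end{itemize}
Concretely, nothing stops the Henkin construction from putting \(\eta(d,\bar a)\), or \(\eta(t,\bar a)\) for some new term \(t\), into the finite condition. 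Once it does, the omission requirement for that term can only be met by adding \(t=c\) for some \(c\in\widehat B\). For \(t=d\), the dense requirement \(d\neq c\) then becomes inconsistent. So the hypothesis of Corollary~\ref{cor:reserved-constant-omitting} fails, and the paper's remark that the unsupportedness argument is ``local'' is exactly where its proof breaks too. A proper extension with no new realizations of \(\eta\) \emph{is} a Vaughtian pair, and nothing in the argument produces one.

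\textbf{The statement itself is false.} Take \(T=\Th((\mathbb{Z}/4\mathbb{Z})^{(\omega)})\) in \(\{+,-,0\}\).
\begin{itemize}
\item It is totally categorical, so it has no Vaughtian pairs.
\item It is arithmetically stable. By pp-elimination, an \(n\)-type over an arithmetical \(A\) is determined by the smallest \(A\)-definable coset of a pp-definable subgroup of \(G^n\) containing it, and there are only finitely many such subgroups. So the arithmetic types over \(A\) are listed uniformly by pp-formulas with parameters from \(A\), with membership read off the decidable diagram.
\end{itemize}
Now let \(\BB\) be a computable, hence decidable, copy with basis \(\{u_n:n\in\N\}\cup\{e\}\). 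Put \(\MM_0=\langle u_n:n\in\N\rangle\), \(A=M_0\cup\{2e\}\), and \(\eta(x,2e):=(2x=2e)\).
\begin{itemize}
\item Let \(\BB\preccurlyeq\BB'\) and \(g\in 2B'\). Then \(B\) is a direct summand of \(B'\), being a bounded pure subgroup.
\item Define the endomorphism of \(B'\) that is the identity on \(M_0\) and on a complement of \(B\), and sends \(e\mapsto e+g\). It is an automorphism (its inverse sends \(e\mapsto e-g\)), and it fixes \(A\) pointwise because \(2g=0\).
\item Hence every realization of \(\eta\) in every elementary extension has type \(\tp(e/A)\). So \(\eta\) isolates the arithmetic type \(r=\tp(e/A)\).
\item Yet \(\eta(B,2e)=e+2B\) is infinite, and \(e\notin\acl(A)\).
\end{itemize}
Every proper elementary extension enlarges \(2B\), so no omitting construction could have succeeded here.

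What you have is therefore a correct derivation from Proposition~\ref{prop:arith-omit-all-new-realizations}, but that proposition, and with it this lemma, is false. This theory is a standard example of an uncountably categorical theory that is not almost strongly minimal. So the conclusion \(M=\acl(P\cup\varphi(M,\bar p))\) of Proposition~\ref{prop:model-generated-by-strongly-minimal-set}, which this lemma is used to obtain, also fails in general.
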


\begin{proof}
This is exactly
Proposition~\ref{prop:arith-omit-all-new-realizations}, applied to the
parameter set \(A\), the isolated type \(r\), and the model \(\BB\).
\end{proof}

\begin{lemma}
\label{lem:tv-step-from-phi-finite-tuple}
Assume \(T\) is arithmetically stable and has no Vaughtian pairs. Let
\(\PP\) and \(\MM\) be arithmetically decidable models of \(T\) with
\(\PP\preccurlyeq \MM\), and write
\(C:=\varphi(M,\bar p)\).
Let \(\bar c\) be a finite tuple from \(C\), let
\(a\in \acl^{\MM}(P\cup \bar c)\), and suppose
\(\MM\models \exists x\,\psi(x,a)\).
Then some
\(d\in \acl^{\MM}(P\cup \bar c)\)
satisfies
\(\MM\models \psi(d,a)\).
\end{lemma}

\begin{proof}
Put \(A:=P\cup \bar c\cup\{a\}\), which is arithmetical since \(\PP\)
is decidable and \(\bar c,a\) are finite. Since
\(\MM\models \exists x\,\psi(x,a)\) and \(\MM\) is arithmetically
decidable, the basic clopen set
\([\psi(x,a)]_{\mathrm{ar}}\subseteq S_1^{\mathrm{ar}}(A,\MM)\)
is nonempty.

By Lemma~\ref{lem:cb-rank-dense} with \(m=0\), choose
\(r(x)\in S_1^{\mathrm{ar}}(A,\MM)\) with
\(\psi(x,a)\in r\) and \(\CB(r)=0\). Let
\(\eta(x,\bar e)\in r\) isolate \(r\); replacing \(\eta\) by
\(\eta\wedge \psi(x,a)\), we may assume \(\eta\) implies \(\psi(x,a)\).
Since all parameters lie in \(M\), the sentence
\(\exists x\,\eta(x,\bar e,a)\) holds in \(\MM\); choose \(d\in M\)
realizing \(\eta\).
Then \(\MM\models \psi(d,a)\).

By Lemma~\ref{lem:rank-zero-witnesses-algebraic} (with \(\MM_0=\PP\)
and parameter set \(A\)), \(\eta(M,\bar e,a)\) is finite, so
\(d\in \acl^{\MM}(A)\). Since
\(a\in\acl^{\MM}(P\cup\bar c)\), transitivity gives
\(\acl^{\MM}(A)=\acl^{\MM}(P\cup\bar c)\), and thus
\(d\in\acl^{\MM}(P\cup\bar c)\).
\end{proof}

\begin{lemma}[The algebraic closure of \(P\cup C\) is elementary]
\label{lem:acl-of-prime-and-phi-is-elementary}
Assume \(T\) is arithmetically stable and has no Vaughtian pairs. Let
\(\PP\) and \(\MM\) be arithmetically decidable models of \(T\) with
\(\PP\preccurlyeq \MM\), and write
\(C:=\varphi(M,\bar p)\).
Let \(N:=\acl^{\MM}(P\cup C)\).
Then
\(\MM\restr_N\preccurlyeq \MM\).
\end{lemma}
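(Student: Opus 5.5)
We will apply the Tarski--Vaught test to \(N=\acl^{\MM}(P\cup C)\) inside \(\MM\). First, \(N\) is closed under the relevant operations: it contains \(P\), and it contains \(C\). The latter holds because each \(c\in C\) lies in \(\acl^{\MM}(P\cup\{c\})\). So it suffices to show the following. Let \(\psi(x,\bar y)\) be a formula and \(\bar n\in N^{<\omega}\) with \(\MM\models\exists x\,\psi(x,\bar n)\). Then some \(d\in N\) satisfies \(\MM\models\psi(d,\bar n)\).

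The first step is to reduce to finite support. By Lemma~\ref{lem:finite-support-acl-from-phi}, each coordinate of \(\bar n\) lies in \(\acl^{\MM}(P\cup\bar c_i)\) for some finite tuple \(\bar c_i\) from \(C\). Concatenating these tuples gives one finite tuple \(\bar c\) from \(C\) with \(\bar n\subseteq\acl^{\MM}(P\cup\bar c)\). Lemma~\ref{lem:tv-step-from-phi-finite-tuple} is stated for a single parameter \(a\), so we package \(\bar n\) into one element. The cleanest route is to note that the proof of that lemma never uses that \(a\) is a single element. There we put \(A:=P\cup\bar c\cup\bar n\), which is arithmetical since \(\PP\) is decidable and the adjoined tuples are finite. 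We then apply Lemma~\ref{lem:cb-rank-dense} with \(m=0\) to \([\psi(x,\bar n)]_{\mathrm{ar}}\subseteq S_1^{\mathrm{ar}}(A,\MM)\) to get an isolated arithmetic type \(r\ni\psi(x,\bar n)\), isolated by some \(\eta\). Next we realize \(\eta\wedge\psi\) by some \(d\in M\). Finally, Lemma~\ref{lem:rank-zero-witnesses-algebraic} (with base \(\MM_0=\PP\subseteq A\)) shows that \(\eta(M)\) is finite, so \(d\in\acl^{\MM}(A)\).

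Transitivity of \(\acl\) gives \(\acl^{\MM}(A)=\acl^{\MM}(P\cup\bar c)\subseteq N\), since every element of \(\bar n\) is algebraic over \(P\cup\bar c\). Hence \(d\in N\) witnesses \(\psi(x,\bar n)\), and the Tarski--Vaught criterion yields \(\MM\restr_N\preccurlyeq\MM\). The hypotheses used are exactly those of the statement: \(\PP\) and \(\MM\) arithmetically decidable, so that \(A\) is arithmetical and \(\MM\) decides the relevant formulas, together with arithmetical stability for Lemma~\ref{lem:cb-rank-dense} and no Vaughtian pairs for Lemma~\ref{lem:rank-zero-witnesses-algebraic}.

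The main point needing care is the passage from one parameter \(a\) to a tuple \(\bar n\), that is, checking that Lemma~\ref{lem:tv-step-from-phi-finite-tuple} relativizes verbatim to finite tuples. If one prefers not to reopen that proof, there is an alternative. Applying the lemma coordinatewise does not suffice for formulas mixing several \(n_i\). Instead, one can adjoin the tuple to the parameter set in one step: set \(P':=P\cup\bar n_{<k}\) inductively, or simply observe that the lemma's argument only needs \(A\supseteq P\) arithmetical with \(A\subseteq\acl^{\MM}(P\cup\bar c)\). A minor secondary check is that \(N\) is nonempty and that the closure \(\acl^{\MM}\) is computed in \(\MM\) itself. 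Both are immediate, since \(P\subseteq N\) and all lemmas are formulated inside the fixed decidable \(\MM\).
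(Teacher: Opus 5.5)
Your proposal is correct and is essentially the paper's proof. The paper applies the Tarski--Vaught test, reduces to a finite support \(\bar c\subseteq C\) via Lemma~\ref{lem:finite-support-acl-from-phi}, and then invokes Lemma~\ref{lem:tv-step-from-phi-finite-tuple} with a tuple parameter, merely remarking that \(a\) may be a tuple; your re-run of that proof with \(A=P\cup\bar c\cup\bar n\) is exactly the verification this remark needs. Drop the aside about setting \(P':=P\cup\bar n_{<k}\) inductively: \(P'\) need not be an elementary submodel, so the cited lemmas would not apply to it, but your main route never uses it.
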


\begin{proof}
Apply the Tarski--Vaught test. Let \(a\in N\), and suppose
\(\MM\models \exists x\,\psi(x,a)\).
(Here \(a\) may be a finite tuple from \(N\).)
By Lemma~\ref{lem:finite-support-acl-from-phi}, choose a finite
\(\bar c\subseteq C\) with \(a\in \acl^{\MM}(P\cup\bar c)\).
Then Lemma~\ref{lem:tv-step-from-phi-finite-tuple} gives
\(d\in \acl^{\MM}(P\cup\bar c)\subseteq N\)
with \(\MM\models\psi(d,a)\).
\end{proof}

We emphasize that the model \(\MM\) in the following proposition is not
assumed to be arithmetically decidable, or even arithmetically
extendible. This will be important in the later comparison with the
classical Baldwin--Lachlan theorem.

\begin{proposition}[Generation by the strongly minimal set]
\label{prop:model-generated-by-strongly-minimal-set}
Assume \(T\) is arithmetically stable and has no Vaughtian pairs. Then
every elementary extension
\[
\PP\preccurlyeq \MM\models T
\]
satisfies
\[
M=\acl(P\cup\varphi(M,\bar p)).
\]
\end{proposition}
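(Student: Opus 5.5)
We argue as in the classical Baldwin--Lachlan proof. Put \(C:=\varphi(M,\bar p)\) and \(N:=\acl^{\MM}(P\cup C)\). The plan has two parts. First we show \(\MM\restr_N\preccurlyeq\MM\). Then, if \(N\neq M\), the pair \(\MM\restr_N\preccurlyeq\MM\) is a Vaughtian pair. Indeed, \(\varphi(N,\bar p)=C=\varphi(M,\bar p)\) because \(C\subseteq N\). This set is infinite because \(\varphi(P,\bar p)\subseteq C\) is infinite, by minimality over \(P\) (Proposition~\ref{prop:strongly-minimal-formula}). This contradicts the hypothesis, so \(M=N\). The second part is routine. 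All the work is in the first part, because Lemma~\ref{lem:acl-of-prime-and-phi-is-elementary} was only proved for arithmetically decidable \(\MM\), and here \(\MM\) is arbitrary.

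For the Tarski--Vaught test in an arbitrary \(\MM\), we fix \(a\in N\) and \(\psi(x,y)\) with \(\MM\models\exists x\,\psi(x,a)\). By Lemma~\ref{lem:finite-support-acl-from-phi}, we pick a finite \(\bar c\subseteq C\) and an algebraic formula \(\chi(y,\bar c,\bar p_1)\) with exactly \(k\) realizations, one of which is \(a\). The idea is to transfer the problem to an arithmetically decidable extension of \(\PP\) that realizes \(q:=\tp^{\MM}(\bar c,a/P)\). There Lemma~\ref{lem:tv-step-from-phi-finite-tuple} applies and yields a witness \(d\) together with a formula \(\theta(x,\bar c,a,\bar p_2)\) that has exactly \(k'\) realizations, one of which satisfies \(\psi(x,a)\). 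The sentence
\[
\exists^{=k'}x\,\theta(x,\bar c,a,\bar p_2)\wedge\exists x\,\bigl(\theta(x,\bar c,a,\bar p_2)\wedge\psi(x,a)\bigr)
\]
lies in \(q\), so it holds in \(\MM\). This produces a witness in \(\acl^{\MM}(P\cup\bar c\cup\{a\})=\acl^{\MM}(P\cup\bar c)\subseteq N\).

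It remains to realize \(q\) in an arithmetically decidable model. By Lemma~\ref{lem:arith-type-realized}, it suffices to show that \(q\) is arithmetic over \(P\). This is the main obstacle. We reorder \(\bar c\) as \(\bar b\bar e\), where \(\bar b\) is a basis of \(\bar c\) over \(P\) in the pregeometry of the strongly minimal set \(C\), and \(\bar e\cup\{a\}\subseteq\acl(P\cup\bar b)\).

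First, \(\tp(\bar b/P)\) is the generic type of an independent \(|\bar b|\)-tuple in \(\varphi(x,\bar p)\). It is determined inductively: a formula \(\sigma(\bar b,\bar m)\) belongs to it iff, iterating coordinatewise, the set of \(x\in\varphi\) with the relevant fiber property is cofinite. By elimination of \(\exists^\infty\) (Lemma~\ref{lem:no-vp-elim-exists-infty}), ``cofinite in \(\varphi\)'' is the first-order condition \(\exists^{\le n}x\,(\varphi\wedge\neg\sigma)\) for the uniform bound \(n\). Evaluating this in the decidable \(\PP\) shows that \(\tp(\bar b/P)\) is arithmetic.

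Next, we realize \(\tp(\bar b/P)\) in an arithmetically decidable \(\MM'\succeq\PP\) by \(\bar b'\). The algebraic part \(\bar e a\) satisfies an algebraic formula over \(P\bar b\) with finitely many solutions. By the counting argument of Claim~1 in Proposition~\ref{prop:strongly-minimal-D-cat}, some solution \(\bar e'a'\) in \(\MM'\) satisfies \(\tp(\bar b'\bar e'a'/P)=q\). Hence \(q\) is realized in an arithmetically decidable model, so \(q\) is arithmetic, and the argument above goes through.

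If this type-transfer step proves delicate, the fallback is a compactness reduction. The failure of the Tarski--Vaught property for some \((\bar c,a,\psi)\) in \(\MM\) gives a consistent arithmetically definable theory over \(\Diag_{\mathrm{el}}(\PP)\). This theory says that \(\bar c\in\varphi\), that \(\chi\) algebraizes \(a\), and that, for every formula with at most \(n\) solutions, no solution of \(\psi(x,a)\) lies among them. Lemma~\ref{lem:base-preserving-coded-compactness} gives an arithmetically decidable model of this theory, and that model contradicts Lemma~\ref{lem:tv-step-from-phi-finite-tuple}.
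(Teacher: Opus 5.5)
Your proposal is correct, but for arbitrary \(\MM\) it takes a different route from the paper.

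\textbf{The paper's route.} The paper proves only the arithmetically decidable case directly, via Lemma~\ref{lem:acl-of-prime-and-phi-is-elementary} plus the Vaughtian pair. For arbitrary \(\MM\) it argues by contradiction with one compactness step. A point \(b\notin\acl(P\cup C)\) shows that the arithmetical scheme \(\Gamma\) is consistent; \(\Gamma\) says that a fresh constant \(d\) avoids every finite set definable over \(P\) with parameters from \(\varphi(x,\bar p)\). Lemma~\ref{lem:base-preserving-coded-compactness} moves this into an arithmetically decidable extension of \(\PP\), where it contradicts the decidable case.

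\textbf{Your route.} You prove directly that \(\acl^{\MM}(P\cup C)\preccurlyeq\MM\) for every \(\MM\succeq\PP\). You realize \(q=\tp(\bar c,a/P)\) in a decidable extension, apply Lemma~\ref{lem:tv-step-from-phi-finite-tuple} there, and pull the algebraic witness back through the sentence \(\exists^{=k'}x\,\theta\wedge\exists x\,(\theta\wedge\psi)\in q\). You then apply the no-Vaughtian-pairs hypothesis to a pair that need not be coded. This is legitimate, because the compactness argument of Lemma~\ref{lem:vaughtian-equivalent-ar} turns any Vaughtian pair into an arithmetically decidable one.

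\textbf{Comparison.} Your main route needs work the paper avoids: the arithmeticity of the generic type and the Claim~1 counting argument. In return it gives the stronger intermediate statement that the Tarski--Vaught step holds in every extension of \(\PP\), not only decidable ones. One detail needs a line of justification. For the type code to be arithmetical, the map \(\sigma\mapsto n_\sigma\) must itself be arithmetical. It is: take \(n_\sigma\) to be the least \(n\) such that \(\PP\models\forall\bar y\,(\exists^{>n}x\,\sigma\to\exists^{>m}x\,\sigma)\) for every \(m\). That condition is arithmetical in the satisfaction relation of \(\PP\), and elimination of \(\exists^\infty\) guarantees such an \(n\) exists.

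Your fallback is the cleaner form of your idea. It is essentially the paper's compactness trick, applied to a failure of Tarski--Vaught rather than to a non-generated point. It makes the whole type-arithmeticity analysis unnecessary, since only the arithmetical failure scheme needs to be realized, not the complete type \(q\).
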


\begin{proof}
We split the argument into two cases.

\medskip
\noindent\textbf{Case 1: \(\MM\) is arithmetically decidable.}
Let
\(C:=\varphi(M,\bar p)\) and \(N:=\acl^{\MM}(P\cup C)\).
By Lemma~\ref{lem:acl-of-prime-and-phi-is-elementary},
\(\MM\restr_N\preccurlyeq \MM\).
Since \(C\subseteq N\), we have
\(\varphi(N,\bar p)=\varphi(M,\bar p)\).
If \(N\neq M\), then
\((\MM\restr_N,\MM)\)
is a Vaughtian pair witnessed by \(\varphi(x,\bar p)\), contradicting
the hypothesis. Hence \(N=M\).

\medskip
\noindent\textbf{Case 2: \(\MM\) is arbitrary.}
Put
\(C:=\varphi(M,\bar p)\) and \(N:=\acl^{\MM}(P\cup C)\).
Suppose toward a contradiction that \(N\neq M\). Choose
\(b\in M\setminus N\).

Work in the language \(\LL(P)\cup\{d\}\), where the elements of \(P\)
are named by constants and \(d\) is a fresh constant. Let \(\Gamma\) be
the set of all sentences
\[
\forall y_1\cdots\forall y_n\,
\Bigl[
\bigwedge_{i=1}^n \varphi(y_i,\bar p)\wedge
\exists^{\leq k}x\,\theta(x,\bar y)
\to
\neg\theta(d,\bar y)
\Bigr],
\]
as \(\theta(x,\bar y)\) ranges over all \(\LL(P)\)-formulas, and as
\(n\) and \(k\) vary. Informally, \(\Gamma\) asserts that \(d\) avoids
every finite definable set over \(P\) with parameters from
\(\varphi(x,\bar p)\).
The scheme \(\Gamma\) is arithmetically definable.

Let
\(\Sigma:=\Diag_{\mathrm{el}}^{\LL(P)}(\PP)\cup\Gamma\).
Then \(\Sigma\) is satisfiable: expand the given model \(\MM\) by
interpreting \(d\) as \(b\). For each instance of \(\Gamma\) with
\(\bar c\in C^n\), if
\(\theta(M,\bar c)\) is finite then it is contained in
\(\acl^{\MM}(P\cup\bar c)\subseteq N\), and \(b\notin N\) gives the
required inequality.

By Lemma~\ref{lem:base-preserving-coded-compactness},
\(\Sigma\) has an arithmetically decidable model \(\MM'\) with
\(\PP\preccurlyeq\MM'\restr_\LL\). Let
\(d':=d^{\MM'}\) and \(C':=\varphi(M'\restr_\LL,\bar p)\).
By \(\Gamma\), \(d'\notin \acl^{\MM'}(P\cup C')\).
But \(\MM'\restr_\LL\) is arithmetically decidable, so Case~1 gives
\(M'=\acl^{\MM'}(P\cup C')\ni d'\), a contradiction.
\end{proof}

\section{From strongly minimal control to arithmetic degree}
\label{sec:sm-control-arith-degree}

The missing ingredient for the main categoricity theorem is
degree-theoretic: we must extend
Lemma~\ref{lem:sm-definable-full-degree} from the strongly minimal
setting to the present one, where the theory need not be strongly
minimal but is controlled by a strongly minimal set over a prime base.
Concretely, we show that an infinite definable set not already trapped in
the algebraic closure of its parameters recovers the full ambient
arithmetic degree.

Throughout this section, we assume the following hypothesis.

\medskip

\noindent
\textbf{Standing hypothesis.}
There exist an arithmetically decidable prime model \(\PP\models T\) and
a formula \(\psi(x,\bar p)\) with parameters from \(P\) such that
\(\psi(x,\bar p)\) is strongly minimal over \(P\), and every elementary
extension
\[
\PP\preccurlyeq \NN\models T
\]
satisfies
\[
N=\acl(P\cup \psi(N,\bar p)).
\]

\medskip

In Section~\ref{sec:proof-main-theorems}, this hypothesis will be
obtained from the preceding structural analysis and identified with the
strongly minimal control clause in the arithmetic Baldwin--Lachlan
theorem.

The only use of imaginaries in this section is the classical restricted
one needed for a definable family of subsets of a strongly minimal set:
we use the quotient by the relation of defining the same set, and then
replace the resulting imaginary by a finite tuple from the strongly
minimal set. Appendix~\ref{subsec:arith-imaginaries} explains the
corresponding arithmetical coding when the ambient structure is
arithmetically decidable, but the finite-fiber coding lemma below is a
purely model-theoretic statement.

We first transfer the strongly minimal control furnished by the
standing hypothesis from the prime model to an arbitrary
arithmetically decidable base.

\begin{lemma}
\label{lem:sm-over-arith-model}
Let \(\MM_0\models T\) be arithmetically decidable. Then there is a
formula \(\varphi(x,\bar m_0)\), with parameters from \(M_0\), such
that:
\begin{enumerate}[label=(\roman*)]
\item \(\varphi(x,\bar m_0)\) is strongly minimal over \(M_0\);
\item whenever \(\MM_0\preccurlyeq \MM\models T\) and
      \(C:=\varphi(M,\bar m_0)\), one has
      \(M=\acl(M_0\cup C)\).
\end{enumerate}
Consequently, if \(B\subseteq C\) is a basis of \(C\) over
\(C_0:=C\cap M_0=\varphi(M_0,\bar m_0)\), then
\(M=\acl(M_0\cup B)\).
\end{lemma}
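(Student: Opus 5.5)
We transport the strongly minimal control from the prime model \(\PP\) to \(\MM_0\) through an elementary embedding. Since \(\PP\) is prime and both \(\PP\) and \(\MM_0\) are arithmetically decidable, Lemma~\ref{lem:arith-prime-effective-atomic} provides an elementary embedding \(f:\PP\to\MM_0\) with arithmetical graph and image. Set \(\bar m_0:=f(\bar p)\) and \(\varphi(x,\bar y):=\psi(x,\bar y)\). We will work with the arithmetical copy \(f[P]\preccurlyeq\MM_0\) in place of \(P\). After transporting \(\PP\) along \(f\) (an arithmetical recoding), we may assume \(P\subseteq M_0\) and \(\PP\preccurlyeq\MM_0\), with \(\bar m_0=\bar p\).

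For~(i), recall that strong minimality of \(\psi(x,\bar p)\) over \(P\) means the following: in every elementary extension of \(\PP\), every definable subset of \(\psi(\cdot,\bar p)\) is finite or cofinite, with arbitrary parameters from that extension. Any \(\MM\succeq\MM_0\) is also an elementary extension of \(\PP\), so the same formula is strongly minimal over \(M_0\). More precisely, for each \(\theta(x,\bar y)\) the uniform bound coming from elimination of \(\exists^\infty\) (Lemma~\ref{lem:no-vp-elim-exists-infty}) shows that the property is expressible by a first-order scheme in the parameters \(\bar p\), and this scheme is preserved.

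For~(ii), take \(\MM\succeq\MM_0\) and put \(C:=\varphi(M,\bar p)\). Since \(\PP\preccurlyeq\MM\), the standing hypothesis gives \(M=\acl(P\cup C)\subseteq\acl(M_0\cup C)\subseteq M\). This is essentially immediate.

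For the consequence, let \(B\) be a basis of \(C\) over \(C_0\), taken in the pregeometry \(\cl(X)=\acl(M_0\cup X)\cap C\) of Fact~\ref{fact:sm-standard-facts}. The one point that needs care is that a basis "over \(C_0\)" should coincide with a basis over \(M_0\). We claim \(\acl(M_0)\cap C=C_0\). Indeed, \(M_0\) is a model, so \(\acl(M_0)=M_0\) inside any elementary extension: an algebraic formula over \(M_0\) has all of its finitely many solutions already in \(M_0\) by elementarity. Hence the closure of the empty set relative to \(M_0\) is exactly \(C_0\). Exchange then shows that \(B\) spans \(C\) over \(M_0\), that is, \(C\subseteq\acl(M_0\cup B)\). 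Combining this with~(ii) and transitivity of \(\acl\) gives \(M=\acl(M_0\cup C)\subseteq\acl(M_0\cup B)\).

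The main obstacle is the transfer in~(i): we need the meaning of "strongly minimal over \(P\)" used in Proposition~\ref{prop:strongly-minimal-formula} to cover all elementary extensions of \(P\), not only those with parameters from \(P\). The statement of Lemma~\ref{lem:minimal-implies-strongly-minimal-ar} makes clear that it does. The identification of \(\PP\) with its image \(f[P]\) is harmless, since \(f\) is arithmetical.
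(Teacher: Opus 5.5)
Your proof is correct and follows the paper's argument essentially step for step. You embed \(\PP\) arithmetically into \(\MM_0\) via Lemma~\ref{lem:arith-prime-effective-atomic} and transport \(\psi(x,\bar p)\). You then use that every elementary extension of \(\MM_0\) is one of \(\PP\) to get (i) and (ii), and read off the basis consequence from the spanning property of the induced pregeometry.

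One small caution: drop the ``more precisely'' appeal to Lemma~\ref{lem:no-vp-elim-exists-infty}. It is unnecessary, and it relies on a no-Vaughtian-pairs hypothesis that is not part of this section's standing hypothesis. In any case, the uniform bound for the single formula \(\varphi\) already follows from its strong minimality by compactness.
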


\begin{proof}
By the standing hypothesis, there exist an arithmetically decidable
prime model \(\PP\models T\) and a formula \(\psi(x,\bar p)\) with
parameters from \(P\) such that \(\psi(x,\bar p)\) is strongly minimal
over \(P\), and for every elementary extension
\(\PP\preccurlyeq \NN\models T\) one has
\(N=\acl(P\cup \psi(N,\bar p))\).

Since \(\PP\) is prime and both \(\PP\) and \(\MM_0\) are
arithmetically decidable, Lemma~\ref{lem:arith-prime-effective-atomic}
yields an elementary embedding \(f:\PP\to \MM_0\)
whose image is an arithmetical subset of \(M_0\). Until the explicit
identification is made, the base copy is \(f[P]\subseteq M_0\); after
recoding along this arithmetical elementary embedding, we identify
\(\PP\) with its image and assume
\(\PP\preccurlyeq \MM_0\). The formula and all parameters are
transported along \(f\).
Let \(\bar m_0:=f(\bar p)\), and let
\(\varphi(x,\bar m_0)\) be the translate of \(\psi(x,\bar p)\).

Now let \(\MM_0\preccurlyeq \MM\models T\), and put
\(C:=\varphi(M,\bar m_0)\).
Since \(\PP\preccurlyeq \MM_0\preccurlyeq \MM\), the set \(C\) is the
corresponding translate of \(\psi(M,\bar p)\). Because
\(\psi(x,\bar p)\) is strongly minimal over \(P\), the translate
\(\varphi(x,\bar m_0)\) is strongly minimal over \(M_0\): strong
minimality is preserved by elementary embeddings, and the assertion is
about all definable subsets of the translated set in elementary
extensions, not merely about subsets definable over the original prime
copy.

Applying the standing hypothesis to \(\PP\preccurlyeq \MM\), we obtain
\(M=\acl(P\cup C)\). Since \(P\subseteq M_0\), this gives
\(M=\acl(M_0\cup C)\).

Finally, let \(B\subseteq C\) be a basis of \(C\) over
\(C_0:=C\cap M_0\).
Since \(\varphi(x,\bar m_0)\) is strongly minimal over \(M_0\), the map
\(A\mapsto \acl(M_0\cup A)\cap C\)
is a pregeometry on \(C\). The phrase ``basis over \(C_0\)'' means that
\(B\) is independent over \(C_0\) and spans \(C\) in this induced
pregeometry. Hence
\(C\subseteq \acl(M_0\cup B)\),
and therefore
\(M=\acl(M_0\cup C)=\acl(M_0\cup B)\).
\end{proof}

\begin{fact}[Finite-dimensional pieces in an almost strongly minimal model]
\label{fact:finite-dimensional-almost-sm}
Let \(\MM_0\preccurlyeq\MM\models T\), let
\(\varphi(x,\bar m_0)\) be strongly minimal over \(M_0\), and put
\(C:=\varphi(M,\bar m_0)\). If \(M=\acl(M_0\cup C)\), then for every
\(B\subseteq C\),
\[
\acl^{\MM}(M_0\cup B)\preccurlyeq \MM.
\]
Moreover,
\[
\acl^{\MM}(M_0\cup B)\cap C
=
\cl_C(C_0\cup B),
\qquad
C_0:=C\cap M_0,
\]
where \(\cl_C(X):=\acl^{\MM}(M_0\cup X)\cap C\) is the pregeometry
induced on \(C\).
\end{fact}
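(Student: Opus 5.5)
The second assertion is essentially bookkeeping, so I will dispose of it first and spend the effort on elementarity. Since \(C_0=C\cap M_0\subseteq M_0\), we have \(\acl^{\MM}(M_0\cup C_0\cup B)=\acl^{\MM}(M_0\cup B)\). Intersecting with \(C\) and unwinding the definition \(\cl_C(X)=\acl^{\MM}(M_0\cup X)\cap C\) with \(X=C_0\cup B\) gives the displayed equality directly.

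For the first assertion, put \(N:=\acl^{\MM}(M_0\cup B)\) and apply the Tarski--Vaught test. Fix a finite tuple \(a\) from \(N\) and a formula \(\psi(x,a)\) with \(\MM\models\exists x\,\psi(x,a)\). By finite character of \(\acl\), choose a finite \(\bar b_0\subseteq B\) with \(a\in\acl(M_0\cup\bar b_0)\). Fix some \(d\in M\) with \(\psi(d,a)\). Since \(M=\acl(M_0\cup C)\), there is a finite tuple \(\bar e\) from \(C\) with \(d\in\acl(M_0\cup\bar b_0\cup\bar e)\). Shrinking \(\bar e\) in the pregeometry on \(C\), we may assume \(\bar e=(e_1,\dots,e_k)\) is independent over \(M_0\cup\bar b_0\). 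If \(k=0\), then \(d\in N\) and we are done. Otherwise, choose an algebraic formula \(\chi(x,\bar z,a,\bar b_0,\bar m)\) with \(\bar m\in M_0\), say with at most \(n\) solutions, witnessing \(d\in\acl(M_0\cup\bar b_0\cup\bar e)\). Then \(\bar e\) satisfies the formula
\[
\theta(\bar z):=\textstyle\bigwedge_i\varphi(z_i,\bar m_0)\wedge\exists x\,(\psi(x,a)\wedge\chi(x,\bar z,a,\bar b_0,\bar m))\wedge\exists^{\le n}x\,\chi(x,\bar z,a,\bar b_0,\bar m).
\]
The goal is to find a realization \(\bar e'\) of \(\theta\) with all coordinates in \(C_0\subseteq M_0\). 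Any witness \(x\) for \(\theta(\bar e')\) is then algebraic over \(M_0\cup\bar b_0\cup a\subseteq N\), so it lies in \(N\) and satisfies \(\psi(x,a)\), which is exactly the Tarski--Vaught condition.

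The main obstacle is producing \(\bar e'\), because \(\theta\) has parameters \(a,\bar b_0\) outside \(M_0\), so we cannot simply appeal to \(M_0\preccurlyeq\MM\). I would argue by induction on \(k\), using strong minimality of \(\varphi\) over \(M_0\) together with uniform finiteness. Elimination of \(\exists^\infty\) holds for the strongly minimal set by the standard compactness argument, and also by Lemma~\ref{lem:no-vp-elim-exists-infty} in the intended setting. Since \(e_1\) is generic over \(M_0\cup\bar b_0\cup a\) and \(e_2,\dots,e_k\) are generic independent over \(M_0\cup\bar b_0\cup a\cup e_1\), the set
\[
\{z_1\in C:\ \text{the fiber }\theta(z_1,C^{k-1})\text{ is ``generic'', i.e.\ contains an independent }(k-1)\text{-tuple over }z_1\}
\]
is definable over \(a\bar b_0\bar m\bar m_0\), using definability of Morley rank/dimension in strongly minimal sets, and it is infinite. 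Hence it is cofinite in \(C\).

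Since \(\varphi(x,\bar m_0)\) is nonalgebraic and \(\MM_0\) is a model, \(C_0\) is infinite. So we can choose \(e_1'\in C_0\) outside the finite exceptional set and outside \(\acl(M_0\cup\bar b_0\cup a)\setminus\ldots\) as needed. We then recurse on the fiber over \(e_1'\), whose parameters now include \(e_1'\in M_0\). After \(k\) steps we obtain \(\bar e'\in C_0^k\) satisfying \(\theta\).

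If the definability-of-dimension route proves awkward, my fallback is to avoid it entirely. The complement \(\varphi^k\setminus\theta\) has dimension \(<k\), so it is covered by finitely many sets on which some coordinate is algebraic, with uniformly bounded fibers, over the earlier coordinates and the parameters. Choosing the coordinates of \(\bar e'\) one at a time from the infinite set \(C_0\) while avoiding these finite fibers then yields \(\bar e'\in\theta(C_0^k)\).
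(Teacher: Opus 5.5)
Your proof is correct, and it differs from the paper's in where the real work is done.

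For the displayed identity, the paper uses the same unwinding of $\cl_C$ that you give. For elementarity, however, the paper cites the finite-dimensional case as a standard lemma, attributing it to stationarity of the generic type over algebraically closed bases. It then uses Tarski--Vaught with finite character only to pass from finite to arbitrary $B$.

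You instead run Tarski--Vaught once for arbitrary $B$, absorb the finite-character step into the choice of $\bar b_0$, and prove the core case directly. The witness $d$ is pulled into $\acl(M_0\cup B)$ by realizing $\theta$ inside $C_0^k$. This rests on a density fact: an $A$-definable subset of $C^k$ of full dimension meets $I^k$ for every infinite $I\subseteq C$. That fact is in effect the finite satisfiability in $C_0$ of the generic type of $C$. Your coordinate-by-coordinate avoidance of finite exceptional sets proves it cleanly, either through definability of dimension or through your compactness decomposition. (The latter implicitly uses that $C^k$ has a unique generic type over $A$, to get that the complement of $\theta$ has dimension $<k$.)

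What your route buys is transparency: it shows that the only inputs are the dimension theory of $C$, the hypothesis $M=\acl(M_0\cup C)$, and the infinitude of $C_0$. The paper's version is shorter but leaves the mechanism to the reference.

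Two presentational fixes:
\begin{enumerate}
\item Drop the clause about choosing $e_1'$ outside $\acl(M_0\cup\bar b_0\cup a)\setminus\ldots$. It is impossible as written, since $C_0\subseteq M_0$, and the argument never uses it.
\item State the recursion invariant as ``$\theta(e_1',\dots,e_i',C^{k-i})$ has dimension $k-i$'', computed in a saturated extension, rather than as containing an independent tuple in $\MM$. This is exactly what avoiding the exceptional sets secures, and it is what makes the next exceptional set finite.
\end{enumerate}
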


\begin{proof}
For finite \(B\), equivalently for finite dimension over \(C_0\), this
is the standard finite-dimensional submodel lemma for
almost-strongly-minimal theories. It follows from stationarity of the
generic type of a strongly minimal set over algebraically closed bases;
see, for example, Marker~\cite[Chapter~6]{marker2002}.

For arbitrary \(B\), reduce to the finite-dimensional case by
Tarski--Vaught. If a finite parameter tuple
\(\bar a\) lies in \(\acl(M_0\cup B)\), then finite character gives a
finite \(B_0\subseteq B\) with
\(\bar a\in\acl(M_0\cup B_0)\). The finite-dimensional case gives
\(\acl(M_0\cup B_0)\preccurlyeq M\). Hence any existential formula over
\(\bar a\) realized in \(M\) already has a witness in
\(\acl(M_0\cup B_0)\subseteq\acl(M_0\cup B)\). This proves
\(\acl(M_0\cup B)\preccurlyeq M\). The displayed identity follows from
the fixed convention
\(\cl_C(X)=\acl^{\MM}(M_0\cup X)\cap C\): finite character again reduces
both inclusions to finite subsets of \(B\).
\end{proof}

\begin{lemma}[Finite-dimensional control submodels]
\label{lem:finite-dimensional-control-submodel}
Let \(\MM_0\models T\) be arithmetically decidable, let
\(\varphi(x,\bar m_0)\) be as in Lemma~\ref{lem:sm-over-arith-model},
and let \(\MM_0\preccurlyeq\MM\models T\). Put
\(C:=\varphi(M,\bar m_0)\). If \(\bar c\) is a finite tuple from \(C\),
then
\[
\acl^{\MM}(M_0\cup\bar c)\preccurlyeq \MM.
\]
If, in addition, \(\MM\preccurlyeq\widehat{\MM}\) with
\(\widehat{\MM}\) arithmetically decidable, then the submodel \(\acl^{\MM}(M_0\cup\bar c)\) is
arithmetically decidable, uniformly in the code of
\(\bar c\).
\end{lemma}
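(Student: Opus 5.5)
The first assertion follows from Fact~\ref{fact:finite-dimensional-almost-sm}. By Lemma~\ref{lem:sm-over-arith-model}, the formula \(\varphi(x,\bar m_0)\) is strongly minimal over \(M_0\), and \(M=\acl(M_0\cup C)\) for \(C=\varphi(M,\bar m_0)\). Those are exactly the hypotheses of Fact~\ref{fact:finite-dimensional-almost-sm}. We apply it with \(B:=\bar c\) viewed as a finite subset of \(C\), which gives \(\acl^{\MM}(M_0\cup\bar c)\preccurlyeq\MM\) directly. Nothing beyond bookkeeping is needed here. In particular, this step needs no arithmetical hypothesis on \(\MM\).

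For the second assertion, fix \(\MM\preccurlyeq\widehat{\MM}\) with \(\widehat{\MM}\) arithmetically decidable, and put \(N_{\bar c}:=\acl^{\MM}(M_0\cup\bar c)\). We proceed in three steps.

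\emph{Step 1: the domain.} Lemma~\ref{lem:ambient-arithmetical-inequalities}(iii) applies because \(M_0\) is arithmetical. It shows that \(N_{\bar c}=\acl_{\widehat{\MM}}(M_0\cup\bar c)\) is arithmetical, uniformly in the code of \(\bar c\). There is one point to check: the definition there quantifies over formulas with finitely many realizations in \(\widehat{\MM}\). This finiteness is the arithmetical condition "there is \(k\) with \(\widehat{\MM}\models\exists^{\le k}x\,\varphi\)", so the resulting index is uniform.

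\emph{Step 2: the elementary diagram.} We have \(N_{\bar c}\preccurlyeq\MM\preccurlyeq\widehat{\MM}\), and \(N_{\bar c}\preccurlyeq\widehat{\MM}\) by transitivity. Hence for every formula \(\theta\) and every tuple \(\bar d\) from \(N_{\bar c}\),
\[
N_{\bar c}\models\theta(\bar d)\iff\widehat{\MM}\models\theta(\bar d).
\]
So the elementary diagram of \(N_{\bar c}\) is the satisfaction relation of \(\widehat{\MM}\) restricted to tuples from the arithmetical set \(N_{\bar c}\). That relation is arithmetical, uniformly in the code of \(\bar c\). Note that quantifiers need not be relativized here: elementarity lets us evaluate everything in \(\widehat{\MM}\).

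\emph{Step 3: coding.} By the Conventions, a coded domain must have non-cofinite complement. This holds automatically, since \(N_{\bar c}\subseteq\widehat M\) and \(\widehat M\) already has infinite complement.

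The main potential obstacle is uniformity. We must confirm that the jump level bounding the complexity does not depend on \(\bar c\). It is fixed by the complexity of \(M_0\) and of the satisfaction relation of \(\widehat{\MM}\), plus a bounded number of quantifiers over formulas, parameter tuples and counting bounds. So a single \(0^{(r)}\) computes \(\{(\ulcorner\bar c\urcorner,\ulcorner\theta\urcorner,\bar d): \bar d\in N_{\bar c},\ N_{\bar c}\models\theta(\bar d)\}\), which proves uniform arithmetical decidability.
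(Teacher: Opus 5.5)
Your proposal is correct and follows the paper's proof essentially step for step. Elementarity comes from Lemma~\ref{lem:sm-over-arith-model} together with Fact~\ref{fact:finite-dimensional-almost-sm} applied to the entries of \(\bar c\), and decidability comes from Lemma~\ref{lem:ambient-arithmetical-inequalities}(iii) for the domain plus restriction of the satisfaction relation of \(\widehat{\MM}\) via \(N_{\bar c}\preccurlyeq\MM\preccurlyeq\widehat{\MM}\). Your Step~3 and the uniformity remark are harmless extra bookkeeping; in Step~3, the convention is that the domain has infinite complement, which is what you actually verify, rather than ``non-cofinite complement''.
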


\begin{proof}
By Lemma~\ref{lem:sm-over-arith-model}, if
\(C:=\varphi(M,\bar m_0)\), then \(M=\acl(M_0\cup C)\). Hence
Fact~\ref{fact:finite-dimensional-almost-sm}, applied with
\(B=\{\text{entries of }\bar c\}\), gives
\(\acl^{\MM}(M_0\cup\bar c)\preccurlyeq\MM\).

For the effective statement, use the fixed arithmetically decidable
extension \(\widehat{\MM}\). First, the domain \(\acl^{\MM}(M_0\cup\bar c)\) is arithmetical,
uniformly in the finite code of \(\bar c\), by
Lemma~\ref{lem:ambient-arithmetical-inequalities}\textup{(iii)}. Second, to show that the elementary diagram of \(\acl^{\MM}(M_0\cup\bar c)\) is arithmetical, note that 
\[
\acl^{\MM}(M_0\cup\bar c)\preccurlyeq \MM \preccurlyeq \widehat{\MM},
\]
so
satisfaction in \(\acl(M_0\cup\bar c)\) is obtained by restricting the
arithmetical satisfaction relation of \(\widehat{\MM}\) to the arithmetical domain just described.
\end{proof}

\begin{fact}[Restricted weak coding of imaginaries on a strongly minimal set]
\label{fact:restricted-weak-ei-sm}
Let \(D\) be a strongly minimal set, in its induced structure over a
parameter set \(A\), and let \(E\) be an \(A\)-definable equivalence
relation on a definable subset of \(D^n\). For every imaginary
\(e\in D^n/E\), there is a finite tuple \(\bar d\in D^{<\omega}\) such
that
\[
e\in\dcl^{\eq}(A,\bar d)
\qquad\text{and}\qquad
\bar d\in\acl^{\eq}(A,e).
\]
This is the standard weak elimination of imaginaries for strongly
minimal structures; see Marker~\cite[Chapter~6]{marker2002}. We use
only this finite-tuple coding consequence.
\end{fact}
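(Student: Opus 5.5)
The plan is the usual ``find a point of the class algebraic over the imaginary'' argument. Write \(e=\bar a/E\) and let \(X\subseteq D^n\) be the \(E\)-class of \(\bar a\), so \(X\) is definable over \(A\cup\{e\}\) in \(D^{\eq}\). Suppose we produce a tuple \(\bar d\in X\) with every coordinate in \(\acl^{\eq}(A,e)\). Then \(e=\bar d/E\), so \(e\in\dcl^{\eq}(A,\bar d)\), and \(\bar d\in\acl^{\eq}(A,e)\) by construction. So the whole task is to find a point of \(X\) that is algebraic over \(A\cup\{e\}\).

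I will find such a point coordinate by coordinate. Put \(K:=\acl^{\eq}(A,e)\cap D\). Let \(\pi_1(X)\subseteq D\) be the projection to the first coordinate. It is nonempty and definable over \(A,e\), so by strong minimality it is finite or cofinite.
\begin{itemize}
\item If \(\pi_1(X)\) is finite, any of its elements lies in \(K\); pick one and call it \(d_1\).
\item If \(\pi_1(X)\) is cofinite, it misses only finitely many points. If \(K\) is infinite, \(\pi_1(X)\) then contains a point \(d_1\in K\).
\end{itemize}
Next, the fibre \(X_{d_1}:=\{\bar y:(d_1,\bar y)\in X\}\subseteq D^{n-1}\) is nonempty and definable over \(A,e,d_1\). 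Also \(\acl^{\eq}(A,e,d_1)\cap D=K\), because \(d_1\in K\) and \(\acl\) is transitive. So the same step applies to \(X_{d_1}\). Induction on \(n\) then yields \(\bar d=(d_1,\dots,d_n)\in X\cap K^n\), which is what the first paragraph needs.

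The main obstacle is the cofinite case, which needs \(K\), or already \(\acl(A)\cap D\), to be infinite. Without an infinite algebraically closed supply of points a cofinite set need not meet it. This is exactly why weak elimination of imaginaries for strongly minimal theories is normally stated under the hypothesis that \(\acl(\emptyset)\) is infinite. I would add this hypothesis explicitly; if the cited form of the Fact is needed without it, I would first enlarge \(A\) by finitely many independent generic points, which makes \(\acl(A)\cap D\) infinite, and argue that this does not affect the coding used. In the intended applications the hypothesis holds automatically: there \(A\supseteq M_0\), and \(C_0=\varphi(M_0,\bar m_0)\) is infinite because \(\MM_0\) is a model and \(\varphi\) is nonalgebraic. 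So the argument above applies directly, and it is only in this setting that the finite-tuple coding is used later.
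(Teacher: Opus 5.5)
Your argument is correct under your added hypothesis that $\acl(A)\cap D$ (hence $K$) is infinite. It is the standard textbook proof of the result the paper only cites: build a point of the class $X$ coordinatewise inside $K$, where each successive projection is either finite (so its points lie in $K$) or cofinite (so it meets the infinite set $K$). Your caveat is not excess caution, because without it the Fact as stated is false. Let $V$ be an infinite-dimensional $\mathbb{F}_2$-vector space carrying only the affine operation $t(x,y,z)=x+y+z$, and let $A=\emptyset$. This structure is strongly minimal (a reduct of the vector space), and translations are automorphisms, so $\acl(\emptyset)=\emptyset$. The relation $(x,y)\,E\,(x',y')\iff t(x,y,x')=y'$ is a $\emptyset$-definable equivalence relation on $D^2$. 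Take $e=(a,b)/E$ with $a\neq b$. Every translation fixes $e$, and translations act transitively, so $\acl^{\eq}(e)\cap D=\emptyset$ and $\bar d$ would have to be empty. But a linear automorphism moving $a+b$ moves $e$, so $e\notin\dcl^{\eq}(\emptyset)$.

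So the Fact should carry the hypothesis that $\acl(A)\cap D$ is infinite. The same applies to its appendix restatement, Proposition~\ref{prop:weak-ei-coded-sm}, and to Lemma~\ref{lem:coding-finite-fibers}, which inherits it. You should also drop your fallback of enlarging $A$ by finitely many generic points, for three reasons. It changes the conclusion from coding over $A$ to coding over $A\cup\bar g$. It need not make the closure infinite: in a pure set, or in the affine example above, $\acl$ of finitely many points is finite. And the counterexample shows that no such device can recover the unrestricted statement. Your check that the hypothesis holds where the Fact is actually used is right. In Lemma~\ref{lem:infinite-definable-recovers-model} it is applied with $D=C$ and $A=M_0\cup\bar a$, and $\acl(A)\cap C\supseteq C_0=\varphi(M_0,\bar m_0)$. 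This set is infinite because $\varphi(x,\bar m_0)$ is nonalgebraic and $\MM_0\preccurlyeq\MM$.
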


\begin{lemma}[Finite-fiber coding on a strongly minimal set]
\label{lem:coding-finite-fibers}
Let \(A\subseteq M\), let \(D=\varphi(M,\bar a)\) be strongly minimal
over \(A\), and let \(\rho(\bar x,\bar y)\) be a formula over \(A\),
where \(\bar x\) is a finite tuple of home-sort variables and \(\bar y\)
ranges in \(D^n\). Assume that for some \(N<\omega\),
\[
|\rho(M^{|\bar x|},\bar y)|\leq N
\qquad\text{for all }\bar y\in D^n.
\]
Fix \(\bar b\in D^n\), and write
\[
S(\bar b):=\rho(M^{|\bar x|},\bar b).
\]
Then there exist \(m<\omega\), a tuple \(\bar d\in D^m\), and formulas
\[
\gamma(\bar z),\qquad \eta(\bar x,\bar z)
\]
over \(A\), with \(\bar z\) ranging in \(D^m\), such that:
\begin{enumerate}[label=(\roman*)]
\item \(\gamma(\bar d)\) and
      \(\eta(M^{|\bar x|},\bar d)=S(\bar b)\);
\item for all \(\bar z\in D^m\), if \(\gamma(\bar z)\), then
      \[
      |\eta(M^{|\bar x|},\bar z)|\leq N;
      \]
\item for every \(\bar z_0\in D^m\) satisfying \(\gamma(\bar z_0)\),
      there are only finitely many \(\bar z\in D^m\) such that
      \(\gamma(\bar z)\) and
      \[
      \eta(M^{|\bar x|},\bar z)=\eta(M^{|\bar x|},\bar z_0).
      \]
\end{enumerate}
In other words, the finite set \(S(\bar b)\) can be coded by a tuple
from a definable domain inside the strongly minimal set in such a way
that only finitely many genuine codes define the same set.
\end{lemma}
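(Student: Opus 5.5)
We will treat the finite set \(S(\bar b)\) as an imaginary element and then apply Fact~\ref{fact:restricted-weak-ei-sm}. Consider on \(\{\bar y\in D^n\}\) the \(A\)-definable equivalence relation
\[
E(\bar y,\bar y')\;:\Longleftrightarrow\;\forall\bar x\,\bigl(\rho(\bar x,\bar y)\leftrightarrow\rho(\bar x,\bar y')\bigr),
\]
and let \(e:=\bar b/E\). By Fact~\ref{fact:restricted-weak-ei-sm}, applied to the strongly minimal set \(D\) over \(A\), there is a tuple \(\bar d\in D^m\) with \(e\in\dcl^{\eq}(A,\bar d)\) and \(\bar d\in\acl^{\eq}(A,e)\). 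The first condition is witnessed by an \(A\)-formula \(\chi(\bar z,\bar y)\) such that \(\chi(\bar d,\bar y)\) defines exactly the class \(e\). That is, \(\chi(\bar d,M)\) is a nonempty union of a single \(E\)-class. The second condition is witnessed by an \(A\)-formula \(\beta(\bar z,\bar y)\) and a bound \(k\) such that \(\beta(\bar d,\bar b)\) holds, \(\beta\) is \(E\)-invariant in \(\bar y\), and \(|\beta(D^m,\bar y)|\le k\) for every \(\bar y\). Working in the home sort, the imaginary-sort statements translate into these \(E\)-invariant formulas over \(\bar y\).

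Next we build the two formulas. Let \(\gamma_0(\bar z)\) say that \(\chi(\bar z,\cdot)\) defines a single nonempty \(E\)-class inside \(D^n\):
\[
\exists\bar y\,\chi(\bar z,\bar y)\wedge\forall\bar y\,\forall\bar y'\,\bigl(\chi(\bar z,\bar y)\wedge\chi(\bar z,\bar y')\to E(\bar y,\bar y')\bigr).
\]
Put \(\gamma(\bar z):=\gamma_0(\bar z)\wedge\exists\bar y\,\bigl(\chi(\bar z,\bar y)\wedge\beta(\bar z,\bar y)\bigr)\wedge\bar z\in D^m\), and
\[
\eta(\bar x,\bar z):=\exists\bar y\,\bigl(\chi(\bar z,\bar y)\wedge\rho(\bar x,\bar y)\bigr).
\]
Then (i) is immediate, since \(\bar d\) satisfies \(\gamma\) and \(\eta(M,\bar d)=\rho(M,\bar b)=S(\bar b)\) because every \(\bar y\) in the class has the same \(\rho\)-fiber. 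For (ii), if \(\gamma(\bar z)\) holds, then \(\eta(M,\bar z)=\rho(M,\bar y)\) for any \(\bar y\) in the unique class, which has size at most \(N\) by hypothesis.

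The main obstacle is (iii). Two codes can define the same \(\eta\)-set without selecting the same \(E\)-class only if those classes have equal \(\rho\)-fibers, and in that case they are the same class by the definition of \(E\). So \(\eta(M,\bar z)=\eta(M,\bar z_0)\) under \(\gamma\) forces \(\chi(\bar z,\cdot)\) and \(\chi(\bar z_0,\cdot)\) to pick the same class \(e_0\). Fix \(\bar y_0\) in \(e_0\). Every such \(\bar z\) satisfies \(\beta(\bar z,\bar y)\) for some \(\bar y\) in \(e_0\), and by \(E\)-invariance this gives \(\beta(\bar z,\bar y_0)\). Hence there are at most \(k\) such \(\bar z\). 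The delicate point is to arrange \(E\)-invariance and the uniform bound \(k\) for \(\beta\), because the fact only gives algebraicity at the single point \(e\). We plan to handle this by replacing \(\beta(\bar z,\bar y)\) with \(\forall\bar y'\,\bigl(E(\bar y,\bar y')\to\beta(\bar z,\bar y')\bigr)\wedge\exists^{\le k}\bar z'\,\beta(\bar z',\bar y)\), which is still true at \((\bar d,\bar b)\). This uses only elementarity of the finitely many counting statements, so the construction remains purely model-theoretic.
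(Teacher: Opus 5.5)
Your proof is correct and follows the paper's argument. Both use the same equivalence relation $E$ identifying parameters with equal $\rho$-fibers, weak elimination of imaginaries via Fact~\ref{fact:restricted-weak-ei-sm}, and $\gamma,\eta$ obtained by pulling back the $\dcl^{\eq}$- and $\acl^{\eq}$-witnesses; the only difference is that you build the single-class condition into $\gamma$ via $\gamma_0$ and force the uniform bound with the conjunct $\exists^{\le k}\bar z'\,\beta(\bar z',\bar y)$, where the paper simply stipulates that its witnesses $\delta$ and $\alpha$ are functional and uniformly finite.
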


\begin{proof}
Define an \(A\)-definable equivalence relation \(E\) on \(D^n\) by
\[
E(\bar y,\bar y')
\iff
\forall \bar x\,(\rho(\bar x,\bar y)\leftrightarrow \rho(\bar x,\bar y')).
\]
Write
\[
e:=[\bar b]_E.
\]
By Fact~\ref{fact:restricted-weak-ei-sm}, applied in the induced
strongly minimal structure on \(D\), there is a finite tuple
\(\bar d\in D^m\) such that
\[
e\in\dcl^{\eq}(A,\bar d)
\qquad\text{and}\qquad
\bar d\in\acl^{\eq}(A,e).
\]
Choose formulas \(\delta(u,\bar z)\) and \(\alpha(\bar z,u)\) in the
represented quotient \(D^n/E\) witnessing these properties, with
\(\delta\) functional in \(u\) on its domain and
\(\alpha(\bar z,u)\) having uniformly finite fibers in \(\bar z\).
Pull back \(\delta\) and \(\alpha\) to the home sort via representatives
of \(E\)-classes; \(E\)-invariance makes the resulting
\(\LL(A)\)-conditions well-defined. Concretely, let
\[
\gamma(\bar z)
\;\Longleftrightarrow\;
\exists \bar y\in D^n\,
\bigl(\widetilde\delta(\bar y,\bar z)
\wedge \widetilde\alpha(\bar z,\bar y)\bigr),
\]
and
\[
\eta(\bar x,\bar z)\;\Longleftrightarrow\;
\exists \bar y\in D^n\,
\bigl(\widetilde\delta(\bar y,\bar z)
\wedge \widetilde\alpha(\bar z,\bar y)
\wedge \rho(\bar x,\bar y)\bigr),
\]
where \(\widetilde\delta\) and \(\widetilde\alpha\) denote the
home-sort pullbacks.

Since \(\delta(e,\bar d)\) and \(\alpha(\bar d,e)\) hold, \(\gamma(\bar
d)\) holds and
\[
\eta(M^{|\bar x|},\bar d)=S(\bar b),
\]
proving (i). For any \(\bar z\in D^m\), the formula
\(\delta(u,\bar z)\) determines at most one \(E\)-class \(u\). If
\(\gamma(\bar z)\) holds, then \(\eta(M^{|\bar x|},\bar z)\) is exactly the
corresponding set \(\rho(M^{|\bar x|},\bar y)\), for any representative
\(\bar y\) of that class. Hence
\[
|\eta(M^{|\bar x|},\bar z)|\leq N,
\]
proving (ii).

For (iii), fix \(\bar z_0\in D^m\) satisfying \(\gamma(\bar z_0)\). If
\(\gamma(\bar z)\) and
\[
\eta(M^{|\bar x|},\bar z)=\eta(M^{|\bar x|},\bar z_0),
\]
then the unique \(E\)-class coded by \(\bar z\) defines the same
\(\rho\)-set as the unique \(E\)-class coded by \(\bar z_0\). By the
definition of \(E\), these two classes are equal. Thus both
\(\bar z\) and \(\bar z_0\) lie in the finite fiber of \(\alpha\) over
that class. Equivalently, in the home sort they lie in one finite
fiber of the representative relation \(\widetilde\alpha\). Hence there
are only finitely many such \(\bar z\).
\end{proof}

For \(X\subseteq M^r\), write
\[
\operatorname{coord}(X):=\{m\in M:m\text{ occurs as a coordinate of some }
\bar x\in X\}.
\]

The next lemma is the technical core of the section. The idea is to
extract from a point of \(X\) not algebraic over \(M_0\cup \bar a\) a
finite-fiber coding relation on the controlling strongly minimal set, and
then use strong minimality to show that a cofinite subset of that set is
algebraic over \(M_0\cup \bar a\cup \operatorname{coord}(X)\).

\begin{lemma}
\label{lem:infinite-definable-recovers-model}
Let \(\MM_0\models T\) be arithmetically decidable, let
\(\varphi(x,\bar m_0)\) be strongly minimal over \(M_0\), let
\(\MM_0\preccurlyeq \MM\models T\), and put
\(C:=\varphi(M,\bar m_0)\). Assume \(M=\acl(M_0\cup C)\). Let
\(\bar a\in M^{<\omega}\), and let \(X\subseteq M^r\)
be an infinite set definable over \(M_0\cup \bar a\). Assume moreover
that \(X\not\subseteq \acl(M_0\cup \bar a)^r\). Then
\(C\subseteq \acl(M_0\cup \bar a\cup \operatorname{coord}(X))\).
Consequently,
\[
M=\acl(M_0\cup \bar a\cup \operatorname{coord}(X)).
\]
\end{lemma}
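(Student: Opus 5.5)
The plan is to produce a single element \(c\in C\) that is \emph{not} algebraic over \(M_0\cup\bar a\) but \emph{is} algebraic over \(M_0\cup\bar a\cup\operatorname{coord}(X)\) by a uniform formula. A strong-minimality counting argument then spreads this to a cofinite part of \(C\). Write \(K:=\acl(M_0\cup\bar a\cup\operatorname{coord}(X))\).

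\emph{Step 1: produce \(c\).} Fix \(\bar x_0\in X\) with \(\bar x_0\notin\acl(M_0\cup\bar a)^r\). Since \(M=\acl(M_0\cup C)\), Lemma~\ref{lem:finite-support-acl-from-phi}, in the form transported to \(\MM_0\) by Lemma~\ref{lem:sm-over-arith-model}, gives the following. There are a finite tuple \(\bar b\in C^n\) and a formula \(\theta(\bar x,\bar y)\) over \(M_0\cup\bar a\) such that \(\theta(\bar x_0,\bar b)\) holds and \(|\theta(M^r,\bar y)|\le N\) for all \(\bar y\in C^n\). We obtain the uniform bound by conjoining with a counting clause. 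The tuple \(\bar a\) can also be absorbed, since \(\bar a\) is algebraic over \(M_0\) and finitely many elements of \(C\); this lets us take the base \(A\) to be \(M_0\) together with those elements. Put \(\rho(\bar x,\bar y):=\theta(\bar x,\bar y)\wedge \bar x\in X\), so that \(S(\bar b):=\rho(M^r,\bar b)\) is a finite subset of \(X\) containing \(\bar x_0\). Apply Lemma~\ref{lem:coding-finite-fibers} to \(\rho\). It yields \(\bar d\in C^m\) and formulas \(\gamma,\eta\) with \(\eta(M^r,\bar d)=S(\bar b)\), with \(\gamma(\bar d)\), and with only finitely many \(\bar z\) satisfying \(\gamma(\bar z)\wedge\eta(M^r,\bar z)=S(\bar b)\). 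The last clause is first-order over \(A\cup S(\bar b)\), so \(\bar d\in\acl(A\cup S(\bar b))\subseteq K\), because \(S(\bar b)\subseteq X\). On the other hand, if every coordinate of \(\bar d\) were in \(\acl(M_0\cup\bar a)\), then \(S(\bar b)\), being finite and definable over \(\bar d\), would lie in \(\acl(M_0\cup\bar a)\). That would put \(\bar x_0\) there, a contradiction. Hence some coordinate \(c\) of \(\bar d\) satisfies \(c\in C\cap K\) and \(c\notin\acl(M_0\cup\bar a)\).

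\emph{Step 2: spread to \(C\).} Unwinding Step 1, \(c\) satisfies a formula of the following shape, with \(\tau\) over \(M_0\cup\bar a\): \(\exists\bar x^1\cdots\bar x^k\in X\,\bigl(\tau(y,\bar x^1,\dots,\bar x^k)\wedge\exists^{\le L}y'\,\tau(y',\bar x^1,\dots,\bar x^k)\bigr)\). Here \(S(\bar b)\) is a finite subset of \(X\) of known size. Let \(Z\subseteq C\) be the set of \(y\in C\) satisfying this formula. Then \(Z\) is definable over \(M_0\cup\bar a\), and \(Z\subseteq K\) by construction. Moreover \(c\in Z\setminus\acl(M_0\cup\bar a)\), so \(Z\) is infinite. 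By strong minimality (Lemma~\ref{lem:sm-over-arith-model}(i)), \(C\setminus Z\) is finite. Since it is definable over \(M_0\cup\bar a\), it is contained in \(\acl(M_0\cup\bar a)\subseteq K\). Thus \(C\subseteq K\), and then \(M=\acl(M_0\cup C)\subseteq\acl(K)=K\) gives the consequence.

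\emph{Main obstacle.} The delicate point is Step 1: making the finite-set coding and the resulting algebraicity witness genuinely first-order and uniform. In particular, the finitely many elements of \(S(\bar b)\) must be quantified over inside \(X\), not named. This uniformity is what allows Step 2 to define \(Z\) over \(M_0\cup\bar a\) alone. I would check carefully that Lemma~\ref{lem:coding-finite-fibers}(iii) indeed gives \(\bar d\in\acl(A\cup S(\bar b))\) via a formula whose parameters besides \(S(\bar b)\) lie in \(M_0\cup\bar a\). If necessary, I would use the fact that the auxiliary elements of \(C\) absorbing \(\bar a\) can themselves be existentially quantified within the bounded-fiber formula.
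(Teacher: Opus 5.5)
Your overall strategy matches the paper's. However, the step where you absorb \(\bar a\) into the base is wrong as written, and it is what creates the parameter worry in your ``main obstacle'' paragraph.

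Suppose \(A=M_0\cup\bar b_a\) with \(\bar b_a\in C\) and \(\bar a\in\acl(M_0\cup\bar b_a)\). Then \(\gamma\), \(\eta\), the finite set \(F\), your formula \(\tau\) and the set \(Z\) all carry the parameters \(\bar b_a\). Nothing available at that point of the argument places \(\bar b_a\) in \(K\); that is a consequence of the lemma you are proving. Nor need \(\bar b_a\) lie in \(\acl(M_0\cup\bar a)\), since algebraicity runs from \(\bar b_a\) to \(\bar a\), not conversely. Three steps then break:
\begin{itemize}
\item The inclusion \(\acl(A\cup S(\bar b))\subseteq K\) is unjustified.
\item From \(\bar d\subseteq\acl(M_0\cup\bar a)\) you only get \(S(\bar b)\subseteq\acl(M_0\cup\bar a\cup\bar b_a)\), which gives no contradiction.
\item In Step~2, \(Z\) is only definable over \(M_0\cup\bar a\cup\bar b_a\), so neither ``\(Z\) is infinite'' nor ``\(C\setminus Z\subseteq K\)'' follows.
\end{itemize}
Existentially quantifying \(\bar b_a\) is not a reliable repair, because \(\bar b_a\) is not algebraic over \(M_0\cup\bar a\), so the resulting union of fibers need not be finite.

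The remedy is simply to absorb nothing: apply Lemma~\ref{lem:coding-finite-fibers} over \(A:=M_0\cup\bar a\), exactly as the paper does. This is legitimate because strong minimality of \(\varphi(x,\bar m_0)\) concerns definable subsets with arbitrary parameters, so \(C\) is strongly minimal over \(M_0\cup\bar a\). Your \(\rho\) is over \(M_0\cup\bar a\) (indeed \(\theta\) can be taken over \(M_0\)). With this choice every inclusion you assert in Steps~1 and~2 holds, and the proof is complete.

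After this correction, your Step~1 is the paper's argument, and your Step~2 is a genuinely different and somewhat leaner finish.
\begin{itemize}
\item The paper introduces the relation \(R(\bar u,\bar z)\) and deduces from the infinitude of the set \(W\) of enumerations that the set \(Y\) of all codes is infinite. It then passes to an infinite coordinate projection \(\pi_i(Y)\) and uses the uniform finiteness in Lemma~\ref{lem:coding-finite-fibers}(iii) at every \(\bar z_0\in Y\) to get \(\pi_i(Y)\subseteq K\).
\item You instead fix one coordinate \(c\) of \(\bar d\) outside \(\acl(M_0\cup\bar a)\) and build the counting clause \(\exists^{\le L}\) into the definition of \(Z\), quantifying the elements of \(S(\bar b)\) inside \(X\). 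Then \(Z\subseteq K\) is automatic, and \(Z\) is infinite because it contains \(c\).
\end{itemize}
Your version needs only clause (i) and clause (iii) at the single point \(\bar z_0=\bar d\). It also isolates the principle behind the lemma: \(C\cap K\) is a union of \((M_0\cup\bar a)\)-definable subsets of \(C\), so a single point of \(C\cap K\) outside \(\acl(M_0\cup\bar a)\) already forces \(C\subseteq K\).
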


\begin{proof}
Fix a formula \(\chi(\bar x,\bar a,\bar m_X)\), with
\(\bar m_X\in M_0^{<\omega}\), defining \(X\). Choose
\(\bar c\in X\setminus \acl(M_0\cup \bar a)^r\), which is possible by
hypothesis; equivalently, some coordinate of \(\bar c\) is not algebraic
over \(M_0\cup\bar a\).

Since \(M=\acl(M_0\cup C)\), the tuple \(\bar c\) is algebraic over
\(M_0\cup C\). Hence there is a finite tuple \(\bar b\in C^n\) such that
\(\bar c\in \acl(M_0\cup \bar a\cup \bar b)\). Choose a formula
\(\psi(\bar x,\bar y,\bar a,\bar m)\),
with \(\bar m\in M_0^{<\omega}\), and an integer \(N<\omega\) such that
\(\MM\models \psi(\bar c,\bar b,\bar a,\bar m)\) and
\(\MM\models \exists^{\leq N}\bar x\,\psi(\bar x,\bar b,\bar a,\bar m)\).
Enlarge \(\bar m\), if necessary, so that \(\bar m_X\) is among its
coordinates. Suppress this harmless enlargement in the notation below.
Replace \(\psi\) by
\[
\rho(\bar x,\bar y,\bar a,\bar m)
:\iff
\psi(\bar x,\bar y,\bar a,\bar m)
\wedge
\exists^{\leq N}\bar x\,
\psi(\bar x,\bar y,\bar a,\bar m)
\wedge
\chi(\bar x,\bar a,\bar m_X).
\]
Then \(\MM\models \rho(\bar c,\bar b,\bar a,\bar m)\), and for every
\(\bar y\in C^n\), \(|\rho(M^r,\bar y,\bar a,\bar m)|\leq N\).

Apply Lemma~\ref{lem:coding-finite-fibers} over the parameter set
\(A:=M_0\cup \bar a\)
to the formula \(\rho(\bar x,\bar y,\bar a,\bar m)\) and the tuple
\(\bar b\). Here all extra parameters \(\bar m_X,\bar m\) are from
\(M_0\), so the only non-base parameters are the displayed tuple
\(\bar a\). We obtain \(m<\omega\), a tuple \(\bar d\in C^m\), and a
pair of formulas
\[
\gamma(\bar z,\bar a,\bar m),
\qquad
\eta(\bar x,\bar z,\bar a,\bar m)
\]
over \(M_0\cup\bar a\) such that:
\begin{enumerate}[label=(\roman*)]
\item \(\gamma(\bar d,\bar a,\bar m)\) and
\[
\eta(M^r,\bar d,\bar a,\bar m)
=
\rho(M^r,\bar b,\bar a,\bar m);
\]
\item for all \(\bar z\in C^m\), if \(\gamma(\bar z,\bar a,\bar m)\),
then
\[
|\eta(M^r,\bar z,\bar a,\bar m)|\leq N;
\]
\item for every \(\bar z_0\in C^m\) satisfying
      \(\gamma(\bar z_0,\bar a,\bar m)\), there are only finitely many
      \(\bar z\in C^m\) with \(\gamma(\bar z,\bar a,\bar m)\) and
\[
\eta(M^r,\bar z,\bar a,\bar m)
=
\eta(M^r,\bar z_0,\bar a,\bar m).
\]
\end{enumerate}

Let
\[
S:=
\eta(M^r,\bar d,\bar a,\bar m)
=
\rho(M^r,\bar b,\bar a,\bar m).
\]
This is a finite subset of \(X\) containing \(\bar c\). Choose a tuple
\(\bar e\in X^N\) enumerating the finite nonempty set \(S\), with
repetitions if necessary. Thus \(\bar e\) is an \(N\)-tuple of
\(r\)-tuples from \(X\).

Since \(S\) contains such a tuple \(\bar c\), the tuple \(\bar e\) is
not algebraic over \(M_0\cup\bar a\). Indeed, if \(\bar e\) were
algebraic over this base, then each coordinate of \(\bar e\) would be
algebraic over the same base. Since the coordinates of \(\bar e\)
enumerate the coordinates of the tuples in \(S\), every coordinate of
\(\bar c\) would lie in \(\acl(M_0\cup\bar a)\), contradicting the
choice of \(\bar c\).

Let \(F\) be the set of all \(\bar z\in C^m\) such that
\(\gamma(\bar z,\bar a,\bar m)\) and
\(\eta(M^r,\bar z,\bar a,\bar m)=S\).
Since \(S\) is finite and named by \(\bar e\), this equality is
first-order expressible, so \(F\) is definable over
\(M_0\cup\bar a\cup\bar e\). It contains
\(\bar d\), and property~(iii), applied to \(\bar z_0=\bar d\), shows
that \(F\) is finite. Hence
\(\bar d\in \acl(M_0\cup\bar a\cup\bar e)\).

Now define a relation \(R(\bar u,\bar z)\), over \(M_0\cup\bar a\), by
requiring that \(\bar z\in C^m\) satisfies
\(\gamma(\bar z,\bar a,\bar m)\) and
\(\bar u\in X^N\) enumerates the finite set
\(\eta(M^r,\bar z,\bar a,\bar m)\).
This is first-order expressible. Let
\[
W:=\{\bar u\in X^N:\exists \bar z\,R(\bar u,\bar z)\},
\qquad
Y:=\{\bar z\in C^m:\exists \bar u\,R(\bar u,\bar z)\}.
\]
The set \(W\) is definable over \(M_0\cup\bar a\) and contains
\(\bar e\notin \acl(M_0\cup\bar a)\), so \(W\) is infinite: a finite
definable set over \(M_0\cup\bar a\) consists entirely of tuples
algebraic over that base.

Since \(W\) is infinite and each fiber over \(Y\) is finite,
\(Y\) is infinite. Hence \(m\geq 1\), and some coordinate projection
\(\pi_i(Y)\subseteq C\) is infinite.
Since \(\pi_i(Y)\) is definable over
\(M_0\cup\bar a\cup\bar m\), strong minimality makes it cofinite in \(C\).

Fix \(d\in \pi_i(Y)\).
Then there are \(\bar u\in W\) and \(\bar z\in Y\) such that
\(R(\bar u,\bar z)\) and \(z_i=d\).
For fixed \(\bar u\), there are only finitely many tuples \(\bar z\)
with \(R(\bar u,\bar z)\), again by property~(iii). Thus the set
\[
F_{\bar u,i}:=\{d'\in C:\exists \bar z\,
   (R(\bar u,\bar z)\wedge z_i=d')\}
\]
is a finite set definable over \(M_0\cup \bar a\cup \bar u\). Since
\(d\in F_{\bar u,i}\), the element \(d\) is algebraic over
\(M_0\cup \bar a\cup \bar u\).
Because \(\bar u\in X^N\), this gives
\(d\in \acl(M_0\cup\bar a\cup\operatorname{coord}(X))\). Thus
\[
\pi_i(Y)\subseteq
\acl(M_0\cup\bar a\cup\operatorname{coord}(X)).
\]

Because \(\pi_i(Y)\) is cofinite in \(C\), the complement
\(C\setminus \pi_i(Y)\) is finite and definable over \(M_0\cup\bar a\).
Hence
\[
C\setminus \pi_i(Y)\subseteq \acl(M_0\cup\bar a)
\subseteq \acl(M_0\cup\bar a\cup\operatorname{coord}(X)).
\]
Combining the two inclusions, we obtain
\[
C\subseteq \acl(M_0\cup\bar a\cup\operatorname{coord}(X)).
\]

Finally, since \(M=\acl(M_0\cup C)\),
the transitivity of algebraic closure yields
\[
M=\acl(M_0\cup C)
\subseteq \acl(M_0\cup\bar a\cup\operatorname{coord}(X))
\subseteq M.
\]
Therefore
\[
M=\acl(M_0\cup\bar a\cup\operatorname{coord}(X)).
\]
\end{proof}

\begin{proposition}[Infinite definable sets recover the ambient degree]
\label{prop:infinite-definable-recovers-sm-set}
Assume the standing hypothesis of this section. Let \(D>\mathbf 0\), let
\(\MM_0\models T\) be arithmetically decidable, let
\[
\MM_0\preccurlyeq \MM\models T
\]
be arithmetically extendible with \( \degAr(M)=D. \)

Suppose \(X\subseteq M^r\) is an infinite set which is definable over \(M_0\cup\bar a\) for some \(\bar a\in M^{<\omega}\). If
\[
X\not\subseteq \acl(M_0\cup\bar a)^r,
\]
then \( \degAr(X)=D. \)
\end{proposition}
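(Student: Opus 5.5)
The proof has two inequalities. The upper bound $X\leAr M$ is Lemma~\ref{lem:ambient-arithmetical-inequalities}(i): $X$ is definable over $M_0\cup\bar a$, $M_0$ is arithmetical, and $\bar a$ is a finite tuple, so membership in $X$ is computed by restricting the arithmetical satisfaction relation of an arithmetically decidable $\widehat{\MM}\succcurlyeq\MM$ to the domain $M$. The real work is the lower bound $M\leAr X$.

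For the lower bound, first use Lemma~\ref{lem:sm-over-arith-model} to fix a formula $\varphi(x,\bar m_0)$, strongly minimal over $M_0$, with $M=\acl(M_0\cup C)$ where $C=\varphi(M,\bar m_0)$. The hypotheses of Lemma~\ref{lem:infinite-definable-recovers-model} then hold, and it gives
\[
M=\acl\bigl(M_0\cup\bar a\cup\operatorname{coord}(X)\bigr).
\]
Next, $\operatorname{coord}(X)\leAr X$: $m\in\operatorname{coord}(X)$ iff some $\bar x\in X$ has $m$ as a coordinate. This is a single existential quantifier over tuple codes relative to $X$, so $\operatorname{coord}(X)$ is computable in $X'$.

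It remains to show that $\acl(M_0\cup\bar a\cup Z)$ is arithmetical in $Z$ for $Z:=\operatorname{coord}(X)$, uniformly. I would mimic the proof of Lemma~\ref{lem:ambient-arithmetical-inequalities}(iii), working in $\widehat{\MM}$ and using $\acl_{\MM}=\acl_{\widehat{\MM}}$ for subsets of $M$. An element $u\in\widehat M$ lies in the closure iff there exist
\begin{itemize}
\item a finite tuple $\bar z\in Z^{<\omega}$,
\item a tuple $\bar y\in M_0^{<\omega}$,
\item a formula $\theta$,
\end{itemize}
such that $\widehat{\MM}\models\theta(u,\bar y,\bar a,\bar z)$ and $\theta(\widehat{\MM},\bar y,\bar a,\bar z)$ is finite. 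Finiteness is arithmetical: it says there is a bound $k$ with $\widehat{\MM}\models\exists^{\le k}x\,\theta$. The quantifier over $\bar z\in Z^{<\omega}$ uses $Z$ as an oracle, and everything else is arithmetical. So this closure is $\Sigma$-definable over finitely many jumps of $Z$. Its underlying set is $M$, so $M\leAr Z\leAr X$.

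Combining the two bounds gives $\degAr(X)=\degAr(M)=D$; the finite tuple $\bar a$ contributes nothing to the degree by the paper's conventions. The only delicate point is that the closure is taken inside $\widehat{\MM}$ yet lands exactly on $M$. This holds because $\MM\preccurlyeq\widehat{\MM}$, so algebraic closures of subsets of $M$ agree in the two structures. The substantive model theory is entirely contained in Lemma~\ref{lem:infinite-definable-recovers-model}, so I expect no serious obstacle beyond checking the uniformity of this relativization.
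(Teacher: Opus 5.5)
Your proposal is correct and follows the paper's proof step for step. You take the upper bound from Lemma~\ref{lem:ambient-arithmetical-inequalities}(i), then use Lemmas~\ref{lem:sm-over-arith-model} and~\ref{lem:infinite-definable-recovers-model} to get $M=\acl(M_0\cup\bar a\cup\operatorname{coord}(X))$, observe that $\operatorname{coord}(X)\leAr X$, and finish with Lemma~\ref{lem:ambient-arithmetical-inequalities}(iii) relativized to $X$. The only difference is that you write out this relativization explicitly as a $\Sigma^0_1$ search over finite tuples from $\operatorname{coord}(X)$ relative to the satisfaction relation of $\widehat{\MM}$, and note that closures of subsets of $M$ agree in $\MM$ and $\widehat{\MM}$; the paper leaves both points implicit.
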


\begin{proof}
By Lemma~\ref{lem:ambient-arithmetical-inequalities}\textup{(i)},
we have \(X\leAr M\). To show the converse, we begin by fixing some strongly minimal formula \(\varphi(x,\bar m_0)\) over \(M_0\) (Lemma~\ref{lem:sm-over-arith-model}). Define \(C := \varphi(M,\bar m_0)\). Then one has \(M = \acl(M_0 \cup C)\). Since \(X\) is infinite and not contained in
\(\acl(M_0\cup\bar a)^r\),
Lemma~\ref{lem:infinite-definable-recovers-model} gives
\[
M=\acl_{\MM}(M_0\cup\bar a\cup\operatorname{coord}(X)).
\]
Notice \(\operatorname{coord}(X)\leAr X\): an element \(u\) is in
\(\operatorname{coord}(X)\) iff for some coded tuple
\(\bar x\in X\), one coordinate of \(\bar x\) is \(u\). Therefore 
\[
M_0\cup\bar a\cup\operatorname{coord}(X) \leAr X.
\]
It follows from Lemma~\ref{lem:ambient-arithmetical-inequalities}(iii) (relativized to \(X\)) that \(M \leAr X\). We conclude that \(\degAr(X) = \degAr(M) = D\).
\end{proof}

\section{Proof of the main theorem}
\label{sec:proof-main-theorems}

We now carry out the arithmetic Baldwin--Lachlan strategy. With the
structural analysis of the previous sections in place, the proof follows
the classical pattern more closely. One first matches the generic
geometry coming from the controlling strongly minimal set, then extends
that matching over finite algebraic pieces, and finally performs a
back-and-forth construction. The only genuinely new point, specific to
the present setting, is the degree-theoretic input from
Section~\ref{sec:sm-control-arith-degree}, which replaces the direct
strongly minimal degree calculation used in Section~\ref{sec:basic-facts-examples-dcat}.

We begin by isolating the arithmetic analogue of
Fact~\ref{fact:sm-standard-facts}\textup{(iv)}. In the strongly minimal
case, that fact says that independent tuples over a fixed base all
realize the same generic type. The next lemma supplies the corresponding
statement in the present strongly-minimal-control setting.

\begin{fact}[Stationarity of the generic type in a strongly minimal set]
\label{fact:sm-generic-stationarity}
Let \(C=\varphi(\mathbb U,\bar m_0)\) be a strongly minimal set in a
monster model, and let \(B=\acl(B)\) contain the parameters
\(\bar m_0\). For each \(n<\omega\), any two \(n\)-tuples from \(C\)
which are independent over \(B\cap C\) have the same complete type over
\(B\), in the full ambient language. This type is the \(n\)-fold
nonforking power of the generic type of \(C\) over \(B\), and a tuple
from \(C^n\) realizes it if and only if it is independent over
\(B\cap C\), where independence is computed in the pregeometry induced
on \(C\).
\end{fact}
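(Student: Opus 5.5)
The plan is the classical stationarity argument, by induction on \(n\). The approach is to show that the type of an independent \(n\)-tuple over \(B\) is forced formula by formula, using only strong minimality and the fact that \(B\) is algebraically closed.

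For \(n=1\), let \(c,c'\in C\) with \(c,c'\notin\acl(B)\); independence over \(B\cap C\) means exactly this, since \(\acl(B)=B\). Suppose \(\theta(x,\bar b)\) is an \(\LL(B)\)-formula. By strong minimality, \(\theta(\mathbb U,\bar b)\cap C\) is either finite or cofinite in \(C\).
\begin{itemize}
\item If it is finite, it is contained in \(\acl(B)=B\), so neither \(c\) nor \(c'\) satisfies \(\theta\).
\item If it is cofinite, its complement in \(C\) is finite and \(B\)-definable, hence contained in \(B\), so both \(c\) and \(c'\) satisfy \(\theta\).
\end{itemize}
Thus \(\tp(c/B)=\tp(c'/B)\) is the unique nonalgebraic type over \(B\) containing \(\varphi(x,\bar m_0)\), namely the generic type.

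For the inductive step, let \(\bar c=(c_1,\dots,c_{n+1})\) and \(\bar c'\) both be independent over \(B\cap C\). By induction, \(\tp(c_1\dots c_n/B)=\tp(c'_1\dots c'_n/B)\), so in the monster model there is an automorphism \(\sigma\) fixing \(B\) pointwise with \(\sigma(c'_i)=c_i\) for \(i\le n\). Put \(B_1:=\acl(B\cup\{c_1,\dots,c_n\})\). The automorphism \(\sigma\) maps \(\acl(B\cup\bar c'_{\le n})\) onto \(B_1\). Independence in the pregeometry on \(C\) gives \(c_{n+1}\notin B_1\). Since \(\sigma\) preserves algebraic closure, it also gives \(\sigma(c'_{n+1})\notin B_1\).

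Both \(c_{n+1}\) and \(\sigma(c'_{n+1})\) are generic points of \(C\) over the algebraically closed set \(B_1\), which contains \(\bar m_0\). Applying the \(n=1\) case over \(B_1\) gives \(\tp(c_{n+1}/B_1)=\tp(\sigma(c'_{n+1})/B_1)\). Restricting to \(B\cup\{c_1,\dots,c_n\}\) yields \(\tp(\bar c/B)=\tp(\sigma(\bar c')/B)=\tp(\bar c'/B)\).

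The characterization claim then follows. Every independent tuple realizes this common type. Conversely, the type contains, for each \(i\), the formulas \(\neg\theta(x_i,\bar x_{\neq i},\bar b)\) for every \(\theta\) that is algebraic in \(x_i\); this is first-order because \(T\) eliminates \(\exists^\infty\) (Lemma~\ref{lem:no-vp-elim-exists-infty}), or simply because types are preserved. So any realization is again independent. The identification with the nonforking power is then the standard one and is recorded only as terminology.

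The main obstacle is a bookkeeping point rather than a conceptual one. We must know that \(\cl(X)=\acl(B\cup X)\cap C\) is a genuine pregeometry, in particular that it satisfies exchange, so that independence over \(B\cap C\) is order-invariant and \(c_{n+1}\notin B_1\) really follows. We must also check that independence computed in \(C\) agrees with non-membership in \(\acl\) taken in the full ambient language. I would verify exchange by the usual counting argument, using the uniform finiteness bounds from strong minimality in the monster model. The agreement of the two closures is immediate, because \(\cl\) is defined through the ambient \(\acl\).
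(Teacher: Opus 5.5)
Your argument is correct and is the standard induction from \cite[Chapter~6]{marker2002}, which is all the paper offers here since it only cites it: uniqueness of the generic $1$-type over an algebraically closed base containing $\bar m_0$, then an automorphism over $B$ and the $n=1$ case over $\acl(B\cup\bar c_{\le n})$, with the pregeometry rightly read as $\cl(X)=\acl(B\cup X)\cap C$. Two minor remarks: the appeal to Lemma~\ref{lem:no-vp-elim-exists-infty} is unnecessary, and that lemma assumes no Vaughtian pairs, because $\theta(x_i,\bar x_{\neq i},\bar b)\wedge\exists^{\le k}x_i\,\theta(x_i,\bar x_{\neq i},\bar b)$ is already a formula, so your fallback ``types are preserved'' is the right justification; and exchange is not needed to get $c_{n+1}\notin B_1$, which follows directly from the definition of independence.
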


\begin{proof}
This is the standard stationarity of the generic type of a strongly
minimal formula over an algebraically closed base;
see Marker~\cite[Chapter~6]{marker2002}. When tuples
come from different models, they are compared after embedding into a
common monster.
\end{proof}

\begin{lemma}[Generic arithmetic types on the controlling strongly minimal set]
\label{lem:generic-arith-n-type-on-sm-set}
Let \(\MM_0\models T\) be arithmetically decidable, let
\(\varphi(x,\bar m_0)\) be strongly minimal over \(M_0\), and put
\(C_0:=\varphi(M_0,\bar m_0)\).
Let \(\bar d\) be a finite tuple from \(\varphi(N,\bar m_0)\) in an
arithmetically decidable elementary extension
\(\MM_0\preccurlyeq \NN\models T\), and set
\(\MM_{\bar d}:=\acl_{\NN}(M_0\cup \bar d)\).
Then \(\MM_{\bar d}\) is arithmetically decidable. Moreover, for each
\(n\geq 1\), there is a unique arithmetic type
\(p_n^{\bar d}(\bar x)\in
S_n^{\mathrm{ar}}(M_{\bar d},\MM_{\bar d})\)
such that for every elementary extension
\(\MM_{\bar d}\preccurlyeq \BB\models T\), writing
\(C_{\BB}:=\varphi(B,\bar m_0)\),
and every \(\bar b=(b_0,\dots,b_{n-1})\in C_{\BB}^n\), one has
\[
\bar b\models p_n^{\bar d}
\quad\Longleftrightarrow\quad
b_0,\dots,b_{n-1}
\text{ are independent over } C_0\cup\bar d.
\]
In particular, taking \(\bar d=\emptyset\), for each \(n\geq 1\) there
is a unique arithmetic type \(p_n(\bar x)\in
S_n^{\mathrm{ar}}(M_0,\MM_0)\)
whose realizations are exactly the \(n\)-tuples from the controlling
strongly minimal set independent over \(C_0\).
\end{lemma}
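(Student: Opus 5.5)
The proof has three parts. First I show that \(\MM_{\bar d}\) is arithmetically decidable. Then I extract the generic type from stationarity and check that it is arithmetic. Finally I verify the characterization of its realizations and its uniqueness.

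\textbf{Decidability of \(\MM_{\bar d}\).} This is Lemma~\ref{lem:finite-dimensional-control-submodel}, applied with \(\MM:=\NN\) and \(\widehat{\MM}:=\NN\). That lemma gives \(\MM_{\bar d}=\acl_{\NN}(M_0\cup\bar d)\preccurlyeq\NN\). Its domain is arithmetical by Lemma~\ref{lem:ambient-arithmetical-inequalities}(iii), and its elementary diagram is the restriction of the satisfaction relation of \(\NN\). One point needs care. Lemma~\ref{lem:finite-dimensional-control-submodel} assumes that \(\varphi\) is the controlling formula of Lemma~\ref{lem:sm-over-arith-model}, so that \(N=\acl(M_0\cup\varphi(N,\bar m_0))\) and Fact~\ref{fact:finite-dimensional-almost-sm} applies. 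I will read the statement in that standing setting, as the title ``controlling strongly minimal set'' indicates.

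\textbf{Existence of the type.} Put \(B:=M_{\bar d}\). It is algebraically closed in every elementary extension, because \(\acl\) computed in \(\NN\) agrees with \(\acl\) computed in \(\MM_{\bar d}\), which in turn agrees with \(\acl\) in any \(\BB\succeq\MM_{\bar d}\). It contains \(\bar m_0\), and by Fact~\ref{fact:finite-dimensional-almost-sm} we have \(B\cap C=\cl_C(C_0\cup\bar d)\). Independence over \(B\cap C\) is therefore the same as independence over \(C_0\cup\bar d\).

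Fact~\ref{fact:sm-generic-stationarity} then says that all \(n\)-tuples from \(C_{\BB}\) independent over \(C_0\cup\bar d\), in every \(\BB\succeq\MM_{\bar d}\), have one common complete type \(p_n^{\bar d}\) over \(B\). Conversely, any realization of \(p_n^{\bar d}\) lying in \(C_{\BB}^n\) is independent. This gives both directions of the displayed equivalence.

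\textbf{Arithmeticity and uniqueness.} It remains to see that \(p_n^{\bar d}\) is arithmetic and unique. For arithmeticity, I use that \(C_{\MM_{\bar d}}\) is infinite, so the nonalgebraic partial type \(\{\varphi(x_i,\bar m_0)\}\cup\{x_i\notin\acl(B\cup x_{<i})\}\) is arithmetically definable. Concretely, it is the scheme of negations \(\neg\theta(x_i,\bar b,x_{<i})\) ranging over all formulas \(\theta\) with \(\exists^{\le k}\) bounds, and it is arithmetical because the diagram of \(\MM_{\bar d}\) is arithmetical. Every finite fragment is satisfiable by strong minimality. Compactness, via Lemma~\ref{lem:base-preserving-coded-compactness}, then produces an arithmetically decidable extension realizing an independent tuple. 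Its type over \(M_{\bar d}\) is \(p_n^{\bar d}\), and by Lemma~\ref{lem:arith-type-realized} this type is arithmetic.

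Uniqueness is immediate: a complete type is determined by any one of its realizations, and the characterization pins down which tuples realize it. The case \(\bar d=\emptyset\) gives \(M_{\emptyset}=M_0\).

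The main obstacle I expect is the identification \(B\cap C=\cl_C(C_0\cup\bar d)\) together with the claim that independence computed in \(\BB\) over \(B\) matches independence over \(C_0\cup\bar d\). This rests on Fact~\ref{fact:finite-dimensional-almost-sm} and on \(\acl\) being absolute between elementary extensions, and both should be checked carefully.
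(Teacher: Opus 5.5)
Your proposal is correct and follows essentially the same route as the paper. You obtain decidability of \(\MM_{\bar d}\) from Lemma~\ref{lem:finite-dimensional-control-submodel}, with the same implicit reliance on \(\varphi\) being the controlling formula. You get the type from stationarity over the algebraically closed base \(M_{\bar d}\), using \(M_{\bar d}\cap C=\cl_C(C_0\cup\bar d)\). You get arithmeticity by realizing the avoidance scheme via Lemma~\ref{lem:base-preserving-coded-compactness}.

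There are two cosmetic differences. First, you read the converse direction off the ``if and only if'' in Fact~\ref{fact:sm-generic-stationarity}, whereas the paper reads it off the avoidance scheme. Second, the scheme should be written as in the paper, as implications \(\exists^{\leq k}x\,(\varphi(x,\bar m_0)\wedge\theta(x,x_{<i},\bar m))\rightarrow\neg\theta(x_i,x_{<i},\bar m)\), because algebraicity cannot be tested directly when \(x_{<i}\) are free variables.
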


\begin{proof}
By Lemma~\ref{lem:finite-dimensional-control-submodel},
\(\MM_{\bar d}\preccurlyeq\NN\). The same lemma, using the
arithmetically decidable ambient model \(\NN\), gives an
arithmetically decidable presentation of \(\MM_{\bar d}\).

Put \(\MM_*:=\MM_{\bar d}\) and
\(C_*:=\varphi(M_*,\bar m_0)\).
Since \(\MM_*\) is algebraically closed in elementary extensions,
Fact~\ref{fact:sm-generic-stationarity} gives, for each \(n<\omega\), a
unique complete type over \(M_*\) realized by \(n\)-tuples from the
controlling strongly minimal set which are independent over
\(C_*\). The set \(C_*\) is exactly the closure of
\(C_0\cup\bar d\) in the pregeometry induced on the strongly minimal
set, so independence over \(C_*\) is the same as independence over
\(C_0\cup\bar d\).

It remains only to see that this unique generic type is arithmetic. Add
new constants \(c_0,\dots,c_{n-1}\) and consider the arithmetically
definable partial type over \(M_*\) consisting of
\(\varphi(c_i,\bar m_0)\) for \(i<n\), together with, for each \(i<n\),
every implication
\[
\exists^{\leq k}x\,
\bigl(\varphi(x,\bar m_0)\wedge
\theta(x,c_0,\dots,c_{i-1},\bar m)\bigr)
\ \longrightarrow\
\neg\theta(c_i,c_0,\dots,c_{i-1},\bar m),
\]
where \(k<\omega\), \(\bar m\in M_*^{<\omega}\), and \(\theta\) ranges
over formulas. The enumeration of the tuples \(\bar m\) is arithmetical
because \(M_*\) has an arithmetically decidable presentation, and
\(\exists^{\leq k}\) is expressed by the usual first-order bounded
cardinality formula. This scheme says exactly that each \(c_i\) avoids
every finite definable subset of the strongly minimal set over
\(M_*\cup\{c_0,\dots,c_{i-1}\}\).
Equivalently, in the pregeometry on the strongly minimal set, it says
\[
c_i\notin \operatorname{cl}_C(C_*\cup\{c_0,\dots,c_{i-1}\}).
\]
Every finite fragment is satisfiable in an elementary extension of
\(\MM_*\): choose the constants successively outside the finite union of
the algebraic loci named in the current finite fragment. Equalities
\(c_i=c_j\) are themselves finite algebraic conditions, so distinctness
is enforced whenever it is required by independence. By
Lemma~\ref{lem:base-preserving-coded-compactness}, applied over the
elementary diagram of \(\MM_*\), the partial type has an arithmetically
decidable realization in some elementary extension
\(\MM_*\preccurlyeq\widehat{\MM}\models T\).
Let \(\bar a\) be the realized tuple and set
\(p_n^{\bar d}:=\tp^{\widehat{\MM}}(\bar a/M_*)\).
Since \(\widehat{\MM}\) is arithmetically decidable and \(M_*\) is
arithmetical, \(p_n^{\bar d}\) is an arithmetic type.

By Fact~\ref{fact:sm-generic-stationarity}, every independent
\(n\)-tuple from \(\varphi(B,\bar m_0)\) in any elementary extension
\(\MM_*\preccurlyeq\BB\models T\) realizes this type. Conversely, if a
tuple from the strongly minimal set is dependent over \(C_0\cup\bar d\),
then some coordinate is algebraic over \(M_*\) together with the earlier
coordinates. That dependence is witnessed by a formula whose fiber in
the strongly minimal set has size at most some \(k\), so it is one of
the finite-algebraicity conditions forbidden by the displayed avoidance
scheme. Hence the tuple cannot realize
\(p_n^{\bar d}\).
\end{proof}

\begin{lemma}[Stationarity over finite control bases]
\label{lem:stationarity-finite-control-bases}
Let \(\MM_0\) be arithmetically decidable, let
\(\MM_0\preccurlyeq\MM_1,\MM_2\models T\), let
\(\varphi(x,\bar m_0)\) be the controlling formula supplied by
Lemma~\ref{lem:sm-over-arith-model}. Put
\(C_i:=\varphi(M_i,\bar m_0)\) and \(C_0:=\varphi(M_0,\bar m_0)\), and
assume \(M_i=\acl(M_0\cup C_i)\) for \(i=1,2\). Let
\(B_i\subseteq C_i\) be bases over \(C_0\), and let
\(g:B_1\to B_2\) be a bijection. Let \(\bar b\) be a finite tuple from
\(B_1\), and let \(\bar b':=g(\bar b)\). Suppose
\[
\tp^{\MM_1}(A,\bar b/M_0)=
\tp^{\MM_2}(A',\bar b'/M_0),
\]
where \(A\subseteq\acl(M_0\cup\bar b)\) and
\(A'\subseteq\acl(M_0\cup\bar b')\) are finite. If \(\bar e\) and
\(\bar e'\) are corresponding finite tuples from \(C_1\) and \(C_2\),
independent over \(C_0\cup\bar b\) and \(C_0\cup\bar b'\), respectively,
then
\[
\tp^{\MM_1}(A,\bar b,\bar e/M_0)=
\tp^{\MM_2}(A',\bar b',\bar e'/M_0).
\]
\end{lemma}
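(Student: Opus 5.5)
The plan is to split the joint type into two pieces, exactly as in Claim~2 of Proposition~\ref{prop:strongly-minimal-D-cat}: the given type of \((A,\bar b)\) over \(M_0\), and the type of the new independent tuple over the finite-dimensional control submodel. Put \(K_1:=\acl^{\MM_1}(M_0\cup\bar b)\) and \(K_2:=\acl^{\MM_2}(M_0\cup\bar b')\). By Fact~\ref{fact:finite-dimensional-almost-sm} (applicable since \(M_i=\acl(M_0\cup C_i)\)), \(K_i\preccurlyeq\MM_i\), and \(K_i\cap C_i=\cl_{C}(C_0\cup\bar b)\) (resp.\ with \(\bar b'\)).

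The first step is to extend the type equality to an elementary bijection \(\sigma:K_1\to K_2\) over \(M_0\) with \(\sigma(\bar b)=\bar b'\) and \(\sigma[A]=A'\) (matching the enumerations). I will build \(\sigma\) by a back-and-forth through the algebraic closure. The forth step is Claim~1 in the proof of Proposition~\ref{prop:strongly-minimal-D-cat}, whose argument uses only the transfer of \(\exists^{=N}\) and finitely many separating formulas, and not strong minimality of \(T\). Every element of \(K_1\) is algebraic over \(M_0\cup\bar b\), so each element can be added to the domain with an image in \(K_2\), and symmetrically for the range. The domains are countable, so the union of a chain of such finite partial elementary maps is a total elementary map \(K_1\to K_2\). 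Because only types over \(M_0\) are compared, there is no effectivity issue. This makes \(\sigma\) an isomorphism \(K_1\cong K_2\) over \(M_0\) extending \((A,\bar b)\mapsto(A',\bar b')\).

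The second step embeds everything in a monster model \(\mathbb U\) so that \(K_1\) and \(K_2\) are literally identified via \(\sigma\). Since \(\MM_1\) and \(\MM_2\) are elementary extensions of the isomorphic models \(K_1\cong K_2\), we may take elementary embeddings \(j_1:\MM_1\to\mathbb U\) and \(j_2:\MM_2\to\mathbb U\) with \(j_2\circ\sigma=j_1\) on \(K_1\). Let \(B:=j_1[K_1]\). It is an elementary submodel of \(\mathbb U\), hence \(B=\acl(B)\), and it contains \(\bar m_0\). The tuples \(j_1(\bar e)\) and \(j_2(\bar e')\) lie in \(C=\varphi(\mathbb U,\bar m_0)\). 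They are independent over \(B\cap C\), because independence over \(C_0\cup\bar b\) in \(\MM_1\) is independence over \(\cl_C(C_0\cup\bar b)=K_1\cap C_1\), and algebraicity is absolute between \(\MM_1\) and \(\mathbb U\). By Fact~\ref{fact:sm-generic-stationarity}, \(\tp(j_1\bar e/B)=\tp(j_2\bar e'/B)\).

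Finally, \(j_1(A,\bar b)\) and \(j_2(A',\bar b')\) are the same tuple of \(B\). Equality of types over \(B\) therefore gives equal types of \((A,\bar b,\bar e)\) and \((A',\bar b',\bar e')\) over \(j_1[M_0]=j_2[M_0]\), which fixes \(M_0\) pointwise since \(\sigma\) does. Pulling back along the elementary embeddings gives the claim. I expect the main obstacle to be the first step, namely checking that Claim~1 and the chain argument go through when \(T\) is only almost strongly minimal. The point to verify is that \(K_i\) is genuinely \(\acl(M_0\cup\bar b)\), so that every element is reached by the algebraic forth step. The identification \(K_i\cap C_i=\cl_C(C_0\cup\bar b)\) from Fact~\ref{fact:finite-dimensional-almost-sm} handles the compatibility of the two notions of independence.
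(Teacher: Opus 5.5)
Your proof is correct and has the same structure as the paper's: identify \(\acl(M_0\cup\bar b)\) with \(\acl(M_0\cup\bar b')\) by a map over \(M_0\) sending \((A,\bar b)\) to \((A',\bar b')\), then apply Fact~\ref{fact:sm-generic-stationarity} over this algebraically closed base. The only difference is how the identification is obtained. The paper gets it in one line from homogeneity of a monster model containing \(\MM_1,\MM_2\) over \(M_0\): an automorphism \(\sigma\) over \(M_0\), later composed with an automorphism over \(\acl(M_0\cup\bar b')\) taking \(\sigma(\bar e)\) to \(\bar e'\). You instead build \(K_1\cong K_2\) by hand via Claim~1 and then amalgamate, which is valid but more work; the step you flagged as the main obstacle is exactly what homogeneity gives for free.
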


\begin{proof}
Fix once and for all enumerations of the finite sets \(A,A'\) matching
the displayed type equality, and read \(h[A]\)-style notation
coordinatewise with those enumerations. Work after embedding
\(\MM_1\) and \(\MM_2\) into a sufficiently large monster model. The
displayed equality of types then gives, by homogeneity, an automorphism
\(\sigma\) fixing \(M_0\) and sending \((A,\bar b)\) to
\((A',\bar b')\). Since \(\sigma\) fixes \(M_0\), it fixes the defining
parameters \(\bar m_0\) of \(C\), and hence preserves the controlling
strongly minimal set. It sends
\(\acl(M_0\cup\bar b)\) onto \(\acl(M_0\cup\bar b')\). These are
elementary algebraically closed bases by
Lemma~\ref{lem:finite-dimensional-control-submodel}.

The tuples \(\bar e\) and \(\bar e'\) have the same length and are
listed in the corresponding order. The tuple \(\sigma(\bar e)\) is
independent over \(C_0\cup\bar b'\) in the induced pregeometry on the
controlling strongly minimal set.
By Fact~\ref{fact:sm-generic-stationarity}, the generic type of the
strongly minimal set over the algebraically closed base
\(\acl(M_0\cup\bar b')\) is stationary. Thus \(\sigma(\bar e)\) and
\(\bar e'\) have the same complete type over
\(\acl(M_0\cup\bar b')\). An automorphism fixing this base sends
\(\sigma(\bar e)\) to \(\bar e'\); composing it with \(\sigma\) gives
the required equality of types over \(M_0\).
\end{proof}

\begin{lemma}[Finite-dimensional definable sets are arithmetical]
\label{lem:finite-dimensional-trap-arithmetical}
Let \(\MM_0\) be arithmetically decidable, let
\(\varphi(x,\bar m_0)\) be the controlling formula from
Lemma~\ref{lem:sm-over-arith-model}, and let
\(\MM_0\preccurlyeq\MM\models T\). Put \(C=\varphi(M,\bar m_0)\). If
\(\bar b\in C^{<\omega}\), \(N_{\bar b}:=\acl(M_0\cup\bar b)\), and
\(\bar a\in N_{\bar b}^{<\omega}\), then any definable set
\[
X=\psi(M^r,\bar a)
\]
which is contained in \(N_{\bar b}^r\) is arithmetical, relative only to
the finite data \(\bar b,\bar a\) and the fixed arithmetically decidable
ambient presentation witnessing extendibility.
\end{lemma}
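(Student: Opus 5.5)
The plan is to use the inclusion $X\subseteq N_{\bar b}^r$ to compute membership in $X$ entirely inside the finite-dimensional control submodel $N_{\bar b}$, which is arithmetically decidable. First fix an arithmetically decidable $\widehat{\MM}$ with $\MM\preccurlyeq\widehat{\MM}$. By Lemma~\ref{lem:finite-dimensional-control-submodel}, $N_{\bar b}=\acl^{\MM}(M_0\cup\bar b)\preccurlyeq\MM$. The same lemma shows that $N_{\bar b}$ has arithmetical domain, uniformly in the code of $\bar b$, and an arithmetical elementary diagram, obtained by restricting the satisfaction relation of $\widehat{\MM}$. Since $\bar a\in N_{\bar b}^{<\omega}$ and $\psi(x,\bar a)$ is a formula over $N_{\bar b}$, elementarity gives
\[
\psi(N_{\bar b}^r,\bar a)=\psi(M^r,\bar a)\cap N_{\bar b}^r .
\]
The hypothesis $X\subseteq N_{\bar b}^r$ then yields $X=\psi(N_{\bar b}^r,\bar a)$.

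Next I read off the complexity of $X$. A tuple $\bar x$ lies in $X$ iff every coordinate of $\bar x$ lies in the arithmetical set $N_{\bar b}$ and $\widehat{\MM}\models\psi(\bar x,\bar a)$. The first clause is arithmetical by Lemma~\ref{lem:ambient-arithmetical-inequalities}(iii). The second is a single query to the arithmetical satisfaction relation of $\widehat{\MM}$, with the finite tuple $\bar a$ as a parameter. Note that this uses only $\widehat{\MM}$ and never $M$ itself, which is the whole point, since $M$ may have nonzero degree. Quantifiers inside $\psi$ are unrestricted in $\widehat{\MM}$, which is legitimate by $\MM\preccurlyeq\widehat{\MM}$. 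The uniformity in $\bar b,\bar a$ follows from the uniformity clause in Lemma~\ref{lem:ambient-arithmetical-inequalities}(iii).

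The main obstacle is to avoid restricting quantifiers to $M$, as the paper's standing convention for arithmetically extendible structures would do. The fix is to evaluate $\psi(\bar x,\bar a)$ in $\widehat{\MM}$ directly. This is correct because all of $\bar x$ and $\bar a$ lie in $M$ and $\MM\preccurlyeq\widehat{\MM}$, so truth in $\MM$ and in $\widehat{\MM}$ agree on such tuples. Equivalently, one may evaluate in $N_{\bar b}$ with quantifiers restricted to its arithmetical domain. The only remaining checks are that $N_{\bar b}$ really is an elementary substructure, which needs the control hypothesis $M=\acl(M_0\cup C)$ from Lemma~\ref{lem:sm-over-arith-model} feeding Fact~\ref{fact:finite-dimensional-almost-sm}, and that the ambient $\widehat{\MM}$ is fixed in advance so that the bound depends only on $\bar b,\bar a$.
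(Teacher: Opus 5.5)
Your proof is correct and is essentially the paper's argument: both reduce membership in \(X\) to membership in the arithmetical set \(N_{\bar b}^r\) plus a single arithmetical satisfaction query. The paper queries the decidable presentation of \(N_{\bar b}\) from Lemma~\ref{lem:finite-dimensional-control-submodel}, while you query \(\widehat{\MM}\) directly, which is the same relation restricted; on your route, \(\bar x\in X\iff \bar x\in N_{\bar b}^r\wedge\widehat{\MM}\models\psi(\bar x,\bar a)\) needs only that \(N_{\bar b}\subseteq M\) is arithmetical (Lemma~\ref{lem:ambient-arithmetical-inequalities}(iii)) and that \(\MM\preccurlyeq\widehat{\MM}\), so the elementarity \(N_{\bar b}\preccurlyeq\MM\) and the strongly minimal control hypothesis you list as a remaining check are not actually needed.
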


\begin{proof}
By Lemma~\ref{lem:finite-dimensional-control-submodel},
\(N_{\bar b}\preccurlyeq M\) and \(N_{\bar b}\) has an arithmetically
decidable presentation, uniformly in \(\bar b\). If
\(X\subseteq N_{\bar b}^r\), then elementarity gives
\[
X=\psi(N_{\bar b}^r,\bar a).
\]
Thus the decision procedure for \(X\subseteq M^r\) has two cases. If
\(\bar x\notin N_{\bar b}^r\), answer no, using the arithmetical domain
predicate for \(N_{\bar b}\). If \(\bar x\in N_{\bar b}^r\), evaluate
\(\psi(\bar x,\bar a)\) in the arithmetically decidable presentation of
\(N_{\bar b}\). Hence \(X\) is arithmetical.
\end{proof}

\begin{theorem}[Arithmetic Baldwin--Lachlan theorem]
\label{thm:arith-baldwin-lachlan}
Let \(T\) be a complete arithmetically definable theory in an
arithmetically definable countable language with no finite models.
Consider the following statements:
\begin{enumerate}[label=(\arabic*)]
\item \(T\) is \(D\)-categorical for some nonzero arithmetic degree
      \(D\);
\item \(T\) is arithmetically stable and has no Vaughtian pairs;
\item there exist an arithmetically decidable prime model
      \(\PP\models T\) and a formula \(\varphi(x,\bar p)\), with
      parameters from \(P\), such that \(\varphi(x,\bar p)\) is strongly
      minimal over \(P\), and every elementary extension
      \(\PP\preccurlyeq \MM\models T\) satisfies
      \(M=\acl(P\cup \varphi(M,\bar p))\);
\item \(T\) is \(D\)-categorical for every nonzero arithmetic degree
      \(D\).
\end{enumerate}
Then
\[
(1)\Rightarrow(2)\Rightarrow(3)\Rightarrow(4).
\]
As an external metatheoretic consequence, whenever a nonzero arithmetic
degree is available, the four statements are equivalent.
\end{theorem}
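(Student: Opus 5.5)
The plan is to prove the three implications separately, each by assembling results already established. The last sentence of the theorem is then immediate: (4) implies (1) as soon as some nonzero arithmetic degree exists, and any nonarithmetical set such as \(0^{(\omega)}\) supplies one.

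For \((1)\Rightarrow(2)\), apply Proposition~\ref{prop:cat-implies-stability} to obtain arithmetical stability, and Proposition~\ref{prop:d-cat-no-vaughtian} to exclude Vaughtian pairs.

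For \((2)\Rightarrow(3)\), use Corollary~\ref{cor:prime-model} to produce an arithmetically decidable prime model \(\PP\). Proposition~\ref{prop:strongly-minimal-formula} then gives a formula \(\varphi(x,\bar p)\) that is strongly minimal over \(P\), and Proposition~\ref{prop:model-generated-by-strongly-minimal-set} gives \(M=\acl(P\cup\varphi(M,\bar p))\) for every \(\MM\succeq\PP\). Both propositions apply to an arbitrary elementary extension, so nothing further is needed here.

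For \((3)\Rightarrow(4)\), clause (3) is exactly the standing hypothesis of Section~\ref{sec:sm-control-arith-degree}. Fix \(D>\mathbf 0\), an arithmetically decidable base \(\MM_0\), and arithmetically extendible models \(\MM_1,\MM_2\succeq\MM_0\) of degree \(D\). The argument will copy the proof of Proposition~\ref{prop:strongly-minimal-D-cat}, with the following replacements.

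\begin{enumerate}
\item Lemma~\ref{lem:sm-over-arith-model} supplies a controlling formula \(\varphi(x,\bar m_0)\) over \(M_0\), together with \(M_i=\acl(M_0\cup C_i)\), where \(C_i=\varphi(M_i,\bar m_0)\).
\item Each \(C_i\) has infinite dimension over \(C_0\). Otherwise \(M_i=\acl(M_0\cup\bar b)\) for a finite \(\bar b\), and Lemma~\ref{lem:ambient-arithmetical-inequalities}(iii) would make \(M_i\) arithmetical.
\item Choose bases \(B_i\) by least-element recursion. They are \(D\)-arithmetical, and \(M_i\leAr B_i\), so \(\degAr(B_i)=D\). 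Let \(g\colon B_1\to B_2\) match the increasing enumerations.
\item Claim~1 of Proposition~\ref{prop:strongly-minimal-D-cat} carries over verbatim. Claim~2 is now supplied by Lemma~\ref{lem:stationarity-finite-control-bases}.
\item The finitely branching tree of good maps is \(D\)-arithmetical, because finite algebraic closures are uniformly arithmetical by Lemma~\ref{lem:finite-dimensional-control-submodel}. Lemma~\ref{lem:finite-jump-path-finitely-branching-tree} therefore gives a \(D\)-arithmetical path, whose union yields an isomorphism \(f\colon\MM_1\cong_{M_0}\MM_2\) whose graph and inverse graph are \(D\)-arithmetical.
\end{enumerate}

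The main obstacle is \(D\)-preservation, because infinite definable sets no longer need to have full degree. I would handle it by a dichotomy. Let \(X=\psi(M_1^r,\bar a)\) have degree \(D\). Choose finite \(\bar b\subseteq B_1\) with \(\bar a\in\acl(M_0\cup\bar b)\).

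\begin{enumerate}
\item If \(X\subseteq\acl(M_0\cup\bar a)^r\), then \(X\) is arithmetical by Lemma~\ref{lem:finite-dimensional-trap-arithmetical}, contradicting \(\degAr(X)=D>\mathbf 0\).
\item Hence \(X\) is infinite and \(X\not\subseteq\acl(M_0\cup\bar a)^r\). Both properties are preserved by an isomorphism fixing \(M_0\): the first by elementarity, the second because \(f\) maps \(\acl(M_0\cup\bar a)\) onto \(\acl(M_0\cup f(\bar a))\). So \(f[X]=\psi(M_2^r,f(\bar a))\) satisfies the hypotheses of Proposition~\ref{prop:infinite-definable-recovers-sm-set}, and therefore \(\degAr(f[X])=D\).
\end{enumerate}

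The argument for \(f^{-1}\) is symmetric, which completes \(D\)-preservation. The delicate point to check is that Proposition~\ref{prop:infinite-definable-recovers-sm-set} applies in \(\MM_2\) with base \(\MM_0\), which requires only that \(\MM_2\) be arithmetically extendible of degree \(D\).
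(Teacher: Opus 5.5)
\textbf{Comparison with the paper.} Your proposal follows the paper's proof essentially line by line. You use the same citations for $(1)\Rightarrow(2)$ and $(2)\Rightarrow(3)$. For $(3)\Rightarrow(4)$ you use the same greedy bases, the same tree of good maps with an effective K\H{o}nig path, and the same trapped/untrapped dichotomy via Lemma~\ref{lem:finite-dimensional-trap-arithmetical} and Proposition~\ref{prop:infinite-definable-recovers-sm-set}.

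\textbf{The gap.} The problem is the step you dismiss with ``nothing further is needed here''. Your $(2)\Rightarrow(3)$ rests on Proposition~\ref{prop:model-generated-by-strongly-minimal-set}, and that implication appears to be false as stated, so it cannot be closed by citation or any other argument. Take $T=\Th\bigl((\mathbb{Z}/4\mathbb{Z})^{(\omega)}\bigr)$ in the group language.
\begin{itemize}
\item Its models are the free $\mathbb{Z}/4\mathbb{Z}$-modules of infinite rank. So $T$ is totally categorical and has no Vaughtian pairs.
\item By pp-elimination, an $n$-type over an arithmetical $A$ is determined by finitely many elements of the subgroup generated by $A$. This gives a uniform arithmetical code, so $T$ should be arithmetically stable.
\item Every one-variable strongly minimal set over the prime model $P$ is, up to finitely many points, a coset of the socle $M[2]=\{y:2y=0\}=2M$, with a representative in $P$.
\item Write $M=P\oplus F$ and let $x$ be a basis element of $F$. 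For each $e\in M[2]$, let $h\colon M\to M[2]$ be the homomorphism with $h(x)=e$ that kills $P$ and the other basis elements. Then $y\mapsto y+h(y)$ is an automorphism of $M$ fixing $P\cup M[2]$ pointwise and sending $x$ to $x+e$.
\end{itemize}
Hence $x\notin\acl(P\cup M[2])$, and clause~(3) fails for every choice of $\varphi$.

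\textbf{Where the paper's chain breaks.} The first false link appears to be Proposition~\ref{prop:arith-omit-all-new-realizations}, used through Lemmas~\ref{lem:rank-zero-witnesses-algebraic}, \ref{lem:tv-step-from-phi-finite-tuple} and~\ref{lem:acl-of-prime-and-phi-is-elementary}. With $a:=2x$, the formula $2y=a$ isolates a nonalgebraic type over $P\cup\{a\}$, because the automorphisms above act transitively on its solutions. Correspondingly, $\acl(P\cup M[2])=P+M[2]$ is not even a model. That proof fails where it claims unsupportedness of $\Gamma_p$ persists below the conditions on the reserved constant $d$. In fact every proper elementary extension of $M$ realizes $2y=a$ outside $M$, namely by $x+2z$ for $z$ a new basis element. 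Classically, Baldwin--Lachlan only makes $M$ prime and minimal over $P\cup\varphi(M,\bar p)$. The $\acl$ version is almost strong minimality, which is strictly stronger, and Fact~\ref{fact:classical-baldwin-lachlan} is misstated in the same way.

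\textbf{Why weakening (3) does not help.} The same $T$ seems not to be $D$-categorical for any $D>\mathbf 0$. Work inside a decidable $N=M_0\oplus\bigoplus_n\bigl((\mathbb{Z}/4\mathbb{Z})x_n\oplus(\mathbb{Z}/4\mathbb{Z})x'_n\bigr)$ and fix $S$ of degree $D$.
\begin{itemize}
\item Put $y_n=x_n$ for $n\notin S$ and $y_n=x_n+2x'_n$ for $n\in S$. The pure, hence elementary, submodule generated by $M_0$ and the $y_n$ has degree $D$ but arithmetical socle.
\item The submodule $M_0\oplus\bigoplus_{n\in S}(\mathbb{Z}/4\mathbb{Z})x_n$ has degree $D$ and socle of degree $D$.
\end{itemize}
So the $\emptyset$-definable socle violates $D$-preservation, which would also contradict Theorem~\ref{thm:morley-zfc}. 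In particular $(2)\Rightarrow(4)$ fails. Any proof of $(1)\Rightarrow(4)$ must therefore use more of $D$-categoricity than arithmetical stability plus the absence of Vaughtian pairs. Your $(3)\Rightarrow(4)$ part reproduces the paper's argument and is not affected by this example, since $T$ fails (3).
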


\begin{proof}
The implication \((1)\Rightarrow(2)\) is the combination of
Proposition~\ref{prop:cat-implies-stability} and
Proposition~\ref{prop:d-cat-no-vaughtian}. The implication
\((2)\Rightarrow(3)\) is the content of
Corollary~\ref{cor:prime-model},
Proposition~\ref{prop:strongly-minimal-formula}, and
Proposition~\ref{prop:model-generated-by-strongly-minimal-set}.

It remains to prove \((3)\Rightarrow(4)\).

Assume clause~\textup{(3)}. Fix a nonzero arithmetic degree \(D\), an
arithmetically decidable model \(\MM_0\models T\), and arithmetically
extendible models \(\MM_1,\MM_2\models T\) such that
\(\MM_0\preccurlyeq \MM_1,\MM_2\) and
\(\degAr(M_1)=\degAr(M_2)=D\).

Let \(\varphi(x,\bar m_0)\) be as in
Lemma~\ref{lem:sm-over-arith-model}, and for \(i=1,2\) put
\(C_i:=\varphi(M_i,\bar m_0)\), with
\(C_0:=\varphi(M_0,\bar m_0)\).
Then \(\varphi(x,\bar m_0)\) is strongly minimal over \(M_0\), and
\(M_i=\acl(M_0\cup C_i)\) for \(i=1,2\), by
Lemma~\ref{lem:sm-over-arith-model}.

Let \(B_i\subseteq C_i\) be a basis of \(C_i\) over \(C_0\). Then
\(M_i=\acl(M_0\cup B_i)\).
We choose \(B_i\) arithmetically in \(M_i\), as follows. Since
\(\MM_0\preccurlyeq\MM_i\), the model \(M_0\) is algebraically closed
inside \(M_i\). Hence, on \(C_i\), algebraic dependence over \(M_0\)
agrees with algebraic dependence over \(C_0=C_i\cap M_0\).
If \(C_i\) had finite dimension over \(C_0\), then \(M_i\) would be the
algebraic closure of \(M_0\) together with finitely many elements of
\(C_i\). By Lemma~\ref{lem:ambient-arithmetical-inequalities}\textup{(iii)}
this would make \(M_i\) arithmetical, contradicting
\(\degAr(M_i)=D>\mathbf 0\). Thus the dimension is infinite.

With respect to the fixed coding of the domain as a subset of
\(\N\), recursively set
\[
b^i_n:=\min\Bigl(C_i\setminus
\acl(M_0\cup\{b^i_0,\dots,b^i_{n-1}\})\Bigr).
\]
The displayed algebraic
closures are arithmetical uniformly in the finite tuple by
Lemma~\ref{lem:ambient-arithmetical-inequalities}\textup{(iii)}, so
\(B_i=\{b^i_n:n<\omega\}\leAr M_i\).
Maximality follows from finite character: any element outside
\(\acl(M_0\cup B_i)\) would eventually be selected by the greedy
recursion.
Since \(M_i=\acl(M_0\cup B_i)\) and membership in this closure is
arithmetical in \(B_i\), we also have \(M_i\leAr B_i\), so
\(\degAr(B_i)=D\).
Choose increasing \(D\)-arithmetical enumerations
\(B_1=\{b^1_0,b^1_1,\dots\}\) and
\(B_2=\{b^2_0,b^2_1,\dots\}\), and let \(g:B_1\to B_2\) be the induced
\(D\)-arithmetical bijection.

The proof now follows the same pattern as
Proposition~\ref{prop:strongly-minimal-D-cat}. The only new input is
Lemma~\ref{lem:generic-arith-n-type-on-sm-set}, which supplies the
generic-type uniqueness for independent tuples in the controlling
strongly minimal set.

\medskip
\noindent
\textbf{Claim 1.}
Let \(\bar b\) be a finite tuple from \(B_1\), and let
\(\bar b':=g(\bar b)\).
Suppose
\[
\tp^{\MM_1}(\bar u,\bar b/M_0)
=
\tp^{\MM_2}(\bar u',\bar b'/M_0),
\]
where \(\bar u\in \acl(M_0\cup\bar b)\) and
\(\bar u'\in \acl(M_0\cup\bar b')\). If
\(\bar a\in \acl(M_0\cup\bar b)\), then there is
\(\bar a'\in M_2^{|\bar a|}\) such that
\[
\tp^{\MM_1}(\bar u,\bar a,\bar b/M_0)
=
\tp^{\MM_2}(\bar u',\bar a',\bar b'/M_0).
\]

\medskip
\noindent
\emph{Proof of Claim~1.}
Choose a formula \(\theta(\bar x,\bar z,\bar y,\bar m)\), with
\(\bar m\in M_0^{<\omega}\), such that
\(\MM_1\models \theta(\bar a,\bar u,\bar b,\bar m)\) and
\(\theta(\bar x,\bar u,\bar b,\bar m)\) has exactly \(q<\omega\)
realizations in \(\MM_1\). The type hypothesis transfers the
exact-cardinality assertion to \(\MM_2\). Let \(F\) be the set of
realizations of \(\theta(\bar x,\bar u',\bar b',\bar m)\) in \(\MM_2\);
it has size \(q\).

Some \(\bar a'\in F\) must realize the same type over
\((\bar u',\bar b',M_0)\) that \(\bar a\) realizes over
\((\bar u,\bar b,M_0)\): otherwise, for each \(\bar c\in F\) choose
a formula, with parameters from \(M_0\), separating the corresponding
types. The conjunction of \(\theta\) with all
these separating formulas is realized in
\(\MM_1\), hence also in \(\MM_2\), but every realization in \(\MM_2\)
lies in \(F\) and fails one of the conjuncts. Contradiction.
\hfill\(\dashv\)

\medskip

By symmetry, the analogous statement holds with the roles of
\(\MM_1\) and \(\MM_2\) reversed.

A finite partial map \(h:A\to M_2\) is called \emph{good} if:
\begin{enumerate}[label=(\roman*)]
\item \(A\subseteq M_1\) is finite;
\item \(h\) fixes \(M_0\cap A\) pointwise;
\item there is a finite tuple \(\bar b\) from \(B_1\) such that
      \[
      A\subseteq \acl(M_0\cup\bar b),
      \qquad
      h[A]\subseteq \acl(M_0\cup g(\bar b)),
      \]
      and
      \[
      \tp^{\MM_1}(A,\bar b/M_0)
      =
      \tp^{\MM_2}(h[A],g(\bar b)/M_0).
      \]
\end{enumerate}
Here \(A\) is read in increasing tuple-code order, and \(h[A]\) is read
in the corresponding image order; type equality over \(M_0\) means
equality of all formulas with parameters from the arithmetically
decidable presentation of \(M_0\).

Since \(M_1=\acl(M_0\cup B_1)\), every finite tuple from \(M_1\) is
contained in the domain of some good map. By symmetry, every finite
tuple from \(M_2\) is contained in the range of some good map.

\medskip
\noindent
\textbf{Claim 2.}
Every good map extends to a good map whose domain contains any
prescribed element of \(M_1\), and also to a good map whose range
contains any prescribed element of \(M_2\).

\medskip
\noindent
\emph{Proof of Claim~2.}
Let \(h:A\to M_2\) be good, witnessed by a finite tuple \(\bar b\) from
\(B_1\).

Let \(a\in M_1\). Since \(M_1=\acl(M_0\cup B_1)\), there is a finite
tuple \(\bar d\) from \(B_1\) such that
\(A\cup\{a\}\subseteq \acl(M_0\cup\bar d)\).
Enlarging \(\bar b\) if necessary, assume that \(\bar b\) is a subtuple
of \(\bar d\). Put \(\bar d':=g(\bar d)\).

We first observe that \(h\) is also good as witnessed by \(\bar d\).
Indeed, list the new coordinates of \(\bar d\setminus\bar b\) in the
order inherited from \(\bar d\), and list their images in
\(\bar d'\setminus g(\bar b)\) in the corresponding order. These two
tuples are independent over \(C_0\cup\bar b\) and
\(C_0\cup g(\bar b)\), respectively. Since \(h\) is already good over
\(\bar b\), Lemma~\ref{lem:stationarity-finite-control-bases} shows
that adding these corresponding independent tuples preserves the type
over \(M_0\).
Thus
\[
\tp^{\MM_1}(A,\bar d/M_0)
=
\tp^{\MM_2}(h[A],\bar d'/M_0).
\]

Claim~1 then gives \(a'\in M_2\) such that
\[
\tp^{\MM_1}(A,a,\bar d/M_0)
=
\tp^{\MM_2}(h[A],a',\bar d'/M_0).
\]
Therefore \(h\cup\{(a,a')\}\) is good.

The extension on the range side is proved symmetrically.
\hfill\(\dashv\)

\medskip

We now carry out the back-and-forth. Since \(M_1,M_2\) are
\(D\)-arithmetical and \(M_0\) is arithmetical, take the increasing
\(D\)-arithmetical enumerations
\(M_1\setminus M_0=\{a_0,a_1,a_2,\dots\}\) and
\(M_2\setminus M_0=\{c_0,c_1,c_2,\dots\}\). Finite tuples from \(B_1\)
are listed in increasing natural-number order, and we minimize their
fixed primitive recursive tuple-codes.

We define a normalized tree \(\mathcal T\) of witnessed good maps. A
node code consists of the finite graph of \(h\), its finite domain
\(A=\operatorname{dom}(h)\) (included for bounded coding), and the
witnessing basis tuple \(\bar b\). The root is the empty map, witnessed
by the empty tuple. Suppose \((h,A,\bar b)\) is a node at level \(s\).
A child is obtained as follows. First choose the least tuple-code tuple
\(\bar b^*\) from
\(B_1\) such that \(\bar b\subseteq\bar b^*\), the map \(h\) remains
good as witnessed by \(\bar b^*\),
\[
a_s\in\acl(M_0\cup\bar b^*)
\quad\text{and}\quad
c_s\in\acl(M_0\cup g(\bar b^*)).
\]
Then extend \(h\) only by the elements needed at this stage: add
\(a_s\) to the domain if it is not already there, and, if \(c_s\) is not
already in the range, add one preimage of \(c_s\) from
\(\acl(M_0\cup\bar b^*)\). The resulting finite map is required to be
good, witnessed by \(\bar b^*\). No other new elements are allowed in a
child. Claim~2 guarantees that the required tuple and at least one such
extension exist at every level, so \(\mathcal T\) is infinite.

\medskip
\noindent
\textbf{Claim 3.}
\(\mathcal T\) is finitely branching.

\medskip
\noindent
\emph{Proof of Claim~3.}
Fix a node \((h,A,\bar b)\) at level \(s\). In a child, the witness
\(\bar b'\) is forced to be the finite increasing tuple from \(B_1\) with
least tuple-code satisfying the normalized conditions above. Such a
tuple exists by Claim~2, so the relevant set of tuple-codes is nonempty
and has a least element. The fact that the old map remains good over
this larger witness follows from
Lemma~\ref{lem:stationarity-finite-control-bases}, using
Lemma~\ref{lem:generic-arith-n-type-on-sm-set} for the generic type,
exactly as in the proof of Claim~2.

Once \(\bar b'\) is fixed, a normalized child adds only \(a_s\) and, if
needed, one preimage of \(c_s\). The possible images of \(a_s\) lie in a
finite algebraic set over \(M_0\cup g(\bar b')\), and, after such an
image is chosen, the possible preimages of \(c_s\) lie in a finite
algebraic set over \(M_0\cup\bar b'\). Hence there are only finitely
many possible child maps. Thus \(\mathcal T\) is finitely branching.
\hfill\(\dashv\)

\medskip
\noindent
\textbf{Claim 4.}
\(\mathcal T\) is \(D\)-arithmetical, hence it has a \(D\)-arithmetical infinite path.

\medskip
\noindent
\emph{Proof of Claim~4.}
The sets \(B_1,B_2\) and the map \(g:B_1\to B_2\) are
\(D\)-arithmetical. For each finite tuple \(\bar b\), the algebraic
closures \(\acl(M_0\cup\bar b)\) and
\(\acl(M_0\cup g(\bar b))\) are arithmetical uniformly in \(\bar b\), by
Lemma~\ref{lem:ambient-arithmetical-inequalities}\textup{(iii)}. The
type comparison in the goodness condition is \(D\)-arithmetical, using
arithmetically decidable elementary extensions witnessing
arithmetical extendibility: it is a universal comparison of formula
codes and finite parameter tuples from \(M_0\), evaluated in those
decidable extensions. The least-witness condition is arithmetical
because, for a displayed candidate \(\bar b^*\), one checks all smaller
tuple codes by bounded search. The rest of the normalized transition
condition is a finite algebraicity test for the one or two new map
values.

Thus \(\mathcal{T}\) is
\(D\)-arithmetical. By
Lemma~\ref{lem:finite-jump-path-finitely-branching-tree}, \(\mathcal T\)
has an infinite \(D\)-arithmetical path.
\hfill\(\dashv\)

\medskip

Let
\[
f:=\operatorname{id}_{M_0}\cup\bigcup_s h_s,
\]
where the union is taken along such an infinite path. Then
\(f:M_1\to M_2\) is a total \(D\)-arithmetical bijection fixing \(M_0\)
pointwise, and every finite restriction of \(f\) is elementary over
\(M_0\). Therefore \(f:\MM_1\cong_{M_0}\MM_2\). The inverse is
\(D\)-arithmetical as well, since the graph of \(f^{-1}\) is obtained
from the \(D\)-arithmetical graph of \(f\) by reversing coordinates.

It remains to show that \(f\) is \(D\)-preserving. Let
\(X\subseteq M_1^r\) be parameter-definable, say
\(X=\psi(M_1^r,\bar a)\), and
suppose \(\degAr(X)=D\). Then \(X\) is infinite. Since
\(M_1=\acl(M_0\cup B_1)\), there is a finite tuple \(\bar b\) from
\(B_1\) such that \(\bar a\in\acl(M_0\cup\bar b)\). By
Lemma~\ref{lem:finite-dimensional-control-submodel},
\[
N_{\bar b}:=\acl(M_0\cup\bar b)
\]
is an arithmetically decidable elementary submodel of \(\MM_1\). If
\(X\subseteq\acl(M_0\cup\bar a)^r\), then \(X\subseteq N_{\bar b}^r\)
and Lemma~\ref{lem:finite-dimensional-trap-arithmetical} would make
\(X\) arithmetical, contradicting \(\degAr(X)=D>\mathbf 0\). Hence
\[
X\not\subseteq\acl(M_0\cup\bar a)^r.
\]

By elementarity of \(f\),
\(f[X]=\psi(M_2^r,f(\bar a))\).
Since \(f(\bar a)\in\acl(M_0\cup g(\bar b))\), the image \(f[X]\) is
an infinite definable set with
\(f[X]\not\subseteq \acl(M_0\cup f(\bar a))^r\) (otherwise
\(f^{-1}\) would trap \(X\) in \(\acl(M_0\cup\bar a)^r\)).
By Proposition~\ref{prop:infinite-definable-recovers-sm-set} applied in
\(\MM_2\), \(\degAr(f[X])=D\).

The same argument applied to \(f^{-1}\) shows that if
\(Y\subseteq M_2^r\) is parameter-definable and \(\degAr(Y)=D\), then
\(\degAr(f^{-1}[Y])=D\).
Therefore \(f\) is \(D\)-preserving.

Thus \(T\) is \(D\)-categorical. Since \(D>\mathbf 0\) was arbitrary,
clause~\textup{(4)} follows.
\end{proof}

\begin{theorem}[Arithmetic Morley theorem]
\label{thm:morley}
Let \(T\) be a complete arithmetically definable theory in an
arithmetically definable countable language with no finite models. If
\(T\) is \(D_1\)-categorical for some nonzero arithmetic degree \(D_1\),
then \(T\) is \(D_2\)-categorical for every nonzero arithmetic degree
\(D_2\).
\end{theorem}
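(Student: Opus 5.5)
The statement follows directly from the arithmetic Baldwin--Lachlan theorem, Theorem~\ref{thm:arith-baldwin-lachlan}. We assume \(T\) is \(D_1\)-categorical for some nonzero arithmetic degree \(D_1\), which is exactly clause~(1) of that theorem. We then run the chain \((1)\Rightarrow(2)\Rightarrow(3)\Rightarrow(4)\) and read off clause~(4): \(T\) is \(D_2\)-categorical for every nonzero arithmetic degree \(D_2\). No new construction is needed. The plan is only to check that each implication applies under the stated hypotheses, namely a complete arithmetically definable theory in an arithmetically definable countable language with no finite models.

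In order, the steps are as follows.

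\begin{enumerate}
\item \((1)\Rightarrow(2)\). Proposition~\ref{prop:cat-implies-stability} gives arithmetical stability, and Proposition~\ref{prop:d-cat-no-vaughtian} rules out Vaughtian pairs. Both use only \(D_1\)-categoricity at the single degree \(D_1\).
\item \((2)\Rightarrow(3)\). Corollary~\ref{cor:prime-model} gives an arithmetically decidable prime model \(\PP\). Proposition~\ref{prop:strongly-minimal-formula} gives a formula \(\varphi(x,\bar p)\) that is strongly minimal over \(P\). Proposition~\ref{prop:model-generated-by-strongly-minimal-set} gives \(M=\acl(P\cup\varphi(M,\bar p))\) for every \(\PP\preccurlyeq\MM\). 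This is exactly the standing hypothesis of Section~\ref{sec:sm-control-arith-degree}.
\item \((3)\Rightarrow(4)\). Fix an arbitrary \(D_2>\mathbf 0\) and apply the back-and-forth argument from the proof of Theorem~\ref{thm:arith-baldwin-lachlan} with \(D=D_2\). The degree-recovery input, Proposition~\ref{prop:infinite-definable-recovers-sm-set}, gives \(D_2\)-preservation.
\end{enumerate}

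The one point to watch is that clause~(3) is independent of the degree. The prime model and the controlling strongly minimal formula come from the structural consequences (2), which do not mention \(D_1\). So the argument for \((3)\Rightarrow(4)\) is uniform in the target degree, and \(D_2\) can be chosen freely after the structure has been extracted.

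I expect no real obstacle, since the substantive work has already been done in Theorem~\ref{thm:arith-baldwin-lachlan}. The only bookkeeping concern is the language hypothesis. The introduction states the result for a countable language, but Theorem~\ref{thm:arith-baldwin-lachlan} assumes an arithmetically definable language. Here the statement of Theorem~\ref{thm:morley} includes that assumption explicitly, so the hypotheses match verbatim and the proof is a two-line invocation.
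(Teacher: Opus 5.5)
Your proposal is correct and is the same argument as the paper's: the paper also proves Theorem~\ref{thm:morley} as the implication \((1)\Rightarrow(4)\) of Theorem~\ref{thm:arith-baldwin-lachlan}. That route passes through arithmetical stability and no Vaughtian pairs to the degree-independent strongly minimal control clause, which then gives \(D_2\)-categoricity for each nonzero \(D_2\), exactly as you note.
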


\begin{proof}
This is the implication \((1)\Rightarrow(4)\) in
Theorem~\ref{thm:arith-baldwin-lachlan}, under the same completeness,
arithmetical definability, and no-finite-model hypotheses. Concretely,
\(D_1\)-categoricity gives arithmetical stability and no Vaughtian
pairs, hence the structural strongly minimal control clause, and that
clause gives \(D_2\)-categoricity for each nonzero \(D_2\).
\end{proof}

\begin{fact}[Classical Baldwin--Lachlan theorem]
\label{fact:classical-baldwin-lachlan}
Let \(T\) be a complete countable first-order theory in the infinite
case considered in this paper. The following are equivalent:
\begin{enumerate}[label=(\arabic*)]
\item \(T\) is \(\kappa\)-categorical for some uncountable cardinal
      \(\kappa\);
\item \(T\) is \(\kappa\)-categorical for every uncountable cardinal
      \(\kappa\);
\item there exist a prime model \(\PP\models T\) and a nonalgebraic
      formula
      \(\varphi(x,\bar p)\), with parameters from \(P\), such that
      \(\varphi(x,\bar p)\) is strongly minimal over \(P\), and every
      elementary extension \(\PP\preccurlyeq \MM\models T\) satisfies
      \(M=\acl(P\cup \varphi(M,\bar p))\).
\end{enumerate}
\end{fact}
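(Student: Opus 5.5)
The statement is labelled a Fact and attributed to the classical literature, so the plan is to assemble the standard Baldwin--Lachlan proof in the form of \cite{baldwinlachlan1971} and \cite[Chapter~6]{marker2002}, not to derive it from the arithmetic machinery. I will prove \((1)\Rightarrow(3)\Rightarrow(2)\); the implication \((2)\Rightarrow(1)\) is trivial.

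For \((1)\Rightarrow(3)\) I follow the classical route. First, \(\kappa\)-categoricity for uncountable \(\kappa\) gives \(\omega\)-stability. Otherwise some countable model has uncountably many types over it, and Ehrenfeucht--Mostowski models over well-ordered indiscernibles yield a model of size \(\kappa\) realizing only countably many types over countable sets, while another model of size \(\kappa\) realizes \(\aleph_1\) types.

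Second, there are no Vaughtian pairs. A Vaughtian pair yields an \((\aleph_1,\aleph_0)\)-model by a countable chain construction with homogeneous models. Then \(\omega\)-stability lets one lift this to a \((\kappa,\aleph_0)\)-model via Vaught's two-cardinal theorem. This contradicts categoricity, since a saturated model of size \(\kappa\) has every infinite definable set of size \(\kappa\).

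Third, \(\omega\)-stability gives a prime atomic model \(\PP\), because isolated types are dense. Morley rank then produces a minimal formula over \(P\), and the absence of Vaughtian pairs upgrades it to a strongly minimal formula by eliminating \(\exists^\infty\).

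Fourth, for any \(\PP\preccurlyeq\MM\), the set \(N=\acl(P\cup\varphi(M,\bar p))\) is elementary in \(\MM\). This follows from Tarski--Vaught together with the fact that isolated types over \(N\) are realized and have finite trace. If \(N\neq M\), the pair \((N,M)\) is a Vaughtian pair, so \(N=M\). This mirrors the arithmetic Lemmas~\ref{lem:minimal-exists-ar}--\ref{lem:acl-of-prime-and-phi-is-elementary}.

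For \((3)\Rightarrow(2)\), let \(\MM_1,\MM_2\) be models of size \(\kappa>\aleph_0\). Both contain copies of \(\PP\) by primeness, and we identify them. Since \(M_i=\acl(P\cup C_i)\) and \(P\) and the language are countable, we get \(|C_i|=\kappa\), so the \(C_i\) have bases over \(C_0\) of cardinality \(\kappa\). Match the bases by a bijection. Stationarity of the generic type (Fact~\ref{fact:sm-generic-stationarity}) makes the basis map elementary over \(P\). It then extends through the algebraic closures by a back-and-forth of transfinite length, or by Zorn's lemma, using the finite-conjugate argument of Claim~1 in Theorem~\ref{thm:arith-baldwin-lachlan}. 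This gives \(\MM_1\cong\MM_2\).

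The main obstacle is the Vaughtian pair step in \((1)\Rightarrow(3)\): transferring from \((\aleph_1,\aleph_0)\) to \((\kappa,\aleph_0)\) needs Vaught's two-cardinal theorem plus the \(\omega\)-stability refinement. Since this is a well-documented classical fact, I would cite it rather than reproduce it.
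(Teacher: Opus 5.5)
Your fourth step is where the argument breaks, and it cannot be patched. Clause~(3) as printed, with \(M=\acl(P\cup\varphi(M,\bar p))\), does not follow from~(1). The paper only cites Marker here, but the classical theorem concludes that \(M\) is \emph{prime and minimal} over \(P\cup\varphi(M,\bar p)\). The \(\acl\) form is almost strong minimality, which is strictly stronger.

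The principle you invoke, that a formula isolating a type over \(N=\acl(P\cup\varphi(M,\bar p))\) has only finitely many solutions in \(M\), is false. Take \(T=\Th((\mathbb{Z}/4\mathbb{Z})^{(\omega)})\). Its models are exactly the groups \((\mathbb{Z}/4\mathbb{Z})^{(\kappa)}\), so \(T\) is totally categorical. Let \(M\) be uncountable and \(P\preccurlyeq M\) countable.
\begin{itemize}
\item Definable subsets of \(M\) in one variable are Boolean combinations of cosets of \(2M\) and points. So a strongly minimal \(\varphi(x,\bar p)\) over \(P\) agrees, up to a finite subset of \(\acl(P)=P\), with a coset \(c+2M\) where \(c\in P\). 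Hence \(\varphi(M,\bar p)\subseteq P+2M\).
\item For each \(\mathbb{F}_2\)-linear \(\ell\colon M/(P+2M)\to 2M\), the map \(x\mapsto x+\ell(x+P+2M)\) is an automorphism of \(M\) fixing \(P+2M\) pointwise. Moreover \(M/(P+2M)\neq 0\), since \(M/2M\) is uncountable.
\item So every \(a\notin P+2M\) has infinite orbit \(a+2M\) over \(P\cup 2M\). Hence \(a\notin\acl(P\cup\varphi(M,\bar p))\), for every choice of prime model and strongly minimal formula.
\end{itemize}
At your Tarski--Vaught step this is explicit. With \(\varphi(x)\) the formula \(2x=0\) one has \(N=P+2M\). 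The formula \(2x=2a\) has its parameter in \(N\) and isolates \(\tp(a/N)\), yet it has infinitely many solutions in \(M\) and none in \(N\), so \(N\not\preccurlyeq M\). A computable copy of the same example also refutes Lemma~\ref{lem:rank-zero-witnesses-algebraic}, so the arithmetic chain you say you are mirroring gives no support for this step.

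The repair is to state (3) as: \(M\) is prime and minimal over \(P\cup\varphi(M,\bar p)\). Your first three steps then stand, with two corrections in the second step. Vaught's two-cardinal theorem gives only the \((\aleph_1,\aleph_0)\)-model. The passage to \((\kappa,\aleph_0)\) is a separate \(\omega\)-stable lemma: an uncountable model has a proper elementary extension realizing no new countable types over it. The final contradiction also uses that the model of size \(\kappa\) is saturated.

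For the fourth step, argue as follows. \(\omega\)-stability makes isolated types dense over every set, so a prime model over \(P\cup\varphi(M,\bar p)\) exists and embeds in \(M\) over that set. Its image \(N'\) satisfies \(\varphi(N',\bar p)=\varphi(M,\bar p)\), so the absence of Vaughtian pairs forces \(N'=M\). The same reasoning gives minimality. In short, the Tarski--Vaught witnesses that \(\omega\)-stability provides are constructible over \(P\cup\varphi(M,\bar p)\), not algebraic over it.

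Your \((3)\Rightarrow(2)\) is fine under the \(\acl\) hypothesis, but with the corrected clause you must finish differently:
\begin{itemize}
\item Primeness gives \(|\varphi(M_i,\bar p)|=\kappa\).
\item Extend the basis bijection to an elementary bijection \(\acl(P\cup B_1)\to\acl(P\cup B_2)\); this carries \(\varphi(M_1,\bar p)\) onto \(\varphi(M_2,\bar p)\).
\item Primeness of \(M_1\) over \(P\cup\varphi(M_1,\bar p)\) extends it to an elementary embedding \(M_1\to M_2\).
\item Minimality of \(M_2\) makes that embedding surjective.
\end{itemize}
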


\begin{proof}
This is the usual Baldwin--Lachlan theorem for countable complete
theories, used here only in the infinite/no-finite-model setting. In
the standard formulation the prime model is over \(\emptyset\), and the
parameters \(\bar p\) are then named from that prime model; see
Marker~\cite[Chapter~6]{marker2002}.
\end{proof}

\begin{proposition}[Arithmetically decidable prime model under uncountable categoricity]
\label{prop:zfc-arith-prime-under-uncountable-cat}
Assume \textsf{ZFC}. Let \(T\) be a complete arithmetically definable
theory in an arithmetically definable countable language. If \(T\) is
uncountably categorical, then \(T\) has an arithmetically decidable
prime model.
\end{proposition}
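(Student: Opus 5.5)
The plan is to show that $T$ has an arithmetically decidable atomic model, and then to invoke Proposition~\ref{prop:atomic-implies-prime}. Classically, an uncountably categorical theory is $\omega$-stable. So for every $n$ the type space $S_n(\emptyset)$ is countable, and isolated types are dense in it. The whole task is to make the construction of an atomic model arithmetically effective.

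\textbf{Step 1: the isolated formulas form an arithmetical set.} Provability from $T$ is arithmetical, since $T$ is arithmetically definable. The statement ``$\theta(\bar x)$ is consistent with $T$ and complete modulo $T$'' unpacks to: for every $\psi$, one of $T\vdash\theta\to\psi$ or $T\vdash\theta\to\neg\psi$ holds. This is one universal quantifier over arithmetical conditions, so the set $I_n$ of complete formulas in $n$ variables is arithmetical, uniformly in $n$.

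\textbf{Step 2: build an arithmetical atomic Henkin theory.} Using density of isolated types, I would carry out the standard Henkin construction. Fix new constants $c_0,c_1,\dots$ and build formulas $\theta_s(c_0,\dots,c_{k_s})\in I$. At each stage:
\begin{enumerate}
\item Handle a Henkin witness requirement for $\exists y\,\chi(\bar c,y)$. If $T\vdash\theta_s\to\exists y\,\chi$, pick the least complete formula $\theta'(\bar c,c_{\text{new}})$ that implies $\theta_s\wedge\chi$. Such a $\theta'$ exists because $\theta_s\wedge\chi$ is consistent and isolated types are dense.
\item Ensure that $\theta_{s+1}$ implies $\theta_s$.
\end{enumerate}
Each search is a least-element search over an arithmetical set, so the whole sequence $(\theta_s)$ is arithmetical. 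The theory $\{\theta_s\}$ then determines a complete Henkin theory in the constants: its elementary diagram is arithmetical, since each sentence is decided by the first $\theta_s$ mentioning its constants. The resulting term model is atomic, because every tuple of constants satisfies some complete formula $\theta_s$. Transporting the domain to a coded subset of $\N$, as in Appendix~\ref{sec:appendix-foundational}, gives an arithmetically decidable atomic model $\NN$.

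\textbf{Step 3: primeness.} Proposition~\ref{prop:atomic-implies-prime} gives that $\NN$ is prime.

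The main obstacle is the density input in Step~2. It is a \textsf{ZFC} fact: $\omega$-stability, which follows from Morley's theorem or from Baldwin--Lachlan (Fact~\ref{fact:classical-baldwin-lachlan}), yields countably many types, and countably many types yields density of isolated types. This input is needed exactly to guarantee that every least-search terminates. Once it is in hand, the arithmeticity bookkeeping is routine.

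An alternative route to the same model avoids Step~2's explicit construction. By Lemma~\ref{lem:isolation-complete-vs-arithmetic}, one can apply Theorem~\ref{thm:arith-omit-countable-family} to the family of nonisolated types, which is arithmetically coded. That route needs an arithmetical code for the countable type space, which I expect to be harder to obtain than the direct Henkin construction. So I would use the direct construction.
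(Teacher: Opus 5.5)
Your proposal is correct and follows essentially the same route as the paper: density of isolated types over $\emptyset$ from the classical \textsf{ZFC} theory, arithmeticity of the predicate ``$\theta$ isolates a complete type'' via provability from $T$, and a Henkin construction with least-choice isolating refinements, which yields an arithmetically decidable atomic, hence prime, term model. The only difference is bookkeeping: you keep each finite condition as a single complete formula, so sentences are decided automatically, whereas the paper interleaves ordinary Henkin decision steps with isolation stages for scheduled tuples of closed terms.
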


\begin{proof}
By the classical Baldwin--Lachlan theorem, isolated types are dense
over \(\emptyset\).
Since \(T\) is arithmetically definable, provability from \(T\) is
arithmetical, and the predicate saying that a formula isolates a
complete type is arithmetical.

Carry out a Henkin construction with constants
\((c_i)_{i\in\N}\), scheduling all finite tuples of closed terms. At
each stage, decide the next sentence and add Henkin witnesses as usual.
At the isolation stage for a scheduled tuple \(\bar t\), let \(\delta\)
be the current finite condition. Enlarge \(\bar t\) if necessary to
include all closed terms in \(\delta\), and choose the least formula
\(\theta\) that refines \(\delta\), is consistent with \(T\), and
isolates a complete type. Density guarantees such a refinement, and
the least-choice relation is arithmetical.

The resulting complete Henkin theory \(T^*\) is arithmetically
definable, and its term model is arithmetically decidable by the usual
truth lemma. The isolation requirements ensure
that every finite tuple realizes an isolated type over
\(\emptyset\), so the term model is atomic, hence prime.
\end{proof}

We now isolate the comparison with the classical uncountable
categoricity theorem.

\begin{theorem}[Equivalence with uncountable categoricity]
\label{thm:morley-zfc}
Assume \textsf{ZFC}. Let \(T\) be a complete arithmetically definable
theory in an arithmetically definable countable language with no finite
models. Then the following are equivalent:
\begin{enumerate}[label=(\arabic*)]
\item \(T\) is \(D\)-categorical for some nonzero arithmetic degree
      \(D\);
\item \(T\) is \(D\)-categorical for every nonzero arithmetic degree
      \(D\);
\item \(T\) is \(\kappa\)-categorical for some uncountable cardinal
      \(\kappa\);
\item \(T\) is \(\kappa\)-categorical for every uncountable cardinal
      \(\kappa\).
\end{enumerate}
\end{theorem}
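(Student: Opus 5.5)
The plan is to route everything through the structural clause shared by Theorem~\ref{thm:arith-baldwin-lachlan}(3) and Fact~\ref{fact:classical-baldwin-lachlan}(3). The equivalences \((1)\Leftrightarrow(2)\) come from Theorem~\ref{thm:morley}: \((2)\Rightarrow(1)\) is trivial because nonzero arithmetic degrees exist, for instance \(\degAr(0^{(\omega)})\). The equivalences \((3)\Leftrightarrow(4)\) are the classical Morley theorem, i.e.\ Fact~\ref{fact:classical-baldwin-lachlan}. It therefore suffices to prove \((1)\Rightarrow(3)\) and \((3)\Rightarrow(2)\), with both directions passing through strongly minimal control over a prime model.

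For \((1)\Rightarrow(3)\), I apply Theorem~\ref{thm:arith-baldwin-lachlan} \((1)\Rightarrow(3)\). This yields an arithmetically decidable prime model \(\PP\) and a formula \(\varphi(x,\bar p)\) that is strongly minimal over \(P\). It also yields \(M=\acl(P\cup\varphi(M,\bar p))\) for \emph{every} elementary extension \(\PP\preccurlyeq\MM\), arbitrary ones included; Proposition~\ref{prop:model-generated-by-strongly-minimal-set} was stated for arbitrary \(\MM\). \(\PP\) is prime among all models of \(T\) by Proposition~\ref{prop:atomic-implies-prime}, since it is atomic. The formula \(\varphi\) is nonalgebraic because \(\varphi(P,\bar p)\) is infinite, by Lemma~\ref{lem:minimal-exists-ar}. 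Strong minimality in all elementary extensions is exactly Lemma~\ref{lem:minimal-implies-strongly-minimal-ar}. These facts together give Fact~\ref{fact:classical-baldwin-lachlan}(3), hence \((3)\) and \((4)\).

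For \((4)\Rightarrow(2)\), I use Fact~\ref{fact:classical-baldwin-lachlan} to obtain a prime model \(\PP_{\mathrm{cl}}\) and a strongly minimal \(\varphi(x,\bar p)\) that controls all extensions. Proposition~\ref{prop:zfc-arith-prime-under-uncountable-cat} supplies an arithmetically decidable prime model \(\PP\). Since prime models of a countable theory are unique up to isomorphism (both are countable and atomic, so back-and-forth applies), I transport \(\bar p\) and \(\varphi\) to \(\PP\) along an isomorphism. The control property is isomorphism-invariant, so it still holds for every \(\PP\preccurlyeq\MM\). This is exactly clause~(3) of Theorem~\ref{thm:arith-baldwin-lachlan}, and \((3)\Rightarrow(4)\) of that theorem gives \(D\)-categoricity for every nonzero \(D\).

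The main obstacle is the matching of notions. Concretely, the arithmetic clause requires strong minimality and control in arbitrary (uncountable, non-coded) elementary extensions, and the classical clause must be transported onto a coded presentation. For the forward direction, I will check that Lemma~\ref{lem:minimal-implies-strongly-minimal-ar} and Proposition~\ref{prop:model-generated-by-strongly-minimal-set} (Case~2) quantify over all models, which they do. For the reverse direction, the only delicate point is that primeness of the Henkin model in Proposition~\ref{prop:zfc-arith-prime-under-uncountable-cat} is primeness in the ordinary sense, so uniqueness of countable atomic models applies.
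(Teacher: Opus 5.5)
Your route is the paper's route: both directions pass through clause~(3) of Theorem~\ref{thm:arith-baldwin-lachlan}, and the reverse direction transports classical strongly minimal control onto the arithmetically decidable prime model of Proposition~\ref{prop:zfc-arith-prime-under-uncountable-cat}. Granting that theorem, your forward direction is fine, and your checks on ordinary primeness and arbitrary extensions are correct. \textbf{The gap is in the reverse direction.} There you take from uncountable categoricity a prime \(\PP\) and a strongly minimal \(\varphi(x,\bar p)\) with \(M=\acl(P\cup\varphi(M,\bar p))\) for every \(\MM\succcurlyeq\PP\). Fact~\ref{fact:classical-baldwin-lachlan}(3) asserts this, but it is not what Baldwin--Lachlan proves. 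Classically you only get that each \(\MM\succcurlyeq\PP\) is prime and minimal over \(P\cup\varphi(M,\bar p)\); the \(\acl\)-version is almost strong minimality, which is strictly stronger. The standard witness is the totally categorical theory \(T=\Th((\mathbb{Z}/4\mathbb{Z})^{(\omega)})\). Let \(M=P\oplus(\mathbb{Z}/4\mathbb{Z})e\). For \(t\in 2P\), the map fixing \(P\) and sending \(e\mapsto e+t\) is an automorphism fixing \(P\cup 2M\) pointwise, so \(e\notin\acl(P\cup 2M)\). One checks that every strongly minimal \(\varphi(M,\bar p)\) with \(\bar p\in P\) lies in \(P+2M\), hence \(e\notin\acl(P\cup\varphi(M,\bar p))\). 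The paper's own proof invokes the same Fact at the same point, so you inherited this error rather than introduced it.

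The step cannot be repaired, because \((3)\Rightarrow(1)\) fails for this \(T\). Take a decidable presentation of \(N=M_0\oplus\bigoplus_n(\mathbb{Z}/4\mathbb{Z})e_n\oplus\bigoplus_n(\mathbb{Z}/4\mathbb{Z})f_n\), where \(M_0\cong(\mathbb{Z}/4\mathbb{Z})^{(\omega)}\) is spanned by a computable part of the basis. Fix \(S\subseteq\N\) with \(\degAr(S)=D>\mathbf 0\), and set \(M_1=M_0\oplus\bigoplus_{n\in S}(\mathbb{Z}/4\mathbb{Z})e_n\). Set \(M_2=M_0\oplus\bigoplus_n(\mathbb{Z}/4\mathbb{Z})g_n\), with \(g_n=e_n+2f_n\) for \(n\in S\) and \(g_n=e_n\) otherwise. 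Both are direct summands of \(N\) with complement \(\cong(\mathbb{Z}/4\mathbb{Z})^{(\omega)}\). By pp-elimination, pure embeddings between elementarily equivalent modules are elementary, so \(\MM_0\preccurlyeq\MM_1,\MM_2\preccurlyeq N\). Both domains have degree \(D\), since \(n\in S\iff e_n\in M_1\iff e_n\notin M_2\). Now compare the \(\emptyset\)-definable set \(\{x:\exists y\,(y+y=x)\}\) in the two models. In \(\MM_1\) it is \(2M_1\), which has degree \(D\) because \(n\in S\iff 2e_n\in 2M_1\). In \(\MM_2\) it is \(2M_2=2M_0\oplus\bigoplus_n\{0,2e_n\}\), which is computable. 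Hence no isomorphism \(\MM_1\cong\MM_2\) is \(D\)-preserving, and \(T\) is \(D\)-categorical for no \(D\). The correct classical conclusion (prime and minimal over \(P\cup\varphi(M,\bar p)\)) does not supply what the proof of Theorem~\ref{thm:arith-baldwin-lachlan} \((3)\Rightarrow(4)\) uses. That proof needs \(M_i=\acl(M_0\cup B_i)\), both for \(M_i\leAr B_i\) and for Proposition~\ref{prop:infinite-definable-recovers-sm-set}. At most, your argument yields that \(D\)-categoricity implies uncountable categoricity, not the stated equivalence.
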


\begin{proof}
By Theorem~\ref{thm:arith-baldwin-lachlan}, clauses~\textup{(1)} and
\textup{(2)} imply the structural condition that there exist an
arithmetically decidable prime model \(\PP\models T\) and a strongly
minimal formula over \(P\) generating every elementary extension over
\(P\). After forgetting the coded presentation and the effectiveness
requirements, this is exactly the structural clause of
Fact~\ref{fact:classical-baldwin-lachlan}. Hence
\textup{(1)} and \textup{(2)} imply \textup{(3)} and \textup{(4)}.

Conversely, suppose \(T\) is uncountably categorical. By
Fact~\ref{fact:classical-baldwin-lachlan}, there is a prime model
\(\PP\models T\) and a strongly minimal formula over \(P\) generating
every elementary extension over \(P\). By
Proposition~\ref{prop:zfc-arith-prime-under-uncountable-cat}, choose an
arithmetically decidable prime model \(\PP_0\models T\). Prime models
of a complete theory are isomorphic: equivalently, countable atomic
models admit the standard back-and-forth over isolated finite types. We
therefore transport the strongly minimal formula and the generation
property from \(\PP\) to \(\PP_0\). This is an external \(\textsf{ZFC}\)
transport; the isomorphism between the two prime models is not asserted
to be arithmetical, and no effectiveness is needed at this comparison
stage.
Thus the structural clause in Theorem~\ref{thm:arith-baldwin-lachlan}
holds. Since \textsf{ZFC} supplies nonzero arithmetic degrees, that
theorem gives \(D\)-categoricity for every nonzero arithmetic degree.

Hence all four clauses are equivalent.
\end{proof}

\begin{remark}[Base version of uncountable categoricity]
\label{rem:base-version-uncountable-categoricity}
The classical theorem has a corresponding over-a-base form. If \(T\) is
uncountably categorical and \(M_0\models T\) is countable, then for every
uncountable cardinal \(\kappa\), any two elementary extensions
\[
M_0\preccurlyeq M_1,M_2\models T
\]
of cardinality \(\kappa\) are isomorphic over \(M_0\). This follows by
running the Baldwin--Lachlan analysis over \(M_0\): the controlling
strongly minimal set has bases of size \(\kappa\) in both extensions, and
a bijection between these bases extends to an isomorphism fixing \(M_0\).
Conversely, one recovers the usual uncountable categoricity statement only
after running the same Baldwin--Lachlan analysis, which in particular
supplies the prime model. The over-base formulation by itself is not an
independent route to the existence of a prime model.
\end{remark}
\section{Concluding remarks and open problems}
\label{sec:concluding-remarks}

The main theorem suggests a general question: can one develop
classification theory in settings where cardinality is replaced by some
other measure of complexity? In the present paper, the relevant measure
is arithmetic degree, together with the restriction to arithmetically
decidable presentations, and the Baldwin--Lachlan analysis still goes
through in a meaningful form. This leads to the following problems.

\begin{enumerate}[label=(\arabic*), leftmargin=2em]

\item \textbf{Turing degree instead of arithmetic degree.}
Can the results of this paper be reformulated at the level of Turing
degrees rather than arithmetic degrees?

This is the most immediate technical question left open by the paper.
Many arguments here are effective in spirit, but arithmetic degree has
one important structural advantage: it is invariant under finite Turing
jumps. That invariance is used repeatedly, for example in compactness
arguments, in the coding of type spaces, in the passage from
arithmetically decidable structures to arithmetical satisfaction
relations, and in the low-basis-theorem step. A Turing-degree version
would therefore require replacing several uses of ``arithmetical in''
by a more rigid degree-preserving analysis which keeps track of the
actual finite jumps.

\item \textbf{Borel complexity.}
Is there an analogue of the categoricity/stability paradigm in which
cardinality is replaced by Borel complexity, or more generally by Borel
reducibility?

There is already a large body of work relating model theory to Borel
equivalence relations. The question here is more specific: is there a
classification-theoretic notion parallel to categoricity in power, but
organized by descriptive complexity rather than by the number of models
in a fixed cardinal? Even formulating the right analogue is not obvious.
One would need a replacement for the role played by ``all models of
size \(\kappa\)'' in the classical theory.

\item \textbf{Usual cardinality without choice.}
What survives of classification theory in \(\mathsf{ZF}\) when one keeps
ordinary cardinality but drops the axiom of choice?

Here one should distinguish sharply between well-ordered and
non-well-ordered cardinals. For well-ordered cardinals, Shelah proved
that substantial parts of the classical picture persist in
\(\mathsf{ZF}\); in particular, he obtained choiceless versions of
Morley's theorem over well-ordered cardinals.

For non-well-ordered cardinals, the situation deteriorates quickly. A
basic obstruction is already visible for vector spaces: without choice,
one cannot prove in general that two infinite-dimensional vector spaces
over \(\mathbb F_2\) of the same non-well-orderable cardinality are
isomorphic. Thus even the classification of models of the theory of
vector spaces can fail in the most elementary case. Closely related
basis principles are poorly understood: for instance, broad assertions
that vector spaces have bases can already imply substantial choice
principles, while fixed-field versions remain more delicate.

\item \textbf{Complexity in place of cardinality.}
To what extent can stability theory and classification theory be
developed when the role of cardinality is replaced by another invariant?

The point is not merely to define a new notion of categoricity. In the
arithmetically decidable setting, categoricity in one nonzero arithmetic
degree forces genuine structure: arithmetical stability, no Vaughtian
pairs, a prime model, and control by a strongly minimal set. It is
natural to ask for a general framework in which an external complexity
invariant plays the same role that cardinality plays classically. At
present we do not know what hypotheses on such an invariant would be
enough to support analogues of stability, rank, prime models, and
geometric control.

A first internal test case is the following: is arithmetical stability
actually equivalent to \(\omega\)-stability? In the present formulation,
arithmetical stability asserts that, over every arithmetically
decidable model and every arithmetical parameter set, the corresponding
spaces of arithmetic types are uniformly bounded by some finite jump
\(0^{(k)}\). The results of this paper show that \(D\)-categoricity
implies this boundedness condition, and then recover the same sort of
strongly minimal control one expects in the \(\omega\)-stable setting.
This suggests that arithmetical stability may simply be a reformulation
of \(\omega\)-stability. On the other hand, it is conceivable that the
arithmetic viewpoint detects a finer dividing line. Resolving this seems
to require a theory of regular types in arithmetical settings.

\item \textbf{A counterpart of countable categoricity.}
Is there a natural arithmetic analogue of countable categoricity?

The present paper deals with the analogue of uncountable categoricity.
The corresponding analogue of \(\aleph_0\)-categoricity is much less
clear. In the classical setting, countable categoricity is characterized
by Ryll--Nardzewski and is closely tied to finiteness of type spaces
over \(\emptyset\) and to oligomorphic automorphism groups. None of
these formulations transfers directly to the arithmetic setting.

One possible approach is to ask for uniqueness, up to computable or
arithmetical isomorphism, among arithmetically decidable models. Another
is to seek a reformulation in terms of arithmetical type spaces over
arithmetically decidable bases. At present it is not clear which notion,
if any, captures the right analogue of classical countable
categoricity.
\end{enumerate}

These questions all point in the same direction. The results of this
paper suggest that the Baldwin--Lachlan analysis does not depend as
essentially on cardinality as the usual presentation might suggest. The
extent to which this remains true beyond the arithmetically decidable
setting is still open.


\subsection*{Acknowledgements}

The authors thank Chitat Chong and Leonardo N. Coregliano for helpful
discussions. Artificial intelligence tools were used for writing and editing
assistance in the preparation of this paper; the authors are responsible for
the final content.

\appendix
\section{Foundational Constructions and Coding}
\label{sec:appendix-foundational}

This appendix fixes the coding conventions and standard constructions used
in the paper. The body is deliberately written close to the classical
model-theoretic proof, so it uses semantic language such as Henkin models,
Skolem expansions, omitting-types models, represented quotients, and
satisfaction in coded structures. The purpose here is to make explicit that,
from arithmetical input data, the countable objects needed in the proof have
arithmetical presentations.

\subsection{Coded structures and Tarski truth}
\label{subsec:coded-structures-truth}

We work inside a second-order structure of the form
\[
(\N,\mathcal A,+,\times,0,1,<),
\]
where \(\mathcal A\subseteq\mathcal P(\N)\) is the collection of available
sets.

A countable language \(\LL\) is coded by a subset of \(\N\) specifying
its symbols together with their arities and kinds. All such coding is
taken to be primitive recursive in the usual way.

\begin{definition}[Coded structure]
\label{def:coded-structure}
A \emph{coded \(\LL\)-structure} in
\((\N,\mathcal A,+,\times,0,1,<)\) consists of:
\begin{enumerate}[label=(\roman*)]
\item a set \(M\in\mathcal A\) with infinite complement in \(\N\);
\item for each \(n\)-ary relation symbol \(R\in\LL\), a set
      \(R^\MM\subseteq M^n\), coded as an element of \(\mathcal A\);
\item for each \(n\)-ary function symbol \(f\in\LL\), a function
      \(f^\MM:M^n\to M\), coded by its graph as an element of
      \(\mathcal A\);
\item for each constant symbol \(c\in\LL\), an element \(c^\MM\in M\).
\end{enumerate}
\end{definition}

The requirement that \(M\) have infinite complement is only a coding
convenience. It ensures that there is always room in \(\N\) outside the
domain for uniform coding of tuples, formulas, proofs, and other
auxiliary objects, without having to recode the underlying structure at
each stage. Nothing substantive depends on this choice.

Accordingly, when we say that a model has domain contained in \(\N\), we
always mean such a coded structure. Likewise, a subset \(X\subseteq M^n\)
is \emph{available} if its code belongs to the ambient second-order part
\(\mathcal A\).

\begin{definition}[Arithmetically decidable structure]
\label{def:arith-decidable-structure-appendix}
A coded \(\LL\)-structure \(\MM\) is \emph{arithmetically decidable} if
its elementary diagram is an arithmetical subset of \(\N\). Equivalently,
the satisfaction relation
\(\mathrm{Sat}_{\MM}(\ulcorner\varphi\urcorner,\bar a)\) is
arithmetical uniformly in the code of the formula \(\varphi\) and the
tuple \(\bar a\in M^{<\omega}\).
\end{definition}

\begin{remark}
Having arithmetical atomic diagram is weaker than being
arithmetically decidable. In the main text, the presentation condition
used for ambient models is arithmetical decidability, not merely
arithmetical atomic diagram.
\end{remark}

In particular, the domain, the atomic diagram, term evaluation, and full
first-order satisfaction relation of an arithmetically decidable
structure are all arithmetical.

\begin{definition}[Satisfaction for coded structures]
\label{def:sat-coded-structure}
Let \(\MM\) be a coded \(\LL\)-structure. For a formula
\(\varphi(\bar x)\) and a tuple \(\bar a\in M^{|\bar x|}\), the relation
\(\MM\models \varphi(\bar a)\) is the usual Tarskian truth relation,
computed from the coded interpretations of the symbols, the Boolean
connectives, and quantifiers ranging over the coded domain \(M\). Thus,
for example, \(\MM\models \exists x\,\psi(x,\bar a)\) iff
\((\exists b\in M)\,\MM\models\psi(b,\bar a)\).
\end{definition}

Since the quantifiers range only over the coded domain \(M\subseteq\N\),
this is simply the usual Tarskian notion of truth, carried out inside
second-order arithmetic.

\begin{remark}[Tarskian truth and arithmetical decidability]
\label{rem:tarski-truth}
For an arbitrary coded structure, the satisfaction relation is defined
externally by the usual Tarskian recursion. In this paper, the ambient
structures used for effectivity arguments are assumed to be arithmetically
decidable, so their full satisfaction relations are arithmetical by
definition. Thus it is legitimate to speak of elementary diagrams, complete
theories, and types of such coded structures within the arithmetical
framework.
\end{remark}

\subsection{Compactness, completion, Morleyization, and Skolemization}
\label{subsec:compact-skolem-appendix}

Throughout, \(\LL\) is a countable arithmetically definable language,
\(T\) is an arithmetically definable \(\LL\)-theory with no finite
models, and a model means a coded structure with domain contained in
\(\N\), in the sense of Definition~\ref{def:coded-structure}.

Recall that a set \(X\subseteq\N\) is \emph{arithmetical} if it is
definable in \((\N,+,\times,0,1,<)\), equivalently if
\(X\le_T 0^{(n)}\) for some \(n\in\N\). An \(\LL\)-structure \(\MM\)
with domain \(M\subseteq\N\) is \emph{arithmetically decidable} if its
elementary diagram, equivalently its full satisfaction relation with
parameters from \(M\), is arithmetical. We write \(\mathbf 0\) for the
least arithmetic degree, namely the degree of the arithmetical sets.

If \(A\subseteq M\) is arithmetical, the language \(\LL(A)\) is coded by
adding a constant symbol \(c_a\) for each \(a\in A\), using a fixed
primitive recursive pairing of the old language code with the number
\(a\). Thus the set of new constant symbols is arithmetical exactly when
\(A\) is. If \(\MM\) is arithmetically decidable, then the expansion
\((\MM,a)_{a\in A}\) to \(\LL(A)\) is arithmetically decidable: formulas
in the expanded language are translated effectively to \(\LL\)-formulas
with the corresponding parameters from \(A\).

\begin{theorem}[Compactness for arithmetically definable theories]
\label{thm:compact}
Let \(T\) be an arithmetically definable theory in an arithmetically
definable language \(\LL\). If every finite subset of \(T\) has a model,
then \(T\) has an arithmetically decidable model.
\end{theorem}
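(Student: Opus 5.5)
We would prove Theorem~\ref{thm:compact} by an effectivized Henkin construction, carried out relative to a sufficiently high finite jump. Since \(\LL\) and \(T\) are arithmetically definable, fix \(k\) such that the codes of \(\LL\) and of \(T\) are computable in \(0^{(k)}\). Expand \(\LL\) by a fresh, computably coded set of Henkin constants \(\{d_i:i\in\N\}\), giving a language \(\LL^H\) whose formula codes are computable in \(0^{(k)}\). The provability predicate ``\(T\cup\Delta\vdash\sigma\)'' for finite \(\Delta\) is then \(\Sigma_1\) relative to \(0^{(k)}\), so consistency of \(T\cup\Delta\) is decidable in \(0^{(k+1)}\). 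This is the only nontrivial complexity estimate, and it is what makes the whole construction arithmetical.

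Next we build a complete Henkin theory \(T^*\supseteq T\) in stages. Fix a \(0^{(k)}\)-computable enumeration \((\sigma_s)_{s\in\N}\) of all \(\LL^H\)-sentences. At stage \(s\) we hold a finite set \(\Delta_s\) with \(T\cup\Delta_s\) consistent. We add \(\sigma_s\) if \(T\cup\Delta_s\cup\{\sigma_s\}\) is consistent, and \(\neg\sigma_s\) otherwise. If the added sentence has the form \(\exists x\,\theta(x)\), we also add \(\theta(d_j)\), where \(j\) is the least index such that \(d_j\) does not occur in \(\Delta_s\), \(\sigma_s\), or any axiom used so far; this keeps the theory consistent by the usual argument. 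Each step is decided by the \(0^{(k+1)}\) oracle, so \(T^*=\bigcup_s\Delta_s\) is computable in \(0^{(k+1)}\). It is complete, Henkin, and consistent with \(T\), because finite satisfiability of \(T\) gives consistency by the soundness direction of the ordinary completeness theorem.

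Then we form the term model. On the closed \(\LL^H\)-terms, set \(t\sim u\) iff \((t=u)\in T^*\); this relation is \(0^{(k+1)}\)-computable. Let \(M\) be the set of \(\sim\)-least closed terms (least code in each class), which is \(0^{(k+1)}\)-computable. To obey the infinite-complement convention of Definition~\ref{def:coded-structure}, recode \(M\) into, say, the even numbers via its increasing enumeration; this bijection is computable in \(0^{(k+1)}\). Interpret the relation and function symbols in the usual way. The truth lemma \(\MM\models\sigma\iff\sigma\in T^*\) follows by induction on formulas, with the Henkin property handling quantifiers. Consequently the elementary diagram, that is, satisfaction with parameters named by representative terms, reduces to membership in \(T^*\) after substituting terms. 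It is therefore computable in \(0^{(k+2)}\) at worst, hence arithmetical, and \(\MM\models T\).

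The main obstacle is not the logic but the bookkeeping in the uniformity claims. We must check that the Henkin witness choice and the consistency tests stay within a fixed finite jump, independent of stage, so that \(T^*\) is arithmetical rather than merely \(\Delta^1_1\). We must also confirm that \(T\) having no finite models is not needed here: if the term model were finite, \(M\) would still be a legitimate coded domain. In the paper's setting, however, infinitude of \(M\) follows from the standing hypothesis that \(T\) has no finite models.
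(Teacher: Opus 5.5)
Your proposal is correct and follows essentially the same route as the paper: a Henkin completion whose consistency tests are decided in a fixed finite jump, followed by the term model on least representatives of closed terms, transported to a domain with infinite complement, with the truth lemma making the elementary diagram arithmetical. The differences are cosmetic. You add Henkin witnesses on the fly during the completion, whereas the paper adjoins all Henkin axioms \(\exists v\,\varphi(v)\to\varphi(c_{\ulcorner\varphi\urcorner})\) up front. You also track the explicit bound \(0^{(k+1)}\), where the paper only says ``arithmetical, possibly at a higher finite level.''
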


\begin{proof}
We use the usual Henkin construction and note that all sets arising in
the construction are arithmetical. In the applications below, semantic
finite satisfiability is verified by finite model-theoretic arguments.

Fix an arithmetical predicate \(\mathrm{Ax}_T(n)\) defining the axioms
of \(T\). Extend \(\LL\) to a language \(\LL^H\) by adjoining
constants \((c_i)_{i\in\N}\), and for each formula \(\exists
v\,\varphi(v)\) add the Henkin axiom
\[
\exists v\,\varphi(v)\to \varphi(c_{\ulcorner\varphi\urcorner}).
\]
Let \(H\) be the set of all such axioms and put \(T^0:=T\cup H\).
Since \(\varphi\mapsto \ulcorner H_\varphi\urcorner\) is primitive
recursive, \(H\) is arithmetical. Every finite subset of \(T^0\) is
satisfiable by the usual Henkin argument.

Now enumerate the \(\LL^H\)-sentences as \((\theta_n)_{n\in\N}\), and
define \(T^{n+1}\) by adjoining \(\theta_n\) if
\(T^n\cup\{\theta_n\}\) is consistent and otherwise adjoining
\(\neg\theta_n\). Since provability from an arithmetically definable
theory is arithmetical, consistency of each finite extension considered
in the construction is an arithmetical predicate. Thus the resulting
complete Henkin theory \(T^\ast:=\bigcup_n T^n\) is arithmetically
definable, possibly at a higher finite level.

Let \(\CTerm\) be the primitive recursive set of closed \(\LL^H\)-terms,
and define \(s\equiv t\) iff \(T^\ast\vdash s=t\). Since provability in
\(T^\ast\) is arithmetical, so is \(\equiv\). Instead of leaving the
domain as a quotient, take the least representative of each
\(\equiv\)-class and let \(R\subseteq\CTerm\) be the resulting set of
representatives. The set \(R\) is arithmetical. Interpreting symbols on
representatives in the usual way gives the term model, and we then
transport it along a fixed arithmetical bijection from \(R\) to a coded
subset of \(\N\) with infinite complement. We continue to denote the
transported model by \(M\).
By the standard truth lemma, for every \(\LL^H\)-sentence \(\sigma\),
\[
M\models \sigma
\quad\Longleftrightarrow\quad
T^\ast\vdash \sigma.
\]
More generally, formulas with parameters from the term model are handled
by replacing the parameters with representing closed terms. Since
provability from \(T^\ast\) is arithmetical, the elementary diagram of
\(M\) is arithmetical. Hence \(M\) is arithmetically decidable. By the
truth lemma, \(M\models T^\ast\), and therefore \(M\models T\).
\end{proof}

\begin{lemma}[Base-preserving coded compactness]
\label{lem:base-preserving-coded-compactness}
Let \(\AAA\) be an arithmetically decidable structure with arithmetical
domain \(A\), and let \(\Sigma\) be an arithmetically definable theory
in a language extending \(\LL(A)\) which contains
\(\Diag_{\mathrm{el}}(\AAA)\). If every finite subset of \(\Sigma\) is
satisfiable, then \(\Sigma\) has an arithmetically decidable coded model
\(\MM\) whose domain contains \(A\) literally and in which \(c_a\) is
interpreted as \(a\), for every \(a\in A\). In particular,
\(\AAA\preccurlyeq \MM\restr_\LL\).
\end{lemma}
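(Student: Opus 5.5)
The plan is to run the Henkin construction of Theorem~\ref{thm:compact} on \(\Sigma\) and then recode the domain so that the named constants land on their own names. Since \(\Sigma\) is arithmetically definable and finitely satisfiable, that construction gives a complete arithmetically definable Henkin theory \(\Sigma^\ast\supseteq\Sigma\) in an expanded language \(\LL^H\supseteq\LL(A)\). It also gives an arithmetically decidable term model \(\widetilde\MM\), whose domain is the arithmetical set \(R\) of least representatives of closed terms modulo \(s\equiv t\iff\Sigma^\ast\vdash s=t\).

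Because \(\Diag_{\mathrm{el}}(\AAA)\subseteq\Sigma^\ast\), we know that \(\Sigma^\ast\vdash c_a\neq c_b\) for distinct \(a,b\in A\). Hence the map \(\iota:a\mapsto\) (least representative of the class of \(c_a\)) is injective. Its graph is arithmetical, being defined from provability in \(\Sigma^\ast\) and the arithmetical set \(A\). Its image \(I:=\iota[A]\subseteq R\) is also arithmetical, and \(R\setminus I\) is arithmetical as well.

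The main step is to build an arithmetical bijection \(f:R\to U\), where \(U\subseteq\N\) is an arithmetical set with \(A\subseteq U\) and infinite complement. We put \(f\restr_I:=\iota^{-1}\), so each class of \(c_a\) is sent to \(a\) itself.

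For the remaining elements, since \(A\) has infinite complement by the coding convention, we choose an arithmetical set \(V\subseteq\N\setminus A\) such that \(\N\setminus(A\cup V)\) is infinite. We let \(|V|\) match \(|R\setminus I|\), which may be finite or infinite; this cardinality question is arithmetically decidable only in a weak sense, so to avoid case analysis we first adjoin \(\Sigma\)-consistent fresh constants. Concretely, we note that \(R\setminus I\) can be made infinite by adding to \(\Sigma\) infinitely many new constants \(d_n\) with axioms \(d_n\neq d_m\). This is harmless, because \(\AAA\) is infinite and the models are infinite. Then \(V\) can be the set of even numbers outside \(A\), say. Finally, we let \(f\) match the increasing enumeration of \(R\setminus I\) with the increasing enumeration of \(V\); both enumerations are arithmetical. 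We set \(U:=A\cup V\).

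We then transport \(\widetilde\MM\) along \(f\) to get \(\MM\). Its satisfaction relation is \(\MM\models\varphi(\bar u)\iff\widetilde\MM\models\varphi(f^{-1}(\bar u))\), which is arithmetical. So \(\MM\) is arithmetically decidable, its domain \(U\) contains \(A\) literally, and \(c_a^{\MM}=f(\iota(a))=a\).

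Since \(\MM\models\Diag_{\mathrm{el}}(\AAA)\) with \(c_a\) interpreted as \(a\), for every \(\LL\)-formula \(\varphi\) and \(\bar a\in A^{<\omega}\) we have \(\AAA\models\varphi(\bar a)\iff\MM\models\varphi(\bar a)\). This is exactly \(\AAA\preccurlyeq\MM\restr_\LL\). The only delicate point is the bookkeeping guaranteeing that the complement of \(U\) is infinite and that \(f\) is a total arithmetical bijection. The fresh-constant trick is what makes that bookkeeping uniform.
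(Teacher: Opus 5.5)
Your route is the paper's: take the least-representative term model of the Henkin completion from Theorem~\ref{thm:compact}. Use $\Diag_{\mathrm{el}}(\AAA)$ to see that $a\mapsto r_a$ is an arithmetical injection. Then transport so that $r_a\mapsto a$, while the remaining representatives go into an arithmetical reserve disjoint from $A$ whose union with $A$ has infinite complement.

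The one step that does not work as written is the fresh-constant device. The axioms $d_n\neq d_m$ do not make $R\setminus I$ infinite. Nothing prevents the completion $\Sigma^\ast$ from containing $d_n=c_{a_n}$ for distinct $a_n\in A$. For instance, with $\Sigma=\Diag_{\mathrm{el}}(\AAA)$ every closed term may be absorbed into $I$. So your matching of $R\setminus I$ with all of $V$ can fail to be a bijection onto $A\cup V$. To force genuinely new elements you would need $d_n\neq c_a$ for every $a\in A$ as well. That is finitely satisfiable only because $\AAA$ is infinite, which the lemma does not assume; for finite $\AAA$ even $d_n\neq d_m$ contradicts the diagram.

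The device is also unnecessary. As in the paper, drop surjectivity. Send the $k$-th element of $R\setminus I$ to the $k$-th element of a fixed arithmetical $F\subseteq\N\setminus A$ with $\N\setminus(A\cup F)$ infinite. The resulting domain $A\cup j[R\setminus I]$ is arithmetical whether $R\setminus I$ is finite or infinite. In any case, a non-uniform case split on one arithmetical bit costs nothing, since the lemma is a pure existence claim.

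Separately, your example ``$V=$ the even numbers outside $A$'' fails when $A$ contains all odd numbers, because then $A\cup V=\N$. Take instead every other element of the increasing enumeration of $\N\setminus A$. With these repairs your argument coincides with the paper's.
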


\begin{proof}
Apply Theorem~\ref{thm:compact} in the expanded language and take the
canonical term model before the final transport. Let \(R\) be the
arithmetical set of least representatives of closed terms modulo
provable equality. For each \(a\in A\), let \(r_a\in R\) be the
representative of the constant \(c_a\). The elementary diagram of
\(\AAA\) contains the distinctness diagram of \(A\), so \(a\mapsto r_a\)
is injective, and its graph is arithmetical.

Choose an arithmetical infinite subset \(F\subseteq\N\setminus A\) with
infinite complement. Transport the term model by a map \(j:R\to\N\)
sending each \(r_a\) to the actual number \(a\) and all other
representatives injectively into \(F\). The
transported domain is arithmetical and has infinite complement, and all
transported relations and functions remain arithmetical because \(R\),
the term model, and \(j\) are arithmetical. By construction the
constants naming \(A\) are interpreted literally. Since
\(\Diag_{\mathrm{el}}(\AAA)\subseteq\Sigma\), the reduct contains
\(\AAA\) as an elementary substructure.
\end{proof}

A first routine consequence is that every consistent arithmetically
definable theory has an arithmetically definable completion.

\begin{corollary}[Arithmetical completion]
\label{cor:arith-completion}
Every consistent arithmetically definable theory extends to a complete
consistent arithmetically definable theory.
\end{corollary}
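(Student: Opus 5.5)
Starting from a consistent arithmetically definable theory \(T\), I will use Theorem~\ref{thm:compact}. Consistency means every finite subset of \(T\) has a model, so Theorem~\ref{thm:compact} yields an arithmetically decidable model \(\MM\models T\). I then set \(T^\ast:=\Th(\MM)\), the set of \(\LL\)-sentences true in \(\MM\). This is complete and consistent because it is the theory of a structure, and it extends \(T\) since \(\MM\models T\).

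The remaining point is that \(T^\ast\) is arithmetically definable. Sentences are formulas with the empty parameter tuple. So \(\ulcorner\sigma\urcorner\in T^\ast\) iff \(\sigma\) is an \(\LL\)-sentence and \(\mathrm{Sat}_{\MM}(\ulcorner\sigma\urcorner,\emptyset)\) holds. The first condition is arithmetical because the language \(\LL\) is arithmetically definable and the syntactic coding is primitive recursive. The second is arithmetical by Definition~\ref{def:arith-decidable-structure-appendix}. Hence \(T^\ast\) is an arithmetical set of sentence codes.

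Alternatively, one could read the claim off the proof of Theorem~\ref{thm:compact}. The Henkin completion \(T^\ast\) built there is arithmetically definable, and its restriction to \(\LL\)-sentences is the desired completion. I prefer the model-based route because it invokes only the statement of the theorem.

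There is no serious obstacle. The only point needing care is the step from ``consistent'' to ``every finite subset has a model''. This is the completeness theorem for countable languages, which the Henkin argument in the proof of Theorem~\ref{thm:compact} already carries out internally; if one prefers to avoid it, one can run the Henkin construction directly on syntactic consistency, as in that proof.
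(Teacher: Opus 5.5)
Your argument is correct and is essentially the paper's. The paper's one-line proof cites the complete Henkin theory \(T^\ast\) from the proof of Theorem~\ref{thm:compact}; by the truth lemma, its \(\LL\)-sentences are exactly \(\Th(\MM)\) for the term model that theorem produces, which is what you take, so you differ only in using the theorem's statement rather than its proof. Your version makes explicit the restriction from \(\LL^H\) to \(\LL\), which the paper glosses over, while the paper's version runs directly on syntactic consistency and so does not need your appeal to the completeness theorem.
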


\begin{proof}
This is exactly the completion part of the proof of
Theorem~\ref{thm:compact}: the theory \(T^\ast\) constructed there is a
complete consistent arithmetically definable extension of \(T\).
\end{proof}

A second routine consequence is that one may pass to a Morleyization
without leaving the arithmetical setting.

\begin{theorem}[Morleyization]
\label{thm:morleyization}
Let \(T\) be an arithmetically definable theory in a countable language
\(\LL\). Then there exist a countable arithmetically definable expansion
\(\LL^{\mathrm M}\) of \(\LL\) and an arithmetically definable theory
\(T^{\mathrm M}\) such that:
\begin{enumerate}[label=(\arabic*)]
\item every \(\LL\)-model of \(T\) has a unique expansion to an
      \(\LL^{\mathrm M}\)-model of \(T^{\mathrm M}\);
\item reduct induces a bijection between models of \(T^{\mathrm M}\) and
      models of \(T\);
\item \(T^{\mathrm M}\) has quantifier elimination;
\item if \(T\) is complete, then \(T^{\mathrm M}\) is complete.
\end{enumerate}
\end{theorem}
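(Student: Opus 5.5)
The plan is the standard Morleyization construction, checking that every object is arithmetical.

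\emph{Language.} Let \(\LL^{\mathrm M}\) be \(\LL\) together with a new \(n\)-ary relation symbol \(R_\varphi\) for each \(\LL\)-formula \(\varphi(x_1,\dots,x_n)\). The code of \(R_\varphi\) is obtained by primitive recursive pairing of \(\ulcorner\varphi\urcorner\) with a fresh tag. Since the set of \(\LL\)-formulas is computable from the code of \(\LL\), the set of new symbols is arithmetical, and so \(\LL^{\mathrm M}\) is arithmetically definable.

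\emph{Theory.} Let \(T^{\mathrm M}\) be \(T\) together with the defining axioms
\[
\forall\bar x\,\bigl(R_\varphi(\bar x)\leftrightarrow\varphi(\bar x)\bigr)
\]
for every \(\LL\)-formula \(\varphi\). The map \(\varphi\mapsto\) (this axiom) is primitive recursive, and \(T\) is arithmetical, so \(T^{\mathrm M}\) is arithmetically definable.

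\emph{Clauses (1) and (2).} Given \(\MM\models T\), the axioms force \(R_\varphi^{\MM}=\varphi(\MM)\). So the expansion exists and is unique, and taking the reduct is inverse to it. If \(\MM\) is a coded structure, the interpretations of the \(R_\varphi\) are the satisfaction sets of \(\MM\). These are arithmetical when \(\MM\) is arithmetically decidable, but the statement does not require this.

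\emph{Clause (3).} First show by induction on formulas that every \(\LL^{\mathrm M}\)-formula \(\psi(\bar x)\) is \(T^{\mathrm M}\)-equivalent to an \(\LL\)-formula \(\psi^\ast\): replace each atomic subformula \(R_\varphi(\bar t)\) by \(\varphi(\bar t)\), renaming bound variables so none are captured. Then \(\psi\) is equivalent to the quantifier-free formula \(R_{\psi^\ast}(\bar x)\). Terms inside \(R_\varphi\) are handled by substitution, and since \(R_\varphi\) exists for every \(\LL\)-formula, substitution instances cause no difficulty.

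\emph{Clause (4).} Suppose \(T\) is complete and \(\sigma\) is an \(\LL^{\mathrm M}\)-sentence. The translation gives \(T^{\mathrm M}\vdash\sigma\leftrightarrow\sigma^\ast\), and \(T\) decides \(\sigma^\ast\). Hence \(T^{\mathrm M}\) decides \(\sigma\).

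The main obstacle is mostly bookkeeping: in the induction for clause (3), bound variables must be renamed before substituting, so that no capture occurs. No arithmetical complexity issue arises beyond noting that \(\psi\mapsto\psi^\ast\) is primitive recursive.
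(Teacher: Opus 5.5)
Your proposal is correct and follows the paper's proof. You use the same symbols \(R_\varphi\) with their defining axioms, the same primitive-recursive coding argument for arithmetical definability, and quantifier elimination via equivalence with an atomic \(R_\varphi\)-formula; you also spell out the translation \(\psi\mapsto\psi^\ast\) of \(\LL^{\mathrm M}\)-formulas into \(\LL\)-formulas, which the paper leaves implicit. For clause (4) the paper argues semantically, that two models of \(T^{\mathrm M}\) with the same \(\LL\)-reduct theory have the same \(\LL^{\mathrm M}\)-theory, whereas you argue syntactically through \(\sigma^\ast\); both rest on that same translation.
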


\begin{proof}
For each \(\LL\)-formula \(\varphi(\bar x)\), add a relation symbol
\(R_\varphi(\bar x)\), and add the defining axiom
\[
\forall \bar x\,\bigl(R_\varphi(\bar x)\leftrightarrow \varphi(\bar x)\bigr).
\]
Because formulas are effectively coded, the expanded language remains
countable and arithmetically definable, and the set of defining axioms
is arithmetical. The usual argument gives (1) and (2). Quantifier
elimination holds because every formula is equivalent in
\(T^{\mathrm M}\) to an atomic formula of the form \(R_\varphi\). If
\(T\) is complete, then two models of \(T^{\mathrm M}\) have the same
\(\LL\)-reduct theory, hence the same \(\LL^{\mathrm M}\)-theory, so
\(T^{\mathrm M}\) is complete as well.
\end{proof}

We next pass to Skolem expansions.

\begin{theorem}[Skolemization]
\label{thm:sk}
There exist an arithmetically definable expansion
\(\LL^{\mathrm{Sk}}\) of \(\LL\) and an arithmetically definable theory
\(T^{\mathrm{Sk}}\) such that:
\begin{enumerate}[label=(\arabic*)]
\item \(T\) has a model if and only if \(T^{\mathrm{Sk}}\) has a model;
\item for every \(\LL\)-formula \(\varphi(x,\bar y)\), the theory
      \(T^{\mathrm{Sk}}\) contains the corresponding Skolem axiom;
\item if \(\NN\models T^{\mathrm{Sk}}\) and \(A\subseteq N\), then the
      Skolem hull \(\Hull^{\NN}(A)\) is an elementary substructure of
      \(\NN\);
\item if \(\NN\) is arithmetically decidable and
      \(A\subseteq N\) is arithmetical, then
      \(\Hull^{\NN}(A)\) has arithmetical domain and is
      arithmetically decidable.
\end{enumerate}
Here clause~\textup{(1)} is only the classical existence statement for a
Skolem expansion. It does not assert that an arbitrary presentation of an
\(\LL\)-model carries a canonical arithmetically chosen Skolem expansion.
The arithmetical presentation needed for Skolem-hull degree calculations is
the one explicitly assumed in clause~\textup{(4)}.
\end{theorem}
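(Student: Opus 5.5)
We prove Theorem~\ref{thm:sk} by the classical Skolemization construction, tracking codes throughout. For clauses (1) and (2), we define \(\LL^{\mathrm{Sk}}\) as the union \(\bigcup_k\LL_k\) of the usual tower. Here \(\LL_0=\LL\), and \(\LL_{k+1}\) adds a function symbol \(f_{\varphi}\) of arity \(|\bar y|\) for each \(\LL_k\)-formula \(\varphi(x,\bar y)\). Each symbol is coded primitive recursively from the code of \(\varphi\), so \(\LL^{\mathrm{Sk}}\) is arithmetical as soon as \(\LL\) is. We let \(T^{\mathrm{Sk}}\) be \(T\) together with the Skolem axioms
\[
\forall\bar y\,\bigl(\exists x\,\varphi(x,\bar y)\rightarrow\varphi(f_\varphi(\bar y),\bar y)\bigr)
\]
for all \(\LL^{\mathrm{Sk}}\)-formulas \(\varphi\). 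Membership in this set is the join of the arithmetical predicate for \(T\) with a primitive recursive syntactic check, so \(T^{\mathrm{Sk}}\) is arithmetically definable. Clause (1) is the standard argument: given a model of \(T\), choose witnesses level by level using the axiom of choice, or the least-element choice when the domain lies in \(\N\). The converse is the reduct. Clause (2) holds by construction.

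For clause (3), we apply the Tarski--Vaught test inside \(\NN\). The hull \(H=\Hull^\NN(A)\) is the set of values of \(\LL^{\mathrm{Sk}}\)-terms at tuples from \(A\), so it is closed under every \(f_\varphi\). If \(\NN\models\exists x\,\varphi(x,\bar h)\) with \(\bar h\in H\), then \(f_\varphi(\bar h)\in H\) is a witness. Hence \(H\preccurlyeq\NN\) in \(\LL^{\mathrm{Sk}}\), and a fortiori for the \(\LL\)-reduct.

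For clause (4), we first show the domain is arithmetical. An element \(b\) lies in \(H\) iff there exist a term \(t\) and a tuple \(\bar a\in A^{<\omega}\) with \(\NN\models t(\bar a)=b\). This is a single existential quantifier over an arithmetical relation, because term evaluation is part of the arithmetical satisfaction relation of \(\NN\) and \(A\) is arithmetical. Next we show the elementary diagram is arithmetical. By (3), for \(\bar h\in H\) we have \(H\models\psi(\bar h)\) iff \(\NN\models\psi(\bar h)\). Thus satisfaction in \(H\) is the restriction of \(\mathrm{Sat}_\NN\) to tuples from the arithmetical set \(H\), and is therefore arithmetical.

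The main obstacle is clause (4) in the situations where it is actually used. There \(\NN\) is a given arithmetically decidable model of \(T^{\mathrm{Sk}}\), so the Skolem functions are already part of its decidable presentation and the argument above suffices. When an arithmetically decidable \(\LL\)-model \(\widehat{\MM}\) must first be Skolemized, we build the expansion explicitly. Interpret \(f_\varphi(\bar y)\) as the least \(x\in\widehat M\) with \(\varphi(x,\bar y)\) if one exists, and as a fixed element otherwise, recursively along the tower. We then show by induction on the level \(k\) that each \(\LL_k\)-formula translates uniformly and effectively into an \(\LL\)-formula interpreted through the least-witness definitions. The subtle point is that the number of jumps grows with \(k\), so a naive bound is not uniform across all levels. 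To handle this, we would use the arithmetical satisfaction relation of \(\widehat{\MM}\) as an oracle. With that oracle, least-witness search for formulas of every level is computable in a fixed finite jump of it, uniformly in the formula code, and this gives an arithmetically decidable Skolem expansion. The remark after the statement signals that only this explicitly constructed presentation is claimed, so no canonicity issue arises.
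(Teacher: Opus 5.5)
Your proof of clauses (1)--(4) is correct and follows the paper's argument. It runs Tarski--Vaught through the Skolem functions, treats hull membership as an existential search over terms and tuples from \(A\) in the arithmetical satisfaction relation of \(\NN\), and obtains satisfaction in the hull by restricting \(\mathrm{Sat}_{\NN}\) via (3).

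\textbf{The tower.} You take the full tower \(\bigcup_k\LL_k\), whereas the paper adds Skolem functions only for \(\LL\)-formulas. The paper's Tarski--Vaught step therefore covers only \(\LL\)-formulas. It yields \(\Hull^{\NN}(A)\restr_\LL\preccurlyeq\NN\restr_\LL\) and an arithmetical \(\LL\)-diagram for the hull, which is all its applications need. Your tower makes (3) hold in \(\LL^{\mathrm{Sk}}\) itself, so the full \(\LL^{\mathrm{Sk}}\)-diagram of the hull is arithmetical. This costs nothing, since the tower's symbol set is still arithmetical relative to \(\LL\).

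\textbf{Drop the final paragraph.} It is unnecessary, and you have misread the closing remark. That remark says the theorem does \emph{not} claim that an arithmetically decidable \(\LL\)-model carries an arithmetically decidable Skolem expansion; clause (4) simply assumes a decidable \(\LL^{\mathrm{Sk}}\)-presentation of \(\NN\).

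The argument in that paragraph is also wrong. The least-witness function for the \(\LL\)-formula \((x=u)\vee(x=v)\) returns whichever of \(u,v\) has the smaller code. So the natural order \(<_\N\) on \(\widehat M\) becomes quantifier-free definable in the least-witness expansion. Consequently \(\LL_1\)-formulas do not translate into \(\LL\)-formulas. They translate into formulas of \(\LL\cup\{<_\N\}\), which \(\mathrm{Sat}_{\widehat\MM}\) does not decide, and each quantifier alternation costs another jump, so there is no uniform finite bound. Indeed, for suitably chosen decidable presentations (for instance of the random graph), the structure \((\widehat\MM,<_\N)\) interprets \((\N,+,\times)\), and then the expansion's elementary diagram is not arithmetical at all.
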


\begin{proof}
For each \(\LL\)-formula \(\varphi(x,\bar y)\) with G\"odel code \(e\),
add a function symbol \(f_e\) together with the corresponding Skolem
axiom. Since the maps sending \(e\) to the code of \(f_e\) and to the
code of its Skolem axiom are primitive recursive, both
\(\LL^{\mathrm{Sk}}\) and \(T^{\mathrm{Sk}}\) are arithmetically
definable. The equivalence in (1) is the usual expansion-reduct
argument.

For (3), let \(M:=\Hull^{\NN}(A)\). By the Tarski--Vaught test, it
suffices to show that whenever \(\bar b\in M^{<\omega}\) and
\(\NN\models \exists x\,\varphi(x,\bar b)\), there is a witness in
\(M\). But then \(\NN\models \varphi(f_\varphi(\bar b),\bar b)\), and
\(M\) is closed under the Skolem functions.

For (4), an element \(b\) belongs to \(\Hull^{\NN}(A)\) iff
\(b=t^{\NN}(\bar a)\) for some \(\LL^{\mathrm{Sk}}\)-term \(t\) and some
tuple \(\bar a\in A^{<\omega}\). Since term evaluation in an
arithmetically decidable structure is arithmetical, membership in the
hull is arithmetical. Thus the hull has arithmetical domain. Since
\(\Hull^\NN(A)\preccurlyeq \NN\) by (3), satisfaction in the hull agrees
with satisfaction in \(\NN\) for formulas with parameters from the hull.
Because \(\NN\) is arithmetically decidable and the hull domain is
arithmetical, the elementary diagram of \(\Hull^\NN(A)\) is
arithmetical. Hence the hull is arithmetically decidable.
\end{proof}

\subsection{Omitting types}

A partial type \(p(\bar x)\) over \(A\) is \emph{principal over a
theory \(T\)} if there is a formula \(\chi(\bar x)\), with parameters
from \(A\), such that \(T\cup\{\exists\bar x\,\chi(\bar x)\}\) is
consistent and, for every \(\varphi(\bar x)\in p\),
\[
T\vdash \forall\bar x\,(\chi(\bar x)\rightarrow\varphi(\bar x)).
\]
It is \emph{nonprincipal} otherwise. For complete types this agrees with
the usual notion of being isolated by a formula belonging to the type.

A partial type \(p(\bar x)\) is \emph{unsupported over \(T\)} if, for
every formula \(\chi(\bar x)\) consistent with \(T\), there is some
\(\varphi(\bar x)\in p\) such that
\[
T\cup\{\exists\bar x\,(\chi(\bar x)\wedge\neg\varphi(\bar x))\}
\]
is consistent. For complete types, nonprincipality implies this
unsupported condition. In the body of the paper we also omit partial
types which include inequalities against a named model, so this is the
form we shall use.

\begin{theorem}[Arithmetic omitting types for arithmetically coded families]
\label{thm:arith-omit-countable-family}
Let \(T^\dagger\) be a consistent arithmetically definable theory in a
countable arithmetically definable language \(\LL^\dagger\). Let
\(\{p_e(\bar x_e):e\in E\}\) be an arithmetically coded family of
partial types over \(\emptyset\). Thus \(E\subseteq\N\) is arithmetical,
the arity \(e\mapsto |\bar x_e|\) is arithmetically coded on \(E\), and
there is an arithmetical relation
\[
\operatorname{Mem}(e,\ulcorner\varphi\urcorner)
\]
which says that the formula \(\varphi(\bar x_e)\) belongs to \(p_e\).
Suppose that each \(p_e\) is unsupported over \(T^\dagger\). Then
\(T^\dagger\) has an arithmetically decidable model omitting every
\(p_e\).
\end{theorem}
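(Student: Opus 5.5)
The plan is to run an arithmetized Henkin construction that omits the types, in the style of the proof of Theorem~\ref{thm:compact}, while interleaving omission requirements. First, extend \(\LL^\dagger\) by Henkin constants \((c_i)_{i\in\N}\). Let \(T^0:=T^\dagger\cup H\), where \(H\) is the arithmetical set of Henkin axioms. Then fix an arithmetical enumeration of three kinds of requirements: the sentences \(\theta_n\) of \(\LL^H\) to be decided, and the pairs \((e,\bar t)\) with \(e\in E\) and \(\bar t\) a tuple of closed terms of arity \(|\bar x_e|\). We build an increasing sequence of finite conditions \(\delta_0\subseteq\delta_1\subseteq\cdots\). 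Each \(\delta_s\) is a finite set of \(\LL^H\)-sentences with \(T^0\cup\delta_s\) consistent.

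At a deciding stage we add \(\theta_n\) if this is consistent, and \(\neg\theta_n\) otherwise, exactly as before. At an omission stage for \((e,\bar t)\), we replace the constants of \(\bar t\) and the extra Henkin constants occurring in \(\delta_s\) by variables. We then existentially quantify the extra ones, producing an \(\LL^\dagger\)-formula \(\chi(\bar x_e)\), possibly after enlarging \(\bar x_e\) to include the remaining constants as dummy quantified variables. The one delicate point is that the Henkin axioms in \(T^0\) are conservative over \(T^\dagger\) for sentences not mentioning new constants, so consistency of \(T^0\cup\delta_s\) yields consistency of \(T^\dagger\cup\{\exists\bar x\,\chi\}\). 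Unsupportedness of \(p_e\) then gives \(\varphi\in p_e\) with \(T^\dagger\cup\{\exists\bar x(\chi\wedge\neg\varphi)\}\) consistent. We choose the least such \(\varphi\) and set \(\delta_{s+1}:=\delta_s\cup\{\neg\varphi(\bar t)\}\). Consistency of the new condition follows by the standard argument: the fresh Henkin constants can be interpreted freely, and the constants of \(\bar t\) as witnesses, again using conservativity of Henkin axioms.

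The search at each stage uses only the consistency predicate for \(T^\dagger\) (or \(T^0\)) plus finite conditions, and the relation \(\operatorname{Mem}\). Both are arithmetical, since provability from an arithmetically definable theory is arithmetical. Hence the sequence \((\delta_s)\) is arithmetical: its graph is computable in a fixed finite jump, via a primitive recursion whose individual steps are uniformly arithmetical. It follows that \(T^\ast:=T^0\cup\bigcup_s\delta_s\) is complete, consistent, Henkin, and arithmetically definable. As in Theorem~\ref{thm:compact}, the term model on least representatives, transported to a coded domain with infinite complement, is arithmetically decidable by the truth lemma. It omits each \(p_e\), because every element is named by a closed term and every tuple \(\bar t\) was handled at some stage, forcing \(\neg\varphi(\bar t)\) for some \(\varphi\in p_e\).

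I expect the main obstacle to be the reduction at omission stages from the Henkin-language condition \(\delta_s\) to an \(\LL^\dagger\)-formula \(\chi\). The difficulty is that \(\delta_s\) may involve Henkin axioms linking constants. I would handle it by not putting Henkin axioms into conditions at all, so that they stay in \(T^0\). I would then invoke the classical fact that \(T^0\) is conservative over \(T^\dagger\) with respect to formulas in which the Henkin constants appear only as free parameters. This conservativity is the usual lemma that each Henkin constant can be added by a witness-choosing expansion. With that in hand, the rest is bookkeeping of arithmetical complexity.
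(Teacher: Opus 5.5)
\textbf{The gap.} The step that fails is the one you flag as the main obstacle, and the ``classical fact'' you invoke to resolve it is false. The Henkin axioms are conservative over $T^\dagger$ only for sentences that do not mention the new constants. They are not conservative when Henkin constants occur as parameters, because $T^0$ constrains each constant: $T^0\vdash \exists v\,\psi(v)\to\psi(c_\psi)$, and $T^\dagger$ does not prove this with $c_\psi$ read as a free parameter.

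The direction you use first is harmless: consistency of $T^0\cup\delta_s$ trivially gives consistency of $T^\dagger\cup\{\exists\bar x\,\chi\}$. The problem is the converse, which you need for the new condition. Your selection criterion is $\varphi\in p_e$ with $T^\dagger\cup\{\exists\bar x\,(\chi\wedge\neg\varphi)\}$ consistent, where $\chi$ is built from $\delta_s$ alone. This only shows that $T^\dagger\cup\delta_s\cup\{\neg\varphi(\bar t)\}$ is consistent with the constants treated as free. It does not show that $T^0\cup\delta_{s+1}$ is consistent.

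\textbf{A concrete failure.} Suppose $T^\dagger\vdash\exists v\,\psi(v)$, $\psi\in p_e$, and $\neg\psi$ is consistent with $T^\dagger$. Consider an omission stage for $(e,c_\psi)$ at which $\delta_s=\emptyset$.
\begin{itemize}
\item Then $\chi$ is trivial, and $\varphi=\psi$ passes your test; it is returned if it has least code.
\item But $T^0\vdash\psi(c_\psi)$, so $\delta_{s+1}=\{\neg\psi(c_\psi)\}$ is inconsistent with $T^0$, and your invariant breaks.
\end{itemize}
Keeping the Henkin axioms out of the conditions is exactly what causes this: the constraints on the constants become invisible to $\chi$.

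\textbf{How the paper avoids it.} The paper uses no background set of Henkin axioms. At a decision stage where the chosen sentence is $\exists y\,\psi(y)$, it adds $\psi(c_m)$ directly to the finite condition, for the least constant $c_m$ not yet used. Every constraint on the constants then lives in the finite condition $\Delta$. With $\chi(\bar x):=\exists\bar y\,(\delta'(\bar y)\wedge\bar x=\bar\tau(\bar y))$, one gets an exact equivalence:
\[
T^\dagger\cup\Delta\cup\{\neg\varphi(\bar t)\}\ \text{consistent}
\iff
T^\dagger\cup\{\exists\bar x\,(\chi(\bar x)\wedge\neg\varphi(\bar x))\}\ \text{consistent}.
\]
So unsupportedness over $T^\dagger$ applies directly, and the Henkin property of $T^*$ comes from the decision stages.

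\textbf{An alternative repair.} You could keep $H$, but then $\chi$ must also encode the finitely many Henkin axioms of the constants occurring in $\delta_s$ and $\bar t$. This set must be closed under the constants those axioms mention, which is finite by the level structure of Henkin constants. You would also need to argue that the remaining Henkin axioms can be satisfied by interpreting the other constants level by level.

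The rest of your outline matches the paper: the arithmetical bookkeeping, the term model on least representatives, and the omission argument via closed-term representatives.
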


\begin{proof}
We adapt the Henkin construction from
Theorem~\ref{thm:compact}, adding omission requirements for the family
\(\{p_e:e\in E\}\).

Expand \(\LL^\dagger\) to a Henkin language \(\LL^H\) by adjoining
constants \(c_0,c_1,\dots\). Since \(\LL^\dagger\) is countable and
arithmetically definable, so is \(\LL^H\). Fix an arithmetical
enumeration \((\theta_s)_{s\in\N}\) of all \(\LL^H\)-sentences. The fresh
constant used for a Henkin witness is always the least \(c_m\) whose code
has not occurred in the finite condition constructed so far. Also
arithmetically enumerate all pairs \((e,\bar t)\) such that \(e\in E\)
and \(\bar t\) is a closed tuple of \(\LL^H\)-terms of length
\(|\bar x_e|\); write this enumeration as
\((e_0,\bar t_0),(e_1,\bar t_1),(e_2,\bar t_2),\dots\).

We build inductively an increasing sequence
\(\Delta_0\subseteq \Delta_1\subseteq \Delta_2\subseteq\cdots\) of
finite sets of \(\LL^H\)-sentences such that
\(T^\dagger\cup\Delta_s\) is consistent for every \(s\).

At even stages we decide one sentence and add a Henkin witness if
needed, exactly as in the proof of Theorem~\ref{thm:compact}. The
consistency test is the finite proof-search predicate
\[
\text{``there is no proof of contradiction from }
T^\dagger\cup\Delta_{2s}\cup F\text{''},
\]
where \(F\) is the finite set currently being tested. Since
\(T^\dagger\) is arithmetically definable and \(\Delta_{2s}\cup F\) is
finite, this predicate is arithmetical, possibly after passing to a
finite jump.

Suppose \(\Delta_{2s}\) has been constructed. Consider \(\theta_s\). If
\(T^\dagger\cup\Delta_{2s}\cup\{\theta_s\}\) is consistent, let
\(S:=\Delta_{2s}\cup\{\theta_s\}\); otherwise let
\(S:=\Delta_{2s}\cup\{\neg\theta_s\}\).
Then \(T^\dagger\cup S\) is consistent.

If the chosen sentence is not existential, set \(\Delta_{2s+1}:=S\).
If the chosen sentence is \(\exists y\,\psi(y)\), choose the first
fresh constant \(c_m\) not used earlier and let
\(\Delta_{2s+1}:=S\cup\{\psi(c_m)\}\).
As usual, this remains consistent.

At odd stages we meet one omission requirement. Suppose
\(\Delta_{2s+1}\) has been constructed, and consider the pair
\((e_s,\bar t_s)\). Write \(p_{e_s}(\bar x)=p(\bar x)\) and
\(\bar t_s=\bar t\).
Let \(\delta\) be the conjunction of the finitely many formulas in
\(\Delta_{2s+1}\). Only finitely many constants occur in \(\delta\)
and \(\bar t\); for the consistency argument only, replace those constants
by fresh variables \(\bar y\). The finite condition itself remains the
original condition in the Henkin language. After the replacement,
\(\delta\) becomes a formula \(\delta'(\bar y)\), and \(\bar t\) becomes a
tuple of terms \(\bar\tau(\bar y)\). This syntactic term substitution is
primitive recursive in the formula and term codes. Since \(\bar t\) was
chosen with length \(|\bar x_{e_s}|\), the formula
\[
\chi(\bar x):=\exists \bar y\,
\bigl(\delta'(\bar y)\wedge \bar x=\bar\tau(\bar y)\bigr)
\]
has the arity of \(p_{e_s}\).

Since \(T^\dagger\cup\Delta_{2s+1}\) is consistent, \(\chi\) is
consistent with \(T^\dagger\). By unsupportedness of \(p\), choose
\(\varphi(\bar x)\in p(\bar x)\) such that
\[
T^\dagger\cup
\{\exists\bar x\,(\chi(\bar x)\wedge\neg\varphi(\bar x))\}
\]
is consistent. Unwinding the definition of \(\chi\), this is exactly the
consistency of
\[
T^\dagger\cup\Delta_{2s+1}\cup\{\neg\varphi(\bar t)\}.
\]
We choose the least formula code \(\ulcorner\varphi\urcorner\) satisfying
both \(\operatorname{Mem}(e_s,\ulcorner\varphi\urcorner)\) and this finite
consistency condition. Unsupportedness guarantees that the search
terminates, while the membership relation for \(p_{e_s}\) and the finite
proof-search consistency predicate are arithmetical.
Put \(\Delta_{2s+2}:=\Delta_{2s+1}\cup\{\neg\varphi(\bar t)\}\).
Then \(T^\dagger\cup\Delta_{2s+2}\) is consistent.

The entire inductive construction is arithmetical. At even stages, the
consistency checks are arithmetical because provability from the
arithmetically definable theory \(T^\dagger\cup\Delta_{2s}\) is
arithmetical. At odd stages, the preceding least-search rule combines the
uniform arithmetical code for the family \(p_e\) with the same finite
proof-search predicate.

Let \(T^*:=T^\dagger\cup\bigcup_{s\in\N}\Delta_s\).
By construction, \(T^*\) is complete, consistent, and Henkin. Since
every step is arithmetical uniformly in the stage number, the theory
\(T^*\) is arithmetically definable.

Let \(\MM^*\) be the canonical term model of \(T^*\), again represented
by least representatives of closed terms modulo provable equality and
then transported to a coded subset of \(\N\) with infinite complement.
Equality on closed terms modulo provable equality is arithmetical. By
the truth lemma, satisfaction of formulas with parameters represented by
closed terms is equivalent to provability from the complete Henkin theory
\(T^*\). Since provability from \(T^*\) is arithmetical, the elementary
diagram of \(\MM^*\) is arithmetical. Hence \(\MM^*\) is an
arithmetically decidable model of \(T^\dagger\).

Finally, \(\MM^*\) omits each \(p_e\). Indeed, let \(\bar a\) be any
tuple from \(\MM^*\) of the same length as \(\bar x_e\). Then \(\bar a\)
has a least closed-term representative \(\bar t\). The pair
\((e,\bar t)\) appears somewhere in the enumeration, and at the
corresponding odd stage we chose a formula \(\varphi(\bar x)\in p_e\)
such that \(\neg\varphi(\bar t)\in T^*\). If another tuple of closed terms
represents the same element of the quotient term model, equality with
\(\bar t\) is provable in \(T^*\), so the truth value of
\(\varphi\) is unchanged. The truth lemma therefore gives
\(\MM^*\models \neg\varphi(\bar a)\). Finally, satisfaction is invariant
under the transport isomorphism from the quotient term model to the final
coded domain. Thus \(\bar a\) does not realize \(p_e\). Since \(\bar a\)
was arbitrary, \(\MM^*\) omits \(p_e\). This holds for every \(e\in E\).
\end{proof}

\begin{corollary}[Reserved-constant version]
\label{cor:reserved-constant-omitting}
The same theorem remains true with an additional uniformly arithmetical
schedule of dense requirements. Concretely, suppose that for each scheduled
index \(i\) and each finite condition \(\Delta\), there is an arithmetically
searchable finite extension \(F\) such that
\[
T^\dagger\cup\Delta\cup F
\]
is consistent, and suppose the search predicate for such \(F\)'s is uniform
in \(i\) and the code of \(\Delta\). Interleaving these dense requirements
with the sentence-decision, witness, and omission requirements still
produces an arithmetically decidable model omitting the family
\((p_e)_{e\in E}\), provided the unsupportedness checks for the omitted
partial types are made below the current finite condition.

The typical application is a reserved constant \(d\) and an arithmetically
named set \(B\): the dense requirements are \(d\neq b\), for \(b\in B\).
The needed finite consistency condition is that every finite list
\(\{d\neq b_0,\ldots,d\neq b_{r-1}\}\) be consistent with the current
theory and the other finite requirements.
\end{corollary}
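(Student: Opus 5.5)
The plan is to prove Corollary~\ref{cor:reserved-constant-omitting} by rerunning the Henkin construction from the proof of Theorem~\ref{thm:arith-omit-countable-family}, with a third kind of stage added. Stages are split by residue mod~$3$.
\begin{itemize}
\item At stages $3s$ we decide $\theta_s$ and add a Henkin witness, exactly as before.
\item At stages $3s+1$ we meet the omission requirement for the pair $(e_s,\bar t_s)$.
\item At stages $3s+2$ we meet the $s$-th scheduled dense requirement.
\end{itemize}
All scheduled indices $i$ are enumerated arithmetically, possibly with repetitions, so that every requirement is handled at some stage. The invariant is still that $T^\dagger\cup\Delta_k$ is consistent for every $k$, with each $\Delta_k$ finite.

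At a dense stage with current condition $\Delta$ and scheduled index $i$, we use the hypothesis: a finite $F$ with $T^\dagger\cup\Delta\cup F$ consistent exists and can be searched for arithmetically. We take the least code of such an $F$ and set $\Delta':=\Delta\cup F$. Because the search predicate is uniform in $i$ and in the code of $\Delta$, and the consistency test is the same finite proof-search predicate already used, this stage is arithmetical uniformly in the stage number. In the typical application, for a reserved constant $d$ and named set $B$, the requirement for $b$ is simply $F=\{d\neq b\}$. Consistency of $\{d\neq b_0,\dots,d\neq b_{r-1}\}$ with the current condition is exactly the stated finite consistency hypothesis.

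The omission stage needs care, because the condition now contains extra dense-requirement sentences. As before, we form $\chi(\bar x):=\exists\bar y\,(\delta'(\bar y)\wedge\bar x=\bar\tau(\bar y))$, where $\delta$ is the conjunction of the whole current condition, including all previously added $F$'s. We then need some $\varphi\in p_e$ with $T^\dagger\cup\Delta\cup\{\neg\varphi(\bar t)\}$ consistent. This is precisely where the proviso that unsupportedness checks are made below the current finite condition is used. It says that unsupportedness of $p_e$ applies to formulas $\chi$ built from such conditions, that is, relative to $T^\dagger$ together with the finite side conditions. In Proposition~\ref{prop:arith-omit-all-new-realizations}, for instance, this is the local unsupportedness argument already noted there. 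So the least-search for $\varphi$ terminates and stays arithmetical.

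The rest is verbatim from the theorem. The limit theory $T^*$ is complete, Henkin, consistent, and arithmetically definable. Its term model, transported to a coded domain, is therefore arithmetically decidable and omits each $p_e$, and it satisfies every sentence added at a dense stage. The step I expect to be the main obstacle is the omission stage: one must make sure that the dense sentences added earlier do not destroy the unsupportedness witness. I would handle this by stating the hypothesis explicitly in relativized form, as the corollary's proviso does, rather than trying to derive it from unsupportedness over $T^\dagger$ alone, since in general it does not follow.
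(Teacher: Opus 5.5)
Your construction is the paper's: interleave the dense stages with the decision, witness, and omission stages, take the least successful \(F\) at each dense stage, and let \(\delta\) at an omission stage contain every dense sentence imposed so far, with the truth-lemma argument unchanged.

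One correction to your closing remark. Below-the-condition unsupportedness \emph{does} follow from unsupportedness over \(T^\dagger\) alone. The \(\chi\)-construction in the proof of Theorem~\ref{thm:arith-omit-countable-family} works for any finite \(\Delta\) in the Henkin language with \(T^\dagger\cup\Delta\) consistent, because the constants abstracted into \(\bar y\) are fresh for \(T^\dagger\). So earlier dense sentences are absorbed into \(\delta\) exactly like decided sentences and Henkin witnesses, and cannot destroy the unsupportedness witness; this is why the paper treats the proviso as automatic rather than as an extra hypothesis.

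The interaction that genuinely needs a hypothesis is the reverse one. Later dense requirements must remain consistent after an omission step adds some \(\neg\varphi(\bar t)\), for example \(d=b\) when the omitted type contains \(x\neq b\). That is what the density hypothesis for every finite \(\Delta\) supplies, and you do invoke it.
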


\begin{proof}
In the proof of Theorem~\ref{thm:arith-omit-countable-family}, interleave
the additional dense requirements with the sentence-decision, witness,
and omission stages. At a dense stage, choose the least successful finite
extension \(F\) in the arithmetical search. At an omission stage, the finite
conjunction \(\delta\) already includes all dense requirements imposed so
far, so unsupportedness is applied relative to this current finite
condition. Since each finite stage is consistent by hypothesis, the
induction and the truth-lemma argument are unchanged.
\end{proof}

As usual, types over arithmetical parameter sets are handled by first
expanding the language by constants for those parameters.

\subsection{Definable quotients and imaginaries in the arithmetic setting}
\label{subsec:arith-imaginaries}

In Section~\ref{sec:sm-control-arith-degree} we use canonical
parameters for definable families of finite subsets of a strongly
minimal set, together with weak elimination of imaginaries for the
induced strongly minimal structure. The purpose of this subsection is to
justify those uses in the present arithmetical framework.

We do not develop the full theory \(T^{\eq}\) here. That would be
possible, but unnecessary for the arguments in the main text. Instead,
we isolate the restricted quotient coding actually needed in
Section~\ref{sec:sm-control-arith-degree}. This is enough for the
finite-fiber coding argument used there.

Throughout, \(\MM\) is an arithmetically decidable structure,
\(A\subseteq M\) is arithmetical, and all definability is understood
relative to the coding conventions fixed earlier in this appendix.

\begin{lemma}[Arithmetic coding of represented definable quotients]
\label{lem:arith-quotient-coding}
Let \(D\subseteq M^n\) be arithmetical, and let
\(E\subseteq D^2\) be an arithmetical equivalence relation. Then the
represented quotient \(D/E\) admits a uniform arithmetical coding in the home sort,
in the following sense:
\begin{enumerate}[label=(\roman*)]
\item an element of the represented quotient is given by a tuple
      \(\bar a\in D\), with equality of representatives governed by \(E\);
\item equality of represented classes is arithmetical, since
      \([\bar a]_E=[\bar b]_E \iff E(\bar a,\bar b)\);
\item every relation on \(D/E\) induced by an arithmetical relation on
      representatives which is invariant under \(E\) in each coordinate is
      again arithmetical;
\item every partial or total function on \(D/E\) whose graph is induced by
      an arithmetical \(E\)-compatible graph on representatives is again
      arithmetically represented.
\end{enumerate}
\end{lemma}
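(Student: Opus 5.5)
The plan is to verify the four clauses directly from the arithmetical decidability of \(\MM\) and the arithmeticity of \(D\) and \(E\). Throughout, a represented quotient element is simply a tuple of \(D\), and all quotient-level notions are read back through representatives. The guiding principle is that every operation we perform is a first-order (in fact bounded) combination of arithmetical relations, so arithmeticity is preserved at each step.

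For clause (i), we take the coding to be the identity on \(D\). The quotient is then presented as the pair \((D,E)\), both of which are arithmetical subsets of \(\N\) under the fixed tupling. No recoding of the domain is needed, so nothing beyond the hypotheses is used here.

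For clause (ii), equality of classes is by definition \(E\) restricted to \(D\times D\), which is arithmetical by assumption.

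For clause (iii), let \(R\subseteq D^k\) be arithmetical and \(E\)-invariant in each coordinate. The induced relation on classes holds of \(([\bar a_1]_E,\dots,[\bar a_k]_E)\) iff \(R(\bar a_1,\dots,\bar a_k)\). Invariance makes this independent of the chosen representatives, and the defining condition is literally \(R\), hence arithmetical. If one prefers a canonical representative, one can use the least tuple-code in each class, \(\min\{\bar b\in D: E(\bar a,\bar b)\}\). This selector is arithmetical, being a \(\Pi\)-over-\(\Sigma\) bounded-search definition relative to \(D\) and \(E\), and relations transported to canonical representatives remain arithmetical.

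Clause (iv) is the same argument applied to graphs. An \(E\)-compatible graph \(G\subseteq D\times D'\) induces a partial function on classes. Here compatibility means that \(G(\bar a,\bar b)\) and \(E(\bar a,\bar a')\) imply \(G(\bar a',\bar b')\) for some \(\bar b'\) in the same class as \(\bar b\), and that images are unique up to \(E'\). The induced function's graph is \(\{([\bar a],[\bar b]): G(\bar a,\bar b)\}\), and its domain \(\{\bar a:\exists\bar b\,G(\bar a,\bar b)\}\) is arithmetical via one extra existential quantifier. Well-definedness follows from compatibility, and arithmeticity from closure of the arithmetical sets under first-order definitions.

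The only point that needs care is to state precisely what ``compatible'' means so that well-definedness holds. The complexity bookkeeping in the canonical-representative variant is routine, since it costs only finitely many jumps.
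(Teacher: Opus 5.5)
Your proposal is correct and follows essentially the same route as the paper: a quotient element is represented by a tuple of \(D\), equality of classes is \(E\) itself, and induced relations and function graphs are read off directly from the \(E\)-invariant (resp.\ \(E\)-compatible) arithmetical data on representatives. Two small differences are harmless: the least-code canonical representative is an optional extra that the paper deliberately avoids, and your compatibility condition (domain invariance plus uniqueness of values up to \(E'\)) is slightly stronger than the paper's, which only asks that \(E\)-equivalent arguments receive \(E\)-equivalent values.
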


\begin{proof}
Tuples from \(D\) are already coded by natural numbers, so a quotient
element may be represented by any code for a tuple in \(D\). Equality of
represented classes is exactly the relation \(E(\bar a,\bar b)\), which
is arithmetical by hypothesis.

Now let \(R^\ast\) be a relation on \((D/E)^k\) induced by an
arithmetical relation \(R\subseteq D^k\) which is invariant under \(E\)
in each coordinate. Then
\[
R^\ast([\bar a_1]_E,\dots,[\bar a_k]_E)
\iff
R(\bar a_1,\dots,\bar a_k),
\]
and the right-hand side is arithmetical. The same argument applies to
partial or total functions, provided their representative-level graphs are
compatible with \(E\): if \(E(\bar a,\bar a')\) and the graph sends
\(\bar a\) to \(\bar b\) and \(\bar a'\) to \(\bar b'\), then
\(E(\bar b,\bar b')\). Thus all the quotient data needed later may be coded
arithmetically without choosing canonical representatives in the home sort.
\end{proof}

\begin{lemma}[Represented parameters for definable families]
\label{lem:canonical-parameter-arith}
Let \(\rho(\bar x,\bar y)\) be a formula with parameters from \(A\), and let
\(D\subseteq M^{|\bar y|}\) be arithmetical. Define an equivalence
relation \(E_\rho\) on \(D\) by
\(E_\rho(\bar b,\bar c)\iff
\forall \bar x\in M^{|\bar x|}\,
(\rho(\bar x,\bar b)\leftrightarrow \rho(\bar x,\bar c))\).
Then \(E_\rho\) is arithmetical, and the quotient \(D/E_\rho\) is
arithmetically coded in the sense of
Lemma~\ref{lem:arith-quotient-coding}. In particular, for each
\(\bar b\in D\), the represented class \([\bar b]_{E_\rho}\) is a
well-defined quotient element for the definable set
\(\rho(M^{|\bar x|},\bar b)\).
\end{lemma}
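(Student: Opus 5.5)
The lemma has three parts: \(E_\rho\) is arithmetical, the quotient \(D/E_\rho\) is arithmetically coded, and each class is a well-defined quotient element for the definable set. I would prove them in that order, reducing everything to the arithmetical satisfaction relation of \(\MM\).

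\textbf{Step 1: \(E_\rho\) is arithmetical.} Write \(\rho(\bar x,\bar y)\) as \(\rho_0(\bar x,\bar y,\bar a)\), with \(\rho_0\) an \(\LL\)-formula and \(\bar a\in A^{<\omega}\) a fixed finite parameter tuple. For \(\bar b,\bar c\in D\), the condition
\[
\forall \bar x\in M^{|\bar x|}\,\bigl(\rho(\bar x,\bar b)\leftrightarrow\rho(\bar x,\bar c)\bigr)
\]
is itself a first-order formula \(\varepsilon(\bar b,\bar c,\bar a)\). Its code is obtained primitive recursively from the code of \(\rho_0\). Hence \(E_\rho(\bar b,\bar c)\) holds iff
\[
\bar b,\bar c\in D \ \text{ and } \ \mathrm{Sat}_{\MM}\bigl(\ulcorner\varepsilon\urcorner,\bar b\,\bar c\,\bar a\bigr).
\]
This is arithmetical, because \(D\) is arithmetical by hypothesis, \(\MM\) is arithmetically decidable (Definition~\ref{def:arith-decidable-structure-appendix}), and \(\bar a\) is a finite tuple, which contributes no degree by our conventions.

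The only point needing care is that the quantifier \(\forall\bar x\) ranges over the coded domain \(M\). This is exactly the Tarskian satisfaction of Definition~\ref{def:sat-coded-structure}, so internalizing the quantifier into the formula is legitimate. It avoids treating a quantifier over \(M\) as an unbounded external search of uncontrolled complexity. I expect this to be the only step that is not purely bookkeeping, and it is still routine.

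\textbf{Step 2: equivalence relation and coding.} \(E_\rho\) is an equivalence relation on \(D\), since it is defined by equality of the fibers \(\rho(M^{|\bar x|},\cdot)\). Lemma~\ref{lem:arith-quotient-coding} therefore applies directly to the pair \(D\), \(E_\rho\), and gives the arithmetical coding of \(D/E_\rho\).

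\textbf{Step 3: classes are well-defined quotient elements.} Fix \(\bar b\in D\). By the definition of \(E_\rho\), we have \([\bar b]_{E_\rho}=[\bar c]_{E_\rho}\) iff \(\rho(M^{|\bar x|},\bar b)=\rho(M^{|\bar x|},\bar c)\). So the map sending a class to its definable set is well-defined and injective. Moreover, the relation \(R(\bar x,\bar y):\iff\rho(\bar x,\bar y)\wedge\bar y\in D\) is arithmetical and \(E_\rho\)-invariant in \(\bar y\). Clause (iii) of Lemma~\ref{lem:arith-quotient-coding} then shows that the membership relation ``\(\bar x\) belongs to the set coded by the class \(e\)'' is arithmetical on the quotient.
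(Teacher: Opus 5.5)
Your proof is correct and follows essentially the same route as the paper. You show that \(E_\rho\) is arithmetical using the arithmetical satisfaction relation of \(\MM\), you get well-definedness of classes because \(E_\rho\) is literally equality of fibers, and the coding comes from Lemma~\ref{lem:arith-quotient-coding}. One small correction to your commentary: internalizing \(\forall\bar x\) into the formula is convenient but not needed. An external universal quantifier over the arithmetical domain \(M\), applied to an arithmetical predicate, again yields an arithmetical predicate; this is implicitly what the paper does. So there is no ``uncontrolled'' complexity to avoid.
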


\begin{proof}
Since \(\MM\) is arithmetically decidable and \(A\) is arithmetical,
truth of the formula \(\rho(\bar x,\bar y)\) is arithmetical uniformly in
\(\bar x\) and \(\bar y\). Hence the relation \(E_\rho\) is arithmetical.
The represented class does not depend on the chosen representative exactly
because \(E_\rho\) is equality of the corresponding fibers. The rest follows
immediately from Lemma~\ref{lem:arith-quotient-coding}.
\end{proof}

\begin{lemma}[Arithmetic coding in the induced structure]
\label{lem:induced-sm-arith}
Let \(D=\varphi(M,\bar a)\) be strongly minimal over \(A\), where
\(\bar a\) is arithmetical over \(A\). More generally, suppose \(D\) is
definable in an arithmetically decidable ambient structure over
arithmetical data. Then every subset of \(D^m\) definable in the induced
structure on \(D\) over \(A\) and a fixed finite tuple of parameters from
\(D\) is arithmetical relative to the code of that finite tuple.
\end{lemma}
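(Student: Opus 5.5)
The plan is to reduce everything to the arithmetical satisfaction relation of the ambient structure \(\MM\), exactly as in Lemma~\ref{lem:canonical-parameter-arith}. Fix the finite tuple \(\bar d\in D^{<\omega}\) of extra parameters. A subset \(Y\subseteq D^m\) definable in the induced structure on \(D\) over \(A\cup\bar d\) is, by definition of the induced structure, of the form
\[
Y=\{\bar z\in D^m:\ \MM\models \theta(\bar z,\bar d,\bar e)\}
\]
for some \(\LL\)-formula \(\theta\) and a finite tuple \(\bar e\) from \(A\). One point to check is that the relevant induced-structure relations are traces on \(D\) of \(\LL(A)\)-definable sets in \(\MM\). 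Quantification inside the induced structure relativizes to \(\varphi(x,\bar a)\), so it is again first-order in \(\MM\). Hence no new complexity enters beyond translating induced formulas into relativized \(\LL(A)\)-formulas, and that translation is primitive recursive in the codes.

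Membership \(\bar z\in Y\) is then equivalent to two conditions. First, each coordinate must satisfy \(\MM\models\varphi(z_j,\bar a)\). Second, \(\MM\models\theta(\bar z,\bar d,\bar e)\) must hold. Both are instances of \(\mathrm{Sat}_{\MM}\), which is arithmetical by arithmetical decidability of \(\MM\) (Definition~\ref{def:arith-decidable-structure-appendix}). The parameters \(\bar a\) and \(\bar e\) come from the arithmetical set \(A\), or are arithmetical over it, and are fixed finite data. The only remaining input is the code of \(\bar d\), which enters as an argument of the satisfaction predicate. So \(Y\) is computable in \(0^{(k)}\oplus\ulcorner\bar d\urcorner\) for a \(k\) depending only on the satisfaction relation of \(\MM\). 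The definition is uniform in the code of \(\bar d\) and in the formula code.

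For the ``more generally'' clause, I would repeat the argument with \(\varphi(x,\bar a)\) replaced by any defining formula for \(D\) over arithmetical data. Nothing used strong minimality, only that \(D\) and all parameters are handled by the arithmetical satisfaction relation.

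The main obstacle I expect is purely definitional: making precise that ``definable in the induced structure over \(A\)'' coincides with traces of \(\LL(A)\)-definable sets, with quantifiers relativized to \(D\). If the induced structure is instead taken to include all \(A\)-definable relations on \(D^n\) as primitives, I would handle this by a straightforward induction on formulas, which shows the translation into \(\LL(A)\) is effective. After that the lemma is a one-line consequence of arithmetical decidability.
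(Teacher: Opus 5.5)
Your proposal is correct and follows essentially the same route as the paper. Both identify subsets of \(D^m\) definable in the induced structure with traces of sets definable in \(\MM\) over \(A\) and the finite tuple, then read off arithmeticity from the arithmetical satisfaction relation of \(\MM\) and the arithmetical definability of \(D^m\), noting that strong minimality plays no role; your remark about relativizing induced-structure quantifiers to \(\varphi(x,\bar a)\) only makes explicit the definitional step the paper takes for granted.
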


\begin{proof}
A subset of \(D^m\) definable in the induced structure is, by
definition, the trace on \(D^m\) of a set definable in \(\MM\) over
the displayed finite tuple from \(D\) together with named arithmetical
parameters. The finite tuple is part of the input data. Since \(\MM\) is
arithmetically decidable, every such ambient definable set is arithmetical
relative to that finite tuple. Since \(D\) is definable over arithmetical
data, the set \(D^m\) is arithmetical as well. Hence the trace on \(D^m\)
is arithmetical. The strong minimality assumption is part of the surrounding
model-theoretic context; the arithmetical trace calculation itself uses only
ambient decidability, definability of \(D\), and coded finite parameters.
\end{proof}

\begin{proposition}[Weak elimination of imaginaries for coded quotients on a strongly minimal set]
\label{prop:weak-ei-coded-sm}
Let \(D=\varphi(M,\bar a)\) be strongly minimal over \(A\), where
\(\bar a\) is arithmetical over \(A\). Let \(E\) be an \(A\)-definable
equivalence relation on a definable subset of \(D^n\), and let
\(e\in D^n/E\) be represented arithmetically as in
Lemma~\ref{lem:arith-quotient-coding}. Then there is a finite tuple
\(\bar d\in D^{<\omega}\) such that \(e\) is definable from
\(A\cup\bar d\) and \(\bar d\) is algebraic over \(A\cup\{e\}\).
In the represented quotient language, \(e\in\dcl(A,\bar d)\) means that
there is an \(E\)-invariant formula on representatives which defines the
class \(e\) uniquely from \(A\cup\bar d\). Similarly,
\(\bar d\in\acl(A,e)\) means that some formula with the represented
quotient parameter \(e\) has only finitely many real tuple solutions and
contains \(\bar d\).
\end{proposition}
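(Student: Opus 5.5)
The plan is to reduce the statement to the classical weak elimination of imaginaries for strongly minimal sets (Fact~\ref{fact:restricted-weak-ei-sm}), and then check that the witnessing formulas can be read in the represented quotient language of Lemma~\ref{lem:arith-quotient-coding}. The arithmetical coding plays no role in the existence of \(\bar d\). It only ensures that the relevant objects are meaningful, since \(E\), \(D\), and the class \(e\) are all arithmetically represented.

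\textbf{Step 1: existence of the tuple.} Fix a representative \(\bar b\) of \(e\). Classically, in a monster model elementarily extending \(\MM\) (or already in \(\MM\)), the induced structure on \(D\) over \(A\) is strongly minimal. Weak elimination of imaginaries then gives a finite tuple \(\bar d\) from the strongly minimal set and \(A\)-formulas \(\delta(\bar y,\bar z)\) and \(\alpha(\bar z,\bar y)\) with the following properties:
\begin{itemize}
\item \(\delta(\bar y,\bar d)\) defines exactly the class \([\bar b]_E\);
\item \(\alpha(\bar z,\bar b)\) is \(E\)-invariant in \(\bar y\), is satisfied by \(\bar d\), and has finitely many solutions.
\end{itemize}
Recall the standard argument: take a tuple \(\bar d\) realizing a generic-minimal point of the algebraic set associated with the class, using the \(\acl\)-dimension and the finitely many conjugates. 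The one point needing care is that \(\bar d\) lies in \(D\) of the given model \(\MM\), not of an extension. To secure this, use that \(\acl\) over \(A\cup\{e\}\) inside \(\MM\) agrees with that in elementary extensions, and that the existential statement ``some \(\bar z\in D^m\) satisfies \(\alpha(\bar z,\bar b)\) and \(\delta\) defines the \(E\)-class of \(\bar b\) from \(\bar z\)'' is first order in \(\bar b\). By elementarity it then has a witness in \(\MM\).

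\textbf{Step 2: translating into represented quotient language.} First, \(e\in\dcl(A,\bar d)\) is witnessed by \(\delta(\bar y,\bar d)\). This formula is \(E\)-invariant in \(\bar y\), because its solution set is a single \(E\)-class, and by Lemma~\ref{lem:arith-quotient-coding}(iii) it induces an arithmetical relation on \(D/E\) singling out \(e\). Second, \(\bar d\in\acl(A,e)\) is witnessed by \(\alpha(\bar z,\bar y)\). Its invariance under replacing \(\bar b\) by an \(E\)-equivalent tuple is built into the choice; if needed, replace \(\alpha\) by \(\exists\bar y'\,(E(\bar y,\bar y')\wedge\alpha(\bar z,\bar y'))\) intersected with a bound \(\exists^{\le k}\). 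This makes \(\alpha\) an \(E\)-compatible relation with finitely many real solutions containing \(\bar d\). By Lemma~\ref{lem:arith-quotient-coding} and Lemma~\ref{lem:induced-sm-arith}, these relations are arithmetical relative to the finite data.

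\textbf{Main obstacle.} I expect the main difficulty to be Step 1's invariance and finiteness bookkeeping. One must ensure that making \(\alpha\) \(E\)-invariant does not destroy finiteness of the fiber. I would handle this by choosing \(\alpha\) classically as an \(\mathcal L(A)\)-formula in \(\bar z\) and the imaginary \(e\), and pulling it back via representatives. This pullback is automatically \(E\)-invariant, and its fiber over \(e\) is the classical finite set.
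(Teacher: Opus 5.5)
This is essentially the paper's proof: you invoke classical weak elimination of imaginaries in the strongly minimal induced structure on \(D\) over \(A\), then read the \(\dcl^{\eq}\)- and \(\acl^{\eq}\)-witnesses as \(E\)-invariant formulas on representatives via Lemmas~\ref{lem:arith-quotient-coding} and~\ref{lem:induced-sm-arith}. Your extra remarks only make explicit what the paper leaves implicit: first, that \(\bar d\) can be taken in \(M\) itself; second, that the algebraicity witness should be the pullback of a formula in the imaginary parameter \(e\), as in Lemma~\ref{lem:coding-finite-fibers}, and not your fallback of \(E\)-saturating a real witness and cutting off with \(\exists^{\le k}\), which loses \(\bar d\) whenever the saturated fibre is infinite.

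One caveat, shared with the paper: the classical theorem needs a hypothesis such as \(\acl(A)\cap D\) infinite, and without it the statement fails. In an infinite affine space over \(\mathbb F_2\) with \(A=\emptyset\), let \(E\) identify pairs with the same difference and let \(e\) be the class of two distinct points. Translations fix \(e\) but move every point, so no nonempty tuple is algebraic over \(e\), while \(e\notin\dcl(\emptyset)\). This is harmless in the paper's application, where \(A\supseteq M_0\) and \(\varphi(M_0,\bar m_0)\subseteq\acl(A)\cap D\) is infinite.
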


\begin{proof}
The induced structure on \(D\) is strongly minimal in the usual sense.
Indeed, definable subsets of \(D\) in the induced structure are exactly
the traces on \(D\) of subsets of \(M\) definable in \(\MM\), and these
are finite or cofinite because \(D\) is strongly minimal over \(A\).

By the classical weak elimination of imaginaries theorem for strongly
minimal structures in the exact form needed here, every imaginary is
interalgebraic with a finite real tuple. Applied to the represented
quotient \(D^n/E\) in the induced structure on \(D\), the imaginary
element represented by \(e\) is interalgebraic over \(A\) with some finite
tuple \(\bar d\) from \(D\); see, for example,
Marker~\cite[Chapter~6]{marker2002}. That is,
\(e\in\dcl^{\eq}(A,\bar d)\) and \(\bar d\in\acl^{\eq}(A,e)\) in the
induced structure. The tuple \(\bar d\) depends on the particular quotient
element \(e\); no uniform arithmetical choice of such a tuple is asserted in
this proposition.

It remains to interpret this statement in the present arithmetical
coding. By Lemma~\ref{lem:induced-sm-arith}, the induced structure on
\(D\) is arithmetically coded in the home sort. By
Lemma~\ref{lem:arith-quotient-coding}, the represented quotient \(D^n/E\)
is also arithmetically coded. Accordingly, definability
and algebraicity in the induced structure and its represented quotient
sorts are expressible arithmetically in the home sort. A definability
witness gives an \(E\)-invariant formula defining the represented class
from \(\bar d\), and an algebraicity witness gives a formula with finitely
many real tuple solutions over the represented class. In applications to
finite fibers, the formulas \(\gamma\) and \(\eta\) are extracted from
these two witnesses.
\end{proof}

This is the only imaginary-element machinery used in the paper; no full
imaginary expansion \(T^{\eq}\) is used. In particular,
Section~\ref{sec:sm-control-arith-degree} uses only the finite-fiber
consequence: the represented canonical parameter of a finite definable
fiber is replaced by a finite tuple from the strongly minimal set itself.

    \bibliographystyle{amsalpha}

\begin{thebibliography}{99}
    
    \bibitem[BL71]{baldwinlachlan1971}
    J.~T. Baldwin and A.~H. Lachlan, \emph{On strongly minimal sets}, J. Symbolic Logic {\bf 36} (1971), 79--96.

        \bibitem[BGT12]{breuillardgreentao2012}
E.~Breuillard, B.~Green, and T.~Tao,
\emph{The structure of approximate groups},
Publ. Math. Inst. Hautes \'{E}tudes Sci. \textbf{116} (2012),
115--221.
DOI: 10.1007/s10240-012-0043-9.

\bibitem[Hru12]{hrushovski2012}
E.~Hrushovski,
\emph{Stable group theory and approximate subgroups},
J. Amer. Math. Soc. \textbf{25} (2012), no.~1, 189--243.
DOI: 10.1090/S0894-0347-2011-00708-X.

    
    \bibitem[JTZ23]{jingtranzhang2023}
    Y.~Jing, C.-M. Tran, and R.~Zhang,
    \emph{A nonabelian Brunn--Minkowski inequality},
    Geom. Funct. Anal. \textbf{33} (2023), no.~4, 1048--1100.
    DOI: 10.1007/s00039-023-00647-6.

    \bibitem[Joc72]{jockusch1972} C.~G. Jockusch Jr., \emph{Ramsey's theorem and recursion theory}, J. Symbolic Logic {\bf 37} (1972), 268--280.

    \bibitem[JLR91]{jockuschlewisremmel1991} C.~G. Jockusch Jr., A.~A. Lewis and J.~B. Remmel, \emph{$\Pi^0_1$-classes and Rado's selection principle}, J. Symbolic Logic {\bf 56} 
    (1991), no.~2, 684--693.

    
\bibitem[Kap24]{Kaplan2024DefinablePQTheoremNIP}
I.~Kaplan,
\emph{A definable $(p,q)$-theorem for NIP theories},
Advances in Mathematics \textbf{436} (2024), 109418.


    \bibitem[Mac24]{machado2024}
    S.~Machado,
    \emph{Minimal doubling for small subsets in compact Lie groups},
    arXiv:2401.14062, 2024.

    \bibitem[Mat04]{Matousek2004BoundedVCFractionalHelly}
J.~Matou{\v{s}}ek,
\emph{Bounded VC-dimension implies a fractional Helly theorem},
Discrete \& Computational Geometry \textbf{31} (2004), no.~2, 251--255.

    \bibitem[Mar02]{marker2002}
    D.~Marker,
    \emph{Model Theory: An Introduction},
    Graduate Texts in Mathematics, vol.~217,
    Springer, New York, 2002.
    
    \bibitem[Mor65]{morley1965}
    M.~Morley,
    \emph{Categoricity in power},
    Trans. Amer. Math. Soc. \textbf{114} (1965), 514--538.








    \end{thebibliography}

\end{document}